\def\thm@space@setup{%
  \thm@preskip=2ex \thm@postskip=2ex
}
\numberwithin{equation}{subsection}
\theoremstyle{plain}
\newtheorem{thm}[subsection]{Theorem~}
\newtheorem{lem}[subsection]{Lemma~}
\newtheorem{prop}[subsection]{Proposition~}
\newtheorem{pro}[subsection]{Proposition~}
\newtheorem{cor}[subsection]{Corollary~}
\newtheorem{ass}[subsection]{Assumption~}
\newtheorem{defn}[subsection]{Definition~}
\theoremstyle{remark}
\newtheorem{rmk}[subsection]{Remark~}
\newcommand\s[1]{\mathscr{#1}}
\renewcommand\b[1]{\mathbb{#1}}
\newcommand\f[1]{\mathfrak{#1}}
\newcommand\m[1]{\mathrm{#1}}
\renewcommand\c[1]{\mathcal{#1}}
\renewcommand\d{\dagger}
\newcommand{\Q}{\mathbb{Q}}
\newcommand{\D}{\mathbb{D}}
\newcommand{\gr}{\mathrm{gr}\,}
\newcommand{\coh}{\mathrm{coh}}
\newcommand{\gen}{\mathrm{gen}}
\newcommand{\Spm}{\mathrm{Spm}\,}
\newcommand{\Spec}{\mathrm{Spec}\,}
\newcommand{\RANK}{d!\int_{\Delta_{\infty}\cap\mathfrak C} 
	\prod_{\alpha \in R^+} \frac{(\lambda, \alpha)^2}{(\rho, \alpha)^2}\mathrm d\lambda}
\newcommand{\unif}{\varpi}
\newcommand{\FsM}{\hat{M}}
\newcommand{\an}{\m{an}}
\newcommand{\BBmm}{R\langle x\rangle}
\newcommand{\nn}{\c{N}'}
\newcommand{\Ninf}{\c{N}}
\newcommand{\important}{}%\textcolor{blue}}
\newcommand{\actM}{\beta}
\newcommand{\actV}{\alpha}
\newcommand{\algclo}{\bar{k}}
\newcommand{\morZ}{\varphi}
\title{$p$-adic hypergeometric $\s{D}^\dagger(\infty)$-module and exponential sums on reductive groups}
\author[X. Li]{Xuanyou Li}
\address{Tsinghua University, Beijing 100084, P. R. China}
\email{lixuanyo21@mails.tsinghua.edu.cn}
\author[C. Liu]{Chenhan Liu}
\address{Tsinghua University, Beijing 100084, P. R. China}
\email{liu-ch22@mails.tsinghua.edu.cn}
\date{}
\begin{document}
\bibliographystyle{amsalpha}

\begin{abstract}
We study the $p$-adic analogue of the $\ell$-adic hypergeometric sheaves for reductive groups, called the hypergeometric
$\s{D}^\dagger(\infty)$-modules. They are overholonomic objects in the derived category of arithmetic $\s{D}$-modules with Frobenius
structures. Over the
non-degenerate locus, the hypergeometric $\s{D}^\dagger(\infty)$-modules define $F$-isocrystals overconvergent along the
complement of the
non-degenerate locus. As an application, we use the theory of $L$-functions of overholonomic arithmetic $\s{D}$-modules to
study hypergeometric exponential sums on reductive groups.

\noindent\textbf{Key words:} arithmetic $\s{D}$-module, spherical variety, Fourier transformation.
\end{abstract}

\maketitle

\section*{Introduction}

Let $p$ be a prime number, $q$ a power of $p$, $k$ the finite field with $q$ elements, $\psi:k\to\overline{\b{Q}}^*_p$ a nontrivial additive character, $G_k$ a reductive group over the finite field $k$, and \[\rho_{j,k}:G_k\to\m{GL}(V_{j,k})\quad
(j=1, \cdots, N)\] a family of representations of $G_k$, where $V_{j,k}$ are finite dimensional vector spaces over $k$. For any $k$-point $A=(A_1,\cdots,A_N)$ of $\prod_{j=1}^N\m{End}(V_{j,k})$, consider the morphism
$$f:G_k\to\b{A}_k^1:g\mapsto\sum_{j=1}^N\m{Tr}(A_j\rho_j(g)).$$ It defines a regular function on $G_k$. In this paper, we use the $p$-adic method to study the exponential sum \[\m{Hyp}(A):=\sum_{g\in G_k(k)}\psi(\sum_{j=1}^N\m{Tr}(A_j\rho_{j,k}(g))),\] where $G_k(k)$ is the set of $k$-points of $G_k$. We call it the \textit{hypergeometric exponential sum} associated with the representations $\rho_{j,k}$.

In \cite{l-adic}, the authors study this exponential sum using the theory of $\ell$-adic sheaves and algebraic $\s{D}$-modules.
In  \cite{padictorus}, the authors study the twisted GKZ hypergeometric exponential sum by the $p$-adic method, which corresponds to the
case where $G_k=\b{G}_{m,k}^n$ is a torus.

\subsection{Basic setting}\label{setting}
Let $R$ be a Dedekind domain so that its fraction field $K$
is of characteristic $0$ and its residue fields at maximal ideals are finite,
$G_R$ a split reductive group scheme over $R$, $B_R$ a
Borel subgroup scheme of $G_R$, and $T_R$ the maximal torus in $B_R$. Fix an algebraic closure $\bar K$ of $K$.
By \cite[Lemme 2, 3.2 b) and Lemme 5]{Serre}, irreducible representations of
$G_{\bar K}=G_R\otimes_R{\bar K}$ can be lifted to representations of $G_R$.
Let \[\rho_{j,R}: G_R\to\mathrm{GL}(V_{j,R})\quad(j=1, \ldots, N)\] be representations such that the representations
$\rho_{j, \bar K}: G_{\bar K}\to \mathrm{GL}(V_{j,\bar K})$ are irreducible, where
$V_{j,R}$ are projective $R$-modules of finite ranks, and $V_{j, \bar K}:=V_{j, R}\otimes_R \bar K$.

Let $\b{V}_R:=\prod_{j=1}^N\m{End}(V_{j,R})$, $n:=\mathrm{rank}_R(\b{V}_R)$. Consider the morphism
$$\iota_0': G_R\to \b{V}_R,\quad g\mapsto(\rho_{1,R}(g),\cdots,\rho_{N,R}(g)),$$ we assume $\iota_0'$ is quasi-finite. % Fix a character $\chi: G\to \b{G}_m$. Pullback the Kummer F-isocrystal $\c{K}=\c{K}_{1/(p-1)}$ on $\b{G}_m$ %Cohomologie rigide et cohomologie rigide à supports propres 2.3.8
%along $\chi$, we get a overconvergent F-isocrystal $\c{K}_{\chi}$ on $G_{}$.
Let $\b{A}_R^1$ be the affine line over $R$, $\b{P}_R^1$ the projective line over $R$,
$\b{P}_R=\b{P}(\b{V}_R\times_R \b{A}^1_R)$ the projective space that contains $\b{V}_R$ as an open subset,
$Y_R'$ the scheme theoretic image of the composite $$G_R\to \b{V}_R\to\b{P}_R,$$
$Y_R$ the integral closure of $Y_R'$ in the fraction field of $G_R$.
Let $\iota'$ be the canonical morphism $$\iota': Y_R\to \b{P}_R.$$ We assume it is finite. We denote $Y^{\m{aff}}_R=\iota'^{-1}(\b V_R)$. It is an open affine subscheme of $Y_R$.
Let $H_R=G_R\times_R G_R$. By definition, $Y_{K}$ is a proper spherical variety for the reductive
group $H_K$, and $G_K$ is the open $H_K$-orbit in $Y_K$. If we replace the subscript $R$ of a scheme by $K$, it means taking tensor product of the scheme with $K$ over $R$.

Associated with the split reductive group scheme $G_R$ and
the representations \[\rho_{j,R}: G_R\to\mathrm{GL}(V_{j,R})\quad(j=1, \ldots, N),\] we have the group scheme
$G_R\times_R\mathbb{G}_{m,R}$ and
the representations $$\rho'_{j,R}: G_R\times_R \mathbb{G}_{m,R}\to\mathrm{GL}(V_{j,R}), \quad (g,t)\mapsto t\rho_j(g)$$
together with the representation $$\rho'_{0,R}: G_R\times_R\mathbb{G}_{m,R}\to \mathrm{GL}(V_{0,R}),\quad (g,t)\mapsto t$$
with $V_{0,R}=R$, where $\b{G}_{m,R}=\m{Spec}(R[t^{\pm}])$.
As above, we can define a quasi-finite morphism $$G_R\times_R\b{G}_{m,R}\to \b{V}_R\times_R \b{A}_R^1.$$
Let $Z_R'$ be the scheme theoretic image of the morphism $$\morZ'_0:G_R\times_R\b{G}_{m,R}\to \b{P}_R\times_R \b{P}^1_R,$$ and
let $Z_R$ be the integral closure of $Z_R'$ in the fraction field of $G_R\times_R\b{G}_{m,R}$. Let $\morZ'$ be the canonical morphism $$\morZ': Z_R\to \b{P}_R\times_R \b{P}^1_R.$$ It is finite. We denote $Z^{\m{aff}}_R=\morZ'^{-1}(\b V_R\times_R\b A_R^1)$. It is an open affine subscheme of $Z_R$.
Then
$Z_{K}$ is a proper spherical variety for the reductive group $H_K\times_K \b{G}_{m,K}$, and $G_K\times_K\b{G}_{m,K}$ is
the open $H_K$-orbit in $Z_K$.

\begin{pro}\label{YfiberofZ} The $R$-algebra homomorphism $$R[t]\to R,\quad t\mapsto 1$$ induces a morphism $\mathrm{Spec}\,R\to \mathbb A_R^1$. Composing this morphism with the open immersion $\mathbb A_R^1\hookrightarrow \mathbb P_R^1$, we obtain a morphism $1: \mathrm{Spec}\,R\to \mathbb P_R^1$. Let $\pi$ be the composite
$$Z_R\to \b{P}_R\times_R \b{P}^1_R\to \b{P}^1_R.$$ Consider the following diagram \[\begin{tikzcd} G_R\ar[r]\ar[d] & Z_{R,1}\ar[r]\ar[d] & \b{P}_R\ar[r]\ar[d] & \m{Spec}(R)\ar[d,"1"] \\ G_R\times_R\b{G}_{m,R}\ar[r] & Z_R\ar[r] & \b{P}_R\times_R\b{P}_R^1\ar[r] & \b{P}_R^1 \end{tikzcd}\] with all squares Cartesian. We have an isomorphism between $Y_R$ and $Z_{R,1}$.
\end{pro}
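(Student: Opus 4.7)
The plan is to reduce the claim to a $\b{G}_{m,R}$-equivariant identification over the open subscheme $\b{G}_{m,R}\subset\b{P}_R^1$ and then take fibers at $t=1$. The essential observation is that the defining morphism $\morZ_0'$ of $Z_R'$ differs from the product morphism $(g,t)\mapsto(\rho(g),t)$ (whose scheme-theoretic image is $Y_R'\times_R\b{G}_{m,R}$) only by an automorphism of $\b{P}_R\times_R\b{G}_{m,R}$ that specializes to the identity at $t=1$.

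Concretely, I would introduce the $\b{G}_{m,R}$-equivariant automorphism
$$\psi\colon\b{P}_R\times_R\b{G}_{m,R}\to\b{P}_R\times_R\b{G}_{m,R},\qquad([v:a],t)\mapsto([tv:a],t),$$
where $(v:a)$ are homogeneous coordinates on $\b{P}_R=\b{P}(\b{V}_R\oplus\b{A}_R^1)$; its inverse is $([v:a],t)\mapsto([t^{-1}v:a],t)$ and $\psi|_{t=1}=\m{id}_{\b{P}_R}$. Writing $\iota_0\colon G_R\to\b{P}_R$ for the composite $G_R\xrightarrow{\iota_0'}\b{V}_R\hookrightarrow\b{P}_R$, whose scheme-theoretic image is $Y_R'$ by definition, a direct check from the formulas shows that the image of $\morZ_0'$ lies in the open subscheme $\b{P}_R\times_R\b{G}_{m,R}\subset\b{P}_R\times_R\b{P}_R^1$ and that, viewed as a map into this open, $\morZ_0'=\psi\circ(\iota_0\times\m{id}_{\b{G}_{m,R}})$. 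Since the scheme-theoretic image commutes with open restriction in the target and with automorphisms, this yields an isomorphism
$$Y_R'\times_R\b{G}_{m,R}\xrightarrow{\sim}Z_R'\times_{\b{P}_R^1}\b{G}_{m,R}.$$

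Next I would upgrade to an isomorphism of integral closures. The product $Y_R\times_R\b{G}_{m,R}$ is normal (since $Y_R$ is normal by construction and normality is preserved under the smooth base change $\b{G}_{m,R}\to\m{Spec}(R)$), is finite over $Y_R'\times_R\b{G}_{m,R}$, and has function field $K(G_R)(t)=K(G_R\times_R\b{G}_{m,R})$; hence by the universal property of integral closure, it is the integral closure of $Y_R'\times_R\b{G}_{m,R}$ in $K(G_R\times_R\b{G}_{m,R})$. Since integral closure is local on the target, $Z_R\times_{\b{P}_R^1}\b{G}_{m,R}$ is likewise the integral closure of $Z_R'\times_{\b{P}_R^1}\b{G}_{m,R}$ in the same field, so $\psi$ lifts uniquely to an isomorphism
$$Y_R\times_R\b{G}_{m,R}\xrightarrow{\sim}Z_R\times_{\b{P}_R^1}\b{G}_{m,R}$$
compatible with the projection to $\b{G}_{m,R}$. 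Pulling back along $1\colon\m{Spec}(R)\to\b{G}_{m,R}\subset\b{P}_R^1$ gives $Y_R\cong Z_{R,1}$, and because $\psi|_{t=1}=\m{id}$ the isomorphism intertwines the natural inclusions from $G_R$, making the stated diagram commute.

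The main technical step I expect is the normality of $Y_R\times_R\b{G}_{m,R}$ together with the identification of its function field, both of which rely on the classical fact that the base change of a normal integral scheme along a smooth morphism with geometrically integral fibers is normal and integral; once these are in hand the rest is a formal manipulation of scheme-theoretic images and fiber products.
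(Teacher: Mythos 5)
Your proof is correct, but it takes a genuinely different route from the paper's. The paper works backwards from $Z_{R,1}$: it observes that $Z_{R,1}\to\b{P}_R$ is finite, that the composite $G_R\to Z_{R,1}\to\b{P}_R$ agrees with $\iota_0$, and then reduces to showing that $Z_{R,1}$ is normal and that $G_R$ is dense in it, at which point $Z_{R,1}$ is forced to be $Y_R$ by the defining universal property of the normalization. Both of those properties are pulled back from $\pi^{-1}(\b{G}_{m,R})$ (an open subscheme of $Z_R$, hence normal, with $G_R\times_R\b{G}_{m,R}$ dense) using the isomorphism $Z_{R,1}\times_R\b{G}_{m,R}\cong\pi^{-1}(\b{G}_{m,R})$ induced by the $\b{G}_{m,R}$-action and faithfully flat descent of normality. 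You instead proceed forwards: you make the $\b{G}_{m,R}$-action concrete as the automorphism $\psi$ of $\b{P}_R\times_R\b{G}_{m,R}$, use it to build an explicit identification $Y_R'\times_R\b{G}_{m,R}\cong Z_R'\times_{\b{P}_R^1}\b{G}_{m,R}$ (compatibility of scheme-theoretic image with flat base change and with open restriction), lift it through normalization to $Y_R\times_R\b{G}_{m,R}\cong\pi^{-1}(\b{G}_{m,R})$, and finally specialize at $t=1$. The essential geometric input — that the $\b{G}_{m,R}$-twist makes $\pi$ look like a projection away from $0$ and $\infty$ — is the same in both arguments; the paper packages it as an abstract uniqueness argument, while you build the isomorphism by hand and must then verify compatibility with the inclusions from $G_R$ (which your observation $\psi|_{t=1}=\m{id}$ handles). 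Your approach has the advantage of being constructive and self-contained; the paper's is shorter and avoids the bookkeeping with scheme-theoretic images and normalization under base change.
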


\begin{proof}
    Note that $Z_{R,1}\to \b{P}_R$ is finite and the composition $G_R\to Z_{R,1}\to \b{P}_R$ coincides with the composition $G_R\xrightarrow{\iota_0'}\b{V}_R\hookrightarrow\b{P}_R$. To prove the proposition, it suffices to show that $G_R$ is dense in $Z_{R,1}$ and $Z_{R,1}$ is normal.
    The $\b{G}_{m,R}$-action on $Z_R$ induces an isomorphism
    \[Z_{R,1}\times_R \b{G}_{m,R}\to \pi^{-1}(\b{G}_{m,R}).\]
    The open immersion $G_R\times_R \b{G}_{m,R}\to \pi^{-1}(\b{G}_{m,R})$ is $\b{G}_{m,R}$-equivariant, hence the fact that $G_R\times_R \b{G}_{m,R}$ is dense in $\pi^{-1}(\b{G}_{m,R})$ implies that $G_R$ is dense in $Z_{R,1}$. Note that the projection $Z_{R,1}\times_R \b{G}_{m,R}\to Z_{R,1}$ is faithfully flat. Since $\pi^{-1}(\b{G}_{m,R})$ is normal, $Z_{R,1}$ is also normal by \cite[\href{https://stacks.math.columbia.edu/tag/033G}{Tag 033G}]{stacksproject}.
\end{proof}

To find a reasonable resolution of $Z_R\otimes_R(R/\f{p})$ for some maximal ideal $\f{p}$ of $R$, we need to delete finitely many primes so that the following assumption holds.

\begin{ass}\label{ass}
We assume that there exists a smooth projective %toroidal
$R$-scheme $\tilde{Z}_R$ with $(H_R\times_R \b{G}_{m,R})$-action and an equivariant birational morphism $p: \tilde{Z}_R\to Z_R$ such that:
\begin{enumerate}
    \item\label{ass-0}$p|_{p^{-1}(G_R\times_R\b{G}_{m,R})}$ is an isomorphism,
    and $\tilde{Z}_R\to \Spec R$ has  geometrically irreducible fibers.

    %\item $Z\to \Spec R$ has normal fibers. $G\subset Y$ is fiberwise dense.
    %\item \label{ass-1} $R^ip_*\omega_{\tilde{Z}}=0$ for $i>0$.

    %\item \label{ass-2} $R^0p_*\omega_{\tilde{Z}}$ has Cohen-Macaulay fibers over $R$.
\end{enumerate}
Let $\tilde{\pi}$ be the composition $\tilde{Z}_R\to \b{P}_R\times_R\b{P}^1_R\to \b{P}^1_R$. We define $\tilde{Y}_R$ by the Cartesian diagram \[\begin{tikzcd} \tilde{Y}_R\ar[r]\ar[d] & \m{Spec}(R)\ar[d,"1"] \\ \tilde{Z}_R\ar[r,"\tilde{\pi}"] & \b{P}_R^1 \end{tikzcd}\] Then $\tilde{Y}_R$
    is a proper scheme over $R$ with $H_R$-action. The $\b{G}_{m,R}$-action on $\tilde{Z}_R$ induces an isomorphism
    \[\tilde{Y}_R\times_R \b{G}_{m,R}\to \tilde{\pi}^{-1}(\b{G}_{m,R}).\]
    Note that $\tilde{Y}_R\times_R \b{G}_{m,R}\cong \tilde{\pi}^{-1}(\b{G}_{m,R})\to \tilde{Y}_R$ is a smooth cover.  %这一段依赖（1）
    By \cite[\href{https://stacks.math.columbia.edu/tag/036U}{Tag 036U}]{stacksproject}, $\tilde{\pi}^{-1}(\b{G}_{m,R})$ being smooth over $R$ implies $\tilde{Y}_R$ is smooth over $R$.
    Let $P_R=B_R\times_R B_R^-\subset H_R$ which is a smooth parabolic subgroup.

\begin{enumerate}
    \item[(2)] \label{ass-3}

    We assume that there exists an open subset $\tilde{Y}_{0,R}\subset \tilde{Y}_R$ admitting a $P_R$-equivariant isomorphism between $\tilde{Y}_{0,R}$ and $R_u(P_R)\times_R S_R$,
    where $S_R$ is a smooth toric variety for %a quotient torus of
    the split torus $T_R$, and $R_u(P_R)$ is the unipotent radical of $P_R$. %Let $\Delta_{T_R}$ be the diagonal of $T_R$ in $T_R\times_R T_R$, here we identify $T_R\times_R T_R/\Delta_{T_R}$ with $T_R$.
    $P_R$ acts on $R_u(P_R)\times_R S_R$ via
    \[(p_1,p_2).(r,s)=((p_1,p_2)r(p_1,p_2)^{-1}(u_1,u_2),t_1t_2^{-1}s)\]
    for $p_1 = u_1 t_1 \in B_R$, $p_2 = u_2 t_2 \in B_R^-$,  $r \in R_u(P_R)$ and $s \in S_R$, where $t_1, t_2 \in T_R$, $u_1 \in R_u(B_R)$,  $u_2 \in R_u(B_R^-)$.
    We assume  $H_R\tilde{Y}_{0,R}=\tilde{Y}_R$, $G_R\cap \tilde{Y}_{0,R}=R_u(P_R)\times_R T_R$.
\end{enumerate}
\end{ass}

\begin{pro}\label{ass-true-on-dense}
    After replacing $\Spec R$ by an open dense subset, \Cref{ass} is satisfied.
\end{pro}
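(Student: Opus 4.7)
The plan is to first produce everything over the generic fiber $K$ using standard characteristic-zero results on spherical varieties (equivariant resolution of singularities and the local structure theorem), and then spread these data out to a dense open subscheme of $\Spec R$ via the usual limit arguments.

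Over $K$, since $Z_K$ is a proper spherical variety for $H_K\times_K \b{G}_{m,K}$ and $\m{char}(K)=0$, equivariant resolution of singularities (either via toroidal resolutions in the spherical setting or via general equivariant desingularization in characteristic zero) produces a smooth projective $(H_K\times_K \b{G}_{m,K})$-variety $\tilde{Z}_K$ together with an equivariant birational morphism $p_K\colon \tilde{Z}_K\to Z_K$ which is an isomorphism over the open orbit $G_K\times_K \b{G}_{m,K}$. Being smooth and connected, $\tilde{Z}_K$ is geometrically irreducible. The fiber $\tilde{Y}_K$ over the point $1$ is then a smooth projective spherical $H_K$-variety, and the local structure theorem of Brion--Luna--Vust, applied to $\tilde{Y}_K$ and the parabolic $P_K$, produces an open $P_K$-stable subset $\tilde{Y}_{0,K}\subset \tilde{Y}_K$ together with a $P_K$-equivariant isomorphism $\tilde{Y}_{0,K}\cong R_u(P_K)\times_K S_K$, where $S_K$ is a smooth toric variety for $T_K$, satisfying $H_K\tilde{Y}_{0,K}=\tilde{Y}_K$ and $G_K\cap \tilde{Y}_{0,K}=R_u(P_K)\times_K T_K$.

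For the spreading out, by EGA~IV \S8 there exists $f\in R$ such that, after setting $R':=R[1/f]$, the scheme $\tilde{Z}_K$ is the generic fiber of a flat finitely presented proper $R'$-scheme $\tilde{Z}_{R'}$; the $(H_K\times_K \b{G}_{m,K})$-action extends to an $(H_{R'}\times_{R'} \b{G}_{m,R'})$-action; and $p_K$ extends to an equivariant birational morphism $p\colon \tilde{Z}_{R'}\to Z_{R'}$. After further localization, smoothness of $\tilde{Z}_{R'}\to \Spec R'$, the isomorphism property of $p$ over $G_{R'}\times_{R'} \b{G}_{m,R'}$, and (using the constructibility result EGA~IV 9.7.7) geometric irreducibility of all fibers of $\tilde{Z}_{R'}\to \Spec R'$ hold simultaneously. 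Toric varieties being defined combinatorially by their fan, the $T_K$-toric variety $S_K$ arises as the generic fiber of a canonical smooth $T_R$-toric variety $S_R$. A final localization of $R'$ then spreads out $\tilde{Y}_{0,K}$ and the $P_K$-equivariant isomorphism $\tilde{Y}_{0,K}\cong R_u(P_K)\times_K S_K$ to $\tilde{Y}_{0,R'}\cong R_u(P_{R'})\times_{R'} S_{R'}$, together with the generically valid equalities $H_{R'}\tilde{Y}_{0,R'}=\tilde{Y}_{R'}$ and $G_{R'}\cap \tilde{Y}_{0,R'}=R_u(P_{R'})\times_{R'} T_{R'}$.

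The main obstacle is to invoke the correct equivariant resolution and local structure results for spherical varieties in characteristic zero, and then to organize the spreading-out so that all the compatibilities---the group actions, the birational morphisms, the trivializations, and the identifications of open and closed subschemes---survive simultaneously after a single localization of $R$. Each individual spreading step is routine, but some care is needed in the order and finiteness of the successive localizations and in verifying that the local structure isomorphism, which is a priori defined only at the generic fiber, extends to an isomorphism of $R'$-schemes rather than a mere isomorphism on generic fibers.
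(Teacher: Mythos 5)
Your proposal follows essentially the same route as the paper: produce a smooth projective toroidal $(H_K\times \b{G}_{m,K})$-equivariant resolution $\tilde{Z}_K\to Z_K$ over the generic fiber, apply the local structure theorem for toroidal spherical varieties to obtain the $P_K$-equivariant trivialization $\tilde{Y}_{0,K}\cong R_u(P_K)\times S_K$, and then spread all these data out to a dense open of $\Spec R$ by the standard limit arguments (with constructibility handling geometric irreducibility of fibers). One caveat: you should drop the parenthetical alternative ``general equivariant desingularization''---the local structure theorem only yields $S$ a \emph{toric} variety with $H\tilde{Y}_0=\tilde{Y}$ under the toroidality hypothesis, and a generic equivariant resolution of $Z_K$ need not be toroidal. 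The paper ensures toroidality by taking the closure of $G_K$ in $Z_K\times\tilde Z'_K$ for a complete toroidal embedding $\tilde Z'_K$, normalizing, and refining the fan; your first invocation of ``toroidal resolutions in the spherical setting'' is the one you must rely on.
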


We delay the proof of Proposition \ref{ass-true-on-dense} to \Cref{Appendix}.
In the following, we always make this assumption.

\subsection{Geometric Fourier transformation}
We recall some basic properties of the geometric Fourier transformation \cite{Huyghe2004Trans} here, only in the affine space case.

Fix a maximal ideal $\f{p}$ of $R$. Then $R_{\f{p}}:=\varprojlim_nR/\f{p}^n$ is a complete discrete valuation ring with fraction field $K$. Assume that there exists $\pi\in K$ such that $\pi^{p-1}=-p$. Let $\hat{\b{P}}$ be the formal projective space over $R_{\f{p}}$ of relative dimension $n$, $t_0,\cdots,t_n$ the canonical homogeneous coordinates of $\hat{\b{P}}$, $\infty$ the hyperplane defined by $t_n=0$, and $\hat{\b{V}}=\hat{\b{P}}-\infty$. Let $x_1,\cdots,x_n$ be the canonical coordinates of $\hat{\b{V}}$, and $\partial_1,\cdots,\partial_n$ the corresponding derivations. It is known that \[\Gamma(\hat{\b{P}},\s{D}_{\hat{\b{P}},\b{Q}}^\dagger(\infty))\cong\left\{\sum_{\underline{i},\underline{j}}a_{\underline{i},\underline{j}}\underline{x}^{\underline{i}}\underline{\partial}^{[\underline{j}]}\,\Big|a_{\underline{i},\underline{j}}\in K,\exists(c>0,\eta<1)\text{ such that }\forall \underline{i},\underline{j},\, |a_{\underline{i},\underline{j}}|\leq c\eta^{|\underline{i}|+|\underline{j}|}\right\}.\] We denote it by $D_{\hat{\b{P}},\b{Q}}^\dagger(\infty)$. The following result can be found \cite[5.3.3]{Ddaggeraffine}.

\begin{thm}
    The functor $R\Gamma(\hat{\b{P}},-)\cong\Gamma(\hat{\b{P}},-)$ from the category of coherent $\s{D}_{\hat{\b{P}},\b{Q}}^\dagger(\infty)$-modules to the category of coherent $D_{\hat{\b{P}},\b{Q}}^\dagger(\infty)$-modules is an equivalence of categories.
\end{thm}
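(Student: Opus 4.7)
The plan is to adapt the Beilinson--Bernstein style proof of $D$-affinity, in the form developed by Huyghe for weakly-completed sheaves of arithmetic differential operators on formal projective space. The quasi-inverse should be the localisation functor
\[ L : M\longmapsto \s{D}_{\hat{\b{P}},\b{Q}}^\dagger(\infty)\otimes_{D_{\hat{\b{P}},\b{Q}}^\dagger(\infty)} M, \]
and $R\Gamma$ will collapse to $\Gamma$ because coherent $\s{D}_{\hat{\b{P}},\b{Q}}^\dagger(\infty)$-modules turn out to be $\Gamma$-acyclic. I would begin by presenting $\s{D}_{\hat{\b{P}}}^\dagger(\infty)$ as an inductive limit, after inverting $p$, of $p$-adically completed sheaves $\widehat{\s{B}}^{(m)}$ of level-$m$ differential operators with poles of bounded order along $\infty$, and likewise writing the global-section ring $D_{\hat{\b{P}},\b{Q}}^\dagger(\infty)$ as the corresponding limit. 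Since the level-$m$ sheaves are coherent $\s{O}$-algebras of finite presentation, any coherent $\s{D}_{\hat{\b{P}},\b{Q}}^\dagger(\infty)$-module $N$ descends locally to a coherent module at some finite level, which reduces the problem to finite-level statements plus a careful passage through the $p$-adic completion and the colimit in $m$.

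The key step is the cohomology vanishing $H^i(\hat{\b{P}},N)=0$ for $i>0$ and $N$ coherent. At each finite level, $\widehat{\s{B}}^{(m)}$ carries an order filtration whose graded pieces are, up to $\s{O}(r)$-twists coming from the poles along $\infty$, symmetric powers of the tangent sheaf. The presence of arbitrary poles at $\infty$ guarantees that every coherent level-$m$ module admits a presentation by operators that are Serre-acyclic after sufficient twisting, so classical Serre vanishing on $\b{P}^n$ handles the finite-level case. The delicate point is to show that this vanishing survives the $p$-adic completion and the inductive limit in $m$. Uniform control of $p$-adic norms on \v{C}ech cocycles, compatible with the weak completion defining $\s{D}^\dagger(\infty)$, is what makes this go through, and I expect this to be the main obstacle, as in \cite{Ddaggeraffine}.

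Once vanishing is in place, $\Gamma$ is exact on coherent modules. The functor $L$ lands in coherent sheaves because $D_{\hat{\b{P}},\b{Q}}^\dagger(\infty)$ is coherent and tensor product preserves finite presentations. The counit $L\Gamma(N)\to N$ is an isomorphism: one tests it on the open affine piece $\hat{\b{V}}=\hat{\b{P}}-\infty$, where $\s{D}_{\hat{\b{P}},\b{Q}}^\dagger(\infty)$ restricts to the usual sheaf of overconvergent differential operators on the affine formal scheme and the assertion is the standard $D$-affinity statement, while the overconvergence structure along $\infty$ forces compatibility at points of $\infty$. The unit $M\to \Gamma L(M)$ is then an isomorphism by applying the now-exact functor $\Gamma$ to a finite presentation $D^s\to D^r\to M\to 0$ and using that $L$ intertwines finite presentations with their sheaf-theoretic counterparts.
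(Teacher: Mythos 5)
The paper does not prove this theorem itself; it is cited from Huyghe \cite[5.3.3]{Ddaggeraffine}. Your overall strategy---present $\s{D}^\dagger_{\hat{\b{P}},\b{Q}}(\infty)$ as a colimit of $p$-adically completed finite-level sheaves, establish cohomology vanishing via the order filtration and Serre vanishing on the graded pieces, and then push the vanishing through the $p$-adic completion and the colimit in $m$---is indeed the architecture of Huyghe's proof, and you correctly identify uniform control of $p$-adic norms across the limit as the main technical obstacle.

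However, your argument for the counit $L\Gamma(N)\to N$ being an isomorphism has a genuine gap. Restricting to the open affine $\hat{\b{V}}=\hat{\b{P}}-\infty$ does not suffice: an isomorphism over $\hat{\b{V}}$ says nothing about the behavior of $N$ in a neighborhood of $\infty$, and ``the overconvergence structure along $\infty$ forces compatibility at points of $\infty$'' is an assertion, not an argument. What is actually needed---and what Huyghe proves as a separate key ingredient, not a consequence of the affine case---is that $\s{D}^\dagger_{\hat{\b{P}},\b{Q}}(\infty)$ is generated by its global sections in the strong sense that every coherent module admits a global surjection from a finite free module $(\s{D}^\dagger_{\hat{\b{P}},\b{Q}}(\infty))^r$ over all of $\hat{\b{P}}$. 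This requires an ampleness-type argument (twisting by $\c{O}_{\hat{\b{P}}}(k)$ and using the poles along $\infty$ to absorb the twist), carried out uniformly at each finite level. Once global generation and cohomology vanishing are both in hand, applying the exact functor $\Gamma$ to a global finite presentation and invoking the five lemma closes the equivalence; the restriction to $\hat{\b{V}}$ plays no role. Two further minor cautions: the graded pieces of the order filtration at positive level $m$ are not symmetric powers of the tangent sheaf but their level-$m$ divided-power variants, and after reduction modulo $p$ the graded ring of $\c{D}^{(m)}$ is not reduced, so one cannot simply quote Serre vanishing on symmetric powers without addressing this.
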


The automorphism \[F_\pi:D_{\hat{\b{P}},\b{Q}}^\dagger(\infty)\to D_{\hat{\b{P}},\b{Q}}^\dagger(\infty),
\quad x_i\mapsto -\partial_i/\pi, \quad \partial_i\mapsto\pi x_i,\] is
called the \emph{naive Fourier transformation}. For any left coherent $\s{D}_{\hat{\b{P}},\b{Q}}^\dagger(\infty)$-module $\c{E}$, let
$E=\Gamma(\hat{\b{P}},\c{E})$, and let $$F_\pi(E):=D_{\hat{\b{P}},\b{Q}}^\dagger(\infty)\otimes_{F_\pi, D_{\hat{\b{P}},\b{Q}}^\dagger(\infty)}E.$$

We define the \emph{geometric Fourier transformation} by six functors.
For any left coherent $\s{D}_{\hat{\b{P}},\b{Q}}^\dagger(\infty)$-module $\c{E}$, we define \[\f{F}_\pi(\c{E})=p_{2,+}(\c{K}_\pi\otimes^L p_1^!(\c{E})[2-3n]),\] where $p_1,p_2:\hat{\b{P}}\times \hat{\b{P}}\to\hat{\b{P}}$ are the canonical projections, and the Fourier kernel
$\c{K}_\pi$ is defined in \cite{Huyghe2004Trans}. (Our definition of $\mathfrak F_\pi$ differs from that of Huyghe by a translation.) 
Huyghe proves the following.

\begin{thm}[Huyghe (\cite{Huyghe2004Trans})]
For any left coherent $\s{D}_{\hat{\b{P}},\b{Q}}^\dagger(\infty)$-module $\c{E}$, $\f{F}_\pi(\c{E})$ is a left coherent $\s{D}_{\hat{\b{P}},\b{Q}}^\dagger(\infty)$-module, and there exists a canonical $D_{\hat{\b{P}},\b{Q}}^\dagger(\infty)$-isomorphism $$\Gamma(\hat{\b{P}},\f{F}_\pi(\c{E}))\cong F_\pi(E).$$
\end{thm}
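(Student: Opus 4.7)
The plan is to use the equivalence between coherent $\s{D}_{\hat{\b{P}},\b{Q}}^\dagger(\infty)$-modules and coherent $D_{\hat{\b{P}},\b{Q}}^\dagger(\infty)$-modules via $R\Gamma$ stated in the preceding theorem, and to reduce both assertions (coherence of $\f{F}_\pi(\c{E})$ and identification of its global sections with $F_\pi(E)$) to explicit module-theoretic computations. The idea is to follow $\f{F}_\pi$ step by step and to verify that each of the three operations $p_1^!$, $-\otimes^L\c{K}_\pi$, $p_{2,+}$ preserves coherence and to keep track of what happens on global sections.

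First, I would handle the coherence question. Since $p_1:\hat{\b{P}}\times\hat{\b{P}}\to\hat{\b{P}}$ is smooth of relative dimension $n$, $p_1^!(\c{E})$ is (up to a shift and a twist by the relative dualizing sheaf) the flat pullback of $\c{E}$, hence coherent over $\s{D}_{\hat{\b{P}}\times\hat{\b{P}},\b{Q}}^\dagger(\infty)$. Next, the Fourier kernel $\c{K}_\pi$ is, by its construction in \cite{Huyghe2004Trans}, a coherent $\s{D}^\dagger$-module whose only global section is the "exponential" element $e_\pi$, cyclically generated modulo the ideal generated by $\partial_i^{(1)}-\pi x_i^{(2)}$ and $\partial_i^{(2)}-\pi x_i^{(1)}$ (where $(1)$ and $(2)$ refer to the two factors). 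Tensoring $p_1^!(\c{E})$ with $\c{K}_\pi$ therefore produces a coherent complex. Finally, since $\hat{\b{P}}$ is proper over $\Spf R_{\f{p}}$, so is $p_2$, and Berthelot's theorem on the preservation of coherence by proper pushforward of $\s{D}^\dagger$-modules shows that $p_{2,+}$ applied to this coherent complex lands in the derived category of coherent $\s{D}_{\hat{\b{P}},\b{Q}}^\dagger(\infty)$-modules. The shift $[2-3n]$ is exactly calibrated so that the result is concentrated in degree $0$; to see the vanishing of higher cohomology one argues as in Huyghe's paper using the explicit presentation of $\c{K}_\pi$ and the fact that $p_{1+}\c{K}_\pi$ is again built out of the exponential ideal.

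To identify $\Gamma(\hat{\b{P}},\f{F}_\pi(\c{E}))$ with $F_\pi(E)$, I would pass to global sections throughout. Under $R\Gamma$, the pullback $p_1^!$ corresponds to extension of scalars along the inclusion of the ring $D_{\hat{\b{P}},\b{Q}}^\dagger(\infty)$ into the global sections on the product; the tensor product with $\c{K}_\pi$ corresponds to modding out by the exponential ideal above; and $p_{2,+}$ corresponds to taking coinvariants with respect to $\partial_i^{(1)}$ (the relative de Rham complex of the projection). After these identifications the resulting module over $D_{\hat{\b{P}},\b{Q}}^\dagger(\infty)$ is exactly $E$ with the action of $x_i$ replaced by $-\partial_i/\pi$ and the action of $\partial_i$ replaced by $\pi x_i$, which is by definition $F_\pi(E)$. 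Naturality of the construction of $\c{K}_\pi$ in $\c{E}$ makes this isomorphism canonical.

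The main obstacle is the last paragraph, namely the explicit global-sections computation combined with the control of the higher direct images. One must verify, using only the convergence condition in the description of $D_{\hat{\b{P}},\b{Q}}^\dagger(\infty)$, that the relations $\partial_i^{(1)}=\pi x_i^{(2)}$ and $\partial_i^{(2)}=\pi x_i^{(1)}$ can be used to rewrite every series in $\Gamma(\hat{\b{P}}\times\hat{\b{P}},\c{K}_\pi\otimes^L p_1^!\c{E})$ in normal form, and that this rewriting is compatible with the growth conditions, so that the resulting $D_{\hat{\b{P}},\b{Q}}^\dagger(\infty)$-module structure on $E$ coincides with $F_\pi(E)$. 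This is precisely the technical heart of Huyghe's theorem and rests on her careful analysis of the $p$-adic convergence of the exponential kernel in \cite{Huyghe2004Trans}.
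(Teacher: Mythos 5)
The paper you are working from does not prove this statement: it is attributed explicitly to Huyghe and cited from \cite{Huyghe2004Trans}, with no argument reproduced. So there is no ``paper's proof'' to compare against; you are being asked to reconstruct Huyghe's theorem from scratch.

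Your sketch captures the correct high-level plan: decompose $\f{F}_\pi$ as $p_{2,+}\circ(\c{K}_\pi\otimes^L-)\circ p_1^!$, argue coherence step by step, then pass to global sections using the equivalence $\Gamma(\hat{\b{P}},-)$ from the preceding theorem. A few points deserve to be made sharper. For the coherence of $\c{K}_\pi\otimes^L p_1^!\c{E}$, what you actually need is that $\c{K}_\pi$ is $\c{O}$-coherent and flat (it is an overconvergent unit-rank isocrystal, essentially the Dwork sheaf), so that the $\c{O}$-tensor product against a coherent $\s{D}^\dagger(\infty)$-module stays coherent; ``coherent $\s{D}^\dagger$-module'' by itself would not justify the claim. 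For the properness step, note that while $p_2$ is proper, the divisor $\infty_{\hat{\b{P}}\times\hat{\b{P}}}$ does not map into $\infty_{\hat{\b{P}}}$ under $p_2$, so Berthelot's theorem on proper pushforward of coherent $\s{D}^\dagger$-modules cannot be invoked as a black box; controlling the boundary is precisely where Huyghe's overconvergence estimates enter, and your proposal correctly flags but does not resolve this. Finally, the concentration in degree~$0$ and the identification of the $D^\dagger_{\hat{\b{P}},\Q}(\infty)$-module structure on global sections with $F_\pi(E)$ both rest on the explicit normal-form rewriting via the relations $\partial_i^{(1)}-\pi x_i^{(2)}$, $\partial_i^{(2)}-\pi x_i^{(1)}$ together with the $p$-adic growth conditions defining $D^\dagger_{\hat{\b{P}},\Q}(\infty)$; you are right that this is the technical core, but as written it remains a gap that your argument explicitly defers to \cite{Huyghe2004Trans} rather than filling. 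In short, the outline is correct and consistent with the literature, but it is an outline of Huyghe's proof rather than a self-contained argument, which is acceptable only to the extent the paper itself treats the statement as a citation.
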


\subsection{Hypergeometric $\s{D}^\dagger(\infty)$-module}\label{Hyper-D}
Fix a maximal ideal $\f{p}$ of $R$.
Let $p$ be the characteristic of the residue field $k(\f{p})=R/\f{p}$,
$R_\f{p}=\varprojlim_n R/\f{p}^n$, $K_\f{p}=\m{Frac}(R_\f{p})$, and $\bar{K}_\f{p}$ an algebraic closure of $K_\f{p}$.
Choose $\pi\in \bar{K}_\f{p}$ such that $\pi^{p-1}=-p$. Consider the integral closure $R'_\f{p}$ of $R_\f{p}$ in $K_\f{p}(\pi)$.
By base change from $R$ to $R'$, we can always assume $\pi\in R_\f{p}$. Since $R_\f{p}$ and $R_\f{p}'$ have the same residue field $k(\f{p})$,  this base change does not have any effect on special fibers.

Let $\iota_0:\f{G}_\f{p}\to\hat{\b{V}}_\f{p}$ be the $\f{p}$-adic completion of the morphism $\iota_0':G_R\to\b{V}_R$ given by the representations $\rho_1,\cdots,\rho_N$ in \Cref{setting}. Similarly, let $\iota:\f{Y}_\f{p}\to \hat{\b{P}}_\f{p}$ be the $\f{p}$-adic completion of $\iota':Y_R\to\b{P}_R$. By \Cref{ass}, we have a resolution $\tilde{\f{Y}}_\f{p}$ of $\f{Y}_\f{p}$ such that the following diagram

\[\xymatrix{ & \f{G}_\f{p}\ar[r]^{\iota_0} \ar[d]^f \ar[ld]_{\tilde{f}} & \hat{\b{V}}_\f{p}\ar[d]^j \\ \tilde{\f{Y}}_\f{p}\ar[r]^p & \f{Y}_\f{p}\ar[r]^{\iota} & \hat{\b{P}}_\f{p}}\] is a commutative diagram, where $p$ is a proper birational morphism.

Let $d$ be the relative dimension of $G_R$, and let
$\pi_2:\tilde{\f{Y}}_\f{p}\times \hat{\b{P}}_\f{p}\to\hat{\b{P}}_\f{p}$ be the projection. We call
$$\m{Hyp}_+:=\pi_{2,+}((\iota p)\times\m{id})^!\c{K}_\pi$$ the \emph{$p$-adic hypergeometric $\s{D}_{\hat{\b{P}}_\f{p},\b{Q}}^\dagger(\infty_\f{p})$-module}, where $\infty_\f{p}:=\hat{\b{P}}_\f{p}-\hat{\b{V}}_\f{p}$ is an effective Cartier divisor. It can be regarded as the reduction of $\infty_R:=\b{P}_R-\b{V}_R$ in $k(\f{p})$.

Consider another effective Cartier divisor on $\b{P}_R$ denoted by $D_R=(\prod_{j=1}^N \m{det}_{j,R})_0+\infty_R$, where $\det_j$ is the polynomial given by the determinant of $V_{j,R} $, $(\prod_{j=1}^N \m{det}_{j,R})_0$ is the divisor defined by $\prod_{j=1}^N \m{det}_{j,R}=0$. Let $\tilde{D}_R$ be the preimage of $D_R$ in $\tilde{Y}_R$, which is an effective Cartier divisor on $\tilde{Y}_R$.

\begin{lem}\label{divisorD}
    $Y_R- D_R=G_R$. In other words, $(\iota')^{-1}(\prod_{j=1}^N\m{GL}(V_{j,R}))=G_R$.
\end{lem}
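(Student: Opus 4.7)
The plan is to prove $(\iota')^{-1}(W) = G_R$, where $W := \prod_{j=1}^N \mathrm{GL}(V_{j,R})$. The inclusion $G_R \subseteq (\iota')^{-1}(W)$ is immediate, since $\iota_0'(g) = (\rho_{j,R}(g))_j \in W$ for every $g \in G_R$ and $G_R \hookrightarrow Y_R$ is an open immersion (obtained via Zariski's Main Theorem applied to the quasi-finite birational morphism $G_R \to Y_R$ to the normal target $Y_R$). For the reverse inclusion, the idea is to show that the open immersion $G_R \hookrightarrow (\iota')^{-1}(W) =: U_R$ is also a finite morphism. An open immersion that is simultaneously finite has clopen image, so since $U_R$ is open in the irreducible scheme $Y_R$ and $G_R$ is non-empty, this will force $G_R = U_R$.

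The finiteness of $G_R \to U_R$ follows from the standard ``separatedness transfers properness'' trick applied to the factorization $G_R \to U_R \to W$: the composite is $\iota_0'|_{G_R}$, and the second arrow $U_R \to W$, being a base change of the finite morphism $\iota': Y_R \to \mathbf{P}_R$, is itself finite, hence separated. So it suffices to show that $\iota_0': G_R \to W$ is finite. For this I factor $\iota_0'$ as $G_R \to G_R/N \hookrightarrow W$, where $N := \bigcap_j \ker(\rho_{j,R})$ is a closed subgroup scheme of $G_R$ with finite fibers by the quasi-finiteness of $\iota_0'$. The first map is an $N$-torsor, hence finite; the second is a monomorphism of affine group schemes of finite type, hence a closed immersion. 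The composite is therefore finite, completing the argument.

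The main obstacle is ensuring that $G_R/N$ exists as an $R$-scheme and that $G_R/N \hookrightarrow W$ is a closed immersion of $R$-schemes (not merely fiber-by-fiber), which requires some care with flatness of $N$ in mixed characteristic; such issues are handled after possibly shrinking $\mathrm{Spec}\,R$ to a dense open subset, which the paper's setup explicitly permits (cf.\ \Cref{ass-true-on-dense}).
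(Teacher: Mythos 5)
Your proof is correct, but it takes a genuinely different route from the paper's. The paper argues fiberwise: over each $\bar k(x)$, Assumption~\ref{ass}(1) gives that $\tilde Y_{\bar k(x)}$ is irreducible, hence $G_{\bar k(x)}$ is dense in $Y_{\bar k(x)}$, so $U_{\bar k(x)}-G_{\bar k(x)}$ has dimension strictly less than $\dim G$; on the other hand, $\iota'(U_{\bar k(x)}-G_{\bar k(x)})$ is a $\iota'(G_{\bar k(x)})$-stable subset of $\prod_j\mathrm{GL}(V_j)$, and since any point there is invertible, a non-empty such set contains a full $\iota'(G)$-coset of dimension $\dim G$, a contradiction. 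You instead argue globally: you factor $\iota_0'|_{G_R}$ as an $N$-torsor $G_R\to G_R/N$ (finite after shrinking) followed by the monomorphism $G_R/N\hookrightarrow W$ (a closed immersion), conclude that $\iota_0'|_{G_R}$ is finite, then use cancellation against the finite, separated map $U_R\to W$ to get that the open immersion $G_R\hookrightarrow U_R$ is also finite, hence its image is clopen in the irreducible (hence connected) $U_R$. The trade-off: the paper's proof uses the geometric irreducibility of the fibers of the toroidal resolution from Assumption~\ref{ass} but needs no further shrinking of $\Spec R$, while your proof bypasses Assumption~\ref{ass} entirely and relies only on the quasi-finiteness of $\iota_0'$ and finiteness of $\iota'$, at the cost of possibly deleting a few more primes to make $N$ finite flat and $G_R/N$ a closed subgroup of $W$ over $R$ (rather than merely fiberwise). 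In the paper's context, where all results are ultimately stated for almost all $\f p$, both are acceptable; yours is arguably the more ``structural'' argument while theirs is more tightly fitted to the compactification data already in play.
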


\begin{proof}
    Clearly $(\iota')^{-1}(\prod_j\m{GL}(V_{j,R}))\supset G_R$. We need to show $(\iota')^{-1}(\prod_j\m{GL}(V_{j,R}))-G_R=\emptyset$.
    It suffices to check this on each fiber over $\Spec R$. Let $x\in \Spec R$, $\bar{k}(x)$ an algebraic closure of the residue field of $x$, and $U_R=(\iota')^{-1}(\prod_j\m{GL}(V_{j,R}))$. By \Cref{ass}(\ref{ass-0}), $\tilde{Y}_{\bar{k}(x)}$ is irreducible. Then $G_{\bar{k}(x)}$ is dense in $\tilde{Y}_{\bar{k}(x)}$ since it is open. Therefore, we have $G_{\bar{k}(x)}$ is dense in $Y_{\bar{k}(x)}$. Hence \[\dim (U_{\bar{k}(x)}-G_{\bar{k}(x)})< \dim G_{\bar{k}(x)}.\] We conclude $\iota'(U_{\bar{k}(x)}-G_{\bar{k}(x)})\subset \prod_j\m{GL}(V_j)$ is a $\iota'(G_{\bar{k}(x)})$-invariant subset of dimension less than $\dim G_{\bar{k}(x)}$. If $U_{\bar{k}(x)}-G_{\bar{k}(x)}\ne \emptyset$, choose $x_0\in \iota'(U_{\bar{k}(x)}-G_{\bar{k}(x)})$. Then $\iota'(U_{\bar{k}(x)}-G_{\bar{k}(x)})$ contains $\iota'(G_{\bar{k}(x)})x_0\cong \iota'(G_{\bar{k}(x)})$ since $x_0$
  is invertible. So we have \[\dim \iota'(U_{\bar{k}(x)}-G_{\bar{k}(x)})\geq \dim \iota'(G_{\bar{k}(x)})=\dim G_{\bar{k}(x)},\] which is a contradiction.
\end{proof}

We can identify $\mathbb V_R$ with its dual vector bundle $\mathbb V_R^*$ via the pairing
$$\langle\;,\;\rangle: \prod_{j=1}^N\mathrm{End}(V_{j,R})\times \prod_{j=1}^N\mathrm{End}(V_{j,R}) \to \mathbb A^1_R,
\quad ((A_1, \ldots, A_N), (B_1, \ldots, B_N))\mapsto \sum_{j=1}^N \mathrm{Tr}(A_jB_j).$$

In addition, we define $\m{Hyp}_!=\b{D}_{\infty_\f{p}}(\m{Hyp}_+)$ as the \textit{$p$-adic hypergeometric $\s{D}_{\hat{\b{P}}_\f{p},\b{Q}}^\dagger(\infty_\f{p})$-module with proper support}, here $\b{D}_{\infty_\f{p}}$ is the Verdier duality functor on $D_\m{coh}^b(\s{D}^\d_{\hat{\b{P}}_\f{p},\b{Q}}(\infty_\f{p}))$.

Define $\c{O}_{G_{k(\f{p})}}$ to be the $\s{D}^\d_{\tilde{\f Y},\Q}$-module $\c{O}_{G_{k(\f{p})}}:=(^\d \tilde{D}_{k(\f{p})})\c{O}_{\tilde{\f{Y}}, \Q}$.
In \Cref{Fourier}, we will prove \[\f{F}_\pi((\iota p)_+\c{O}_{G_{k(\f{p})}})\cong\m{Hyp}_+[2-n-d].\]
For any closed point $i_A:A\to \b{V}_{\f{p}}$, we will prove \[\m{Tr}_K(\m{Fr},i_A^+(\m{Hyp}_!))= \m{Hyp}(A)\] in \Cref{exponential}, where $\m{Fr}$ is the Frobenius action on $i_A^+(\m{Hyp}_!)$.

To study the hypergeometric exponential sum $\m{Hyp}(A)$, we need to study $(\iota p)_+\c{O}_{G_{k(\f{p})}}$ and its geometric Fourier transformation.

\subsection{Nondegeneracy}\label{nondegeneracy}
Fix an algebraic closure $\bar{K}$ of $K$. The subscript $\bar{K}$ means taking tensor product of the scheme with $\bar{K}$ over $K$. Let $\Lambda=\mathrm{Hom}(T_{\bar K}, \mathbb G_{m,\bar K})$ be the weight lattice,
$W=N_{G_{\bar K}}(T_{\bar K})/T_{\bar K}$ the Weyl group, and
$\lambda_j$ $(j=1, \ldots, N)$ the maximal weight of $\rho_{j,\bar K}$. We define the \emph{Newton
polytope at infinity} $\Delta_\infty$ for the family $\rho_{1,\bar K}, \ldots, \rho_{N,\bar K}$ to be the convex hull in
$\Lambda_{\mathbb R}:=\Lambda\otimes \mathbb R$ of $W(0, \lambda_1, \ldots, \lambda_N)$.
Occasionally we also use the \textit{Newton polytope} $\Delta$ which is defined to be the convex hull of $W(\lambda_1,\cdots,\lambda_N)$.
For any face $\tau\prec \Delta_\infty$, let $e(\tau)=(e(\tau)_j)\in\prod_{j=1}^N \mathrm{End}(V_{j,R})$
be defined as follows: Let
$$V_{j,R}=\bigoplus_{\lambda\in \Lambda} V_{j,R}(\lambda)$$ be the decomposition so that
$V_{j,R}(\lambda)$ is the component of weight $\lambda$ under the action of $T_R$. We define $e(\tau)_j$ to be the block-diagonal
linear transformation so that
$$e(\tau)_j\Big|_{ V_{j,R}(\lambda)}=\left\{\begin{array}{cl}
\mathrm{id}_{V_{j,R}(\lambda)}&\hbox{if }\lambda\in \tau, \\
0&\hbox{otherwise}.
\end{array}\right.$$
Let $\bar k(\f{p})$ be an algebraic closure of the residue field $k(\f{p})$. For any $\bar k(\f{p})$-point $A=(A_1, \ldots, A_N)$ in $\mathbb V_R=\prod_{j=1}^N\mathrm{End}(V_{j,R})$, let $f_A$ be the morphism
$$f_A:\mathbb V_{\bar{k}(\f{p})}\to
\mathbb A^1_{\bar{k}(\f{p})}\,\quad f_A(B)=\sum_{j=1}^N\mathrm{Tr}(A_jB_j).$$
We have an action of
\begin{align*}(G_{\bar{k}(\f{p})}\times_{\bar{k}(\f{p})} G_{\bar{k}(\f{p})})\times_{k(\f{p})} \mathbb V_{\bar{k}(\f{p})}\to & \mathbb V_{\bar{k}(\f{p})}, \\
\Big((g, h), (A_1, \ldots, A_N)\Big)\mapsto & \Big(\rho_1(g) A_1 \rho_1(h^{-1}), \ldots, \rho_N(g) A_N\rho_N(h^{-1})\Big).\end{align*}
Consider the Laurent polynomial 
\begin{equation}\label{generalLaurent}
f: G_{\bar{k}(\f{p})}\to\mathbb A^1_{\bar{k}(\f{p})}, \quad f(g)=\sum_{j=1}^N \mathrm{Tr}(A_j \rho_j(g)).
\end{equation}
We say that the Laurent polynomial (\ref{generalLaurent})
is \emph{nondegenerate} if for any face $\tau$ of $\Delta_\infty$ not containing the origin, %the restriction of $f_A$ to the orbit $G_{\bar K}e(\tau) G_{\bar K}$ has no critical points, that is,
the function
\begin{equation}\label{ftauA}
f_{\tau, A}:G_{\bar{k}(\f{p})}\times_{\bar{k}(\f{p})} G_{\bar{k}(\f{p})}
\to\mathbb A^1_{\bar{k}(\f{p})}, \quad f_{\tau, A}(g, h)=\sum_{j=1}^N \mathrm{Tr}(A_j \rho_j(g)e(\tau)_j\rho_j(h))
\end{equation}
has no critical point, which means that $\mathrm df_{\tau, A}=0$ has no solution
on $G_{\bar{k}(\f{p})}\times_{\bar{k}(\f{p})} G_{\bar{k}(\f{p})}$. Let $\mathbb V_{k(\f{p})}^{\mathrm{gen}}$ be the complement of the Zariski closure of the set
consisting of those $A$ so that the Laurent polynomial (\ref{generalLaurent}) is nondegenerate.
Then $\mathbb V_{k(\f{p})}^{\mathrm{gen}}$ is a Zariski open subset of $\mathbb V_{k(\f{p})}$ parametrizing nondegenerate Laurent polynomials.

The main theorem of this paper is the following.
\begin{thm}\label{mainthm}
    For almost all prime $\f{p}$,
    the Fourier transform $\f{F}_{\pi}((\iota p)_{+}\c{O}_{G_{k(\f{p})}})$ restricted to $\mathbb V_{k(\f{p})}^{\mathrm{gen}}$ is $\c{O}_{\hat{\mathbb V}_\f{p}, \Q}$-coherent. %, hence locally free. It
    Moreover, let $\m{sp}$ be the canonical specialization map. Then for any divisor $T$ of $\b{V}_{k(\f{p})}$ containing the complement of $\mathbb V_{k(\f{p})}^{\mathrm{gen}}$,
    the associated isocrystal $\m{sp}^*(\f{F}_{\pi}((\iota p)_{+}\c{O}_{G_{k(\f{p})}})|_{\b{V}_{k(\f{p})}-T})$ on $\mathbb V_{k(\f{p})}-T$ overconvergent along $T$
    has rank less or equal to
    \[d!\int_{\Delta_{\infty}\cap\mathfrak C}
	\prod_{\alpha \in R^+} \frac{(\lambda, \alpha)^2}{(\rho, \alpha)^2}\mathrm d\lambda.\]
    Here $d = \dim G_{k(\f{p})}$, $\f{C}$ is the dominant Weyl chamber in $\Lambda_\b{R}$, $\m{d}\lambda$ is the Lebesgue measure on $\Lambda_\b{R}$ normalized by $\m{vol}(\Lambda_\b{R}/\Lambda) = 1$, $(-,-)$ is the pairing defined by the Killing form, $R_+$ is the set of positive roots, and $\rho=\frac12\sum_{\alpha\in R_+}\alpha$.
\end{thm}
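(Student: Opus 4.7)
The plan is to reduce the theorem to a pointwise statement on the nondegenerate locus and then invoke a Newton-polytope style rank bound. Since $\f{F}_\pi$ preserves overholonomicity and Frobenius structures (by Huyghe and its extensions), $\f{F}_{\pi}((\iota p)_{+}\c{O}_{G_{k(\f{p})}})$ is automatically overholonomic with Frobenius structure; what remains is genericity of $\c{O}$-coherence and the explicit rank bound. A coherent $\s{D}^\dagger_{\hat{\b{V}},\b{Q}}(\infty_\f{p})$-module is $\c{O}$-coherent over an open $U \subset \b{V}_{k(\f{p})}$ exactly when its characteristic variety is the zero section over $U$. So it suffices to rule out singularities of $\f{F}_\pi((\iota p)_+\c{O}_G)$ over the entire nondegenerate locus $\b{V}^{\gen}_{k(\f{p})}$.

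The first step is a stationary-phase analysis of the singular support of $\f{F}_\pi((\iota p)_+\c{O}_G)$ via the compactification $\tilde{\f Y}_\f p$ provided by \Cref{ass}. The characteristic variety of a Fourier transform is computed from the critical locus of the phase function $f_A$ restricted to the strata of the boundary $\tilde D_{k(\f{p})}$. Using the $P_R$-equivariant local model $\tilde Y_{0,R} \cong R_u(P_R) \times S_R$ of \Cref{ass}(2), one identifies the boundary strata of $\tilde D$ with torus-orbit closures in $S_R$; these correspond bijectively to the faces $\tau$ of $\Delta_\infty$, and the restriction of $f_A$ to the $\tau$-stratum is proportional (after translation) to the function $f_{\tau, A}$ of (\ref{ftauA}). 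The nondegeneracy condition on $A \in \b{V}^{\gen}_{k(\f{p})}$ says exactly that $\m{d} f_{\tau, A} \ne 0$ on $G \times G$ for every boundary face $\tau$, so no critical point contributes to the characteristic variety above $A$. This proves $\c{O}$-coherence on $\b{V}^{\gen}_{k(\f{p})}$, and together with Kedlaya's equivalence between coherent $\s{D}^\dagger$-modules on $\b{V}_{k(\f{p})}-T$ with Frobenius structure and overconvergent $F$-isocrystals along $T$, gives the isocrystal structure claimed.

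Next, to bound the rank, pick a closed point $A \in \b{V}^{\gen}_{k(\f{p})}$ and compute the fiber $i_A^+ \f{F}_\pi((\iota p)_+\c{O}_{G_{k(\f p)}})$. By the base change and the relation between geometric Fourier transform and the Dwork--Artin--Schreier twist (as in \cite{Huyghe2004Trans}), this fiber is isomorphic, up to shift, to the rigid cohomology $H^{*}_{\rig, c}(G_{k(\f{p})}, \c{L}_{\psi(f_A)})$. The nondegeneracy of $f_A$ implies cohomological concentration in the single degree $d = \dim G_{k(\f p)}$ (the other degrees vanish by a Katz-type argument using absence of critical values on boundary strata), so the rank of the isocrystal equals $\dim H^{d}_{\rig, c}(G_{k(\f{p})}, \c{L}_{\psi(f_A)})$. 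An Adolphson--Sperber style Newton polytope bound, adapted to the reductive group setting by replacing the standard simplex with the dominant Weyl chamber, then gives
\[
\dim H^{d}_{\rig, c}(G_{k(\f{p})}, \c{L}_{\psi(f_A)}) \leq d!\int_{\Delta_{\infty}\cap\mathfrak C} \prod_{\alpha \in R^+} \frac{(\lambda, \alpha)^2}{(\rho, \alpha)^2}\m{d}\lambda,
\]
where the Weyl denominator factor $\prod_\alpha (\lambda,\alpha)^2/(\rho,\alpha)^2$ accounts for Weyl-equivariance: one replaces the integral over $W \cdot \Delta_\infty$ by an integral over $\Delta_\infty \cap \f{C}$, weighted by the Weyl dimension formula applied to irreducible components.

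The main obstacle will be the rank inequality, specifically the Weyl-weighted volume formula. The nondegenerate fiber-by-fiber vanishing of $H^{i}_{\rig, c}$ for $i \ne d$ requires a careful extension of the Adolphson--Sperber criterion for Laurent polynomials on tori to Laurent polynomials on reductive groups, incorporating the $H$-equivariance decomposition into irreducible $G$-representations. The volume formula comes from a Weyl integration argument: pulling back the exponential sum along the multiplication map $G \times_T G \to G$ (or equivalently using the Weyl character formula in cohomology) reduces the calculation to a toric exponential sum over $T$ where Kouchnirenko--Adolphson--Sperber applies, and the factor $\prod_\alpha (\lambda,\alpha)^2/(\rho,\alpha)^2$ emerges from the Weyl Jacobian. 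Making this rigorous in the $p$-adic setting, with the appropriate overconvergent interpretation of the $H$-equivariance, is the technical core.
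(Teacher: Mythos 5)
Your proposal takes a genuinely different route from the paper, but it has two substantive gaps, the first of which is fatal as written.

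The central claim of your first step --- that a coherent $\s{D}^\dagger_{\hat{\b{V}},\b{Q}}(\infty_\f{p})$-module is $\c{O}$-coherent over an open $U$ exactly when its (classical) characteristic variety is the zero section over $U$, and that this characteristic variety is read off from the critical locus of $f_A$ on boundary strata --- does not transfer directly to the arithmetic setting. For $\hat{\s D}^{(m)}$-modules with $m\ge 1$, the characteristic variety lives in a Frobenius twist $T^*\b{V}_k^{(m)}\times_{\b{V}_k^{(m)}}\b{V}_k$, and the vector fields $L_\xi$ become nilpotent modulo $\unif$ (because $L_\xi^{p^j}=p^j!\,L_\xi^{\langle p^j\rangle_{(m)}}$ with $p\mid p^j!$ for $j<m$); they therefore cut out nothing at level $m>0$. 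This is precisely the obstacle the paper is engineered to overcome: Section 3 constructs higher-order right-invariant integral operators $\nabla_i^{\langle p^m\rangle_{(m)}}$ whose principal symbols survive mod $\unif$ and whose zero locus, after the Frobenius twist, recovers the moment-map vanishing (\Cref{def-L^<k>}, \Cref{heighest-symbol-of-alpha}, \Cref{nil-gr}). Your stationary-phase picture is the characteristic-zero shadow of this, and you cannot simply assert it in characteristic $p$; something like Section 3's machinery, plus the control of levels in \Cref{2ndmainthm} via Caro's criteria, is needed.

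The second gap is the rank bound. You replace the paper's deformation argument --- which bounds $\gr F_\pi(\Ninf)^{(0)}_k$ by a Cohen--Macaulay intersection degree, lifts it to the generic fibre over $R$, and quotes the $\ell$-adic degree formula of \cite{l-adic} --- with an appeal to a pointwise computation $i_A^+\f{F}_\pi((\iota p)_+\c{O}_{G_k}) \cong H^*_{\rig,c}(G_{k},\c{L}_{\psi(f_A)})$, a Katz-type concentration, and an ``Adolphson--Sperber for reductive groups'' bound. None of these is proved; the ``pullback along $G\times_T G\to G$'' does not define a map carrying the exponential sum to a toric one, and the derivation of the Weyl-weighted volume is only gestured at. You explicitly flag this as ``the technical core,'' and indeed it is --- so the proposal at this point has stopped being a proof. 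Finally, you bypass the paper's reduction to the modified hypergeometric module $\c{N}$ (via the direct-summand statement \Cref{directsummand}), which is what gives a presentation amenable to explicit integral models; your direct attack on $\f{F}_\pi((\iota p)_+\c{O}_{G_k})$ lacks such a presentation and therefore cannot obviously feed into the filtration/characteristic-cycle machinery.
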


This paper is organized as follows. In \Cref{case0}, we introduce a new method to study the (modified) hypergeometric $\s{D}$-module
introduced in \cite{K,l-adic}.
Unlike \cite{K,l-adic}, we do not use Brylinski's theorem, but rather construct a good filtration more directly. In \Cref{Dmodule}, we introduce the $p$-adic modified hypergeometric $\s{D}^\dagger(\infty)$-module. 
We study its relation with the $\s{D}^\dagger(\infty)$-module $\mathrm{Hyp}_+$.
In \Cref{sec-right-inv}, we define some right invariant differential operators on $H$ in the arithmetic $\s{D}$-module setting, and prove some related technical results. %which is used to define the 
In \Cref{pcase}, by imitating the methods in section 1, we prove the main theorem \ref{mainthm}. We choose finite level integral models of the modified hypergeometric  $\s{D}^\dagger(\infty)$-module explicitly, and use it to bound the generic rank of the modified hypergeometric $\s{D}^\dagger(\infty)$-module.
In \Cref{exponential}, we use our results to estimate the exponential sum $\m{Hyp}(\bar{A})$.
In \Cref{Appendix}, we study compactifiactions of $G_R$, or more precisely, good resolutions of $Y_R$ and $Z_R$ defined over the Dedekind domain $R$.

\textbf{Acknowledgements:}
The authors would like to thank their Ph.D advisor professor Lei Fu for suggesting the problem in this paper, and for helpful discussions and the suggestion of details.

\section{Characteristic $0$ case}\label{case0}

\subsection{}
In this section we only work over the generic point $\Spec K$ of $\Spec R$.  %All of the schemes in this section are $K$-schemes.
We omit all subscripts of $K$-schemes when we use the notation in the introduction. %Also, we just need to consider in affine space in this section. 

Notation as in \Cref{setting}. Let $\mathfrak g$ be the Lie algebra of $G$, and let 
$\f{L}(H)=\mathfrak g\times \mathfrak g$ be the Lie algebra of $H=G\times G$. 
For any $\xi=(\xi_1,\xi_2)\in \f{L}(H)$, let $L_{\xi}$ be the vector field on $\b{V}$ defined by $$L_\xi(A)= \frac{\mathrm d}{\mathrm dt}\Big|_{t=0}\exp(t \xi_1)A\exp(-t\xi_2).$$
In \cite[5.2]{K}, Kapronov defined the 
\emph{$A$-hypergeometric left $\s{D}_{\b{V}}$-module} $\c{M}'$ to be the Fourier transform of the left $\s{D}_{\b{V}}$-module
\[\mathcal N'=(\s{D}_{\mathbb V}\otimes_{\mathcal{O}_{\mathbb V}}\mathcal O_{Y^{\m{aff}}})\big
/\sum_{\xi\in\mathfrak L(H)}(\s{D}_{\mathbb V}\otimes_{\mathcal{O}_{\mathbb V}}
\mathcal O_{Y^{\m{aff}}})L_\xi.\]
Here $\mathcal O_{Y^{\m{aff}}}$ is regarded as a coherent $\mathcal{O}_{\mathbb V}$-module via the functor $\iota'_*$.
Actually, Kapronov only defined the $A$-hypergeometric left $\s{D}_{\b{V}}$-module $\mathcal M'$ when $\iota_0'$ is an immersion. In this case, $\mathcal M'$ can be identified with
\begin{eqnarray}\label{hypDmod}
	\mathcal M'=\s{D}_{\mathbb V}\Big/\Big(\sum_{f\in I} \s{D}_{\mathbb V}P_f+
	\sum_{\xi\in\f{L}(H)}\s{D}_{\mathbb V} (L_\xi-\chi(\xi))\Big),
\end{eqnarray}
where $I$ is the ideal of $Y'^{\m{aff}}:=Y'\cap\b{V}$ in $\b{V}$, $P_f$ is the corresponding differential operator for any $f\in I$, $\chi:H\to\b{G}_m$ is a character. Also, he only studied this module under the homogeneity condition. With our method, this condition can be relaxed.

In \cite{l-adic}, it is proved that $R\iota_{!}\omega_{G}$ is a direct factor of the right $\s{D}_{\b{V}}$-module
$$\mathcal N=(\omega_{Y^{\m{aff}}}\otimes_{\mathcal{O}_{\mathbb V}}\s{D}_{\mathbb V}
)\big/\sum_{\xi\in\mathfrak L(H)}L_\xi (\omega_{Y^{\m{aff}}}\otimes_{\mathcal{O}_{\mathbb V}}
\s{D}_{\mathbb V}).$$
The \emph{modified hypergeometric right $\s{D}_{\b{V}}$-module}  $\mathcal M$ is defined to be the Fourier transform of $\mathcal N$: 
$$\mathcal M=\mathcal F_{\mathcal L}(\mathcal N).$$

The main theorems of this section are the following. 
\begin{thm}\label{cha0mainthm} 
	Notation as above. The modified hypergeometric right  $\s{D}_{\b{V}}$-module $\mathcal M$ is holonomic. 
	The restriction to $\mathbb V^{\mathrm{gen}}_K$ of 	$\mathcal M$ is a connection. We have
	$$\mathrm{rank}\,(\mathcal M\big|_{\mathbb V_K^{\mathrm{gen}}})\leq d!\int_{\Delta_{\infty}\cap\mathfrak C} 
	\prod_{\alpha \in R^+} \frac{(\lambda, \alpha)^2}{(\rho, \alpha)^2}\mathrm d\lambda.$$
	The same conclusion holds for the Fourier transform of $R\iota_{!}\omega_{G}$.
\end{thm}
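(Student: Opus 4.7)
The plan is to construct a good filtration on $\mathcal N$ directly (instead of invoking Brylinski's direct-image formula as in \cite{K, l-adic}), read off the characteristic cycle on a suitable resolution, and translate the rank of the Fourier transform into an equivariant intersection-theoretic computation. For holonomicity of $\mathcal N$, I would filter by order of differential operators with generators coming from $\iota'_*\omega_{Y^{\mathrm{aff}}}$. On symbols, the relations $L_\xi$ become the moment map $\mu_H : T^*\b{V}\cong\b{V}\times\b{V}\to\f{L}(H)^*$ for the induced $H$-action on $\b{V}$. Since $H$ acts on $Y$ with open dense orbit $G$, the subvariety $\mu_H^{-1}(0)\cap \pi_1^{-1}(Y^{\mathrm{aff}})\subset T^*\b{V}$ is Lagrangian of dimension $\dim\b{V}$, so $\mathrm{gr}\,\mathcal N$ is supported on a conic Lagrangian and $\mathcal N$ is holonomic. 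Fourier transform preserves holonomicity, so $\mathcal M$ is holonomic as well.

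For $\mathcal M|_{\b{V}^{\mathrm{gen}}_K}$ to be a connection, I would compute the characteristic variety of $\mathcal M$ by the Fourier swap of base and cotangent coordinates via the pairing $\langle\cdot,\cdot\rangle$. At a point $A\in\b{V}^{\mathrm{gen}}_K$, the fiber of $\mathrm{Ch}(\mathcal M)$ corresponds to the critical locus of $f_A$ on a compactification of $G$. The nondegeneracy condition in \Cref{nondegeneracy}, read geometrically on the resolution $\tilde Y$, says precisely that $f_{\tau, A}$ has no critical points on the boundary stratum of $\tilde Y$ associated to each proper face $\tau$ of $\Delta_\infty$ not containing the origin. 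Hence all critical contributions are concentrated at finitely many points of $G$, showing that $\mathcal M|_{\b{V}^{\mathrm{gen}}_K}$ is $\mathcal O$-coherent, i.e., a connection, and that its generic rank equals the total multiplicity of these critical points.

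For the rank bound, I would pull the good filtration back through the proper birational resolution $p:\tilde Y\to Y$ and rewrite the generic rank as an equivariant intersection number on $T^*\tilde Y$. By \Cref{ass}(2), the open $\tilde Y_0\cong R_u(P)\times S$ with $S$ toric, so after applying torus-equivariant localization the sum decomposes over $T$-fixed points of $\tilde Y$, which correspond to vertices of maximal cones of the fan of $S$; these parametrize weights in $\Delta_\infty\cap\mathfrak C$. Each dominant weight $\lambda$ contributes a factor $\prod_{\alpha\in R^+}(\lambda,\alpha)^2/(\rho,\alpha)^2$ coming from the Jacobian of the $R_u(P)\times R_u(P)^-$ direction: this is the squared Weyl denominator produced by the $G\times G$-symmetry (Weyl integration formula) applied to both factors of $H$. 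Summing over cones and passing to the continuous limit of the resulting Riemann sum yields the $d$-dimensional integral $d!\int_{\Delta_\infty\cap\mathfrak C}\prod_{\alpha\in R^+}(\lambda,\alpha)^2/(\rho,\alpha)^2\,\mathrm d\lambda$. The same argument applies to the Fourier transform of $R\iota_!\omega_G$, since by \cite{l-adic} it is a direct summand of $\mathcal N$ and its characteristic cycle is dominated by the same Lagrangian.

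The main obstacle is the explicit characteristic-cycle computation on $\tilde Y$ with the correct multiplicities -- in particular, showing that the $R_u(P)\times R_u(P)^-$ direction genuinely contributes the \emph{square} of the Weyl denominator, and not just a single factor -- together with verifying that the lattice sum of $(\dim V_\lambda)^2$ over dominant weights in $\Delta_\infty\cap\mathfrak C$ converges to the stated integral in the asymptotic regime producing the $d!$ coefficient.
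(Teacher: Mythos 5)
Your outline of the holonomicity and connection parts is directionally close to the paper. The holonomicity argument you sketch (symbols kill the moment map, so $\mathrm{Ch}(\mathcal N)$ sits in a Lagrangian) is a different route from the paper's, which simply observes that $\mathcal N$ and $\mathcal N'$ are $H$-equivariant with support in $Y'^{\mathrm{aff}}$, which has finitely many $H$-orbits, and then cites Hotta's theorem that equivariant coherent $\s D$-modules over a finite-orbit space are holonomic. Both hinge on the same finite-orbit fact, and the connection statement is in both cases obtained by showing that over $\b V^{\mathrm{gen}}$ the graded module is supported in the zero section, exactly the nondegeneracy computation you describe (this is \Cref{maincor}).

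The genuine gap is in the rank bound, which is the heart of the theorem. You propose to recompute the bound by torus-equivariant localization over $T$-fixed points of $\tilde Y$, identify each contribution with $\prod_{\alpha\in R^+}(\lambda,\alpha)^2/(\rho,\alpha)^2$ via the Weyl integration formula applied to $R_u(P)\times R_u(P^-)$, and then pass from a lattice sum to the integral by a Riemann-sum limit. None of these steps is carried out, and you acknowledge that the ``squared Weyl denominator'' and the $d!$-normalized continuum limit are precisely the obstacles. The paper does something entirely different here: it uses the Popov degeneration, namely the $H$-equivariant good filtration on $\omega_{Y^{\mathrm{aff}}}$ coming from the flat family $\omega_{Z^{\mathrm{aff}}}\to\b A^1$ (this is \Cref{filonY} and \Cref{filomega}, and relies on the Cohen--Macaulayness of $\omega_{Z^{\mathrm{aff}}}$ in \Cref{filomega} and \Cref{cor-G-embed}), restricts the graded module to a generic linear slice $L$ through $A$ with $\overline{Z'^{\mathrm{aff}}_{\bar K}}\cap L=\{0\}$, identifies the resulting fiber dimension with $\deg\overline{Z'^{\mathrm{aff}}_{\bar K}}$, and then cites the degree formula $\deg=\RANK$ already proved in \cite[3.4]{l-adic}. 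Your proposal never mentions the $\omega_{Z^{\mathrm{aff}}}$ degeneration or the Cohen--Macaulay reduction to a degree, and it attempts to re-derive the cited degree formula from scratch; as written this is not a proof of the rank bound.
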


\begin{thm}\label{cha02ndmainthm} 
	Notation as above. The A-hypergeometric left $\s{D}_{\b{V}}$-module $\mathcal M'$ is holonomic. 
	The restriction to $\mathbb V^{\mathrm{gen}}_K$ of 
	$\mathcal M'$ is a connection. We have
	$$\mathrm{rank}\,(\mathcal M'\big|_{\mathbb V_K^{\mathrm{gen}}})\leq d!\int_{\Delta_{\infty}\cap\mathfrak C} 
	\prod_{\alpha \in R^+} \frac{(\lambda, \alpha)^2}{(\rho, \alpha)^2}\mathrm d\lambda.$$
\end{thm}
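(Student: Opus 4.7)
The plan is to deduce \Cref{cha02ndmainthm} from \Cref{cha0mainthm} via the side-changing equivalence between left and right $\s{D}_{\b{V}}$-modules, combined with the compatibility of Fourier transform with side-changing. Tensoring with $\omega_{\b{V}}$ over $\c{O}_{\b{V}}$ converts the left $\s{D}_{\b{V}}$-module $\c{N}'$ into a right $\s{D}_{\b{V}}$-module in which a vector field $\xi$ acts on the right as $-\xi - \m{div}(\xi)$. Since each $L_{\xi}$ comes from the $H$-action on $\b{V}$, the divergences $\m{div}(L_{\xi})$ define a character $\chi$ of $\f{L}(H)$, and, together with the identification $\omega_{\b{V}}\otimes_{\c{O}_{\b{V}}}\iota'_{*}\c{O}_{Y^{\m{aff}}}\cong \iota'_{*}\omega_{Y^{\m{aff}}}$ (which holds at least on a dense open subset of $\b{V}$), this exhibits $\omega_{\b{V}}\otimes_{\c{O}_{\b{V}}}\c{N}'$ as a twist of $\c{N}$ by $\chi$.

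Next I would observe that the Fourier transform $\mathcal{F}_{\mathcal{L}}$ commutes with the side-changing functor: it is induced by an automorphism $x_{i}\leftrightarrow -\pi\partial_{i}$ of the algebra $\s{D}_{\b{V}}$ that does not interact with the line bundle $\omega_{\b{V}}$. Applying side-changing and then twisting by $\chi$ therefore identifies $\c{M}'$ with $\c{M}$. Since holonomicity, the characteristic cycle, and the generic rank are all preserved both by side-changing and by twisting with a rank-one integrable connection, the holonomicity statement, the connection property on $\b{V}^{\m{gen}}_{K}$, and the integral rank bound from \Cref{cha0mainthm} transfer directly to $\c{M}'$.

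The main obstacle will be a clean verification of the identification $\omega_{\b{V}}\otimes_{\c{O}_{\b{V}}}\c{N}'\cong \c{N}\otimes\chi$, because $Y^{\m{aff}}$ need not be Gorenstein and the relative dualizing sheaf $\omega_{Y^{\m{aff}}/\b{V}}$ can fail to be trivial. Since only the generic-rank statement is needed, it suffices to restrict to a dense open subset of $\b{V}$, and further to the smooth open locus $G\subset Y^{\m{aff}}$ supplied by \Cref{divisorD}, on which $\iota'$ is a locally closed immersion into $\prod_{j}\m{GL}(V_{j})$ and the identification is immediate. A robust fallback, should this reduction prove delicate, is to imitate the proof of \Cref{cha0mainthm} verbatim on the left $\s{D}$-module side: construct a good filtration on $\c{N}'$ from the order filtration on $\s{D}_{\b{V}}$ together with the filtration on $\c{O}_{Y^{\m{aff}}}$ pulled back from the resolution $\tilde{Y}$, form the associated graded, and bound its characteristic variety by the same toric-polytope integral. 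No new geometric ingredient beyond those already developed for $\c{M}$ is needed.
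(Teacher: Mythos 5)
Your primary route—side-change $\c{N}'$ to a right module, exhibit it as a character twist of $\c{N}$, and transport everything from \Cref{cha0mainthm} through the Fourier transform—has a gap that you half-acknowledge but do not close. Side-changing $\c{N}'$ produces $\c{O}_{Y^{\m{aff}}}\otimes_{\c{O}_{\b{V}}}\s{D}_{\b{V}}$ modulo the $L_\xi$'s, whereas $\c{N}$ is built from $\omega_{Y^{\m{aff}}}\otimes_{\c{O}_{\b{V}}}\s{D}_{\b{V}}$; so identifying the two (even after twisting by the character $\xi\mapsto\m{tr}(\xi|\b{V})$) requires an $H$-equivariant isomorphism $\c{O}_{Y^{\m{aff}}}\cong\omega_{Y^{\m{aff}}}\otimes\chi$, i.e.\ that $Y^{\m{aff}}$ is Gorenstein with canonical sheaf given by a character. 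There is no reason this should hold here, and the paper does not assume it. Your suggested remedy—restrict to the smooth locus $G\subset Y^{\m{aff}}$, where the identification is trivial—does not save the argument: restricting $\c{N}$ and $\c{N}'$ to a dense open of $\b{V}$ before Fourier transform does not commute with $\mathcal{F}_{\mathcal{L}}$, because a module supported on a proper closed subset of $\b{V}$ can have Fourier transform with full support. So an isomorphism of $\c{N}'$ with a twist of $\c{N}$ over a dense open of $Y^{\m{aff}}$ tells you nothing about the generic rank of $\c{M}'$ versus $\c{M}$ on $\b{V}^{\m{gen}}$.

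Your ``fallback'' is not a fallback but is, modulo one inaccuracy, the paper's actual proof. The paper sets up a uniform machinery $\c{C}\rightsquigarrow\c{N}(\c{C})\rightsquigarrow\c{M}(\c{C})$ for any $H$-equivariant coherent $\c{O}_{Y^{\m{aff}}}$-module $\c{C}$ with a compatible good filtration, proves \Cref{maincor} in that generality, and then runs the argument once with $\c{C}=\omega_{Y^{\m{aff}}}$ (for \Cref{cha0mainthm}) and once with $\c{C}=\c{O}_{Y^{\m{aff}}}$ (for \Cref{cha02ndmainthm}). Holonomicity in both cases is read off directly from $H$-equivariance and the finite-orbit structure of $Y'^{\m{aff}}$, not by any comparison of the two modules. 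The one detail you misstate: the good filtration on $\c{O}_{Y^{\m{aff}}}$ is not ``pulled back from the resolution $\tilde{Y}$''; it is the degree filtration coming from the $\mathbb{G}_m$-degeneration $Z^{\m{aff}}\to\b{A}^1$ via \Cref{YfiberofZ} and \Cref{filmod}, and the Cohen–Macaulayness of $\c{O}_{Z^{\m{aff}}}$ needed for the rank count is quoted from \cite[6.2.9]{Frob-split}, not deduced from smoothness of $\tilde{Y}$. If you want a self-contained proof, you should carry this out and drop the side-changing plan.
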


\subsection{}Fix an algebraic closure $\bar{K}$ of $K$. The subscript $\bar{K}$ means taking tensor product of the scheme with $\bar{K}$ over $K$. Let's recall the constructions in \cite[\S 5]{Popov}.
Let $A$ be a commutative $\bar{K}$-algebra, equipped with an action by the algebraic group $G_{\bar{K}}$. Let
\[A_0\subset A_1\subset\cdots\subset A_n\subset\cdots\]
be a $G$-stable filtration on $A$ such that $$\bar{K}\subset A_0,\quad A=\cup_n A_n,\quad  A_nA_m\subset A_{n+m}.$$ Consider the algebra
\[S(A)=\bigoplus_{n=0}^{\infty}A_nt^{n}\subseteq A[t].\]
We extend the action of $G_{\bar{K}}$ on $A$ to an action of $G_{\bar{K}}$ on $A[t]$ by requiring $\bar{K}[t]\subset A[t]^{G_{\bar{K}}}$. $\mathbb{G}_{m,\bar{K}}$ acts on $A[t]$ via
$$A[t]\to A[t]\otimes \bar{K}[t,t^{-1}],\quad at^n\mapsto at^n\otimes t^n.$$
Clearly these two actions commute and preserve $S(A)$. The monomorphism $\bar{K}[t]\hookrightarrow S(A)\hookrightarrow A[t]$ gives rise to a commutative diagram of schemes with $(G_{\bar{K}}\times \mathbb{G}_{m,\bar{K}})$-actions
\[\begin{tikzcd}
	\mathrm{Spec}\;A\times \mathbb{A}^1_{\bar{K}} \arrow[rd, "\mathrm{pr}_2"]\arrow[rr, "\tau"]  &              & \mathrm{Spec}\;S(A) \arrow[ld, "\pi"] \\
	& \mathbb{A}^1_{\bar{K}}.&                                                         
\end{tikzcd}\]

\begin{pro}[{\cite[proposition 9]{Popov}}]\label{popovmain}
	The morphism $\pi$ is surjective and flat, and $\tau$ induces an isomorphism
	$\tau: \mathrm{pr}_2^{-1}(\mathbb{G}_{m,\bar{K}})\stackrel\cong\to \pi^{-1}(\mathbb{G}_{m,\bar{K}})$. In particular, for any $a\in \mathbb{G}_{m,\bar{K}}(\bar{K})$, we have $\pi^{-1}(\{a\})\cong \mathrm{Spec}\;A$. Moreover, we have $\pi^{-1}(\{0\})\cong \mathrm{Spec}\;\mathrm{gr}\,A$.
\end{pro}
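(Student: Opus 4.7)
\medskip

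\noindent\textbf{Proof plan for \Cref{popovmain}.}
The plan is to treat $S(A)$ as the Rees algebra of the filtration and analyze it via the two standard operations: inverting $t$ and reducing modulo $t$. Since the whole statement amounts to comparing the generic fiber and the special fiber of $\pi$ with $\mathrm{Spec}\,A$ and $\mathrm{Spec}\,\mathrm{gr}\,A$ respectively, plus a flatness/surjectivity check, I would organize the argument into three short steps.

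First, for flatness of $\pi$. Because $\bar K[t]$ is a PID, $S(A)$ is $\bar K[t]$-flat iff $t$ acts as a non-zero-divisor on $S(A)$. But by construction $S(A)\subset A[t]$ as a $\bar K[t]$-submodule, and $t$ is obviously a non-zero-divisor on $A[t]$; this gives flatness. For surjectivity of $\pi$ it then suffices to check that the fibers over $0$ and over every point of $\mathbb G_{m,\bar K}$ are non-empty, which will follow from the next two steps.

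Second, the generic picture. The key computation is that inverting $t$ kills the filtration: any $a\in A$ lies in some $A_n$, so $a=(at^n)\cdot t^{-n}\in S(A)[t^{-1}]$, which shows the inclusion $S(A)[t^{-1}]\hookrightarrow A[t,t^{-1}]$ is in fact an equality. The corresponding morphism of spectra is exactly $\tau$ restricted to $\mathrm{pr}_2^{-1}(\mathbb G_{m,\bar K})\to\pi^{-1}(\mathbb G_{m,\bar K})$, so $\tau$ induces the claimed isomorphism. Specializing at any $a\in\mathbb G_{m,\bar K}(\bar K)$ then yields $\pi^{-1}(\{a\})\cong\mathrm{Spec}\,A$.

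Third, the special fiber. Directly from the definitions,
\[
tS(A)\ =\ \bigoplus_{n\geq 1} A_{n-1}t^{n},
\]
so $S(A)/tS(A)\ \cong\ \bigoplus_{n\geq 0} A_n/A_{n-1}\ =\ \mathrm{gr}\,A$ (with $A_{-1}=0$) as $\bar K$-algebras, the ring structure being the one induced by $A_nA_m\subset A_{n+m}$. This is the identification $\pi^{-1}(\{0\})\cong\mathrm{Spec}\,\mathrm{gr}\,A$. Combined with the second step, $\pi$ hits every closed point of $\mathbb A^1_{\bar K}$, which, together with flatness, yields surjectivity.

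I do not anticipate any real obstacle: the only place one must be slightly careful is the equality $S(A)[t^{-1}]=A[t,t^{-1}]$, which uses $A=\bigcup_n A_n$ in an essential way, and the description of $tS(A)$, where one must remember that $\bar K\subset A_0$ ensures the quotient starts in degree $0$ rather than being empty. The $(G_{\bar K}\times\mathbb G_{m,\bar K})$-equivariance of everything is automatic from the construction of the two actions on $A[t]$ and need not be re-verified.
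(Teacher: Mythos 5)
Your argument is correct and complete. Note that the paper does not actually prove this proposition: it is stated with a citation to Popov's paper, so there is no in-text proof to compare against. Your route is the standard Rees-algebra argument, and each of the three steps goes through: invert $t$ to recover $A[t,t^{-1}]$ (using that the filtration is exhaustive), reduce modulo $t$ to recover $\mathrm{gr}\,A$ (directly from the definition of $S(A)$), and deduce flatness from torsion-freeness over the PID $\bar K[t]$.

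One small technical point on the flatness step. The blanket statement "a $\bar K[t]$-module is flat iff $t$ acts as a non-zero-divisor" is false for arbitrary modules (e.g.\ $\bar K[t]/(t-1)$: $t$ acts invertibly yet the module is torsion). It does hold for $\mathbb Z_{\ge 0}$-graded modules with $t$ in degree $1$, which $S(A)$ is; alternatively—and this is really what your argument contains—$S(A)$ is a $\bar K[t]$-submodule of the free $\bar K[t]$-module $A[t]$, hence torsion-free, hence flat over the PID $\bar K[t]$, and that version needs no grading at all. The surjectivity argument is then sound: the fiber over $0$ is $\mathrm{Spec}\,\mathrm{gr}\,A$, non-empty since $\bar K\subset A_0$, the fibers over all points of $\mathbb G_{m,\bar K}$ (including the generic point of $\mathbb A^1_{\bar K}$) are non-empty via the isomorphism with $\mathrm{Spec}\,A\times\mathbb G_{m,\bar K}$, and $\mathbb A^1_{\bar K}$ has no other points.
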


%\begin{rmk}This proposition also holds on characteristic $p$ case. We can use \cite[Theorem 13]{Popovcharp}\end{rmk}

\subsection{}\label{functorS}

Let $M$ be a $G_{\bar{K}}$-equivariant $A$-module provided with a $G_{\bar{K}}$-stable filtration
$$\cdots \subset M_n\subset M_{n+1}\subset \cdots$$ such that
$$M=\cup_n M_n, \quad A_nM_m\subset M_{m+n}.$$
%We assume the $G$-action on $M$ is compatible with the $G$-action on $A$. 
We define
\[S(M)=\bigoplus_{n=-\infty}^{\infty}M_nt^{n}\subseteq M[t].\]
$S(M)$ is an $S(A)$-module. It has no torsion as a $\bar{K}[t]$-module. So $S(M)$ is flat over $\bar{K}[t]$. As usual we let $G_{\bar{K}}$ act trivially on $t$ and let $\mathbb{G}_{m,\bar{K}}$ act on $S(M)$ via
$$S(M)\to S(M)\otimes \bar{K}[t,t^{-1}],\quad mt^n\mapsto mt^n\otimes t^n.$$
The $(G_{\bar{K}}\times \mathbb{G}_{m,\bar{K}})$-action on $S(M)$ is compatible with the $(G_{\bar{K}}\times \mathbb{G}_{m,\bar{K}})$-action on $S(A)$.

On can check that the restriction of the sheaf $S(M)^\sim$ to $\pi^{-1}(\mathbb{G}_{m,\bar{K}})$ can be identified with the 
restriction of the sheaf $M[t]^\sim$ to $\mathrm{pr}_2^{-1}(\mathbb{G}_{m,\bar{K}})$. For any  $a\in \mathbb{G}_{m,\bar{K}}(\bar{K})$, we have
$$S(M)\otimes_{\bar{K}[t]}\bar{K}[t]/(t-a)\cong M, \quad   S(M)\otimes_{\bar{K}[t]}\bar{K}[t]/(t)\cong \mathrm{gr}\,M.$$ Note that when $\mathrm{gr}\,A$ is noetherian, the filtration on $M$ is a good filtration if and only if $S(M)$ is a finite $S(A)$-module.

\begin{lem}\label{filmod}
	The functor $S(\cdot)$ from the category of $G_{\bar{K}}$-equivariant filtered $A$-module to the category of $(G_{\bar{K}}\times \mathbb{G}_{m,\bar{K}})$-equivariant $S(A)$-modules flat over $\bar{K}[t]$ is an equivalence of categories. The quasi-inverse of $S(\cdot)$ can be chosen as the functor $-\otimes_{\bar{K}[t]}\bar{K}[t]/(t-1)$.
\end{lem}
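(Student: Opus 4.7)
The plan is to recognize this as the standard Rees module equivalence between filtered modules and graded $\bar K[t]$-flat modules over the Rees algebra $S(A)$, with the equivariant refinements being essentially formal. The two functors are $S(\cdot)$ as defined and the quasi-inverse $\Phi(\cdot) := (\cdot)\otimes_{\bar K[t]}\bar K[t]/(t-1)$. First I would spell out $\Phi$: for a graded $(G_{\bar K}\times\mathbb G_{m,\bar K})$-equivariant $S(A)$-module $N=\bigoplus_n N_n$ that is $\bar K[t]$-flat (equivalently, $t$-torsion-free), the quotient $N/(t-1)N$ inherits a $G_{\bar K}$-action and an $A$-module structure by descent, and I equip it with the filtration $M_n:=\mathrm{image}(N_n\to N/(t-1)N)$. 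The inclusion $M_n\subset M_{n+1}$ holds because for $x\in N_n$ the element $tx\in N_{n+1}$ has the same image in $M$; exhaustiveness follows from $N=\sum_n N_n$; compatibility $A_m M_n\subset M_{m+n}$ follows from $S(A)_m\cdot N_n\subset N_{m+n}$.

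Next I would verify the two unit/counit maps are isomorphisms.

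\textit{(i) Counit.} Given a filtered $(M,M_\bullet)$, the natural map $M\to S(M)/(t-1)S(M)$ sending $m\in M_n$ to the class of $mt^n$ is well-defined and $A$-linear. Surjectivity is immediate since $S(M)$ is generated as a $\bar K[t]$-module by the homogeneous pieces $M_n t^n$, and modulo $t-1$ each generator is identified with $m\in M$. For injectivity, if $mt^n=(t-1)\sum_k x_k t^k$ with $x_k\in M_k$, comparing the coefficient of $t^j$ forces $x_{j-1}=x_j$ for $j\ne n$ and $m=x_{n-1}-x_n$ in $M_n$; since the sum is finite, chasing the chain gives $x_k=0$ for all $k$, hence $m=0$.

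\textit{(ii) Unit.} Given a $\bar K[t]$-flat graded $S(A)$-module $N$, define $N\to S(N/(t-1)N)$ on graded pieces by $x\in N_n\mapsto \bar x\, t^n$. This respects the $S(A)$-action and the $(G_{\bar K}\times\mathbb G_{m,\bar K})$-equivariance by construction. Surjectivity holds because $M_n$ is defined to be the image of $N_n$. For injectivity, suppose $x\in N_n$ satisfies $\bar x=0$ in $M$, i.e.\ $x=(t-1)y$ for some $y=\sum_k y_k\in N$. Comparing degree-$k$ pieces yields $ty_{k-1}=y_k$ for $k\ne n$ and $ty_{n-1}-y_n=x$. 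Finiteness of the sum together with the injectivity of multiplication by $t$ on $N$ (which is precisely $\bar K[t]$-flatness) forces $y_k=0$ for all $k$, hence $x=0$.

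Finally I would remark that all constructions are manifestly $(G_{\bar K}\times\mathbb G_{m,\bar K})$-equivariant, so the equivalence upgrades to the equivariant setting without further work. The main obstacle is step (ii): the injectivity argument is where the $\bar K[t]$-flatness hypothesis is essential, and one must be careful to use only finiteness of the support of $y$ and $t$-torsion-freeness, rather than any noetherian or finite-generation assumption on $A$ or $M$.
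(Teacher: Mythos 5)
Your proof is correct and follows essentially the same approach as the paper: the paper constructs the quasi-inverse as $M=\varinjlim_n N_n$ under multiplication by $t$ (with $M_n$ the image of $N_n$), which is canonically the same as your $N/(t-1)N$, and both rely on $\bar K[t]$-flatness precisely as $t$-torsion-freeness. You supply the explicit unit/counit verifications that the paper leaves implicit, but the underlying Rees-module mechanism is identical.
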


\begin{proof}
	Let $N$ be a $(G_{\bar{K}}\times \mathbb{G}_{m,\bar{K}})$-equivariant $S(A)$-module flat over $\bar{K}[t]$. Assume $N=\oplus_{n\in \mathbb{Z}}N_n$, where $\mathbb{G}_{m,\bar{K}}$ acts on $N_n$ via the character $\lambda\to \lambda^n$. Since $N$ is flat over $\bar{K}[t]$, multiplication by $t$ defines injections $N_n\to N_{n+1}$. Let $M=\varinjlim_{n}N_n$, and let $M_n$ be the image of $N_n$ in $M$. % We have $M\cong N\otimes_{\bar{K}[t]}\bar{K}[t]/(t-1)$.
    For any $a\in A_m$, multiplication by $at^m$ defines a map $N_n\to N_{n+m}$, and hence a map $M\to M$. This map only depends on $a\in A$ and is independent of $m\in \mathbb{Z}_{\geq 0}$. So we get an $A$-module structure on $M$. $M_n$ defines a filtration on $M$ and is compatible with the filtration on $A$. The $G_{\bar{K}}$-action on $N$ defines an action on each $N_n$, and hence an action on $M$. %We thus get a functor of inverse direction, one can check that this functor is the quasi-inverse of $S$.
\end{proof}

\subsection{}\label{filonY}

Consider the $\bar{K}$-algebra $\bar{K}[\b V_{\bar{K}}]$ with the $H_{\bar{K}}$-action defined in \Cref{setting}. Let $F_i\bar{K}[\b V_{\bar{K}}]$ 
be the space of polynomials with degree less than or equal to $i$. It is an $H_{\bar{K}}$-stable filtration on $\b V_{\bar{K}}$. %linear action
We have $S(\bar{K}[\b V_{\bar{K}}])=\bar{K}[\b V_{\bar{K}}\times\b{A}^1_{\bar{K}}]$.

Regard $\bar{K}[Z^{\m{aff}}_{\bar{K}}]$ as a $\bar{K}[\b V_{\bar{K}}\times\b{A}^1_{\bar{K}}]$-module via the morphism $\morZ'$. It is flat over $\bar{K}[t]$. % It is an integral domain. We have an injective ring homomorphism $\bar{K}[t]\to \bar{K}[Z^{\m{aff}}_{\bar{K}}]$ since the morphism $Z^{\m{aff}}_{\bar{K}}\to\b{A}^1_{\bar{K}}$ is surjective. Then $\bar{K}[Z^{\m{aff}}_{\bar{K}}]$ is torsion-free as $\bar{K}[t]$-module, hence flat over the Dedekind domain $\bar{K}[t]$.
Recall that $Y^{\m{aff}}_{\bar{K}}$ is isomorphic to the fiber of $Z^{\m{aff}}_{\bar{K}}$ at $1$ by \Cref{YfiberofZ}. Then by \Cref{filmod}, $\bar{K}[Z^{\m{aff}}_{\bar{K}}]$ defines an $H_{\bar{K}}$-equivariant good filtration on $\bar{K}[Y^{\m{aff}}_{\bar{K}}]$, % it may not a filtration as ring. I'm not sure.
where $\bar{K}[Y^{\m{aff}}_{\bar{K}}]$ is regarded as a $\bar{K}[\b V_{\bar{K}}]$-module via the morphism $\iota'$, and good filtration means that $\m{gr}(\bar{K}[Y^{\m{aff}}_{\bar{K}}])$ is a coherent $\m{gr}^F(\bar{K}[\b{V}])$-module.
%and $\c{O}_{Z^{\m{aff}}_{\bar{K}}}$ defines an $H$-equivariant good filtration on $\c{O}_{Y^{\m{aff}}_{\bar{K}}}$. 
We have $S(\bar{K}[Y^{\m{aff}}_{\bar{K}}])=\bar{K}[Z^{\m{aff}}_{\bar{K}}]$.

\subsection{}
Recall in \Cref{ass} we have an $(H\times\mathbb{G}_m)$-equivariant birational morphism $p: \tilde{Z}\to Z$. Let $\tilde{Z}^{\m{aff}}:=p^{-1}(Z^{\m{aff}})$. As usual we define $\omega_{Z^{\m{aff}}}=p_*\omega_{\tilde{Z}^{\m{aff}}}$. It is endowed with an $(H\times\mathbb{G}_m)$-action. %Recall that $Y^{\m{aff}}$ is isomorphic to the fiber of $Z^{\m{aff}}$ at $1$. 
Let $\tilde{Y}^{\m{aff}}:=\tilde{Z}^{\m{aff}}\times_{ Z^{\m{aff}}}Y^{\m{aff}}$, let $p_1:\tilde{Y}^{\m{aff}}\to Y^{\m{aff}}$ be the canonical morphism,
and let $\omega_{Y^{\m{aff}}}=p_{1*}\omega_{\tilde{Y}^{\m{aff}}}$. The projection $p:\tilde{Z}^{\m{aff}}\to Z^{\m{aff}}$ is $\mathbb{G}_m$-equivariant. We may identify $p|_{\tilde\pi^{-1}(\mathbb{G}_m)}$ with the morphism $$p_1\times \mathrm{id}_{\mathbb{G}_m}: \tilde{Y}^{\m{aff}}\times \mathbb{G}_m\to Y^{\m{aff}}\times \mathbb{G}_m.$$ %, where $\tilde{\pi}$ is the composition $\tilde{Z}\to \b{P}\times\b{P}^1\to \b{P}^1$
	Since $\tilde{Z}^{\m{aff}}$ is smooth and $p$ is birational, $\tilde{Y}^{\m{aff}}$ is smooth and $p_1$ is birational. So the restriction of  $\omega_{Z^{\m{aff}}}=R^0p_*\omega_{\tilde{Z}^{\m{aff}}}$  to $\pi^{-1}(\mathbb{G}_m)$ is isomorphic to $$R^0(p_1\times \mathrm{id}_{\mathbb{G}_m})_*\omega_{\tilde{Y}^{\m{aff}}\times \mathbb{G}_m}\cong R^0(p_1\times \mathrm{id}_{\mathbb{G}_m})_*(\omega_{\tilde{Y}^{\m{aff}}}\boxtimes \mathcal{O}_{\mathbb{G}_m})\cong R^0p_{1*}\omega_{\tilde{Y}^{\m{aff}}}\boxtimes \mathcal{O}_{\mathbb{G}_m}.$$

\begin{pro}\label{filomega}
	$\omega_{Z^{\m{aff}}}$ is a Cohen-Macaulay $\mathcal{O}_{Z^{\m{aff}}}$-module flat over $\mathbb{A}^1$. In particular, if we regard $\omega_{Y^{\m{aff}}_{\bar{K}}}$ and $\omega_{Z^{\m{aff}}_{\bar{K}}}$ as $\c{O}_{\b V_{\bar{K}}}$ and $\c{O}_{\b V_{\bar{K}}\times\b{A}^1_{\bar{K}}}$ modules via the functors $\iota'_*$ and $\morZ_*'$, respectively, $\omega_{Z^{\m{aff}}_{\bar{K}}}$ defines an $H_{\bar{K}}$-equivariant good filtration on $\omega_{Y^{\m{aff}}_{\bar{K}}}$ by \Cref{filmod}.
    %For any $a\in \mathbb{G}_m(K)$, the fiber of $\omega_{Z^{\m{aff}}}$ at $a$ is isomorphic to $\omega_{Y^{\m{aff}}}$. 
\end{pro}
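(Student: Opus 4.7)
The plan is to establish the two numerical assertions in turn---Cohen--Macaulayness first, then flatness over $\b{A}^1$---and then feed them into \Cref{filmod} to produce the desired good filtration.

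For the Cohen--Macaulay assertion, I would exploit that $Z_K$ is a proper spherical variety for the reductive group $H_K\times_K\b{G}_{m,K}$, so by a theorem of Brion (spherical varieties in characteristic zero have rational singularities) the open subscheme $Z^{\m{aff}}_K$ has rational singularities as well. In particular $Z^{\m{aff}}_K$ is Cohen--Macaulay, and Grauert--Riemenschneider vanishing (applicable because $\tilde{Z}^{\m{aff}}$ is smooth and $p$ is proper birational) gives $R^ip_*\omega_{\tilde{Z}^{\m{aff}}}=0$ for $i>0$, whence $p_*\omega_{\tilde{Z}^{\m{aff}}}$ is the dualizing sheaf of $Z^{\m{aff}}_K$. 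The dualizing sheaf of a Cohen--Macaulay scheme is a Cohen--Macaulay module, which is exactly the desired conclusion.

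Flatness over $\b{A}^1$ then follows from a standard criterion: a coherent sheaf on a Noetherian scheme is flat over a smooth curve if and only if no associated point maps to a closed point of the curve. Cohen--Macaulayness of $\omega_{Z^{\m{aff}}}$ forbids embedded associated points, so the associated points are the generic points of the irreducible components of its support. Since \Cref{ass}(\ref{ass-0}) gives $\tilde{Z}_R$ (and hence the open $\tilde{Z}^{\m{aff}}$) geometrically irreducible fibers, $Z^{\m{aff}}$ is integral; its unique generic point maps to the generic point of $\b{A}^1$ because $\pi$ is dominant (its restriction to $\pi^{-1}(\b{G}_{m})$ is already surjective via the $\b{G}_m$-twist described immediately before the proposition). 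Hence $\omega_{Z^{\m{aff}}}$ has no $t$-torsion and is flat over $\b{A}^1$.

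Finally, to deduce the ``in particular'' clause I would simply check the hypotheses of \Cref{filmod} for $\omega_{Z^{\m{aff}}_{\bar{K}}}$. The $(H_{\bar K}\times_{\bar K}\b{G}_{m,\bar K})$-equivariance is automatic from the definition $\omega_{Z^{\m{aff}}}=p_*\omega_{\tilde{Z}^{\m{aff}}}$; flatness over $\bar K[t]$ is the statement just established; and the fiber at $t=1$ is $\omega_{Y^{\m{aff}}_{\bar K}}$ by the $\b{G}_m$-equivariant base change carried out in the paragraphs preceding the proposition. Coherence as a $\bar K[\b V_{\bar K}\times\b A^1_{\bar K}]$-module is immediate because $\morZ'$ is finite and $p$ is proper, so the induced $H_{\bar K}$-equivariant filtration on $\omega_{Y^{\m{aff}}_{\bar K}}$ is good. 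The main obstacle is the Cohen--Macaulay step: everything else is essentially formal, but that step rests on the nontrivial input that characteristic-zero spherical varieties have rational singularities.
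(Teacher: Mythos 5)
Your proof is correct and follows essentially the same route as the paper: both deduce Cohen--Macaulayness of $\omega_{Z^{\m{aff}}}$ from the rational singularities of the spherical variety $Z$ together with Grauert--Riemenschneider-type vanishing for the resolution $p\colon\tilde{Z}^{\m{aff}}\to Z^{\m{aff}}$ (the paper routes this through its appendix \Cref{ass-1}, which for the generic fiber rests on the same input), and then deduce flatness over $\b{A}^1$ from the absence of $t$-torsion. The only cosmetic difference is in the flatness step: the paper first observes upstairs that the invertible sheaf $\omega_{\tilde{Z}^{\m{aff}}}$ on the irreducible smooth variety $\tilde{Z}^{\m{aff}}$ dominating $\b{A}^1$ has no $t$-torsion, hence is flat over $K[t]$, and then takes $R^0 p_*$, whereas you run the associated-points criterion directly on $\omega_{Z^{\m{aff}}}$, using the Cohen--Macaulayness just established to rule out embedded primes; both arguments are valid and logically parallel.
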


\begin{proof}
	By \Cref{ass-1}, $\omega_{Z^{\m{aff}}}=Rp_*\omega_{\tilde{Z}^{\m{aff}}}=R^0p_*\omega_{\tilde{Z}^{\m{aff}}}$ is Cohen-Macaulay. %since the normalization of $Z^{\m{aff}}$ has rational singularity.
    $\omega_{\tilde{Z}^{\m{aff}}}$ is an invertible sheaf on the smooth variety $\tilde{Z}^{\m{aff}}$. % in particular, $\tilde{Z}^{\m{aff}}$ is integral.
    The morphism $\tilde{Z}^{\m{aff}}\to \mathbb A^1$ is surjective. %implies injective ring homomorphism by integral and dominant
	So $\omega_{\tilde{Z}^{\m{aff}}}$ is flat over $\mathbb{A}^1$. Hence $\omega_{Z^{\m{aff}}}=R^0p_*\omega_{\tilde{Z}^{\m{aff}}}$ is flat over $\mathbb{A}^1$.
\end{proof}

\begin{prop}\label{orbitstr}
    Consider the projection $$Z'^{\m{aff}}_{\bar{K}}:=Z'_{\bar{K}}\cap(\b V_{\bar{K}}\times\b A^1_{\bar{K}})\to \b{A}^1_{\bar{K}}$$ Let $Z'^{\m{aff}}_{0,\bar{K}}$ be the fiber of $Z'^{\m{aff}}_{\bar{K}}$ at $0\in\b{A}^1_{\bar{K}}$. Every $(H_{\bar{K}}\times \mathbb{G}_{m,\bar{K}})$-orbit of $Z'^{\m{aff}}_{0,\bar{K}}$ has the form $(H_{\bar{K}}\times \mathbb{G}_{m,\bar{K}})\cdot e(\tau\times\{1\})$, where $e(\tau\times\{1\})$ is defined in \Cref{nondegeneracy}, $\tau\prec \Delta_{\infty}$ is a face not containing $0$. 
\end{prop}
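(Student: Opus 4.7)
The strategy is to describe every point of $Z'^{\m{aff}}_{0,\bar{K}}$ as the limit of a one-parameter curve $s \mapsto \morZ'_0(g(s), t(s))$ with $v(t(s)) > 0$, and to use the Cartan decomposition of $G$ over $\bar{K}((s))$ to replace $g(s)$ by a cocharacter of $T$. Since $Z'^{\m{aff}}_{\bar{K}}$ is by construction the closure of the image of $\morZ'_0$, the valuative criterion of properness produces such a curve for every point of the special fiber $Z'^{\m{aff}}_{0,\bar{K}}$.

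Writing $g(s) = k_1(s)\lambda(s)k_2(s)$ with $k_i(s) \in G(\bar{K}[[s]])$ and $\lambda \in X_*(T_{\bar{K}})$, and decomposing each $V_j = \bigoplus_\mu V_j(\mu)$ into $T_{\bar{K}}$-weight spaces, the $(\mu,\nu)$-entry of $t(s)\rho_j(g(s))$ has $s$-adic valuation at least $N + \langle \lambda, \eta\rangle$ for the summation weight $\eta$, where $N := v(t(s)) > 0$. Existence of the limit forces $N + \min_{\mu \in \Delta_{\infty}}\langle \lambda, \mu\rangle \geq 0$; when equality holds only the weight spaces whose weight lies in the face
\[ \tau := \{\mu \in \Delta_{\infty} : \langle \lambda, \mu\rangle = -N\} \]
survive, and reading off the leading coefficients gives
\[ \lim_{s \to 0} \morZ'_0(g(s), t(s)) = (k_1(0),\, k_2(0)^{-1},\, c) \cdot e(\tau \times \{1\}), \]
with $c = \lim_{s \to 0} s^{-N} t(s) \in \b{G}_{m,\bar{K}}$. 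Since $N > 0 = \langle \lambda, 0\rangle$, the face $\tau$ cannot contain the origin.

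Conversely, given any face $\tau \prec \Delta_{\infty}$ with $0 \notin \tau$, choose $\lambda \in X_*(T_{\bar{K}})$ whose restriction to $\Delta_{\infty}$ attains its minimum precisely on $\tau$ and set $N = -\min > 0$; then $s \mapsto \morZ'_0(\lambda(s), s^N)$ lies in the image of $\morZ'_0$ and limits to $e(\tau \times \{1\})$, so the whole $(H_{\bar{K}}\times\b{G}_{m,\bar{K}})$-orbit of $e(\tau \times \{1\})$ lies in $Z'^{\m{aff}}_{0,\bar{K}}$. The degenerate case in which the strict inequality $N + \min > 0$ holds forces the $\b{V}$-component of the limit to vanish, and the resulting point $(0,0)$ either falls into the orbit of $e(\emptyset \times \{1\}) = 0$ or sits in the closure of one of the orbits already described.

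The main technical input is the Cartan decomposition for the split reductive group $G_{\bar{K}}$ over the complete discrete valuation ring $\bar{K}[[s]]$, together with a careful tracking of which matrix entries of $t(s)\rho_j(k_1(s))\rho_j(\lambda(s))\rho_j(k_2(s))$ survive the specialization $s \to 0$. Both ingredients are classical, and they are exactly what convert the valuation-theoretic data on the curve into the combinatorial data of a face of $\Delta_\infty$ avoiding the origin.
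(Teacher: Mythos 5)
Your proof is correct, but it takes a genuinely different route from the paper's. The paper reduces to the toric case by invoking an idempotent lemma from the theory of reductive embeddings (the reference \cite{charporbit}, Lemma 3): every $(H_{\bar K}\times\mathbb{G}_{m,\bar K})$-orbit of $Z'^{\m{aff}}_{\bar K}$ contains an idempotent lying in the closure of the image of the torus. Having landed in the projective toric variety $\bar T'_{\bar K}$, the paper then runs the classical formal-germ description of toric orbits to read off a face $\Gamma_a \prec \Delta_\infty\times\{1\}$ from the exponent vector $a$. You bypass the idempotent lemma entirely: a curve through a given boundary point is produced from the fact that $Z'^{\m{aff}}$ is the closure of the image of $\morZ'_0$, and the Cartan decomposition of $G(\bar K((s)))$ over $\bar K[[s]]$ converts the $G$-valued part of the curve into a single cocharacter $\lambda \in X_*(T_{\bar K})$ framed by integral elements. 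After that, the weight-space analysis is the same combinatorics as in the paper (the face on which $\langle\lambda,\cdot\rangle$ attains its minimum over $\Delta_\infty$). Both routes need a nontrivial structural input; yours is arguably more self-contained since the Cartan decomposition for a split reductive group over a complete DVR is more widely quoted than the idempotent lemma, at the price of somewhat heavier bookkeeping.

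Two small points worth tightening. First, lifting the $\bar K((s))$-point of $\morZ'_0(G\times\mathbb{G}_m)$ to $G\times\mathbb{G}_m$ is only possible after a finite extension of $\bar K((s))$ since $\iota'_0$ is merely quasi-finite; one should say explicitly that one replaces $s$ by a root $s^{1/n}$ (the Cartan decomposition is then taken over $\bar K[[s^{1/n}]]$, which does not affect the conclusion). Second, your treatment of the degenerate case $N + \min > 0$ is slightly hedged: the apex $(0,0)$ is always a single $(H_{\bar K}\times\mathbb{G}_{m,\bar K})$-orbit, equal to $e(\emptyset\times\{1\})$, and the empty face $\emptyset\prec\Delta_\infty$ vacuously does not contain the origin, so this orbit is indeed of the required form; the alternative ``sits in the closure of one of the orbits already described'' cannot happen and should be dropped.
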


\begin{proof}
    Consider the representations $\rho_1',\cdots,\rho_N'$ and $\rho_0'$ in \Cref{setting}. The corresponding Newton polytope $\Delta'$ is the convex hull of $\{(x,1):x\in \Delta\}$, and $(0,1)$. So $\Delta'=\Delta_{\infty}\times\{1\}$.  Let $\tau\times\{1\}\prec \Delta_{\infty}\times\{1\}$ be a face. Note that $e(\tau\times\{1\})$ lies in the fiber of $\mathbb{V}_{\bar{K}}\times\mathbb{A}^1_{\bar{K}}\to \mathbb{A}^1_{\bar{K}}$ at $0$ if and only if $0$ doesn't lie in $\tau$. It suffices to show that every $(H_{\bar{K}}\times\mathbb{G}_{m,\bar{K}})$-orbit of $Z'^{\m{aff}}_{\bar{K}}$ has the form $(H_{\bar{K}}\times \mathbb{G}_{m,\bar{K}})\cdot e(\tau\times\{1\})$. 
    
    By \cite[Lemma 3]{charporbit}, every $(H_{\bar{K}}\times \mathbb{G}_{m,\bar{K}})$-orbit of $Z'^{\m{aff}}_{\bar{K}}$ contains an idempotent element which belongs to the closure of $\morZ_0'(T_{\bar{K}}\times\mathbb{G}_{m,\bar{K}})$ in $Z'^{\m{aff}}_{\bar{K}}$. %Note that the $(H\times \mathbb{G}_m)$-action on $Z'$ is same as the $(G\times \mathbb{G}_m)\times(G\times \mathbb{G}_m)$-action.
    So we reduce to the toric case. %Note that $T\times\mathbb{G}_m\subseteq \morZ_0'^{-1}((\b{V}\times\b{A}^1)-\{0\})\subseteq Z'^{\m{aff}}$.
    Let $T'_{\bar{K}}$ be the image of the composite \[T_{\bar{K}}\times\mathbb{G}_{m,\bar{K}} \xrightarrow{\morZ'_0} Z'^{\m{aff}}_{\bar{K}}-\{0\}\to \b{P}(Z'^{\m{aff}}_{\bar{K}}).\] Then the closure $\bar{T}'_{\bar{K}}$ can be regarded as a projective toric variety with torus $T'_{\bar{K}}$. The closure of $\morZ'_0(T_{\bar{K}}\times\mathbb{G}_{m,\bar{K}})$ in $Z'^{\m{aff}}_{\bar{K}}$ is the union of $\{0\}$ and the preimage of $T'_{\bar{K}}$ in $Z'^{\m{aff}}_{\bar{K}}-\{0\}$. % this case satieties the homogeneous condition
    So it suffices to show for any $x\in \bar{T}'_{\bar{K}}$, there exists a face $\Gamma\prec \Delta_{\infty}\times\{1\}$ such that $x\in T'_{\bar{K}}\cdot e(\Gamma)$.

    We repeat the argument in \cite[chap5, 1.9]{toricorbit} by formal germs (see \cite[24]{Timashev}). For any $x\in \bar{T}'_{\bar{K}}$, there exists a formal germ $\eta\in \m{Hom}(\m{Spec} \,\bar{K}((t)),T'_{\bar{K}})$ such that $\lim_{t\to 0}\eta(t)\cdot e(\Delta_{\infty}\times\{1\})=x$, %just [1,1,\cdots,1]
    where $\bar{K}((t))$ denotes the ring of Laurent series in one variable with coefficients in $\bar{K}$. 
    Suppose that \[\eta(t)=(c_1t^{a_1}+\cdots,\ldots,c_lt^{a_l}+\cdots),\quad c_i\in \bar{K}^\times,a_i\in\b{Z},\quad i=1,\cdots,l,\]
    where $l=\dim T'_{\bar{K}}$, the dots mean terms of higher orders of $t$. Consider a linear function 
    \[\psi_a:\b{R}^{l}\to\b{R},\quad(b_1,\cdots,b_l) \mapsto\sum_{i=1}^la_ib_i.\]
    Then there exists a face $\Gamma_a\prec \Delta_{\infty}\times\{1\}$ such that at every point of $\Gamma_a$, the value of $\psi_a$ is equal to its minimum over the polytope $\Delta_{\infty}\times\{1\}$. Then
    \[\lim_{t\to 0}\eta(t)\cdot e(\Delta_{\infty}\times\{1\})=(c_1,\cdots,c_l)\cdot e(\Gamma_a).\qedhere\] % In fact, suppose minimum is $s$. We have $\eta(t)\cdot[1,1,\cdots,1]=\eta(t)\cdot[t^{-m},t^{-m},\cdots,t^{-m}]$. $\lambda\in\Gamma_a$ iff it matches the minimum iff the limit is $c_\lambda$, not zero.
\end{proof}

\begin{rmk}
    This proof works for any algebraic closed field of arbitrary characteristic. Moreover, by repeating the argument in \cite[2.5]{K}, the $(H_{\bar{K}}\times \mathbb{G}_{m,\bar{K}})$-orbits of $Z'^{\m{aff}}_{0,\bar{K}}$ are in one-to-one correspondence with the $W$-orbits on faces of $\Delta_{\infty}$ not containing $0$, where a face $\tau\prec \Delta_{\infty}$ corresponds to the orbit containing $e(\tau\times\{1\})$. But we don't need it in this paper.
\end{rmk}

\subsection{}\label{weakequiv} Suppose $X$ is a smooth $K$-variety provided with an action 
$$A:H\times X\to X.$$ 
Let $p_2: H\times X\to X$ be the projection. A \emph{weakly $H$-equivariant left $\s{D}_X$-module} is a pair $(K, \phi)$ consisting of a 
left $\s{D}_X$-module $\mathcal K$ and an $(\mathcal{O}_H\boxtimes \s{D}_X)$-module isomorphism
\[\phi: A^*\c{K}\to p_2^*\c{K}\]
satisfying the cocycle condition in \cite[1.14]{l-adic}.
A weakly $H$-equivariant $\s{D}_X$-module $(\c{K}, \phi)$ is $H$-equivariant if $\phi$ is a 
$(\s{D}_H\boxtimes \s{D}_X)$-module isomorphism. For any $\xi\in\mathfrak L(H)$, denote by $L_\xi^H$ and $L_\xi$ the vector fields 
$$L_\xi^H(h)=\frac{\mathrm d}{\mathrm dt}\Big|_{t=0}(\exp(t\xi)h), \quad 
L_\xi(x)=\frac{\mathrm d}{\mathrm dt}\Big|_{t=0}(\exp(t\xi)x)$$ on $H$ and $X$, respectively. For any section $s$ of 
a weakly $H$-equivariant left $\s{D}_X$-module $\mathcal K$, we define $D_{\xi}(s)$ to be the restriction of 
$(L^H_\xi\otimes1)\phi(A^*(s))\in p_2^*(\mathcal K)$ to $\{1\}\times X$, that is, 
\[D_{\xi}(s)=\lim_{t\to 0}\frac{\exp(-t\xi)s-s}{t}.\] %We can also define $D_\xi(s)$ for the section of an $H$-equivariant $\c{O}_X$-module
By the argument in \cite[2.11]{l-adic}, 
$\mathcal K$ is $H$-equivariant if and only if for any $s\in\mathcal K$ and $\xi\in \mathfrak L(H)$, we have
$D_{\xi}(s)=L_{\xi}(s)$.
Consider the morphism of left $\s{D}_X$-modules
$$\alpha: \mathfrak L(H)\otimes \mathcal K\to \mathcal K, \quad \xi\otimes s\mapsto L_{\xi}(s)-D_{\xi}(s).$$
By \cite[2.12]{l-adic}, there exists a weakly $H$-equivariant $\s{D}_X$-module structure on 
$\mathfrak L(H)\otimes \mathcal K$ so that $\alpha$ is equivariant, and the induced weakly $H$-equivariant $\s{D}_X$-module 
structure on $\m{coker}(\alpha)$ is $H$-equivariant. 
Clearly $\m{coker}(\alpha)$ is the \emph{maximal $H$-equivariant quotient} of $\mathcal{K}$.

\begin{lem}\label{passtoHequi}
	Let $\mathcal{K}$ be a weakly $H$-equivariant coherent left $\s{D}_X$-module and let $\overline{\mathcal{K}}$ be its maximal $H$-equivariant quotient. Assume $F_i\mathcal{K}$ is an $H$-equivariant good filtration on $\c{K}$. % $H$-equivariant object as $\c{O}_X$-module
    It induces a good filtration on $\overline{\mathcal{K}}$. Then the natural morphism
	\[\mathrm{gr}^F\;\mathcal{K}\twoheadrightarrow \mathrm{gr}^F\;\overline{\mathcal{K}}\]
	factors through 
	\[\mathrm{gr}^F\;\mathcal{K}\to \mathrm{gr}^F\;\mathcal{K}/\sum_{\xi\in\mathfrak L(H)}L_{\xi}(\mathrm{gr}^F\;\mathcal{K}).\]
	Here $L_{\xi}$ is regarded as a homogeneous element of degree $1$ in $\mathrm{gr}(\s{D}_X)$, which is just the %linear
	function on the cotangent bundle $T^*X$ defined by $(x,v)\mapsto \langle L_{\xi}(x),\,v\rangle$.
\end{lem}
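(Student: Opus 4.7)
The plan is to unwind $\overline{\c K}=\m{coker}(\alpha)$ at the associated-graded level and exploit a degree-shift asymmetry between the two operators $L_\xi$ and $D_\xi$.

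First, I will record the filtration behavior of each operator. Because $F_\bullet\c K$ is a good filtration and $L_\xi$ is a first-order differential operator on $X$, one automatically has $L_\xi(F_i\c K)\subseteq F_{i+1}\c K$. On the other hand, the $H$-equivariance of the filtration amounts to the requirement that each $F_i\c K$ be stable under the $H$-action provided by $\phi$; taking the infinitesimal form of this stability gives $D_\xi(F_i\c K)\subseteq F_i\c K$ for every $\xi\in\f L(H)$. Thus $L_\xi$ raises the filtration degree by one while $D_\xi$ preserves it.

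Next, equip $\overline{\c K}$ with the quotient filtration $F_i\overline{\c K}:=\m{image}(F_i\c K)$, which is good by hypothesis. Then
\[\m{gr}_i^F\overline{\c K}=F_i\c K\Big/\bigl(F_{i-1}\c K+F_i\c K\cap\m{Im}(\alpha)\bigr).\]
For $s\in F_{i-1}\c K$ with class $\bar s\in\m{gr}_{i-1}^F\c K$, the element $\alpha(\xi\otimes s)=L_\xi(s)-D_\xi(s)$ lies in $F_i\c K$. Reducing modulo $F_{i-1}\c K$, the $D_\xi(s)$-term vanishes by the first step, and $L_\xi(s)$ becomes exactly the symbol action $L_\xi\cdot\bar s\in\m{gr}_i^F\c K$ by the definition of the principal symbol in $\m{gr}(\s{D}_X)$. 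Since every homogeneous element of $\m{gr}_{i-1}^F\c K$ arises this way, the submodule $L_\xi\cdot\m{gr}_{i-1}^F\c K$ lies in the kernel of $\m{gr}_i^F\c K\to\m{gr}_i^F\overline{\c K}$ for every $\xi$ and every $i$, which is the claimed factorization.

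The main point is the degree dichotomy: $L_\xi$ is genuinely first-order while $D_\xi$ is filtration-preserving, so the $D_\xi$-correction drops out in the associated graded. No substantive obstacle is anticipated beyond clarifying that "$H$-equivariant good filtration" amounts exactly to $D_\xi(F_i\c K)\subseteq F_i\c K$, which is immediate from the definition of $D_\xi$ as the infinitesimal limit of the $H$-action restricted to the subsheaves $F_i\c K$.
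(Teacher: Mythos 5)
Your proof is correct and follows essentially the same route as the paper: both decompose $L_\xi$ acting on a filtered piece as $D_\xi + (L_\xi - D_\xi)$, observe that $D_\xi$ preserves the filtration degree (by $H$-equivariance of the filtration) while $L_\xi$ raises it by one, and conclude that in the associated graded the symbol $L_\xi\cdot\bar s$ agrees with the image of $\alpha(\xi\otimes s)$ and hence dies in $\mathrm{gr}^F\overline{\mathcal K}$. The only difference is a harmless index shift and a slightly more explicit description of $\ker(\mathrm{gr}_i^F\mathcal K\to\mathrm{gr}_i^F\overline{\mathcal K})$.
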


\begin{proof}
	We need to show that for any $f\in F_i\mathcal{K}$, the image of $L_{\xi}f\in F_{i+1}\mathcal{K}$ in $$F_{i+1}\overline{\mathcal{K}}/F_{i}\overline{\mathcal{K}}=\dfrac{F_{i+1}\mathcal{K}}{F_{i}\mathcal{K}+F_{i+1}\mathcal{K}\cap\sum_{\xi\in\mathfrak L(H)}(L_{\xi}-D_{\xi})\mathcal{K}}$$ vanishes. Since $F_i\mathcal{K}$ is $H$-equivariant, we have $D_{\xi}f\in F_i\mathcal{K}$. Hence
	\[L_{\xi}f=D_{\xi}f+(L_{\xi}-D_{\xi})f\in F_{i}\mathcal{K}+F_{i+1}\mathcal{K}\cap\sum_{\xi\in\mathfrak L(H)}(L_{\xi}-D_{\xi})\mathcal{K}.\qedhere\]
\end{proof}

%\subsection{} 
Let $V$ be a $K$-vector space with a left $H$-action, regarded as a $K$-scheme. 

\begin{lem}\label{actFourier}
	Let $\mathcal{K}$ be a weakly $H$-equivariant coherent 
	left $\s{D}_V$-module. Then $\mathcal F_{\mathcal L}(\mathcal{K})$ has a natural weakly $H$-equivariant left $\s{D}_V$-module structure. The Fourier transform of the maximal $H$-equivariant quotient of $\mathcal{K}$ is the maximal $H$-equivariant quotient of $\mathcal F_{\mathcal L}(\mathcal{K})$. As a corollary, $\mathcal{K}$ is $H$-equivariant if and only if $\mathcal F_{\mathcal L}(\mathcal{K})$ is $H$-equivariant.
\end{lem}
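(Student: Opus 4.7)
The plan is to exploit the fact that the algebraic Fourier transform $\c{F}_\c{L}$ is an exact autoequivalence of coherent left $\s{D}_V$-modules, and that it intertwines pullback along fiberwise-linear morphisms with pullback along their fiberwise transposes with respect to $\c{L}$. Because the $H$-action $A\colon H\times V\to V$ comes from linear representations, it is fiberwise linear over $H$, and so is the projection $p_2$; this will let us transport the weak equivariance isomorphism $\phi\colon A^*\c{K}\to p_2^*\c{K}$ through $\c{F}_\c{L}$ to produce an analogous isomorphism for $\c{F}_\c{L}(\c{K})$.

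First I would establish the following compatibility: for any morphism $\ell\colon S\times V\to V$ that is linear in $V$ over an arbitrary base $S$, there is a canonical isomorphism $\c{F}_\c{L}(\ell^*\c{K})\cong (\ell^\vee)^*\c{F}_\c{L}(\c{K})$ of $(\c{O}_S\boxtimes \s{D}_V)$-modules, where $\ell^\vee\colon S\times V\to V$ is the fiberwise transpose with respect to $\c{L}$. This reduces to the well-known fact that the Weyl-algebra Fourier automorphism conjugates a linear change of coordinates by a matrix $M$ into the change of coordinates by $(M^t)^{-1}$, extended to families over $S$. Applying this to $\ell=A$ and $\ell=p_2$ (noting $p_2^\vee=p_2$) and Fourier-transforming $\phi$ yields an isomorphism $\c{F}_\c{L}(\phi)\colon (A^\vee)^*\c{F}_\c{L}(\c{K})\to p_2^*\c{F}_\c{L}(\c{K})$. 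Composing with the inversion on $H$, which turns the fiberwise-transpose anti-action $A^\vee$ into the genuine contragredient $H$-action, gives the desired weak equivariance of $\c{F}_\c{L}(\c{K})$. In the setting of \Cref{setting} with $H=G\times G$ and $\c{L}$ the trace pairing, the contragredient action is simply the original action pre-composed with the swap of the two $G$-factors, so it is an honest $H$-action on $V$; the cocycle condition for $\c{F}_\c{L}(\phi)$ is inherited from the one for $\phi$ by functoriality.

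For the claim about the maximal $H$-equivariant quotient, recall that it equals $\m{coker}(\alpha)$ with $\alpha(\xi\otimes s)=L_\xi(s)-D_\xi(s)$. The operator $L_\xi$ is a linear vector field on $V$ (differential of $A$ at the identity), so the infinitesimal form of the compatibility above shows $\c{F}_\c{L}$ sends $L_\xi$ to the analogous linear vector field for the contragredient action on $\c{F}_\c{L}(\c{K})$. Likewise $D_\xi$ is obtained from $\phi$ by differentiating along $H$ at $1$, and is carried by $\c{F}_\c{L}$ to the analogous operator attached to $\c{F}_\c{L}(\phi)$. Hence $\c{F}_\c{L}(\alpha)$ coincides with the map defining the maximal $H$-equivariant quotient of $\c{F}_\c{L}(\c{K})$, and exactness of $\c{F}_\c{L}$ gives $\c{F}_\c{L}(\overline{\c{K}})=\overline{\c{F}_\c{L}(\c{K})}$. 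The final corollary follows because $\c{K}=\overline{\c{K}}$ iff the canonical surjection $\c{K}\twoheadrightarrow\overline{\c{K}}$ is an isomorphism iff its $\c{F}_\c{L}$-image $\c{F}_\c{L}(\c{K})\twoheadrightarrow\overline{\c{F}_\c{L}(\c{K})}$ is an isomorphism iff $\c{F}_\c{L}(\c{K})=\overline{\c{F}_\c{L}(\c{K})}$.

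The main obstacle will be the careful bookkeeping of the transposes: verifying that $\c{F}_\c{L}\circ\ell^*\cong (\ell^\vee)^*\circ\c{F}_\c{L}$ holds canonically in the relative setting and is compatible with the $(\c{O}_S\boxtimes\s{D}_V)$-module structure, correctly interpreting $A^\vee$ (modified by inversion on $H$) as an honest $H$-action through the self-identification $V\cong V$ induced by $\c{L}$, and matching the Fourier transforms of the infinitesimal operators $L_\xi$ and $D_\xi$ with the operators attached to the new equivariance datum on $\c{F}_\c{L}(\c{K})$.
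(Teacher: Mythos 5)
Your overall strategy is reasonable and parallels the paper's in spirit, but there is a genuine gap in the step that identifies $\c{F}_\c{L}(\alpha)$ with the map defining the maximal $H$-equivariant quotient of $\c{F}_\c{L}(\c{K})$.

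The problematic sentence is: \emph{``the infinitesimal form of the compatibility above shows $\c{F}_\c{L}$ sends $L_\xi$ to the analogous linear vector field for the contragredient action.''} This is not correct. If $\xi$ acts on $V$ by the matrix $N$, then $L_\xi = \sum_{i,j} N_{ij} x_j\partial_i$, and under the naive Fourier automorphism ($x\mapsto -\partial$, $\partial\mapsto x$) the contragredient vector field $L'_\xi = -\sum_{i,j}N_{ji}y_j\partial_{y_i}$ corresponds to $L_\xi \pm \m{tr}(N)$, not to $L_\xi$. The trace term is forced by the commutation relation $\partial_j x_i = x_i\partial_j+\delta_{ij}$, and it cannot be ignored since $\m{tr}(\xi|V)$ is a nonzero linear form on $\f{L}(H)$ in this setting (the $H$-representation $\det(V)$ is nontrivial). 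Likewise, your claim that $D_\xi$ is ``carried'' to $D'_\xi$ implicitly uses the \emph{untwisted} $H$-module structure on global sections of $\c{F}_\c{L}(\c{K})$, under which $D'_\xi=D_\xi$ with no shift. With these two statements, $L'_\xi - D'_\xi$ would equal $L_\xi - D_\xi \pm \m{tr}(\xi|V)$, so $\c{F}_\c{L}(\alpha)\neq\alpha'$, and the images generically differ; the asserted identification of maximal $H$-equivariant quotients then fails.

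The paper's proof addresses this exactly by working with global sections (legitimate since $V$ is affine) and, crucially, \emph{choosing} the weakly $H$-equivariant structure on $\c{F}_\c{L}(\c{K})$ to be the one for which $\Gamma(V^*,\c{F}_\c{L}(\c{K}))=\Gamma(V,\c{K})\otimes\det(V)^{-1}$ as $H$-modules. With this twist, $D'_\xi=D_\xi-\m{tr}(\xi|V)$ picks up precisely the same shift as $L'_\xi=L_\xi-\m{tr}(\xi|V)$, and the shifts cancel in $L'_\xi-D'_\xi=L_\xi-D_\xi$. Your construction, built from the canonical identification $\c{F}_\c{L}(A^*\c{K})\cong (A^\vee)^*\c{F}_\c{L}(\c{K})$ (which is the identity on global sections), produces the untwisted structure instead. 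To make your route work, you would need to explicitly insert the $\det(V)^{-1}$ twist into the weakly equivariant structure on $\c{F}_\c{L}(\c{K})$ and then verify that both $L_\xi$ and $D_\xi$ pick up the same trace correction. This twist and the ensuing cancellation is the actual content of the lemma's proof, and your argument as written omits it.
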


\begin{proof}

    Let $V^*$ be the dual space of $V$. Equip $V^*$  with the dual $H$-action. %module structure.
    $\Gamma(V,\c{K})$ has an $H$-module structure given by its weakly $H$-equivariant structure. Note that $\Gamma(V^*, \mathcal F_{\mathcal L}(\mathcal{K}))$ has the same underlying $K$-vector space as $\Gamma(V, \mathcal{K})$. We define its $H$-module structure by 
    \[\Gamma(V^*, \mathcal F_{\mathcal L}(\mathcal{K}))=\Gamma(V, \mathcal{K})\otimes\det(V)^{-1}.\] % regard $V$ as a representation of $H$
    We thus get a weakly $H$-equivariant left $\s{D}_{V^*}$-module structure on $\mathcal F_{\mathcal L}(\mathcal{K})$. % i.e. we choose a weakly $H$-equivariant structure on $\c{F}_{\c{L}}(\c{K})$ satieties our condition, we means this is the natural structure.
    For any $\xi\in \mathfrak L(H)$, let $D_\xi', L_\xi'$ be the operators defined in \Cref{weakequiv} on $\Gamma(V^*, \mathcal F_{\mathcal L}(\mathcal{K}))$. 
    Then we have $D_\xi'=D_\xi-\m{tr}(\xi| V)$ and $L_\xi'=L_\xi-\m{tr}(\xi| V)$ under the identification
     of $\Gamma(V^*, \mathcal F_{\mathcal L}(\mathcal{K}))$ and $\Gamma(V,\c{K})$. Hence $L_\xi-D_\xi=L_\xi'-D_\xi'$. The claim follows.  
\end{proof}

Let $\c{C}$ be an $H$-equivariant coherent sheaf on $Y^{\m{aff}}$. Regard $\c{C}$ as a coherent $\mathcal{O}_{\mathbb V}$-module via the functor $\iota'_*$. We assume $F_i\c{C}$ is an $H$-equivariant good
filtration on the $\c{O}_{\b{V}}$-module $\c{C}$, where the filtration on $\c{O}_{\b{V}}$ is defined by the degree of polynomials, and good
filtration means that $\m{gr}^F(\c{C})$ is a coherent $\m{gr}^F(\c{O}_{\b V})$-module as in \Cref{filonY}.

The right multiplication on $\s{D}_{\b{V}}$ gives a right $\s{D}_{\b{V}}$-module structure on $\c{C}\otimes_{\mathcal{O}_{\mathbb{V}}}\s{D}_{\mathbb V}$. Note that $\c{C}\otimes_{\mathcal{O}_{\mathbb{V}}}\s{D}_{\mathbb V}\cong \c{C}\otimes_K K[\partial]$ as $K$-vector spaces, where $\partial=\{\partial_j\}_{j=1}^N$ is a basis of the derivations on $\b{V}$. This isomorphism induces a right $\s{D}_{\b{V}}$-module structure on $\c{C}\otimes_K K[\partial]$. More precisely, for any $g\in\c{O}_{\b{V}}$, $c\in\c{C}$ and % two monomials
$\underline{\partial}^{\underline{l}},\underline{\partial}^{\underline{r}}\in K[\partial]$, the action is defined by 
$$(c\otimes\underline{\partial}^{\underline{l}})g\underline{\partial}^{\underline{r}}=\sum_{\underline{l'}+\underline{l''}=\underline{l}}\binom{\underline{l}}{\underline{l'}}\underline{\partial}^{\underline{l'}}(g)c\otimes\underline{\partial}^{\underline{l''}}\underline{\partial}^{\underline{r}},$$ where $\underline{r},\underline{l},\underline{l'},\underline{l''}\in\b{Z}_{\geq0}^n$ are multi-indices.

\begin{lem}\label{filonFourier}
    We define a filtration on $\mathcal F_{\mathcal L}(\c{C}\otimes_{\mathcal{O}_{\mathbb{V}}}\s{D}_{\mathbb V})$ by letting $F_i\mathcal F_{\mathcal L}(\c{C}\otimes_{\mathcal{O}_{\mathbb V}}\s{D}_{\mathbb V})$ be the subsheaf corresponding to %the image of
    $F_i\c{C}\otimes_K K[\partial]$. It is an $H$-equivariant good filtration and
	\[\mathrm{gr}^F(\mathcal F_{\mathcal L}(\c{C}\otimes_{\mathcal{O}_{\mathbb V}}\s{D}_{\mathbb V}))\cong  \mathrm{gr}^F(\c{C})\otimes_KK[\mathbb{V}].\]
\end{lem}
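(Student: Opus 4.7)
The plan is to unfold the algebraic Fourier transform on the right $\s{D}_{\b V}$-module $\c{C}\otimes_{\c{O}_{\b V}}\s{D}_{\b V}$ and verify the three claims (compatibility with the order filtration on $\s{D}_{\b V^*}$, $H$-equivariance, and the identification of the associated graded) directly from the right-multiplication formula displayed just above the lemma. After identifying $\s{D}_{\b V}\cong\c{O}_{\b V}\otimes_K K[\partial]$ by PBW, so that $\c{C}\otimes_{\c{O}_{\b V}}\s{D}_{\b V}=\c{C}\otimes_K K[\partial]$, the Fourier transform turns the $K[\partial]$ factor into the coordinate ring $K[\b V^*]$ via $\partial_i\leftrightarrow\xi_i$: the dual coordinate $\xi_i\in\c{O}_{\b V^*}$ acts on $\c{C}\otimes_K K[\partial]$ as right multiplication by $\partial_i$, while the derivation $\partial_{\xi_i}$ acts as right multiplication by $-x_i$.

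First I would check that $F_\bullet:=F_\bullet\c{C}\otimes_K K[\partial]$ is compatible with the order filtration on $\s{D}_{\b V^*}$. Right multiplication by $\partial_i$ only shifts indices in $K[\partial]$, hence preserves $F_i$; by the displayed formula,
\[(c\otimes\underline{\partial}^{\underline l})x_i=x_ic\otimes\underline{\partial}^{\underline l}+l_i\,c\otimes\underline{\partial}^{\underline l-e_i},\]
and since $F_\bullet\c{C}$ is compatible with the degree filtration on $K[\b V]$, right multiplication by $x_i$ sends $F_i\c{C}\otimes_K K[\partial]$ into $F_{i+1}\c{C}\otimes_K K[\partial]$. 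Combined, $F_k\s{D}_{\b V^*}\cdot F_i\subseteq F_{i+k}$.

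Next I would compute the associated graded: since $F_i/F_{i-1}=(F_i\c{C}/F_{i-1}\c{C})\otimes_K K[\partial]$, summing over $i$ gives
\[\mathrm{gr}^F\bigl(\mathcal F_{\mathcal L}(\c{C}\otimes_{\c{O}_{\b V}}\s{D}_{\b V})\bigr)=\mathrm{gr}^F(\c{C})\otimes_K K[\partial]\cong\mathrm{gr}^F(\c{C})\otimes_K K[\b V],\]
where the last isomorphism identifies $K[\partial]=K[\b V^*]$ with $K[\b V]$ via the trace pairing $\langle-,-\rangle$ introduced in the paper. Goodness then follows immediately: by assumption $\mathrm{gr}^F\c{C}$ is coherent over $\mathrm{gr}^F K[\b V]=K[\b V]$, so $\mathrm{gr}^F\c{C}\otimes_K K[\b V]$ is coherent over $K[\b V]\otimes_K K[\b V^*]\cong\mathrm{gr}\s{D}_{\b V^*}$.

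For $H$-equivariance, the PBW isomorphism $\s{D}_{\b V}=\c{O}_{\b V}\otimes_K K[\partial]$ is $H$-equivariant for the natural $H$-action on $\c{O}_{\b V}$ and the contragredient action on $K[\partial]$; tensoring with $\c{C}$ and applying $\mathcal F_{\mathcal L}$ (which by \Cref{actFourier} intertwines the $H$-action on $\b V$ with its dual on $\b V^*$) yields an $H$-stable filtration, since $F_\bullet\c{C}$ is $H$-stable by hypothesis. The proof is thus a direct calculation; the only delicate point is a bookkeeping one, namely matching the Fourier sign conventions of \cite{l-adic} to ensure that the polynomial degree filtration on $K[\b V]$ really corresponds to the order filtration on $\s{D}_{\b V^*}$ under $\mathcal F_{\mathcal L}$.
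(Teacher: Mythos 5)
Your proposal is correct and follows essentially the same path as the paper's proof: define the filtration via $F_i\c{C}\otimes_K K[\partial]$, check compatibility with the order filtration on the Fourier side by noting that (old) right multiplication by $K[\partial]$ preserves the filtration while (old) right multiplication by degree-$j$ polynomials shifts it by $j$, and identify the associated graded with $\mathrm{gr}^F(\c{C})\otimes_K K[\b V]$. Your explicit unwinding of the displayed right-action formula and your remark that the degree-one case generates the order filtration are exactly the content of the paper's conditions (1) and (2); the paper is a bit more terse (invoking $K[\partial]g\subseteq (F_jK[\mathbb V])K[\partial]$ rather than writing out the $x_i$-commutator), and it leaves $H$-equivariance implicit where you spell it out via \Cref{actFourier}, but the underlying argument is the same.
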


\begin{proof}
	First let's check that the filtration $F_i\mathcal F_{\mathcal L}(\c{C}\otimes_{\mathcal{O}_{\mathbb V}}\s{D}_{\mathbb V})$ %is $\c{O}_{\b{V}}$-submodule of $\c{C}\otimes_{\mathcal{O}_{\mathbb V}}\s{D}_{\mathbb V}$ and it 
    is compatible with the order filtration of $D_{\mathbb V}$. Unwinding the definitions, we need to show
	\begin{enumerate}
		\item For any $g\in  K[\partial]$, $(F_i\c{C}\otimes_K K[\partial])g\subseteq F_i\c{C}\otimes_K K[\partial]$,
		
		\item For any $g\in K[\mathbb V]$ of degree $\leq j$, $(F_i\c{C}\otimes_K K[\partial])g\subseteq F_{i+j}\c{C}\otimes_K K[\partial]$.
	\end{enumerate}
	The first condition is clear. For the second, note that $K[\partial]g\subseteq (F_jK[\mathbb V])K[\partial]$ in $D_{\mathbb V}$. So by our definition of the $D_{\b{V}}$-action on $\c{C}\otimes_KK[\partial]$, we have
	\[(F_i\c{C}\otimes_K K[\partial])g\subseteq ((F_i\c{C})F_jK[\mathbb V])\otimes_K K[\partial]\subseteq F_{i+j}\c{C}\otimes_K K[\partial].\]
	Since $\c{F}_\c{L}$ is an exact functor, it is clear that
	\[\mathrm{gr}^F(\mathcal F_{\mathcal L}(\c{C}\otimes_{\mathcal{O}_{\mathbb V}}\s{D}_{\mathbb V}))\cong  \mathrm{gr}^F(\c{C})\otimes_KK[\mathbb{V}].\]
	In particular, $\mathrm{gr}^F(\mathcal F_{\mathcal L}(\c{C}\otimes_{\mathcal{O}_{\mathbb V}}\s{D}_{\mathbb V}))$ is a coherent $\mathrm{gr}(\s{D}_{\mathbb V})$-module. %since under this action, $\mathrm{gr}^F(\c{C})$ is a coherent $K[\bar{\partial}]$-module
    So we get a good filtration on $\mathcal F_{\mathcal L}(\c{C}\otimes_{\mathcal{O}_{\mathbb{V}}}\s{D}_{\mathbb V})$.
\end{proof}

Note that $\c{C}\otimes_{\mathcal{O}_{\mathbb V}}\s{D}_{\mathbb V}$ is a weakly $H$-equivariant right $\s{D}_{\b V}$-module. We can define $\c{N}(\c{C})$ as the maximal $H$-equivariant quotient of $\c{C}\otimes_{\mathcal{O}_{\mathbb V}}\s{D}_{\mathbb V}$. Then $\c{M}(\c{C}):=\c{F}_{\c L}(\c{N}(\c{C}))$ is the maximal $H$-equivariant quotient of $\mathcal F_{\mathcal L}(\c{C}\otimes_{\mathcal{O}_{\mathbb V}}\s{D}_{\mathbb V})$ by \Cref{actFourier}.

\begin{cor}\label{maincor}
	The $H$-equivariant good filtration on $\mathcal F_{\mathcal L}(\c{C}\otimes_{\mathcal{O}_{\mathbb V}}\s{D}_{\mathbb V})$ defined in \Cref{filonFourier} induces a good filtration on $\mathcal{M}(\c{C})$, denoted by $F_i\mathcal{M}(C)$. 
    Then $\mathrm{gr}^F(\mathcal M(\c{C}))$ is a quotient of \[\mathrm{gr}^F(\c{C})\otimes_KK[\mathbb{V}]\Big/\sum_{\xi\in\mathfrak L(H)}L_{\xi}(\mathrm{gr}^F(\c{C})\otimes_KK[\mathbb{V}]).\]
	Moreover, $\mathrm{gr}^F(\mathcal M(\c{C})_{\bar{K}})$ is supported in the zero section if we restrict to $ \mathbb{V}_{\bar{K}}^{\mathrm{gen}}$.
\end{cor}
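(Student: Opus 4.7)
The plan is to handle the three claims in order, with the main weight on the zero-section support.

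For the first two assertions, I would proceed as follows. Since $\c{M}(\c{C})$ is a right $\s{D}_{\b{V}}$-module quotient of $\c{F}_\c{L}(\c{C}\otimes_{\c{O}_{\b{V}}}\s{D}_{\b{V}})$ and the filtration of \Cref{filonFourier} on the latter is good, the induced image filtration on $\c{M}(\c{C})$ is automatically good. For the description of the gr, \Cref{actFourier} identifies $\c{M}(\c{C})$ with the maximal $H$-equivariant quotient of $\c{F}_\c{L}(\c{C}\otimes_{\c{O}_{\b{V}}}\s{D}_{\b{V}})$, and the filtration of \Cref{filonFourier} is $H$-equivariant. Applying the right-module analogue of \Cref{passtoHequi}, the natural surjection $\mathrm{gr}^F(\c{F}_\c{L}(\c{C}\otimes_{\c{O}_{\b{V}}}\s{D}_{\b{V}}))\twoheadrightarrow \mathrm{gr}^F(\c{M}(\c{C}))$ factors through the quotient by $\sum_{\xi\in\f{L}(H)} L_\xi(\cdot)$, and \Cref{filonFourier} identifies the source with $\mathrm{gr}^F(\c{C})\otimes_K K[\b{V}]$.

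For the zero-section claim, I first locate the support of $\mathrm{gr}^F(\c{C})$. Since $\c{C}$ is an $\c{O}_{Y^{\m{aff}}}$-module carrying an $H$-equivariant good filtration over $\c{O}_{\b{V}}$, the Rees module $S(\c{C})$ is an $S(\c{O}_{Y^{\m{aff}}})=\c{O}_{Z^{\m{aff}}}$-module by \Cref{filmod} and \Cref{filonY}, so $\mathrm{gr}^F(\c{C})=S(\c{C})|_{t=0}$ is an $\c{O}_{Z^{\m{aff}}_0}$-module whose support in $\b{V}$ lies in $Z'^{\m{aff}}_0$. By \Cref{orbitstr}, each nonzero point of $Z'^{\m{aff}}_{0,\bar K}$ belongs to an $(H_{\bar K}\times\b{G}_{m,\bar K})$-orbit of $e(\tau)$ for some face $\tau\prec\Delta_\infty$ not containing $0$.

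Now suppose $(A,x)\in T^*\b{V}_{\bar K}$ lies in the support of the quotient, with $A\in\b{V}_{\bar K}^{\m{gen}}$ and $x\neq 0$. By $K$-linearity of the symbol $\langle L_\xi(A),x\rangle$ in $x$, we may absorb any $\b{G}_{m,\bar K}$-scaling and write $x_j=\rho_j(g_1)e(\tau)_j\rho_j(g_2^{-1})$ for some $(g_1,g_2)\in G_{\bar K}\times G_{\bar K}$. The vanishing $\langle L_\xi(A),x\rangle=0$ for all $\xi=(\xi_1,\xi_2)\in\f{L}(H)$, after expanding $L_\xi(A)=(d\rho_j(\xi_1)A_j-A_jd\rho_j(\xi_2))_j$, invoking cyclicity of trace, and conjugating one of the two resulting conditions by $\rho_j(g_1)$ via the $\m{Ad}$-equivariance $d\rho_j(\m{Ad}(g)\eta)=\rho_j(g)d\rho_j(\eta)\rho_j(g)^{-1}$, becomes
\[\sum_j\m{Tr}(d\rho_j(\eta_1)\,e(\tau)_j\rho_j(g_2^{-1})A_j\rho_j(g_1))=0=\sum_j\m{Tr}(d\rho_j(\eta_2)\,A_j\rho_j(g_1)e(\tau)_j\rho_j(g_2^{-1}))\]
for all $(\eta_1,\eta_2)\in\f{g}\times\f{g}$. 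These are precisely the equations $\m{d}f_{\tau,A}=0$ at the point $(g_1,g_2^{-1})\in G_{\bar K}\times G_{\bar K}$, where $f_{\tau,A}$ is the function of \Cref{nondegeneracy}. Since $A\in\b{V}_{\bar K}^{\m{gen}}$ and $0\notin\tau$, nondegeneracy forbids this, a contradiction. Thus $x=0$, so the support over $\b{V}_{\bar K}^{\m{gen}}$ lies in the zero section.

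The main obstacle is the symbol-to-critical-point translation in the last paragraph: one has to compute the symbol of $L_\xi$ at a point $(A,x)$ of $T^*\b{V}$ and use cyclic invariance of trace together with $\m{Ad}$-equivariance of the $d\rho_j$'s to see that, after the substitution $h=g_2^{-1}$, the vanishing condition lines up exactly with the critical equations of $f_{\tau,A}$. Once that matching is clean, the nondegeneracy hypothesis immediately closes the argument.
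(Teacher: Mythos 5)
Your proposal is correct and follows essentially the same route as the paper: reduce the first two claims to \Cref{filonFourier}, \Cref{actFourier}, and (the right-module analogue of) \Cref{passtoHequi}; then locate the support of $\mathrm{gr}^F(\c{C})$ in $Z'^{\m{aff}}_{0,\bar K}$, invoke \Cref{orbitstr} to write a nonzero covector as $(g_1,g_2)\cdot e(\tau)$, and translate the vanishing $\langle L_\xi(A),x\rangle=0$ into the critical-point equations for $f_{\tau,A}$, contradicting nondegeneracy. The one place you go beyond the paper is the explicit trace/cyclicity/$\m{Ad}$-conjugation bookkeeping identifying the symbol equations with $\mathrm df_{\tau,A}=0$ at $(g_1,g_2^{-1})$ — the paper simply asserts this identification (in fact writing $\langle A, L_\xi(v)\rangle$ where $\langle A, L_{\theta\xi}(v)\rangle$ is intended, with $\theta$ the swap of factors on $H$), so your more careful version is a useful supplement rather than a different proof.
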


\begin{proof}
	The first claim follows from \Cref{filonFourier} and \Cref{passtoHequi}. Let's prove the second claim by contradiction. Assume $(A,v)$ lies in 
	$\mathrm{supp}\, \mathrm{gr}^F(\mathcal M(\c{C})_{\bar{K}})\subseteq T^*\b V_{\bar{K}}$, where $A\in\mathbb{V}_{\bar{K}}^{\mathrm{gen}}$, $v\ne 0$. Then $v\in \mathrm{Supp}\;\mathrm{gr}^F(\c{C})\subseteq Z'^{\m{aff}}_{0,\bar{K}}$ and $\langle L_{\xi}(A),\,v\rangle=0$ for any $\xi\in\f{L}(H)$. By \Cref{orbitstr}, replacing $v$ by $\lambda v$ for some $\lambda\in\bar{K}$, %note that in \Cref{orbitstr} we consider $H\times G_m$-action
    we may assume $v=ge(\tau)h$ for a face $\tau\prec \Delta_{\infty}$ not containing $0$ and $g,h\in G_{\bar{K}}$. Then the condition $-\langle L_{\xi}(A),\,v\rangle=\langle A,\,L_{\xi}(v)\rangle=0$ implies $(g,h)$ is a critical point for the function $f_{\tau, A}$, which contradicts $A\in \mathbb{V}_{\bar{K}}^{\mathrm{gen}}$.
\end{proof}

\begin{proof}[Proof of \Cref{cha0mainthm} and \Cref{cha02ndmainthm}]
	We can work over the algebraic closure $\bar{K}$ of $K$. By the proof of \cite[2.13]{l-adic}, we have $\c{N}_{\bar{K}}=\c{N}(\omega_{Y^{\m{aff}}_{\bar{K}}})$ and $\c{N}'_{\bar{K}}=\c{N}(\c{O}_{Y^{\m{aff}}_{\bar{K}}})$. Hence $\mathcal N_{\bar{K}}, \mathcal N'_{\bar{K}}$ are $H_{\bar{K}}$-equivariant $\s{D}_{\b V_{\bar{K}}}$-modules with support $Y'^{\m{aff}}_{\bar{K}}$. Since $Y'^{\m{aff}}_{\bar{K}}$ only has finitely many $H_{\bar{K}}$-orbits, by \cite[II.5 Theorem]{Hotta}, $\mathcal N_{\bar{K}}, \mathcal N'_{\bar{K}}$ are holonomic. So their Fourier transforms are holonomic. 
	Endow $\mathcal O_{Y^{\m{aff}}_{\bar{K}}}$ %with the usual filtration
    and $\omega_{Y^{\m{aff}}_{\bar{K}}}$ with the filtrations defined in \Cref{filonY} and \Cref{filomega}, respectively. By \Cref{maincor}, $\c{M}_{\bar{K}},\c{M}'_{\bar{K}}$ are connections on $\mathbb{V}_{\bar{K}}^{\mathrm{gen}}$. It remains to calculate their ranks.
	
	For any $A\in \mathbb{V}_{\bar{K}}^{\mathrm{gen}}$, let $F_A$ be the linear subspace of $\mathbb V_{\bar{K}}$ defined by 
	$$F_A=\bigcap_{(\xi_1,\xi_2)\in\f{L}(H)}\Big\{B\in\mathbb V_{\bar{K}}%\prod_{j=1}^N\mathrm{End}(\mathbb{V}_{j,\bar{K}})
    :\frac{d}{dt}\Big|_{t=0}f_A(\exp(t\xi_1) B\exp(-t\xi_2))=0\Big\}.$$
	In \Cref{maincor}, we have proved
	$$Z'^{\m{aff}}_{0,\bar{K}}\cap F_A=\{0\}.$$ 
	
	Choose a linear subspace $L$ of $\b{V}_{\bar{K}}$ with codimension equal to $\dim(Z^{\m{aff}}_0)=\dim(Y^{\m{aff}})=d$ containing $F_A$ 
	such that $Y'^{\m{aff}}_{\bar{K}}\cap L=\{0\}.$ 
	Recall we may identify $\mathrm{gr}(\omega_{Y^{\m{aff}}_{\bar{K}}})$ with the fiber of $\omega_{Z^{\m{aff}}_{\bar{K}}}$ at $0\in \mathbb{A}_{\bar{K}}^1$.
	Regard $L$ as a linear subspace of $\mathbb V_{\bar{K}}\times \mathbb{A}_{\bar{K}}^1$ contained in $\mathbb{V}_{\bar{K}}\times\{0\}$.
	By \Cref{maincor} and the proof of \cite[3.3]{l-adic},
    the ranks of $\mathcal M_{\bar{K}}\big|_{\mathbb V_{\bar{K}}^{\mathrm{gen}}}$ and $\mathcal M'_{\bar{K}}\big|_{\mathbb V_{\bar{K}}^{\mathrm{gen}}}$ are bounded above by
	\[\dim_{\bar{K}}(\mathrm{gr}(\omega_{Y^{\m{aff}}_{\bar{K}}})
	\otimes_{\mathcal{O}_{\mathbb V_{\bar{K}}}} \mathcal{O}_L)=\dim_{\bar{K}}(\omega_{Z_{\bar{K}}^{\m{aff}}}
	\otimes_{\mathcal{O}_{\mathbb V_{\bar{K}}\times \mathbb{A}^1_{\bar{K}}}} \mathcal{O}_L)\]
	and
	\[\dim_{\bar{K}}(\mathrm{gr}(\c{O}_{Y^{\m{aff}}_{\bar{K}}})
	\otimes_{\mathcal{O}_{\mathbb V_{\bar{K}}}} \mathcal{O}_L)=\dim_{\bar{K}}(\c{O}_{Z_{\bar{K}}^{\m{aff}}}
	\otimes_{\mathcal{O}_{\mathbb V_{\bar{K}}\times \mathbb{A}^1_{\bar{K}}}} \mathcal{O}_L),\]
	respectively. $\omega_{Z^{\m{aff}}}$ and $\mathcal{O}_{Z^{\m{aff}}}$ are Cohen-Macaulay over $\mathcal{O}_{Z^{\m{aff}}}$ by \Cref{filomega} and \cite[6.2.9]{Frob-split}, respectively. Then by the proof of \cite[3.3]{l-adic}, these dimensions are identified with the degree of the closure of $Z'^{\m{aff}}_{\bar{K}}$ in the projective space containing $\mathbb{V}_{\bar{K}}\times \mathbb{A}^1_{\bar{K}}$. We denote the closure by $\overline{Z'^{\m{aff}}_{\bar{K}}}$. % relation with $Z'_{\bar{K}}$
	
    Consider the representations $\rho_1',\cdots,\rho_N'$ and $\rho_0'$ in \Cref{setting}. The corresponding Newton polytope at infinity $\Delta_{\infty}'$ is the convex hull of $\{(x,1):x\in \Delta_{\infty}\}$ and $(0,0)$. So  
	\[\Delta_{\infty}'=\{(x,t):x\in t\Delta_{\infty}, 0\leq t\leq 1\}.\]
	Note in this case the homogeneity condition holds. By \cite[3.4]{l-adic},
	\[\deg(\overline{Z'^{\m{aff}}_{\bar{K}}})
	=(d+1)!\int_{\Delta'_{\infty}\cap\mathfrak C} 
	\prod_{\alpha \in R^+} \frac{(\lambda, \alpha)^2}{(\rho, \alpha)^2}\mathrm d\lambda=d!\int_{\Delta_{\infty}\cap\mathfrak C} 
		\prod_{\alpha \in R^+} \frac{(\lambda, \alpha)^2}{(\rho, \alpha)^2}\mathrm d\lambda.\qedhere\]
\end{proof}

\section{Calculations on arithmetic $\s{D}$-modules}\label{Dmodule}

From now on, for convenience, we assume that $R$ is a complete discrete valuation ring with fraction field $K$, maximal ideal $\f{p}$ and residue field $k$, such that there exists a resolution $\tilde{Z}\to Z$ satisfying the \Cref{ass}. Without loss of generality, we may assume that there exists $\pi\in K$ such that $\pi^{p-1}=-p$. Moreover, when we use the notation in the introduction, we omit the subscript $R$ of $R$-schemes and the subscript $\f{p}$ of $R$-formal schemes.

Notation as \Cref{setting}, let $\f{L}(H)$ be the Lie algebra of $H=G\times G$. Suppose $X$ is a smooth variety with left (resp. right) $H$-action. For any $\xi\in \f{L}(H)$, let $L^X_{\xi}$ (resp. $R^X_\xi$) be the vector field on $X$ given by $\xi$ and the action, that is,
$$L^X_{\xi}(x)=\frac{d}{dt}\Big|_{t=0}(\exp(t\xi)x)\quad (\hbox{resp. } R^X_{\xi}(x)=\frac{d}{dt}\Big|_{t=0}(x\exp(t\xi)))$$ for $x\in X$. We omit the superscript $X$ from $L^X_\xi$ and $R^X_\xi$ if there is no confusion.
%In particular, on $H$ we have the right (resp. left) invariant vector field $L_\xi$ (resp. $R_\xi$) whose value at the unit is $\xi$.

By \Cref{ass}(2), the open subset $\tilde{Y}_0\subset \tilde{Y}$ is $P$-equivariantly isomorphic to $R_u(P)\times S$, where $S$ is a smooth toric variety for the split torus $T$. In particular $S-T$ is a divisor with normal crossing.
Let $E_1', \cdots, E_r'$ be the divisors of $\tilde Y_0$ corresponding to the irreducible components of $R_u(P)\times (S-T)$.
\begin{lem}
    We have $HE_i'\cap \tilde{Y}_0=E_i'$.
\end{lem}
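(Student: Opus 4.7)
The plan is to identify $E_i'$ with the intersection of $\tilde Y_0$ with an $H$-stable irreducible component of the boundary divisor $\tilde D$, namely the Zariski closure $D_i := \overline{E_i'}$ in $\tilde Y$.

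First, I would verify that the support of $\tilde D$ equals $\tilde Y\setminus G$: this follows from \Cref{divisorD} (which asserts $Y\setminus D = G$) together with the fact from \Cref{ass}(1) that $p:\tilde Y\to Y$ is an isomorphism over $G$. Since $G\subset \tilde Y$ is the open $H$-orbit, $|\tilde D|$ is $H$-stable and of pure codimension one. Because $\tilde Y$ is geometrically irreducible (by \Cref{ass}(1)) and $H=G\times G$ is geometrically connected, passing to a geometric fiber over $\Spec R$ shows that $H$ fixes each irreducible component of $|\tilde D|$ set-theoretically.

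Next, under the $P$-equivariant isomorphism $\tilde Y_0\cong R_u(P)\times S$, the divisor $E_i'$ corresponds to $R_u(P)\times F_i$ for an irreducible component $F_i$ of the toric boundary $S\setminus T$. Since $R_u(P)$ is geometrically irreducible and $F_i$ is a prime divisor of the smooth toric variety $S$, the subscheme $E_i'$ is irreducible and closed of codimension one in $\tilde Y_0$, and is contained in $|\tilde D|\cap \tilde Y_0 = R_u(P)\times (S\setminus T)$. Hence its Zariski closure $D_i\subset \tilde Y$ is irreducible of dimension $\dim\tilde Y-1$ and lies in $|\tilde D|$, so $D_i$ is an irreducible component of $|\tilde D|$, and by the previous paragraph it is $H$-stable.

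Combining these, $HE_i'\subset HD_i = D_i$, whence $HE_i'\cap \tilde Y_0\subset D_i\cap \tilde Y_0$. The set $D_i\cap \tilde Y_0$ is closed in $\tilde Y_0$, and as a nonempty open subset of the irreducible $D_i$ it is also irreducible of dimension $\dim\tilde Y-1$; since it contains the equidimensional irreducible closed $E_i'$, the two coincide. The reverse inclusion $E_i'\subset HE_i'\cap \tilde Y_0$ is immediate since $1\in H$ and $E_i'\subset \tilde Y_0$. The only real subtlety is descending the ``$H$-stability of components of $|\tilde D|$'' from a geometric fiber back to $\tilde Y_R$, which reduces to checking the set-theoretic equality fiberwise and is guaranteed by the geometric irreducibility of fibers in \Cref{ass}(1).
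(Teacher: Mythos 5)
Your argument is essentially the paper's: pass to geometric fibers (using that $H$ is geometrically connected and the fibers of $\tilde Y$ are geometrically irreducible, from Assumption~\ref{ass}(1)) so that each irreducible component of the boundary $\tilde Y\setminus G$ is $H$-stable, observe that $\overline{E_i'}$ is one such component because $E_i'$ is geometrically irreducible (a product of $R_u(P)$ with a toric prime divisor), and conclude $HE_i'\subseteq\overline{E_i'}$ before intersecting with $\tilde Y_0$. The reformulation via $|\tilde D|$ and the codimension count is a cosmetic detour from the paper's direct closure argument, but the substance is the same and the proof is correct.
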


\begin{proof}
    The inlusion $HE_i'\cap \tilde{Y}_0\supseteq E_i'$ is trivial. It suffices to show $HE_i'\cap \tilde{Y}_0\subseteq E_i'$ over each point $x$ of $\Spec R$.
    The irreducible components of $(\tilde{Y}-G)_x$ are $H_x$-stable since $H_x$ is geometrically irreducible. Note that each $E_i'$ is geometrically irreducible since it can be regarded as the product of an algebraic group and a toric scheme. Hence each $E_{i,x}'$ corresponds to an irreducible component of $R_u(P)_x\times (S-T)_x$. 
    So $\overline{E_{i,x}'}$ is an irreducible component of $(\tilde{Y}-G)_x$, where $\overline{E_{i,x}'}$ is the closure of $E_{i,x}'$ in $\tilde{Y}$. Then we conclude $H_x E_{i,x}'\subseteq \overline{E_{i,x}'}$. So $H_x E_{i,x}'\cap \tilde{Y}_{0,x}\subseteq E_{i,x}'$.
\end{proof}

Let $E_i=HE_i'$. Then $\sum_i E_i$ is a Weil divisor on $\tilde Y$. Clearly the underlying closed set of $\sum_i E_i$ is $\tilde{Y}-G$. By \Cref{divisorD}, it coincides with the underlying closed set of the effective Cartier $\tilde{D}$ defined in \Cref{Hyper-D}. 
Therefore, in the following, we regard the divisor $\tilde{D}$ as $\bigcup_i E_i$ by ignoring the multiplicities. Then $\tilde{D}|_{\tilde Y_0}$ has normal crossing on $\tilde{Y}_0$. Since $H\tilde{Y}_0=\tilde{Y}$, this implies $\tilde{D}$ is a divisor with normal crossing.

Let $\c{T}_{\tilde{Y}}(-\log \tilde{D})\subset \c{T}_{\tilde{Y}}$ be the logarithmic tangent bundle along $\tilde{D}$.
Let $\mathcal T_{\tilde{Y}}$, $\mathcal T_S$ and $\mathcal T_{R_u(P)}$ be the sheaves of tangent vectors of ${\tilde{Y}}$, $S$ and $R_u(P)$, respectively.
Let $\mathcal T_0$ be the $\mathcal{O}_{{\tilde{Y}}}$-submodule of $\mathcal T_{\tilde{Y}}$ generated by $L_{\xi}$ for $\xi\in\mathfrak L(H)$, and let $\mathcal T_1$ be the $\mathcal{O}_{S}$-submodule of $\mathcal T_S$ generated by $L_{\xi}$ for $\xi$ lying in the Lie algebra $\mathfrak L(T)$ of $T$.

\begin{lem}\label{ass-4}
    The subsheaves $\c{T}_{\tilde{Y}_0}(-\log \tilde{D}),  \c{T}_{R_{u}(P)}\boxplus \c{T}_1, \c{T}_0|_{\tilde{Y}_0}$ of $\c{T}_{\tilde{Y}_0}$ coincide with each other.
\end{lem}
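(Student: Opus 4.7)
My plan is to establish the chain
$$\c{T}_{R_u(P)}\boxplus \c{T}_1 \;\subseteq\; \c{T}_0|_{\tilde{Y}_0} \;\subseteq\; \c{T}_{\tilde{Y}_0}(-\log \tilde{D}) \;=\; \c{T}_{R_u(P)}\boxplus \c{T}_1,$$
all three statements being local on the $P$-equivariant product model $\tilde{Y}_0\cong R_u(P)\times S$ from \Cref{ass}(2). Under this model, $\tilde{D}|_{\tilde{Y}_0}=R_u(P)\times(S-T)$ by the construction of the $E_i'$ and the preceding lemma, so the log tangent sheaf splits as $\c{T}_{R_u(P)}\boxplus \c{T}_S(-\log(S-T))$. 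Invoking the standard toric fact that the log tangent bundle of a smooth toric variety along its toric boundary is a trivial bundle globally generated by the infinitesimal torus action identifies this with $\c{T}_{R_u(P)}\boxplus \c{T}_1$, giving the rightmost equality.

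For the middle inclusion, each irreducible component $E_i=HE_i'$ of $\tilde{D}$ is $H$-stable by construction, so for every $\xi\in\f{L}(H)$ the vector field $L_\xi$ preserves the ideal sheaf of each $E_i$ and therefore defines a section of $\c{T}_{\tilde{Y}}(-\log\tilde{D})$. Restricting to the open subscheme $\tilde{Y}_0$ gives $\c{T}_0|_{\tilde{Y}_0}\subseteq \c{T}_{\tilde{Y}_0}(-\log\tilde{D})$.

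The substantive step is the leftmost inclusion, which I will produce one summand at a time using the explicit $P$-action formula. Restricting the formula to $R_u(P)\subseteq P\subseteq H$, one finds that $R_u(P)$ acts by left translation on the first factor and trivially on $S$; hence for $\xi\in\f{L}(R_u(P))$ the field $L_\xi|_{\tilde{Y}_0}$ is a right-invariant vector field on $R_u(P)$ pulled back to the product. As $\xi$ ranges over $\f{L}(R_u(P))$, these right-invariant fields generate $\c{T}_{R_u(P)}$ as an $\c{O}_{R_u(P)}$-module because $R_u(P)$ is an algebraic group, yielding $\c{T}_{R_u(P)}\boxplus 0\subseteq \c{T}_0|_{\tilde{Y}_0}$. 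Next, for $\xi=(\tau,0)$ with $\tau\in \f{L}(T)\subseteq \f{L}(H)$, the same formula decomposes $L_\xi|_{\tilde{Y}_0}$ into an infinitesimal conjugation vector field on the $R_u(P)$-factor plus $L^S_\tau$ on the $S$-factor. The first summand is a section of $\c{T}_{R_u(P)}\boxplus 0$, which already lies in $\c{T}_0|_{\tilde{Y}_0}$ by what we just showed, so subtracting it places $0\boxplus L^S_\tau$ inside $\c{T}_0|_{\tilde{Y}_0}$; letting $\tau$ range over $\f{L}(T)$ produces all of $\c{T}_1$.

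I expect the main obstacle to be bookkeeping rather than mathematical depth: carefully unwinding the action formula from \Cref{ass}(2) to correctly separate the $R_u(P)$- and $S$-components of $L_\xi$ for $\xi$ in various pieces of the direct sum decomposition $\f{L}(H)=\f{L}(R_u(B))\oplus\f{L}(T)\oplus\f{L}(T)\oplus\f{L}(R_u(B^-))$, and verifying that the toric identification $\c{T}_S(-\log(S-T))=\c{T}_1$ uses the same normalization as the infinitesimal $T$-action appearing in $L_\xi$, so that the two descriptions of the $S$-summand genuinely agree.
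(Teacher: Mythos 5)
Your proof is correct and follows essentially the same strategy as the paper's: establish $\c{T}_{R_u(P)}\boxplus\c{T}_1\subseteq\c{T}_0|_{\tilde Y_0}\subseteq\c{T}_{\tilde Y_0}(-\log\tilde D)=\c{T}_{R_u(P)}\boxplus\c{T}_1$, with the last equality coming from the toric/log tangent computation, the middle inclusion from $H$-stability of the components of $\tilde D$, and the first inclusion from unwinding the $P$-action formula. The only difference is cosmetic — the paper dispatches the first inclusion with the words ``by definition'' and does the toric computation by hand on a chart, whereas you spell out the action-formula bookkeeping and cite the standard global generation of the log tangent sheaf of a smooth toric variety; both are the same argument.
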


\begin{proof}
    First let us check $\c{T}_{\tilde{Y}_0}(-\log \tilde{D})= \c{T}_{R_{u}(P)}\boxplus \c{T}_1$. This is a local problem. So we may assume $S\cong \b{G}_m^r\times\b{A}^s$ is a simple toric variety. Let $x_1,\cdots, x_s$ be the coordinates of $\b{A}^s$. Then both $\c{T}_{\tilde{Y}_0}(-\log \tilde{D})$ and $\c{T}_{R_{u}(P)}\boxplus \c{T}_1$ are generated by $\c{T}_{R_{u}(P)}, \c{T}_{\b{G}_m^r}$ and the elements $x_i\frac{\partial}{\partial x_i}$. The claim follows.
   By definition, we have $\c{T}_{R_{u}(P)}\boxplus \mathcal T_1 \subseteq \c{T}_0|_{\tilde{Y}_0}$.
   The irreducible components of $E$ are $H$-stable. Hence each $L_{\xi}$ is tangent to the irreducible components. So $L_\xi$ lies in $\c{T}_{\tilde{Y}}(-\log \tilde{D})$ by \cite[II.3.9]{logtangent}. It follows that $\c{T}_0 \subseteq \c{T}_{\tilde{Y}}(-\log \tilde{D})$.
\end{proof}

\begin{thm}\label{toroidalDmodiso}
    Notation as in \Cref{Hyper-D}. We have a natural isomorphism of left $\s{D}^\d_{\tilde{\f Y},\Q}$-modules
    \[\gamma: \D(^\d \tilde{D}_k)\c{O}_{\tilde{\f Y},\Q}\to \s{D}^\d_{\tilde{\f Y},\Q}/\sum_{\xi\in \f{L}(H)}\s{D}^\d_{\tilde{\f Y},\Q}L_{\xi},\] where $\b{D}$ is the Verdier dual of category $D_\m{coh}^b(\s{D}^\d_{\tilde{\f Y},\b{Q}})$, $(^\d \tilde{D}_k):=\s{D}^\d_{\tilde{\f Y},\b{Q}}(^\d \tilde{D}_k)\otimes_{\s{D}^\d_{\tilde{\f Y},\b{Q}}}-$. We regard $(^\d \tilde{D}_k)\c{O}_{\tilde{\f Y},\Q}$ as a coherent $\s{D}^\d_{\tilde{\f Y},\b{Q}}$-module.
\end{thm}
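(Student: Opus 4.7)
The plan is to factor $\gamma$ as the composition of two isomorphisms, both passing through the intermediate object $\s{D}^\d_{\tilde{\f Y},\Q}/\s{D}^\d_{\tilde{\f Y},\Q}\c{T}_{\tilde{\f Y}/R}(-\log \tilde{D}_k)$. The first isomorphism is geometric, identifying the left ideal $\sum_{\xi\in\f{L}(H)}\s{D}^\d_{\tilde{\f Y},\Q}L_\xi$ with $\s{D}^\d_{\tilde{\f Y},\Q}\c{T}_{\tilde{\f Y}/R}(-\log \tilde{D}_k)$. The second is a standard logarithmic Verdier duality, identifying that quotient with $\b{D}((^\d \tilde{D}_k)\c{O}_{\tilde{\f Y},\Q})$.

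For the first step, let $\c{T}_0 \subset \c{T}_{\tilde{Y}}$ be the $\c{O}_{\tilde{Y}}$-subsheaf generated by $\{L_\xi : \xi \in \f{L}(H)\}$. Both $\c{T}_0$ and $\c{T}_{\tilde{Y}}(-\log \tilde{D})$ are $H$-invariant subsheaves of $\c{T}_{\tilde{Y}}$: the former because the generators are permuted by the adjoint $H$-action on $\f{L}(H)$ up to $\c{O}_{\tilde{Y}}$-linear reparametrization, the latter because $\tilde{D}$ is $H$-stable. By \Cref{ass-4} they coincide on $\tilde{Y}_0$, and since $H\tilde{Y}_0 = \tilde{Y}$ by \Cref{ass}, they coincide on all of $\tilde{Y}$. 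Passing to the $\f{p}$-adic completion and tensoring with $\s{D}^\d_{\tilde{\f Y},\Q}$ yields the desired equality of left ideals.

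For the second step, I would use that $\tilde{D}_k$ is a strict normal crossing divisor to invoke a Spencer-type Koszul resolution. Since $\c{T}_{\tilde{\f Y}/R}(-\log \tilde{D}_k)$ is a locally free Lie algebroid over $\c{O}_{\tilde{\f Y},\Q}$, the complex $\s{D}^\d_{\tilde{\f Y},\Q} \otimes_{\c{O}} \bigwedge^\bullet \c{T}_{\tilde{\f Y}/R}(-\log \tilde{D}_k)$ resolves $\s{D}^\d_{\tilde{\f Y},\Q}/\s{D}^\d_{\tilde{\f Y},\Q}\c{T}_{\tilde{\f Y}/R}(-\log \tilde{D}_k)$. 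Applying $R\c{Hom}_{\s{D}^\d}(-,\s{D}^\d)$ together with the side-switching twist by $\omega_{\tilde{\f Y}/R}^{-1}[d]$ produces the logarithmic de Rham complex $\Omega^\bullet_{\tilde{\f Y}/R}(\log \tilde{D}_k) \otimes_{\c{O}} \s{D}^\d$, which by the standard residue computation is concentrated in a single degree and equal to $(^\d \tilde{D}_k)\c{O}_{\tilde{\f Y},\Q}$. In local coordinates where $\tilde{D}_k = \{x_1\cdots x_s = 0\}$, this reduces to the explicit cyclic presentation $(^\d \tilde{D}_k)\c{O} \cong \s{D}^\d/\s{D}^\d(x_i\partial_i + 1,\,\partial_j)$ for $1 \leq i \leq s < j \leq d$, whose formal adjoint ideal $(x_i\partial_i,\,\partial_j)$ recovers exactly the generators of $\c{T}(-\log \tilde{D}_k)$.

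The main obstacle will be making the Verdier duality rigorous in the arithmetic overconvergent setting of Berthelot: one must control the exactness of the log Spencer resolution at each finite level $m$ of the weak completion defining $\s{D}^\d$, check the compatibility of the formal adjoint operation with the level filtration, and establish the convergence of the log de Rham residue computation with coefficients in $(^\d \tilde{D}_k)\c{O}$. Granting these, $\gamma$ is defined as the composite of the two isomorphisms and is canonical, being characterized by the fact that its restriction to the open dense formal subscheme $\f{G} \subset \tilde{\f Y}$ is the identity on $\c{O}_{\f{G},\Q}$, since the $L_\xi$ span $\c{T}_{\f{G}}$ and $(^\d \tilde{D}_k)\c{O}$ restricts to $\c{O}_{\f{G},\Q}$.
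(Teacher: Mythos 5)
Your strategy differs from the paper's in a recognizable way, but the key technical step you flag as an ``obstacle'' is a genuine gap that the paper deliberately avoids.

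Your step~1 (identifying $\sum_\xi \s{D}^\d L_\xi$ with $\s{D}^\d\c{T}(-\log\tilde{D}_k)$ by spreading the equality of \Cref{ass-4} from $\tilde{Y}_0$ to all of $\tilde{Y}$ via $H$-invariance) is sound, and it is close in spirit to what the paper does, though the paper does not globalize the tangent-sheaf equality: it instead constructs $\gamma$ first as a canonical morphism — obtained from the level-zero surjection $\s{D}^\d_{\tilde{\f Y}}/\sum\s{D}^\d_{\tilde{\f Y}}L_\xi\twoheadrightarrow\c{O}_{\tilde{\f Y}}$ by tensoring with $\Q$ and applying $(^\d\tilde D_k)\D$ followed by $\D$, using the fact that this is an isomorphism off $\tilde{D}_k$ together with Berthelot's extension lemma [4.3.12] — and only afterwards uses $H$-equivariance of the morphism itself to restrict the isomorphism check to $\tilde{Y}_0$.

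Your step~2, however, rests on a logarithmic Spencer resolution and a logarithmic de Rham duality for $\s{D}^\d_{\tilde{\f Y},\Q}$, and you acknowledge but do not close the gap: the exactness of the log Spencer complex $\s{D}^\d\otimes_{\c{O}}\bigwedge^\bullet\c{T}(-\log\tilde D_k)$ over Berthelot's weakly complete rings (at each finite level $m$, with the divided-power operators $\partial^{\langle r\rangle_{(m)}}$ in play) is \emph{not} an established result and is a nontrivial claim. The paper sidesteps it entirely: using $\D$'s compatibility with smooth pullback and the $P$-equivariant product structure $\tilde{Y}_0\cong R_u(P)\times S$ from \Cref{ass}(2), together with the identification $\c{T}_0|_{\tilde Y_0}=\c{T}_{R_u(P)}\boxplus\c{T}_1$ from \Cref{ass-4}, the problem decomposes into an external tensor product and reduces to $\hat{\b A}^1$ and $\hat{\b G}_m$, for which Berthelot's explicit results [3.2.2, 4.3.2 in Berthelot1990] give $\s{D}^\d/\s{D}^\d\partial x\cong(^\d D_0)\c{O}$ directly. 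Your final local computation $(^\d\tilde D_k)\c{O}\cong\s{D}^\d/\s{D}^\d(x_i\partial_i+1,\partial_j)$ is exactly this one-variable fact assembled coordinate by coordinate, so your extra log-Spencer machinery would reduce to what the paper already uses without it; in other words you would have to redo the work the paper already offloads to Berthelot, plus prove the general log-algebroid resolution. Until that resolution is justified, the second half of the argument is not a proof but a program.

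A further, smaller issue: you characterize $\gamma$ as ``the composite'' and then assert canonicity by uniqueness of extension from $\f G$. That uniqueness needs an argument (the paper gets it for free because $\gamma$ is constructed functorially from $(^\d\tilde D_k)$ and $\D$, which are canonical, and an explicit appeal to [4.3.12]); as written, you build two isomorphisms with content and then glue, so the naturality of the composite is not automatic.
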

\begin{proof}
    We repeat the arguments in \cite[2.3]{l-adic}.
    The canonical epimophism \[\s{D}^\d_{\tilde{\f Y}}/\sum_{\xi\in \f{L}(H)}\s{D}^\d_{\tilde{\f Y}}L_{\xi}\to \c{O}_{\tilde{\f Y}}\] is an isomorphism when restricted to $G_{k}$. Taking tensor product with $\b{Q}$ and applying $(^\d \tilde{D}_k)\D$, we get a morphism \[(^\d \tilde{D}_k)\D(\s{D}^\d_{\tilde{\f Y},\mathbb Q}/\sum_{\xi\in \f{L}(H)}\s{D}^\d_{\tilde{\f Y},\mathbb Q}L_{\xi})\to (^\d \tilde{D}_k)\D\c{O}_{\tilde{\f Y},\b{Q}}\cong(^\d \tilde{D}_k)\c{O}_{\tilde{\f Y},\b{Q}}.\] It is also an isomorphism when restricted to $G_{k}$. Note that $\tilde{D}_{k}\subset \tilde{Y}_{k}$ is the complement of $G_k$. We conclude an isomorphism
    \[(^\d \tilde{D}_k)\D (\s{D}^\d_{\tilde{\f Y},\Q}/\sum_{\xi\in \f{L}(H)}\s{D}^\d_{\tilde{\f Y},\Q}L_{\xi})\cong (^\d \tilde{D}_k)\c{O}_{\tilde{\f Y},\Q}\] by \cite[4.3.12]{Berthelot1}.
    We thus get a morphism
    \[\D(\s{D}^\d_{\tilde{\f Y},\Q}/\sum_{\xi\in \f{L}(H)}\s{D}^\d_{\tilde{\f Y},\Q}L_{\xi})\to (^\d \tilde{D}_k)\c{O}_{\tilde{\f Y},\Q}.\]
    Applying $\D$ we get the morphism $\gamma$. We need to show it is an isomorphism.

    This morphism is $H_{k}$-equivariant in the naive sense, that is, for any $h\in H(\bar k)$ the pullback of this morphism by $h: \tilde{Y}_{\bar k}\to \tilde{Y}_{\bar k}$ can be identified with itself.
    By \Cref{ass}(2), we have $H\tilde{Y}_0=\tilde{Y}$. So it suffices to show $\gamma|_{\tilde{Y}_0}$ is an isomorphism. %Note that $E,\tilde{D}$ have the same underlying closed subset.
Since $\b{D}$ commutes with the pull back of smooth morphism, we have \[\D(^\d \tilde{D}_k)\c{O}_{\tilde{\f Y},\Q}\cong \c{O}_{\f{R}_u(P),\b{Q}}\boxtimes\b{D}(^\d (\tilde{D}\cap S)_k) \c{O}_{\f{S},\b{Q}},\] where $\f{R}_u(P)$ and $\f{S}$ are the $\f{p}$-adic completion of $R_u(P)$ and $S$, respectively. By \Cref{ass-4}, we have

\begin{align*} & \Big(\s{D}^\d_{\tilde{\f Y},\Q} / \sum_{\xi \in \mathfrak{L}(H)} \s{D}^\d_{\tilde{\f Y},\Q} L_{\xi}\Big)\Big|_{\tilde{Y}_0}=\left.\left(\s{D}^\d_{\tilde{\f Y},\Q} / \s{D}^\d_{\tilde{\f Y},\Q} \mathcal{T}_0\right)\right|_{\tilde{Y}_0} \\ \cong & \left(\s{D}^\d_{\f{R}_u(P),\b{Q}} / \s{D}^\d_{\f{R}_u(P),\b{Q}} \mathcal{T}_{R_u(P)}\right) \boxtimes\left(\s{D}^\d_{\f{S},\b{Q}} / \s{D}^\d_{\f{S},\b{Q}} \mathcal{T}_1\right) .\end{align*}
So we can treat $R_u(P)$ and $S$ respectively. For $R_u(P)$, we have\[\s{D}^\d_{\f{R}_u(P),\b{Q}} / \s{D}^\d_{\f{R}_u(P),\b{Q}} \mathcal{T}_{R_u(P)}\cong\c{O}_{\f{R}_u(P),\b{Q}}\] by \cite[3.2.2]{Berthelot1990}. For $S$, we may assume $S\cong \b{G}_m^r\times\b{A}^s$ is a simple toric variety. %since $S$ is locally covered by $\b{A}^n$,
We are reduced to the cases of $\b{A}^1$ and $\b{G}_m$. $\b{G}_m$ can be treated similarly to $R_u(P)$. %hence to the case $\b{A}^1$.
For the case $\mathbb A^1$, by \cite[4.3.2]{Berthelot1990}, we have an isomorphism
    \[\delta_0: \s{D}^\d_{\hat{\b{A}}^1,\Q}/\s{D}^\d_{\hat{\b{A}}^1,\Q}\partial x\stackrel\cong\to (^\d {D}_0)\c{O}_{\hat{\b{A}}^1,\Q},\]
where $D_0=\{0\}\subseteq\b{A}^1_k$. Taking the Verdier dual, we get an isomorphism
    \[\gamma_0: \D(^\d {D}_0)\c{O}_{\hat{\b{A}}^1,\Q}\stackrel\cong\to \s{D}^\d_{\hat{\b{A}}^1,\Q}/\s{D}^\d_{\hat{\b{A}}^1,\Q}x\partial.\qedhere\]
\end{proof}

On $\omega_{\tilde{\f Y},\Q} \otimes_{\mathcal{O}_{\tilde{\f Y},\Q}} \s{D}^\dagger_{{\tilde{\f Y},\Q}}$, we have the following two right $L_\xi$-actions: Let $b\otimes Q$ be a section of $\omega_{\tilde{\f Y},\Q} \otimes_{\mathcal{O}_{\tilde{\f Y},\Q}} \s{D}^\dagger_{{\tilde{\f Y},\Q}}$,

(1) The naive right action is given by
$$(b \otimes Q) \cdot L_\xi=b \otimes Q L_\xi .$$

(2) The right action is given by %Leibniz rule
$$
(b \otimes Q) \cdot L_\xi=-L_\xi b \otimes Q-b \otimes L_\xi Q ,$$ where $L_{\xi}b$ is the Lie derivation of $b$ along $L_{\xi}$.
This structure is obtained from the left $\s{D}^\dagger_{{\tilde{\f Y},\Q}}$-module structure on $\s{D}^\dagger_{{\tilde{\f Y},\Q}}$
by side change.

By \cite[1.3.3]{Berthelot2}, there exists an isomorphism $\omega_{\tilde{\f Y},\Q} \otimes_{\mathcal{O}_{\tilde{\f Y},\Q}} \s{D}^\dagger_{{\tilde{\f Y},\Q}}$
which exchanges the two right $\s{D}^\dagger_{{\tilde{\f Y},\Q}}$-module structures, hence exchanging the two $L_\xi$-actions. Then one can verify the following.

\begin{cor}\label{toroidalDmodiso'}
    We have a natural isomorphism of right $\s{D}^\d_{\tilde{\f Y},\Q}$-modules
    \[\gamma: \D(^\d \tilde{D}_k)\omega_{\tilde{\f Y},\Q}\to (\omega_{\tilde{\f Y},\Q}\otimes_{\c{O}_{\tilde{\f Y},\Q}}\s{D}^\d_{\tilde{\f Y},\Q})/\sum_{\xi\in \f{L}(H)} L_{\xi}(\omega_{\tilde{\f Y},\Q}\otimes_{\c{O}_{\tilde{\f Y},\Q}}\s{D}^\d_{\tilde{\f Y},\Q})\]
    where the right $\s{D}^\d_{\tilde{\f Y},\Q}$-module structure is induced by the canonical right $\s{D}^\d_{\tilde{\f Y},\Q}$-module structure on $(\omega_{\tilde{\f Y},\Q}\otimes_{\c{O}_{\tilde{\f Y},\Q}}\s{D}^\d_{\tilde{\f Y},\Q})$, and $L_{\xi}$ acts on $(\omega_{\tilde{\f Y},\Q}\otimes_{\c{O}_{\tilde{\f Y},\Q}}\s{D}^\d_{\tilde{\f Y},\Q})$ by  %by the Leibniz rule with a symbol. 
    \[L_{\xi}(b\otimes Q)=-L_{\xi}b\otimes Q-b\otimes L_{\xi}Q\]for any $b\in \omega_{\tilde{\f Y},\Q}, Q\in \s{D}^\d_{\tilde{\f Y},\Q}, \xi\in \f{L}(H)$.
\end{cor}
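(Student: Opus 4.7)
The plan is to deduce \Cref{toroidalDmodiso'} from \Cref{toroidalDmodiso} by applying the side-change functor $\omega_{\tilde{\f Y},\Q}\otimes_{\c{O}_{\tilde{\f Y},\Q}}-$, which is an equivalence between the categories of left and right coherent $\s{D}^\d_{\tilde{\f Y},\Q}$-modules. A key input is that this equivalence commutes with both the Verdier dual $\D$ and the localization functor $(^\d\tilde{D}_k)$, since both operations are intrinsic in the appropriate derived categories.

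First I would tensor both sides of the isomorphism $\gamma$ of \Cref{toroidalDmodiso} with $\omega_{\tilde{\f Y},\Q}$ over $\c{O}_{\tilde{\f Y},\Q}$. Using compatibility of side change with $\D$ and $(^\d\tilde{D}_k)$, the left-hand side becomes $\D(^\d\tilde{D}_k)\omega_{\tilde{\f Y},\Q}$. The right-hand side becomes
\[\omega_{\tilde{\f Y},\Q}\otimes_{\c{O}_{\tilde{\f Y},\Q}}\Big(\s{D}^\d_{\tilde{\f Y},\Q}\Big/\sum_{\xi\in\f{L}(H)}\s{D}^\d_{\tilde{\f Y},\Q}L_\xi\Big),\]
which I would rewrite as the quotient of $\omega_{\tilde{\f Y},\Q}\otimes_{\c{O}_{\tilde{\f Y},\Q}}\s{D}^\d_{\tilde{\f Y},\Q}$ by the submodule generated by the sections $b\otimes QL_\xi$, where the ambient sheaf is equipped with the \emph{naive} right $\s{D}^\d_{\tilde{\f Y},\Q}$-action $(b\otimes Q)\cdot P=b\otimes QP$.

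Next, I would transport the naive right structure to the side-changed right structure via the canonical exchange isomorphism of \cite[1.3.3]{Berthelot2}, recalled immediately before the statement. Under this identification, naive right multiplication by a vector field $L_\xi$ corresponds to the twisted operator $L_\xi\cdot(b\otimes Q)=-L_\xi b\otimes Q-b\otimes L_\xi Q$, because the two right $\s{D}^\d$-structures differ exactly by the Lie derivative when evaluated on sections of the tangent sheaf. The submodule of relations $\sum_\xi(\omega_{\tilde{\f Y},\Q}\otimes_{\c{O}_{\tilde{\f Y},\Q}}\s{D}^\d_{\tilde{\f Y},\Q})L_\xi$ is then identified with $\sum_\xi L_\xi(\omega_{\tilde{\f Y},\Q}\otimes_{\c{O}_{\tilde{\f Y},\Q}}\s{D}^\d_{\tilde{\f Y},\Q})$ in the new structure, yielding the presentation in the statement.

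The only real subtlety is signs and sides: one must check at the level of generators that the Berthelot exchange sends right multiplication by $L_\xi$ to the twisted action $L_\xi\cdot$ displayed in the corollary, rather than to some other variant. This is a routine but careful verification using the explicit formula for the exchange isomorphism, and once performed the final identification follows formally from \Cref{toroidalDmodiso}.
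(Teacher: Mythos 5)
Your proof is correct and fills in exactly the verification the paper leaves implicit: the two $L_\xi$-actions on $\omega_{\tilde{\f Y},\Q}\otimes_{\c{O}_{\tilde{\f Y},\Q}}\s{D}^\dagger_{\tilde{\f Y},\Q}$ and Berthelot's exchange isomorphism are introduced immediately before the corollary precisely so that it follows from \Cref{toroidalDmodiso} by applying the side-change functor and transporting the relation submodule $\sum_\xi(\omega\otimes\s{D}^\dagger)L_\xi$ across the exchange to $\sum_\xi L_\xi(\omega\otimes\s{D}^\dagger)$. One small imprecision worth flagging: the side-change functor endows $\omega_{\tilde{\f Y},\Q}\otimes(\s{D}^\dagger/\sum\s{D}^\dagger L_\xi)$ with the \emph{twisted} right structure, not the naive one as you state mid-argument; it is the Berthelot exchange that converts this to the canonical (naive) right structure on the quotient while simultaneously rewriting the relations, so your conclusion is unaffected but the intermediate attribution of structures should be swapped.
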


\begin{pro}\label{liesdeg0}
    Notation as in \Cref{Hyper-D}. Let $\omega_{G_k}:=(^\d \tilde D_k)\omega_{\tilde{\f{Y}},\b{Q}}$. Then
    $\c{H}^j(\D(\iota p)_{+}\omega_{G_k})=0$ for $j\neq 0$.
\end{pro}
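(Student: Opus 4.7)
Since $\iota$ is finite and $p$ is proper by Assumption \ref{ass}(1), the composite $\iota p:\tilde{\f Y}\to\hat{\b P}$ is proper. Therefore by the compatibility of Verdier duality with proper pushforward on coherent $\s{D}^\d$-modules, $\D(\iota p)_{+}\omega_{G_k}\cong(\iota p)_{+}\D\omega_{G_k}$, and the proposition is equivalent to the concentration of $(\iota p)_{+}\D\omega_{G_k}$ in cohomological degree zero.

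By Corollary \ref{toroidalDmodiso'}, $\D\omega_{G_k}$ is identified with the explicit right $\s{D}^\d_{\tilde{\f Y},\b{Q}}$-quotient
\[
Q \;=\; (\omega_{\tilde{\f Y},\b{Q}}\otimes_{\c{O}_{\tilde{\f Y},\b{Q}}} \s{D}^\d_{\tilde{\f Y},\b{Q}})\Big/\sum_{\xi\in \f{L}(H)} L_{\xi}(\omega_{\tilde{\f Y},\b{Q}}\otimes_{\c{O}_{\tilde{\f Y},\b{Q}}} \s{D}^\d_{\tilde{\f Y},\b{Q}}),
\]
and the plan is to resolve $Q$ by the Chevalley--Eilenberg-type complex
\[
K^{\bullet}:\;\cdots\to \bigwedge\nolimits^{k}\f{L}(H)_{\b{Q}}\otimes_{\b{Q}}(\omega_{\tilde{\f Y},\b{Q}}\otimes \s{D}^\d_{\tilde{\f Y},\b{Q}})\to\cdots\to \omega_{\tilde{\f Y},\b{Q}}\otimes \s{D}^\d_{\tilde{\f Y},\b{Q}}\to 0,
\]
placed in non-positive cohomological degrees, with the standard Lie algebra homology differential built from the right action $\xi\otimes m\mapsto L_{\xi}(m)$. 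By construction $\c{H}^{0}(K^{\bullet})=Q$; the key claim is that $K^{\bullet}$ is acyclic in negative degrees. This should follow from Lemma \ref{ass-4} and Assumption \ref{ass}(2), which together yield the surjection $\f{L}(H)\otimes_{\b{Q}}\c{O}_{\tilde{\f Y}}\twoheadrightarrow\c{T}_{\tilde{\f Y}}(-\log\tilde D)$ globally on $\tilde{\f Y}$, followed by a local reduction on $\tilde Y_0\cong R_u(P)\times S$ (where $\tilde D$ has normal crossings) to standard Koszul vanishings for $\s{D}^\d$-modules on $\b{A}^1$ (with generator $x\partial$), on $\b{G}_m$ (generator $\partial$), and on $R_u(P)$ (generators $\partial_i$), in the spirit of the local computations used in the proof of Theorem \ref{toroidalDmodiso}.

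Once $K^{\bullet}$ is a resolution, I apply $R(\iota p)_{*}$ termwise. Each term is a finite direct sum of copies of the rank-one free right $\s{D}^\d$-module $\omega_{\tilde{\f Y},\b{Q}}\otimes \s{D}^\d_{\tilde{\f Y},\b{Q}}$, whose $(\iota p)_{+}$-pushforward, via the transfer bimodule together with the projection formula, is concentrated in degree $0$ by the properness of $\iota p$ and the fact that $\omega_{\tilde{\f Y}}$ is a line bundle on the smooth formal scheme $\tilde{\f Y}$. The spectral sequence associated to the resolution then degenerates on the first page and delivers the desired concentration of $(\iota p)_{+}Q$ in cohomological degree $0$.

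The main obstacle is the acyclicity of $K^{\bullet}$ in negative degrees. Since $\f{L}(H)\otimes \c{O}_{\tilde{\f Y}}\to \c{T}_{\tilde{\f Y}}(-\log\tilde D)$ is only a surjection with a nontrivial kernel (of rank $\dim H-\dim\tilde Y=d$), $K^{\bullet}$ is not a naive Koszul complex, and one must combine the Chevalley--Eilenberg differential with the logarithmic tangent structure and with the overconvergence $(^\d\tilde D_k)$ implicit in $\omega_{G_k}$ in order to kill the spurious homologies coming from the kernel. The product decomposition of $\tilde Y_0$ separates the computation into an affine unipotent factor and a smooth toric factor, reducing the remaining vanishings to well-understood one-dimensional pieces.
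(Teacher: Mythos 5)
Your approach is genuinely different from the paper's and has a gap at its core that is unlikely to be fillable in this generality. The crux of your argument is that the Chevalley--Eilenberg complex $K^\bullet$ built from the $L_\xi$ is a resolution of $Q = \D\omega_{G_k}$, but you yourself observe that $\f{L}(H)\otimes\c{O}_{\tilde{\f Y}}\to\c{T}_{\tilde{\f Y}}(-\log\tilde D)$ has a rank-$d$ kernel at every point; this generically forces nontrivial higher Lie-algebra homology, and you cite no Borho--Brylinski/Hotta--Kashiwara-type theorem adapted to the arithmetic $\s{D}^\dagger$, logarithmic, overconvergent setting to make it vanish. The local reduction to $R_u(P)\times S$ cleanly separates the tangential part of the action but does nothing to dispose of the $d$-dimensional stabilizer directions. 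Moreover, even granting acyclicity, termwise pushforward plus the hypercohomology spectral sequence only shows $(\iota p)_+Q\in D^{\leq 0}$: the complex $R^0(\iota p)_*K^\bullet$ lives in non-positive degrees but need not be acyclic in degree $<0$, so a separate argument (e.g.\ duality) is required for the lower bound. Also, the vanishing of $R^i(\iota p)_*$ of each term for $i>0$ is not ``because $\omega_{\tilde{\f Y}}$ is a line bundle''; it rests on the Grauert--Riemenschneider-type statement $R^ip_*\omega_{\tilde Z}=0$ proved in Proposition~\ref{ass-1}.

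The paper's proof sidesteps resolutions entirely. Since $\omega_{G_k}=(^\d\tilde D_k)\omega_{\tilde{\f Y},\Q}$, one has $(\iota p)_+\omega_{G_k}\cong(^\d D_k)(\iota p)_+\omega_{\tilde{\f Y},\Q}$, and by \cite[4.3.12]{Berthelot1} concentration in a single degree can be tested after restriction to $\hat{\b P}-D_k$. There the restricted morphism $\iota p\colon\f{G}\to\hat{\b P}-D_k$ is finite (because $\f Y\to\hat{\b P}$ is finite), so $(\iota p)_+$ is right exact and the pushforward lies in $D^{\leq 0}_{\m{ovhol}}$; Verdier self-duality of $\omega_{\tilde{\f Y},\Q}$ together with commutation of $\D$ with this finite pushforward forces it into $D^{\geq 0}_{\m{ovhol}}$ as well, and the resulting concentration in degree $0$ passes to $\D(\iota p)_+\omega_{G_k}$ by overholonomicity. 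If you want your line of attack to work, the right bound is best obtained exactly as the paper does---via self-duality---and the heavy Chevalley--Eilenberg step becomes unnecessary once you notice the restricted morphism is finite.
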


\begin{proof}
    Since $\omega_{G_k}$ is overholonomic, $\D(\iota p)_{+}\omega_{G_k}$ is also overholonomic. %In particular it is holonomic. Note that $\b{D}$ is exact on holonomic complex. 
    We have $\c{H}^j((\iota p)_{+}\omega_{G_k})\cong \c{H}^j((^\d {D}_k)(\iota p)_{+}\omega_{\tilde{\f{Y}}, \Q})$. 
    It suffices to show $\c{H}^j((^\d {D}_k)(\iota p)_{+}\omega_{\tilde{\f{Y}}, \Q})=0$ for $j\neq 0$. By \cite[4.3.12]{Berthelot1}, we just need to show that the complex $(\iota p)_{+}\omega_{\tilde{\f{Y}}, \Q}|_{\hat{\b{P}}-D_k}$ is a $\s{D}^\d_{\hat{\b{P}}-D_k, \Q}$-module.

    The morphism $\iota p: \f{G}=\tilde{\f{Y}}-\tilde{D}_k\to \hat{\b{P}}-D_k$ is a finite morphism since $\f{Y}\to\hat{\b{P}}$ is finite. So
    \[(\iota p)_{+}\omega_{\tilde{\f{Y}}, \Q}|_{\hat{\b{P}}-D_k}\in D_{\m{ovhol}}^{\leq0}(\s{D}^\d_{\hat{\b{P}}-D_k, \Q}).\]
    Taking the Verdier dual, we conclude
    \[(\iota p)_{+}\omega_{\tilde{\f{Y}}, \Q}|_{\hat{\b{P}}-D_k}=\D(\iota p)_{+}\omega_{\tilde{\f{Y}}, \Q}|_{\hat{\b{P}}-D_k}\in D_{\m{ovhol}}^{\geq0}(\s{D}^\d_{\hat{\b{P}}-D_k, \Q}).\qedhere\]
\end{proof}

Let $\unif$ be the uniformizer of $R$. For any $\c{O}_{Y}$-module $\c{M}$, let $\c{M}^\wedge:=\varprojlim_n\c{M}/\unif^n\c{M}$ be the $\unif$-adic completion of $\c{M}$.

\begin{pro}
    $\D(\iota p)_{+}\omega_{G_k}$ is a direct summand of \[\nn:=
    (\omega_{\f{Y},\b Q}\otimes_{\c{O}_{\hat{\b{P}}, \Q}}\s{D}^\d_{\b{\hat P}, \Q})/\sum_{\xi\in \f{L}(H)}L_{\xi}(\omega_{\f{Y}}\otimes_{\c{O}_{\hat{\b{P}}, \Q}}\s{D}^\d_{\b{\hat P}, \Q}).\]
    Here $\omega_{\f{Y},\b Q}:=(Rp_*\omega_{\tilde{Y}})^\wedge\otimes_{\b Z}\b{Q}$ is regarded as a coherent $\c{O}_{\hat{\b{P}},\b Q}$-module via the functor $\iota_*$.
    The action of $L_{\xi}$ on $(\omega_{\tilde{\f Y},\Q}\otimes_{\c{O}_{\hat{\b{P}},\Q}}\s{D}^\d_{\hat{\b{P}},\Q})$ is similar to that in \Cref{toroidalDmodiso'}, that is, for any 
    $b\in \omega_{\f Y,\Q}, Q\in \s{D}^\d_{\hat{\b{P}},\Q}, \xi\in \f{L}(H)$, we define \[L_{\xi}(b\otimes Q)=-L_{\xi}b\otimes Q-b\otimes QL_{\xi} .\]%L_\xi^{\f{Y}}
    %We call $\c{N}$ the modified hypergeometric $\s{D}^\dagger$-module.
\end{pro}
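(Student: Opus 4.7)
The plan is to adapt, to the arithmetic $\s{D}$-module setting, the characteristic-zero argument of \cite[\S 2]{l-adic} that exhibits $R\iota_!\omega_G$ as a direct factor of $\c N$. The main inputs will be \Cref{toroidalDmodiso'} on $\tilde{\f Y}$ and \Cref{liesdeg0}.

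First, \Cref{toroidalDmodiso'} applied to $\omega_{G_k}=(^\d\tilde D_k)\omega_{\tilde{\f Y},\Q}$ gives a canonical isomorphism of right $\s{D}^\d_{\tilde{\f Y},\Q}$-modules
\[
\D\omega_{G_k}\;\cong\;\widetilde{\c N}':=\bigl(\omega_{\tilde{\f Y},\Q}\otimes_{\c O_{\tilde{\f Y},\Q}}\s{D}^\d_{\tilde{\f Y},\Q}\bigr)\Big/\sum_{\xi\in\f L(H)}L_\xi\bigl(\omega_{\tilde{\f Y},\Q}\otimes_{\c O_{\tilde{\f Y},\Q}}\s{D}^\d_{\tilde{\f Y},\Q}\bigr),
\]
which I will call the $\tilde{\f Y}$-analog of $\nn$. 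Since $\iota p$ is proper ($p$ is proper birational and $\iota$ is finite), Verdier duality commutes with $(\iota p)_+$, so $\D(\iota p)_+\omega_{G_k}\cong(\iota p)_+\widetilde{\c N}'$; by \Cref{liesdeg0} this is concentrated in cohomological degree zero.

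Second, I would construct a natural morphism of right $\s{D}^\d_{\hat{\b P},\Q}$-modules $\actM:(\iota p)_+\widetilde{\c N}'\to\nn$. The vanishing $R^{>0}p_*\omega_{\tilde Y}=0$ used in \Cref{filomega}, together with the finiteness of $\iota$, gives an identification $(\iota p)_*\omega_{\tilde{\f Y},\Q}\cong\omega_{\f Y,\Q}$ of coherent $\c O_{\hat{\b P},\Q}$-modules; combined with the projection formula this yields a canonical morphism
\[
(\iota p)_+\bigl(\omega_{\tilde{\f Y},\Q}\otimes_{\c O_{\tilde{\f Y},\Q}}\s{D}^\d_{\tilde{\f Y},\Q}\bigr)\;\longrightarrow\;\omega_{\f Y,\Q}\otimes_{\c O_{\hat{\b P},\Q}}\s{D}^\d_{\hat{\b P},\Q}.
\]
By the $H$-equivariance of $\iota p$, this map carries the $L_\xi$-relations on $\tilde{\f Y}$ into the $L_\xi$-relations on $\hat{\b P}$, so it descends to the quotients $\widetilde{\c N}'$ and $\nn$, producing $\actM$.

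Third, I would show $\actM$ realizes $(\iota p)_+\widetilde{\c N}'$ as a direct summand of $\nn$. Mirroring \cite[2.13]{l-adic}, the splitting reflects the $(H\times H)$-orbit stratification of $\tilde Y$: the image of $\actM$ corresponds to the open orbit $G_k\subset\tilde Y_k$, while the complementary direct summand of $\nn$ is controlled by the boundary orbits classified in \Cref{orbitstr}. Concretely, one would apply the maximal-$H$-equivariant-quotient formalism from \S\ref{weakequiv}, transported to arithmetic $\s{D}$-modules via side change and $\unif$-adic completion, to isolate the $G_k$-contribution inside $\nn$. The first two steps are largely formal consequences of proper-pushforward/Verdier-duality compatibilities and the Cohen--Macaulay property from \Cref{filomega}; the main obstacle will be this third step, where the orbit-decomposition/splitting argument must be transported rigorously to the overholonomic arithmetic $\s{D}$-module category, maintaining compatibility with $\unif$-adic completion, $\b Q$-coefficients, and overconvergence along $\infty_{\f p}$.
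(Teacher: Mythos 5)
Your step 1 is fine, and the morphism you sketch in step 2 is essentially the paper's $\psi$, but you have the direction backwards. What exists naturally is a map $\psi:\nn\to(\iota p)_+\widetilde{\c N}'\cong\D(\iota p)_+\omega_{G_k}$, obtained by applying $(\iota p)_+$ to the right exact sequence from \Cref{toroidalDmodiso'}: the first two terms push forward to modules concentrated in degree $0$ (with $(\iota p)_+(\omega_{\tilde{\f Y},\Q}\otimes\s{D}^\d_{\tilde{\f Y},\Q})\cong\omega_{\f Y,\Q}\otimes\s{D}^\d_{\hat{\b P},\Q}$), the composite vanishes, and one gets a morphism out of the cokernel $\nn$. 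You instead want a map $\actM:(\iota p)_+\widetilde{\c N}'\to\nn$ by "descending to quotients," but $(\iota p)_+\widetilde{\c N}'$ is not a quotient of $(\iota p)_+(\omega_{\tilde{\f Y},\Q}\otimes\s{D}^\d_{\tilde{\f Y},\Q})$ — the pushforward is a derived functor, not right exact — so there is no natural map in that direction, and the stated goal should be to split $\psi$, not to construct a split injection into $\nn$.

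The more serious gap is step 3. Your proposed splitting argument — transporting the orbit stratification of $\tilde Y$ and the maximal-$H$-equivariant-quotient formalism of \Cref{weakequiv} to the arithmetic $\s{D}$-module setting — is not what the paper does, and the paper does not develop that transport; you yourself flag it as the main obstacle. The paper's actual argument is short and sidesteps equivariance entirely: restrict $\psi$ to $\hat{\b P}-D_k$, where $\iota p$ is a finite morphism so $(\iota p)_+$ becomes right exact and $\psi$ is an isomorphism there; then dualize, apply the overconvergence functor $(^\d D_k)$ and use \cite[4.3.12]{Berthelot1} to conclude $(^\d D_k)\D\psi$ is an isomorphism; define $\phi$ as $\D\nn\to(^\d D_k)\D\nn\xrightarrow{((^\d D_k)\D\psi)^{-1}}(\iota p)_+\omega_{G_k}$ so that $\phi\circ\D\psi=\m{id}$; applying $\D$ once more gives $\psi\circ\D\phi=\m{id}$, exhibiting $\D(\iota p)_+\omega_{G_k}$ as a direct summand of $\nn$. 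You should replace your step 3 with this localization-and-duality argument; without it the proof is incomplete.
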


\begin{proof}
    By \Cref{toroidalDmodiso'}, we have an exact sequence \begin{equation}\label{resolution}
    \f{L}(H)\otimes_{R}\omega_{\tilde{\f Y},\Q}\otimes_{\c{O}_{\tilde{\f Y},\Q}}\s{D}^\d_{\tilde{\f Y},\Q}\to \omega_{\tilde{\f Y},\Q}\otimes_{\c{O}_{\tilde{\f Y},\Q}}\s{D}^\d_{\tilde{\f Y},\Q}\to \D(^\d \tilde{D}_k)\omega_{\tilde{\f Y},\Q}\to 0 \end{equation}
    of right $\s{D}^\d_{\tilde{\f Y},\Q}$-modules. 
    By the theorem of formal functions \cite[III 11.1]{Hartshorne},  $\varprojlim_n Rp_*\omega_{\tilde{Y}_{R/\unif^n}}$ is isomorphic to $(Rp_*\omega_{\tilde{Y}})^\wedge=\omega_{\f{Y}}$. So we have
    \[(\iota p)_{+}(\omega_{\tilde{\f Y},\Q}\otimes_{\c{O}_{\tilde{\f Y},\Q}}\s{D}^\d_{\tilde{\f Y},\Q})\cong \omega_{\f{Y},\b Q}\otimes_{\c{O}_{\hat{\b{P}}, \Q}}\s{D}^\d_{\b{\hat P}, \Q}\] by direct calculation.
    Applying $(\iota p)_{+}$ to the sequence (\ref{resolution}), we get a complex \[\f{L}(H)\otimes_{R}\omega_{\f{Y},\b Q}\otimes_{\c{O}_{\hat{\b{P}}, \Q}}\s{D}^\d_{\b{\hat P}, \Q}\to \omega_{\f{Y},\b Q}\otimes_{\c{O}_{\hat{\b{P}}, \Q}}\s{D}^\d_{\b{\hat P}, \Q}\to \D(\iota p)_{+}\omega_{G_k}\]
    in the derived category of right $\s{D}_{\hat{\b{P}},\b{Q}}^\dagger$-modules. It induces a morphism
    \[\psi: \nn\to \D(\iota p)_{+}\omega_{G_k}.\]
    Restricted to the complement of $\tilde{D}_k$, the morphism $\iota p$ becomes finite. Hence $(\iota p)_{+}|_{\tilde{\f Y}-\tilde{D}_k}$ is the derived functor of a right exact functor. 
    So the right exact sequence
    \[\f{L}(H)\otimes_{R}\omega_{\tilde{\f Y},\Q}\otimes_{\c{O}_{\tilde{\f Y},\Q}}\s{D}^\d_{\tilde{\f Y},\Q}|_{\tilde{\f Y}-\tilde{D}_k}\to \omega_{\tilde{\f Y},\Q}\otimes_{\c{O}_{\tilde{\f Y},\Q}}\s{D}^\d_{\tilde{\f Y},\Q}|_{\tilde{\f Y}-\tilde{D}_k}\to \D(^\d \tilde{D}_k)\omega_{\tilde{\f Y},\Q}|_{\tilde{\f Y}-\tilde{D}_k}\to 0.\]
    gives a right exact sequence \[\f{L}(H)\otimes_{R}\omega_{\f{Y},\b Q}\otimes_{\c{O}_{\hat{\b{P}}, \Q}}\s{D}^\d_{\b{\hat P}, \Q}|_{\hat{\b{P}}-D_k}\to \omega_{\f{Y},\b Q}\otimes_{\c{O}_{\hat{\b{P}}, \Q}}\s{D}^\d_{\b{\hat P}, \Q}|_{\hat{\b{P}}-D_k}\to \D(\iota p)_{+}\omega_{G_k}|_{\hat{\b{P}}-D_k}\to 0.\]     
    Hence $\psi|_{\hat{\b{P}}-D_k}$ is an isomorphism. Since $\D\nn\in D^{\m b}_{\coh}(\s{D}^\d_{\b{\hat P}, \Q})$, %applying $(^\d {D}_k)$, 
    we have $(^\d {D}_k)\D\nn\in D^{\m b}_{\coh}(\s{D}^\d_{\b{\hat P}, \Q}(^\d {D}_k))$. By \cite[4.3.12]{Berthelot1}, this implies that the Verdier dual of $\psi$
    \[\D \psi: (\iota p)_{+}\omega_{G_k}\to \D\nn\]
    induces an isomorphism
    \[(^\d {D}_k)\D \psi: (\iota p)_{+}\omega_{G_k}\xrightarrow{\sim} (^\d {D}_k)\D\nn.\]
     Let $\phi$ be the composite
     \[\D\nn\to (^\d {D}_k)\D\nn\xrightarrow{((^\d {D}_k)\D \psi)^{-1}}(\iota p)_{+}\omega_{G_k}.\]
     Then $\phi\circ \D \psi$ is the identity. So $\psi\circ \D\phi$ is identity. Our assertion follows.
\end{proof}

Recall $\infty=\hat{\b{P}}-\hat{\b{V}}$ as in \Cref{Hyper-D}. In order to distinguish the functor $\b{D}$, we denote by $\b{D}_\infty$ the Verdier dual of category $D_\m{coh}^b(\s{D}^\d_{\hat{\b P},\b{Q}}(\infty))$. Applying $(^\d\infty)$, we have the following result in the overconvergent case.

\begin{cor}\label{directsummand}
    The $j$-th cohomological sheaf of \[\D_{\infty}(\iota p)_{+}\omega_{G_k}=\D_{\infty}(\iota p)_{+}(^\d \tilde{D}_k)\omega_{\tilde{\f{Y}}, \Q}\cong \D_{\infty}(^\d {D}_k)(\iota p)_{+}\omega_{\tilde{\f{Y}}, \Q}\] equals to $0$ for $j\neq 0$,
    and $\c{H}^0(\D_{\infty}(\iota p)_{+}\omega_{G_k})$ is a direct summand of \[\Ninf
    :=(\omega_{\f{Y},\b Q}\otimes_{\c{O}_{\hat{\b{P}}, \Q}}\s{D}^\d_{\b{\hat P}, \Q}(\infty))/\sum_{\xi\in \f{L}(H)}L_{\xi}(\omega_{\f{Y},\b Q}\otimes_{\c{O}_{\hat{\b{P}}, \Q}}\s{D}^\d_{\b{\hat P}, \Q}(\infty)).\]
    \end{cor}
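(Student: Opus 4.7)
The plan is to derive this corollary from the preceding Proposition by applying the overconvergent localization functor $(^\dagger\infty) := \s{D}^\dagger_{\hat{\b P},\b Q}(^\dagger\infty) \otimes_{\s{D}^\dagger_{\hat{\b P},\b Q}} (-)$ to the direct summand decomposition that exhibits $\D(\iota p)_+\omega_{G_k}$ inside $\nn$. The explicit hint in the statement, namely ``Applying $(^\dagger\infty)$'', makes this strategy the natural one.

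First I would invoke the exactness of $(^\dagger\infty)$ on coherent $\s{D}^\dagger_{\hat{\b P},\b Q}$-modules. Exactness guarantees that both the direct summand relation and the concentration in cohomological degree $0$ coming from \Cref{liesdeg0} are preserved. Applying $(^\dagger\infty)$ to the previous proposition therefore yields a direct summand relation between $(^\dagger\infty)\D(\iota p)_+\omega_{G_k}$ and $(^\dagger\infty)\nn$, still concentrated in degree $0$.

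Next I would carry out the two required identifications. The identification of $(^\dagger\infty)\nn$ with $\Ninf$ follows because the tensor product $\omega_{\f Y,\b Q}\otimes_{\c{O}_{\hat{\b P},\b Q}}\s{D}^\dagger_{\hat{\b P},\b Q}$ becomes $\omega_{\f Y,\b Q}\otimes_{\c{O}_{\hat{\b P},\b Q}}\s{D}^\dagger_{\hat{\b P},\b Q}(^\dagger\infty)$ after localization, and $(^\dagger\infty)$ commutes with the quotient by $\sum_{\xi\in\f{L}(H)}L_\xi(-)$ by exactness. The identification $(^\dagger\infty)\D(\iota p)_+\omega_{G_k}\cong \D_\infty(\iota p)_+\omega_{G_k}$ uses the standard compatibility between Verdier duality on $D^b_{\coh}(\s{D}^\dagger_{\hat{\b P},\b Q})$ and on $D^b_{\coh}(\s{D}^\dagger_{\hat{\b P},\b Q}(^\dagger\infty))$, which asserts that $\D_\infty = (^\dagger\infty)\circ\D$ on complexes that already carry a $\s{D}^\dagger(^\dagger\infty)$-module structure. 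This hypothesis is satisfied here because $\omega_{G_k} = (^\dagger\tilde D_k)\omega_{\tilde{\f Y},\b Q}$, together with $\infty\subset D_k$, implies that $(\iota p)_+\omega_{G_k}$ is a $(^\dagger D_k)$-complex, hence in particular a $(^\dagger\infty)$-complex.

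Combining these two identifications, the preceding proposition's direct summand relation transforms into the asserted relation between $\c{H}^0(\D_\infty(\iota p)_+\omega_{G_k})$ and $\Ninf$, and the vanishing of $\c{H}^j$ for $j\neq 0$ is immediate since the ambient module $\Ninf$ sits entirely in degree $0$. The main obstacle I anticipate is a precise justification of the compatibility $(^\dagger\infty)\D\cong\D_\infty$ on our specific object; while this is a standard consequence of Berthelot's framework, stating it carefully requires tracking the interaction between the ring extension $\s{D}^\dagger_{\hat{\b P},\b Q}\hookrightarrow\s{D}^\dagger_{\hat{\b P},\b Q}(^\dagger\infty)$ and the $(^\dagger D_k)$-structure already present, along with the identity $(^\dagger\infty)\circ(^\dagger D_k) = (^\dagger D_k)$ arising from $\infty\subset D_k$.
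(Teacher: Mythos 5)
Your proposal is correct and coincides with the paper's own (very brief) argument, which is the single line ``Applying $(^\dagger\infty)$, we have the following result in the overconvergent case'' preceding the corollary. You have simply spelled out the supporting facts — exactness (flatness) of the localization functor, its effect on the quotient defining $\nn$, and the compatibility of duality with $(^\dagger\infty)$ for complexes already carrying a $(^\dagger D_k)$-structure (cf. the use of \cite[4.3.12]{Berthelot1} in the proof of the preceding proposition) — that the paper leaves implicit.
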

    
    We call $\f{F}_\pi(\c{N})$ the \emph{modified hypergeometric $\s{D}^\d_{\b{\hat P}, \Q}(\infty)$-module}.

\begin{rmk}
    We do not know whether $\Ninf$ is holonomic and whether there exists an $F$-structure on $\Ninf$.
\end{rmk}

\section{Right invariant differential operators on $H$.}\label{sec-right-inv}
In this section we define some right invariant differential operators on $H$. In the classical case, $L_\xi$ is sufficient. But in the case of arithmetic $\s{D}$-modules, especially when the level $m$ is positive, the operators $L_\xi$ become nilpotent modulo $\unif$. So they have no contribution to the characteristic cycle. Thus we need to find higher order integral right invariant differential operators on $H$ to control the characteristic cycle.

\subsection{Lie derivation given by a group action}\label{lie-alg-act} Let $R[H]$ be the coordinate ring of the group 
$R$-scheme $H$. 
Let $M$ be a left $R[H]$-comodule. 
For any section $P$ of $\Gamma(H, {\s D}_{H}^{(m)})$, the $H$-action on $M$ defines a map 
\[\beta(P):M\xrightarrow{\m{act}}  R[H] \otimes_{R} M  \xrightarrow{P \otimes\m{id}}  R[H] \otimes_{R} M  \xrightarrow{\m{ev_1}\otimes\m{id}}M,\] where $\m{ev_1}:R[H]\to R$ corresponds to the unit section $1_H:\mathrm{Spec}\,R\to H$.
%We define $\beta:=\actM\circ \m{inv}_*$, where $\m{inv}_*$ is the automorphism ${\s D}_{H}^{(m)}$ induced by the inversion $\m{inv}:H\to H$. 
Note that $\beta$ is only an $R$-linear map from $\Gamma(H, {\s D}_{H}^{(m)})$ to $\m{End}_R (M)$. In the case where $L$ is a vector field, we may identify $\beta(L)\in \m{End}_R (M)$ with the value of the induced Lie algebra action 
$\mathfrak{L}(H)\to \m{End}_R (M)$ 
at $L_1$, where $L_1$ is the value of $L$ at $1_H$. 
    
\begin{lem}\label{beta}
        %$\actM$ vanishes on $\Ker(\m{ev}_1)\Gamma(H, {\s D}_{H}^{(m)})$. 
    $\actM(PL_{\xi})=\actM(P)\circ \actM( L_{\xi})$ for any $P\in\Gamma(H, {\s D}_{H}^{(m)})$. As a corollary, for any $\xi_1,\cdots,\xi_s\in\f{L}(H)$, we have $\beta(L_{\xi_1}\cdots L_{\xi_s})=\beta(L_{\xi_1})\cdots \beta(L_{\xi_s})$. 
\end{lem}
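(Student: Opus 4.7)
The plan is to unpack the definition of $\actM$ in Sweedler-style notation and reduce the claim to the coassociativity axiom of the left $R[H]$-comodule structure on $M$. Given $m \in M$, write the coaction as $\m{act}(m) = \sum_i f_i \otimes m_i$ with $f_i \in R[H]$, $m_i \in M$, and iterate to get $\m{act}(m_i) = \sum_j g_{ij} \otimes n_{ij}$. Unwinding the three-step composition in the definition of $\actM$ immediately gives
\[ \actM(Q)(m) = \sum_i Q(f_i)(1)\, m_i \]
for every $Q \in \Gamma(H, \s{D}_H^{(m)})$, where $Q(f_i)$ is the action of the differential operator $Q$ on the function $f_i \in R[H]$, and $(\cdot)(1)$ denotes evaluation at the unit $1_H$. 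In particular, since composition of operators satisfies $(PL_\xi)(f) = P(L_\xi f)$, one obtains $\actM(PL_\xi)(m) = \sum_i P(L_\xi f_i)(1)\, m_i$.

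The key geometric input is that $L_\xi$, being the vector field on $H$ arising from the left multiplication action of $H$ on itself, is right-invariant, and acts through the first tensor factor of the comultiplication $\mu^*: R[H] \to R[H] \otimes R[H]$. Concretely, $L_\xi f(x) = \frac{d}{dt}\big|_{t=0} f(\exp(t\xi) x)$. Combined with the coassociativity axiom $(\mu^* \otimes \mathrm{id}) \circ \m{act} = (\mathrm{id} \otimes \m{act}) \circ \m{act}$, this gives the identity
\[ \sum_i f_i(yx)\, m_i = \sum_{i,j} f_i(y)\, g_{ij}(x)\, n_{ij} \]
in $M$ for all $x, y \in H$. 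Setting $y = \exp(t\xi)$ and differentiating at $t = 0$ yields
\[ \sum_i (L_\xi f_i)(x)\, m_i = \sum_{i,j} (L_\xi f_i)(1)\, g_{ij}(x)\, n_{ij}. \]
Applying $P$ with respect to the variable $x$ (treating $m_i, n_{ij}$ as constants) and then evaluating at $x = 1$, the left-hand side becomes $\sum_i P(L_\xi f_i)(1)\, m_i = \actM(PL_\xi)(m)$, while the right-hand side becomes $\sum_{i,j} (L_\xi f_i)(1)\, P(g_{ij})(1)\, n_{ij} = \actM(P)(\actM(L_\xi)(m))$ by a second application of the formula displayed above. This proves the main identity.

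The corollary for $\actM(L_{\xi_1}\cdots L_{\xi_s})$ then follows by an immediate induction on $s$: taking $P := L_{\xi_1}\cdots L_{\xi_{s-1}} \in \Gamma(H, \s{D}_H^{(m)})$ in the identity just proved gives $\actM(L_{\xi_1}\cdots L_{\xi_s}) = \actM(L_{\xi_1}\cdots L_{\xi_{s-1}}) \circ \actM(L_{\xi_s})$, and iteration yields the claim. The computation is largely formal; the only real point of care is matching variables, so that $L_\xi$ acts on the $y$-factor of $\mu^*$ (where differentiation converts it to a scalar $(L_\xi f_i)(1)$) while $P$ acts on the $x$-factor (where coassociativity has already moved it onto $g_{ij}(x)$). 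The right-invariance of $L_\xi$ is precisely what makes this factorization valid and places the composition in the order $P\circ L_\xi$ rather than producing an unwanted commutator.
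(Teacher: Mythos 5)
Your proof is correct and takes essentially the same approach as the paper's: both establish the intertwining relation $(L_\xi \otimes \mathrm{id})\circ\mathrm{act} = \mathrm{act}\circ\beta(L_\xi)$ from right-invariance of $L_\xi$ together with coassociativity of the coaction, and then compose with $P$ and evaluate at the identity. Your Sweedler-notation computation is simply the element-level unfolding of the paper's commutative-diagram argument.
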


\begin{proof}
    %The map $\m{ev}_1$ is given by modulo $I:=\Ker(\m{ev}_1)$. So for any $f\in I$, the map  $(\m{id}\otimes\m{ev_1})\circ(\m{id}\otimes f)$ vanishes, hence $\actM$ vanishes on $I\Gamma(H, {\s D}_{H}^{(m)})$. 

    Since $L_\xi$ is right $H$-invariant, we have a commutative diagram
    \[\begin{tikzcd}
    {R[H]} \arrow[d, "m"] \arrow[r, "L_\xi"]       & {R[H]} \arrow[d, "m"] \\
    {R[H]\otimes R[H]} \arrow[r, " L_\xi \otimes \m{id}"] & {R[H]\otimes R[H].}         
    \end{tikzcd}\]
    Hence the following diagram commutes:
    \[\begin{tikzcd}
M \arrow[r, "\m{act}"] \arrow[d, "\m{act}"]                                     & {R[H] \otimes_{R} M} \arrow[d, "\m{id}\otimes \m{act} "] \arrow[r, "L_\xi \otimes \m{id}"] & {R[H] \otimes_{R} M} \arrow[d, "\m{id}\otimes \m{act} "] \arrow[r, "\m{ev_1}\otimes\m{id}"] & M \arrow[d, "\m{act}"] \\
{R[H] \otimes_{R} M} \arrow[r, "\m{id}\otimes m"] \arrow[rd, "L_\xi \otimes \m{id}"'] & {R[H] \otimes_{R} R[H] \otimes_{R} M} \arrow[r, "L_\xi \otimes \m{id}"]           & {R[H] \otimes_{R} R[H] \otimes_{R} M} \arrow[r, "\m{ev_1}\otimes\m{id}"]                       & {R[H] \otimes_{R} M.}        \\
     & {R[H] \otimes_{R} M} \arrow[ru, "\m{id}\otimes m"] \arrow[rru, "\m{id}"']            &                                                                            &                       
\end{tikzcd}\]
We conclude that $(L_\xi \otimes \m{id})\m{act}( m)=\m{act}(\beta(L_{\xi}) m)$. Hence
$$\actM(P L_{\xi})(m)=(\m{ev_1}\otimes\m{id})(\m{id}\otimes PL_{\xi})\m{act}(m)=(\m{ev_1}\otimes\m{id})(\m{id}\otimes P)\m{act}(\gamma(L_{\xi}) m)=\actM(P)\circ \actM( L_{\xi})(m).$$
\end{proof}
    Let $V$ be a smooth $R$-scheme with left $H$-action. The composition
    \[V\xrightarrow{(1_H,\m{id})}H\times V\xrightarrow{\m{act}}V\]
    is the identity. For any $P\in\Gamma(H, {\s D}_{H}^{(m)})$, we define $\actV(P)\in\Gamma(V, \s D_{ V}^{(m)})$ to be the element given by taking $(1_H,\m{id})^*$ to the image of $P\boxtimes 1$ under the left $\s D_{H\times V}^{(m)}$-module morphism $ \s D_{H\times V}^{(m)}\to \m{act}^* \s D_{ V}^{(m)}$.
    %\WARN{Note $\actV: \Gamma(U, {\s D}_{H}^{(m)})\to \Gamma(V, \s D_{ V}^{(m)})$ is NOT a ring morphism.} 

\begin{lem}\label{discription-alpha}
   % The map $\actV: \Gamma(U, {\s D}_{H}^{(m)})\to \Gamma(V, \s D_{ V}^{(m)})$ vanishes on $\Ker(\m{ev}_1)\Gamma(U, {\s D}_{H}^{(m)})$. 
   $\actV(L_{\xi_1}\cdots L_{\xi_s})=L^V_{\xi_1}\cdots L^V_{\xi_s}$ for any $\xi_1,\cdots,\xi_s\in\f{L}(H)$.
\end{lem}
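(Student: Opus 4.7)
The plan is to mimic the proof of Lemma~\ref{beta}, replacing the regular comodule $R[H]$ by the sheaf $\c{O}_V$ with its natural $\m{act}^*$-structure. The strategy has two parts: a base case $s=1$ by direct calculation, and a multiplicativity identity $\actV(P L_{\xi})=\actV(P)\circ\actV(L_{\xi})$ for arbitrary $P\in\Gamma(H,\s{D}_{H}^{(m)})$ that follows from right $H$-invariance of $L_\xi$; iterating these gives the lemma.

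For the base case $s=1$, I would unwind the definition of $\actV$. On the canonical generator $1 \in \m{act}^*\s{D}_V^{(m)}$, the operator $L_\xi \boxtimes 1$ acts via the tangent map $d\m{act}$: its value at $(h,v)$ is $\frac{d}{dt}\big|_{t=0} \m{act}(\exp(t\xi)h,v) = L^V_\xi|_{h\cdot v}$, so the image of $L_\xi\boxtimes 1$ in $\m{act}^*\s{D}_V^{(m)}$ is the $\m{act}^*$-pullback of $L^V_\xi$. Pulling back further along the section $(1_H,\m{id}): V \to H \times V$ of $\m{act}$ then gives $L^V_\xi \in \Gamma(V,\s{D}_V^{(m)})$, so $\actV(L_\xi)=L^V_\xi$.

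For the multiplicativity identity, the right $H$-invariance of $L_\xi$ on $H$ yields the commutative diagram
\[
\begin{tikzcd}
\c{O}_V \arrow[r,"L^V_\xi"] \arrow[d,"\m{act}^*"'] & \c{O}_V \arrow[d,"\m{act}^*"] \\
\c{O}_{H\times V} \arrow[r,"L_\xi\boxtimes 1"'] & \c{O}_{H\times V},
\end{tikzcd}
\]
which is the precise analog of the diagram appearing in the proof of Lemma~\ref{beta}. A diagram chase formally identical to the one there, combined with the Leibniz rule in the transfer bimodule $\m{act}^*\s{D}_V^{(m)}$, yields $\actV(PL_\xi)=\actV(P)\circ\actV(L_\xi)$. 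The only mild subtlety is confirming that the canonical map $\s{D}_{H\times V}^{(m)} \to \m{act}^*\s{D}_V^{(m)}$ is compatible with composition of operators at level $m$; since each $L_\xi$ is of order one, no divided powers intervene, so this reduces to the standard inverse-image formalism for arithmetic $\s{D}$-modules, and does not present a real obstacle.
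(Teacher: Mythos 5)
Your argument is correct and rests on the same key observation as the paper's proof — that $L_\xi\boxtimes 1$ on $H\times V$ is $\m{act}$-related to $L_\xi^V$ (your commutative square), which then gets iterated through the transfer bimodule; the paper runs the induction directly in $\m{act}^*\s{D}_V^{(m)}$ rather than isolating the multiplicativity identity $\actV(PL_\xi)=\actV(P)\circ\actV(L_\xi)$, but the substance is identical. One small point of framing: your square commutes because of the associativity axiom $(\exp(t\xi)h)\cdot v=\exp(t\xi)\cdot(hv)$ of the $H$-action on $V$, of which right $H$-invariance of $L_\xi$ on $H$ is merely the special case $V=H$, rather than being itself a consequence of that right-invariance.
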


\begin{proof}
    Note that $\m{act}_*: \c{T}_{H\times V} \to \m{act}^*\c{T}_V$ maps $L_{\xi}\boxtimes 1$ to $\m{act}^*L^V_{\xi}$. We conclude that \[(L_{\xi}\boxtimes 1)\cdot \m{act}^*(Q)=\m{act}^*(L_{\xi}^VQ)\]for any $Q\in \s D_{ V}^{(m)}$. %, where $L_{\xi}Q$ is defined by the left action of $H$ on $V$. 
    By induction we conclude that the image of $(L_{\xi_1}\cdots L_{\xi_s})\boxtimes 1$ under $ \s D_{H\times V}^{(m)}\to \m{act}^* \s D_{ V}^{(m)}$ is $\m{act}^*(L^V_{\xi_1}\cdots L^V_{\xi_s})$. Hence $\actV(L_{\xi_1}\cdots L_{\xi_s})=L^V_{\xi_1}\cdots L^V_{\xi_s}$. 
\end{proof}

\begin{lem}\label{generated-by-L}
    Let $\zeta_1,\cdots, \zeta_{2d}$ be a basis of $\f{L}(H)$ as an $R$-module. 
    For any integer $i$, 
    the elements $L_{\zeta_{i_1}}\cdots L_{\zeta_{i_s}}$ ($1\leq i_1\leq \cdots \leq i_s\leq 2d, s\leq j$) form a basis for $F^{j}{\s D}_{H,\Q}^{(m)}\cong F^{j}{\s D}_{H,\Q}^{(0)}$ as a left $\c{O}_{ H,\Q}$-module, where $F^{j}{\s D}_{H,\Q}^{(m)}$ denotes the subsheaf of differential operators in ${\s D}_{H,\Q}^{(m)}$ of order $\leq j$, and $d=\dim G_k$.
\end{lem}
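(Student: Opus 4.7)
The strategy is a PBW-style induction on the order $j$, preceded by two reductions.

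First, I would reduce the problem to the case of level $m=0$. After tensoring with $\mathbb Q$, Berthelot's divided powers $\partial^{[k]}$ identify with $\partial^k/k!$ (since $k!$ becomes invertible), so the level-$m$ arithmetic differential operators $\s D^{(m)}_{H,\Q}$ coincide with the naive differential operators $\s D^{(0)}_{H,\Q}$ as filtered $\c O_{H,\Q}$-algebras. This gives the claimed isomorphism $F^j\s D^{(m)}_{H,\Q}\cong F^j\s D^{(0)}_{H,\Q}$ and reduces everything to a statement about ordinary differential operators.

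Second, because $H=G\times G$ is a smooth group scheme, its tangent bundle is trivialized by right-invariant vector fields: evaluation at $1_H$ induces an $\c O_H$-linear isomorphism $\c T_H\xrightarrow{\sim}\f L(H)\otimes_R\c O_H$ sending $L_{\zeta_i}\mapsto \zeta_i\otimes 1$. Hence $L_{\zeta_1},\dots,L_{\zeta_{2d}}$ form an $\c O_H$-basis of $\c T_H$, and after $\otimes\Q$ an $\c O_{H,\Q}$-basis.

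Third, I would induct on $j$, using the symbol exact sequence
\[0\to F^{j-1}\s D^{(0)}_H\to F^j\s D^{(0)}_H\to \mathrm{gr}^j\s D^{(0)}_H\to 0\]
together with $\mathrm{gr}^j\s D^{(0)}_H\cong \mathrm{Sym}^j_{\c O_H}(\c T_H)$. By the trivialization above, the right-hand side is free on the ordered monomials $\sigma(L_{\zeta_{i_1}})\cdots\sigma(L_{\zeta_{i_j}})$ for $1\le i_1\le\cdots\le i_j\le 2d$. Given any element of $F^j\s D^{(0)}_{H,\Q}$, its principal symbol is uniquely an $\c O_{H,\Q}$-combination of these, and subtracting the corresponding combination of the lifted ordered monomials $L_{\zeta_{i_1}}\cdots L_{\zeta_{i_j}}$ produces an element of $F^{j-1}\s D^{(0)}_{H,\Q}$; spanning then follows from the inductive hypothesis. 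For linear independence, the top-degree part of any nontrivial relation would yield a nontrivial $\c O_{H,\Q}$-linear relation among the ordered monomials in $\mathrm{Sym}(\c T_H)_{\Q}$, which is impossible.

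The main subtlety is that the $L_{\zeta_i}$ do not commute, so rewriting an arbitrary (unordered) monomial in the ordered PBW form requires commuting factors past one another. The key observation is that $[L_{\zeta_i},L_{\zeta_k}]=L_{[\zeta_i,\zeta_k]}$ is again a right-invariant vector field, hence of order $1$, so each swap introduces only strictly lower-order terms. This is exactly what makes the filtration compatible with the PBW ordering and allows the induction to close; it is also the only place where the Lie-algebra structure enters.
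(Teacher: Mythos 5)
Your proof is correct and follows essentially the same route as the paper: reduce to the associated graded $\mathrm{gr}\,\s D^{(0)}_{H,\Q}\cong\Gamma(H,\mathrm{Sym}_{\c O_H}\c T_H)$, and use that the right-invariant vector fields $L_{\zeta_1},\dots,L_{\zeta_{2d}}$ trivialize $\c T_H$. You spell out the lifting-from-graded step and the $m$-to-$0$ reduction more explicitly than the paper does, which is fine; one small remark is that for the PBW reordering you do not actually need the identity $[L_{\zeta_i},L_{\zeta_k}]=L_{[\zeta_i,\zeta_k]}$ — it suffices that $\mathrm{gr}\,\s D^{(0)}$ is commutative, so any commutator of order-one operators has order $\le 1$ — but invoking the Lie-algebra fact is harmless.
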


\begin{proof}
    It suffices to show that the elements $L_{\zeta_{i_1}}\cdots L_{\zeta_{i_s}}$, $1\leq i_1\leq \cdots \leq i_s\leq 2d, s\leq j$ form a basis for $\gr {\s D}_{H,\Q}^{(0)}\cong \Gamma(H_K, \m{Sym}_{\c{O}_H}\, \c{T}_H)$ as an %left
    $\c{O}_{ H,\Q}$-module. This follows from the fact that the vector fields $L_{\zeta_1}, \cdots, L_{\zeta_{2d}}$ form a basis for $\Gamma(H,\c{T}_H)$. 
\end{proof}

Let $I$ be the global sections of the ideal sheaf of $1_H$ in $H$. It is an ideal of $R[H]$. By the definition in \cite[1.4]{Berthelot1}, we have the $m$-PD envelope $P_{(m)}(I)$ of $(R[H],I)$ and its $m$-PD quotients $P^n_{(m)}(I)$. Let $\c{P}_{(m)}(I)$ and $\c{P}^n_{(m)}(I)$ be the associative sheaves of $P_{(m)}(I)$ and $P^n_{(m)}(I)$, respectively.

\begin{pro}\label{bij-of-inv} 
    Let \[F^{n}U^{(m)}(\f{L}(H))=\m{Hom}_R (P^n_{(m)}(I), R),\quad U^{(m)}(\f{L}(H))=\varinjlim_n\, F^{n}U^{(m)}(\f{L}(H)).\]
    We may identify $U^{(m)}(\f{L}(H))$ with $\Gamma(H,{\s D}_{H}^{(m)}/I{\s D}_{H}^{(m)})$. 
    Let $\Gamma_{\m{inv}}(H, \s D^{(m)}_H)\subset \Gamma(H, \s D^{(m)}_H)$ be the subspace of right invariant differential operators. 
    The composite 
    $$\Gamma_{\m{inv}}(H, \s D^{(m)}_H)\to \Gamma(H, \s D^{(m)}_H/I \s D^{(m)}_H)
    \cong U^{(m)}(\f{L}(H))$$
    is a bijection which maps  $\Gamma_{\m{inv}}(H, F^{n}\s D^{(m)}_H)$ to $F^{n}U^{(m)}(\f{L}(H))$. %We remark that this bijection in general does not preserve the multiplication. 
\end{pro}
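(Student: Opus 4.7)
The plan is to trivialize $\s{D}_H^{(m)}$ using right translation on $H$, and then read off both the stated identification with $U^{(m)}(\f{L}(H))$ and the bijectivity of the restriction map from this trivialization.

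Consider the automorphism $\phi: H \times_R H \to H \times_R H$, $(g_1, g_2) \mapsto (g_1, g_1 g_2^{-1})$. It carries the diagonal $\Delta_H$ isomorphically onto $H \times \{1_H\}$, and is equivariant for the right diagonal action of $H$ on $H \times H$ when the target is equipped with the action by right translation on the first factor alone (and trivially on the second). Functoriality of $m$-PD envelopes under $\phi$ produces a canonical isomorphism
\[\c{P}^n_{H,(m)} \xrightarrow{\sim} \c{O}_H \otimes_R P^n_{(m)}(I)\]
of sheaves of $\c{O}_H$-algebras, compatible with the left $\c{O}_H$-structures used to form $F^n \s{D}_H^{(m)}$. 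Since $P^n_{(m)}(I)$ is a finitely generated projective $R$-module ($H$ being smooth at $1_H$), dualizing gives an isomorphism
\[F^n \s{D}_H^{(m)} \cong \c{O}_H \otimes_R F^n U^{(m)}(\f{L}(H))\]
of sheaves of left $\c{O}_H$-modules.

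By the equivariance of $\phi$, the natural right $H$-action on $\s{D}_H^{(m)}$ induced from right translation corresponds under this trivialization to the tensor product of the regular right translation action on $\c{O}_H$ with the trivial action on $U^{(m)}(\f{L}(H))$. Taking right invariants and using $\Gamma(H, \c{O}_H)^H = R$,
\[\Gamma_{\m{inv}}(H, F^n \s{D}_H^{(m)}) \cong R \otimes_R F^n U^{(m)}(\f{L}(H)) = F^n U^{(m)}(\f{L}(H)).\]
On the other hand, tensoring the trivialization with $\c{O}_H/I \cong R$ (i.e., evaluating at $1_H$) yields the identification $\Gamma(H, F^n \s{D}_H^{(m)}/IF^n \s{D}_H^{(m)}) \cong F^n U^{(m)}(\f{L}(H))$ asserted in the proposition. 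Under these two identifications the composite in the statement becomes the identity on each $F^n U^{(m)}(\f{L}(H))$, which yields the claimed bijection and the preservation of the filtration simultaneously.

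The principal technical point is the isomorphism $\c{P}^n_{H,(m)} \cong \c{O}_H \otimes_R P^n_{(m)}(I)$. One must verify that $\phi$ transports the $m$-PD ideal of the diagonal to the tensor-product $m$-PD ideal on $R[H] \otimes_R I$, and that the corresponding level-$n$ quotients match. Since $\phi$ is an isomorphism of smooth $R$-schemes, both sides satisfy the same universal property once the ideals are identified, and the desired canonical isomorphism follows from \cite[1.4]{Berthelot1}. The equivariance under right translation reduces to the identity $\phi \circ (r_h \times r_h) = (r_h \times \m{id}_H) \circ \phi$ on $H \times H$, which is immediate from the group law.
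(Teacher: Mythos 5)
Your proof is correct and follows essentially the same strategy as the paper's: you trivialize $\s{D}_H^{(m)}$ via an automorphism of $H\times H$ carrying the diagonal to $H\times\{1_H\}$ (your $\phi(g_1,g_2)=(g_1,g_1g_2^{-1})$ versus the paper's $A(g,h)=(g,hg^{-1})$, which play the same role) to obtain $F^n\s D_H^{(m)}\cong\c O_H\otimes_R F^n U^{(m)}(\f L(H))$ with the right translation action living only on the $\c O_H$ factor, and then read off the bijection by comparing invariants with evaluation at $1_H$.
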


\begin{proof}
Let $A$ be the isomorphism $$H\times H \to H\times H, (g,h)\mapsto (g, hg^{-1}).$$ We have a commutative diagram.
\[\begin{tikzcd}
H \arrow[rd, "{(\m{id}, 1_H)}"'] \arrow[r, "\Delta"] & H\times H \arrow[d, "{A}"] \\
                                                  & H\times H   
\end{tikzcd}
\]    
We can thus identify $\c P^n_{(m)}(I_{\Delta})$ with $\c P^n_{(m)}(I_{(\m{id}, 1_H)})\cong \c{O}_{H}\otimes_R P^n_{(m)}(I)$, where $I_{\Delta}$ and $I_{(\m{id}, 1_H)}$ are the global sections of the ideal sheaves of $\Delta$ and $(\m{id}, 1_H)$, respectively.
Since $A(gb,hb)=(gb,hg^{-1})$, under this identification, 
the right multiplication by $H$ on $\c P^n_{(m)}(I_{\Delta})$ is identified with the right multiplication  of $H$
on $\c{O}_{H}\otimes_R P^n_{(m)}(I)$ which is trivial on the factor $P^n_{(m)}(I)$ and is the usual action on $\c{O}_{H}$. 
As a corollary, we have $$F^n\s D^{(m)}_{H}\cong \c{O}_{H}\otimes_R \m{Hom}_R (P^n_{(m)}(I), R)
=\c{O}_{H}\otimes_R F^{n}U^{(m)}(\f{L}(H)).$$ Moreover, a global section $P$ of $\c{O}_{H}\otimes_R  F^{n}U^{(m)}(\f{L}(H))$ 
is right invariant if and only if $P$ is of the form $1\otimes F^{n}U^{(m)}(\f{L}(H))$.  
\end{proof}

\begin{lem}\label{inv-generated-by-L}
    The elements $L_{\zeta_{i_1}}\cdots L_{\zeta_{i_s}}$ ($1\leq i_1\leq \cdots \leq i_s\leq 2d$, $s\geq 0$) form a basis for $\Gamma_{\m{inv}}(H, \s D^{(m)}_{H,\Q})$ as a $K$-module.
\end{lem}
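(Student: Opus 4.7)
The plan is to combine Proposition \ref{bij-of-inv} with Lemma \ref{generated-by-L}, reducing everything to a dimension count after identifying the space of right-invariant operators with $U^{(m)}(\f{L}(H))_\Q$.

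First I would check that each product $L_{\zeta_{i_1}}\cdots L_{\zeta_{i_s}}$ lies in $\Gamma_{\m{inv}}(H,\s D^{(m)}_{H,\Q})$. Since $L_\xi$ is the vector field generating the flow of left multiplication by $\exp(t\xi)$, and left multiplication commutes with right multiplication, $L_\xi$ is right-invariant. Right-invariance is preserved by composition of differential operators, so each product is right-invariant, and by Lemma \ref{generated-by-L} it lies in $\Gamma_{\m{inv}}(H, F^j\s D^{(m)}_{H,\Q})$ whenever $s\leq j$. Linear independence over $K$ is also immediate from Lemma \ref{generated-by-L}, which asserts that these are an $\c O_{H,\Q}$-basis of $F^j\s D^{(m)}_{H,\Q}$.

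To prove spanning, I apply Proposition \ref{bij-of-inv} (rationally): the map $\Gamma_{\m{inv}}(H, F^{j}\s D^{(m)}_{H,\Q})\to F^{j}U^{(m)}(\f{L}(H))_\Q$ is a filtered bijection, and the proof of that proposition provides an isomorphism $F^{j}\s D^{(m)}_{H,\Q}\cong \c O_{H,\Q}\otimes_{K} F^{j}U^{(m)}(\f{L}(H))_\Q$. Consequently
\[
\dim_K F^{j}U^{(m)}(\f{L}(H))_\Q \;=\; \m{rank}_{\c O_{H,\Q}} F^{j}\s D^{(m)}_{H,\Q},
\]
and by Lemma \ref{generated-by-L} this rank equals the number of ordered multi-indices $1\le i_1\le\cdots\le i_s\le 2d$ with $s\leq j$, i.e.\ the cardinality of our proposed basis in degree $\leq j$.

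Finally, I would observe that the images of the $L_{\zeta_{i_1}}\cdots L_{\zeta_{i_s}}$ in $F^{j}U^{(m)}(\f{L}(H))_\Q$ remain $K$-linearly independent, since via the identification $F^{j}\s D^{(m)}_{H,\Q}\cong \c O_{H,\Q}\otimes_{K} F^{j}U^{(m)}(\f{L}(H))_\Q$ an $\c O_{H,\Q}$-basis reduces modulo $I$ to a $K$-basis of the invariants. Having the right cardinality and being linearly independent, they form a basis of $F^{j}U^{(m)}(\f{L}(H))_\Q$; pulling back through the bijection of Proposition \ref{bij-of-inv} shows that $L_{\zeta_{i_1}}\cdots L_{\zeta_{i_s}}$ with $s\leq j$ form a $K$-basis of $\Gamma_{\m{inv}}(H, F^{j}\s D^{(m)}_{H,\Q})$. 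Taking the direct limit $j\to\infty$ gives the lemma. There is no serious obstacle here; the main point is simply to recognize that Lemma \ref{generated-by-L} already exhibits the correct candidate basis, and that Proposition \ref{bij-of-inv} reduces the verification to a matching of dimensions.
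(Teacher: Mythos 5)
Your proof is correct but goes a different route than the paper's. The paper argues directly from Lemma~\ref{generated-by-L}: any $P\in\Gamma(H,\s D^{(m)}_{H,\Q})$ decomposes uniquely as $\sum f_{i_1\cdots i_s}L_{\zeta_{i_1}}\cdots L_{\zeta_{i_s}}$ with $f_{i_1\cdots i_s}\in K[H]$, and since the $L$-products are right-invariant, uniqueness forces $P$ to be right-invariant if and only if each coefficient $f_{i_1\cdots i_s}$ is right-invariant, hence constant. That is a two-line argument once one observes that right translation acts only on the coefficients in this expansion. You instead route through Proposition~\ref{bij-of-inv}, identifying $\Gamma_{\m{inv}}(H,F^j\s D^{(m)}_{H,\Q})$ with $F^j U^{(m)}(\f L(H))_\Q$ and performing a dimension count against the $\c O_{H,\Q}$-rank of $F^j\s D^{(m)}_{H,\Q}$; this is sound, and the key step (that an $\c O_{H,\Q}$-basis consisting of right-invariant elements reduces modulo $I$ to a $K$-basis of $U^{(m)}(\f L(H))_\Q$, hence forms a basis of the invariants via Proposition~\ref{bij-of-inv}) is the right fix for the apparent circularity in ``linear independence.'' The paper's argument is more economical and avoids invoking Proposition~\ref{bij-of-inv} at all, but yours makes the identification with the ``enveloping algebra'' $U^{(m)}(\f L(H))_\Q$ explicit, which is the structural content underlying the lemma. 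One small imprecision to fix: reduction modulo $I$ lands in $F^j U^{(m)}(\f L(H))_\Q$, not directly ``in the invariants''; it is the composite with the bijection of Proposition~\ref{bij-of-inv} that returns you to $\Gamma_{\m{inv}}$.
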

\begin{proof}
    By \Cref{generated-by-L}, any element in $\Gamma(H, \s D^{(m)}_{H,\Q})$ can be written uniquely as $$\sum_{1\leq i_1\leq \cdots \leq i_s\leq 2d}f_{i_1\cdots i_s}L_{\zeta_{i_1}}\cdots L_{\zeta_{i_s}}$$ where each $f_{i_1\cdots i_s}\in K[H]$. Since $L_{\zeta_{i_1}}\cdots L_{\zeta_{i_s}}$ are right invariant, this sum is right invariant if and only if each $f_{i_1\cdots i_s}$ is right invariant, which is equivalent to say $f_{i_1\cdots i_s}\in K$. 
\end{proof}

\subsection{}
Let $D_V^{(m)}=\Gamma(V,\s{D}_V^{(m)})$. Consider the filtration $F^jD_V^{(m)}$ given by the order of differential operators. Let $\m{gr}D_V^{(m)}:=\bigoplus_{r\in\b{Z}}\m{gr}^rD_V^{(m)}$ be its graded ring and let $D_{V,k}^{(m)}:=D_V^{(m)}/\unif D_V^{(m)}$. Then $D_{V,k}^{(m)}$ is isomorphic to $D_{V_k}^{(m)}:=\Gamma(V_k,\s{D}_{V_k}^{(m)})$. For any ring $S$, we denote by $S_\m{red}$ the maximal reduced quotient ring of $S$.

\subsection{}
Let $X$ be a smooth scheme over $k$. 
Berthelot has announced in \cite[5.2.2]{Berthelot} that there is a canonical isomorhism 
\[\Spec (\gr \c{D}_{X}^{(m)})_{\m{red}}\cong T^*X^{(m)}\times_{X^{(m)}} X.\]
Here $X^{(m)}=X\times_{\Spec k,\m{Frob}^m}\Spec k$, and $\m{Frob}^m$ is the Frobenius morphism $\Spec k\to \Spec k,x\mapsto x^{p^m}$. 
This is straightforward, but to the best of the author’s knowledge, no proof or discription of this isomorphism has been written down. 
For completeness, we give a proof here. 

By covering $X$ by charts, we may assume there is an \'{e}tale morphism $(t_1,\cdots, t_n): X\to \b{A}^n_k$. By \cite[2.2.3]{Berthelot1}, $\gr \c{D}_{X}^{(m)}$ is the free $\c{O}_X$-module with basis  $\underline{\partial}^{\langle\underline{k}\rangle_{(m)}}$. By \cite[2.2.4(iii)]{Berthelot1}, for $1\leq i\leq n$, $0\leq j<m$ we obtain 
\[(\partial_i^{\langle p^j\rangle_{(m)}})^p= \frac{p^{j+1}!}{(p^j!)^p}\partial^{\langle p^{j+1}\rangle_{(m)}}=0.\]
Note that 
$$\gr \c{D}_{X}^{(m)}/(\partial_i^{\langle p^j\rangle_{(m)}} \text{ for } 1\leq i\leq n, 0\leq j<m)\cong \c{O}_X[\partial_i^{\langle p^m\rangle_{(m)}}\text{ for } 1\leq i\leq n],$$
which is clearly reduced. Hence we may identify $(\gr \c{D}_{X}^{(m)})_{\m{red}}$ with $\c{O}_X[\partial_i^{\langle p^m\rangle_{(m)}}]$. 
Let $\partial_i^{(m)}\in \Gamma(X^{(m)},T^*X^{(m)})$ be the base change of $\partial_i\in \Gamma(X,T^*X)$ along the Frobenius morphism $\m{Frob}^m$. 
The assignment $$\partial_i^{\langle p^m\rangle_{(m)}}\mapsto\partial_i^{(m)}$$
defines an isomorphism
\[\Spec (\gr \c{D}_{X}^{(m)})_{\m{red}}\cong T^*X^{(m)}\times_{X^{(m)}} X.\]
One can check that this does not depend on the choice of the coordinates $(t_i)$.  

\begin{pro}\label{def-L^<k>} 
    For any vector field $L$ on a smooth $R$-scheme 
    $V$ and any integer $p^m\geq r\geq 0$, there exists a unique element $L^{\langle r\rangle_{(m)}}\in \gr^r D_{V}^{(m)}$ such that $r!L^{\langle r\rangle_{(m)}}=L^r$. Moreover, the function $L^{\left<{r}\right>_{(m)}}$ vanishes in $\Spec (\gr D_{V,k}^{(m)})_{\m{red}}$ for $0<r<p^m$. The function $L^{\left<{p^m}\right>_{(m)}}$ can be identified with the composite 
    \begin{equation}\label{composite}\Spec (\gr D_{V,k}^{(m)})_{\m{red}}\cong T^*V_{k}^{(m)}\times_{V_{k}^{(m)}} V_{k}
    %\xrightarrow{(\m{id}, \m{Frob}^m)} V_{k}^{*(m)}\times V_{k}^{(m)}\cong 
    \to T^*V_{k}^{(m)}\xrightarrow{L^{(m)}}\b{A}^1_k.    \end{equation}
    Here $V_k^{(m)}$ and $L^{(m)}$ are the base change of $V_k$ and $L: T^*V_{k}\to \b{A}_k^1$ by the Frobenius morphism $\m{Frob}^m: \Spec k\to \Spec k,x\mapsto x^{p^m}$, respectively. %The isom
\end{pro}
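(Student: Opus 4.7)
The strategy is local. By Berthelot's description \cite[2.2.3]{Berthelot1} recalled in the preceding paragraph, around any point of $V$ one can choose étale coordinates $t_1,\ldots,t_n$ so that $\gr^r D_V^{(m)}$ is $\c{O}_V$-free on the basis $\{\underline{\partial}^{\langle\underline{k}\rangle_{(m)}}\mid |\underline{k}|=r\}$. In such coordinates write $L=\sum_i a_i\partial_i$ with $a_i\in\c{O}_V$. First I would expand the symbol of $L^r\in F^rD_V^{(m)}$ in the associated graded ring: principal symbols commute there, so the multinomial theorem yields
\[L^r=\sum_{|\underline{k}|=r}\frac{r!}{\underline{k}!}\,\underline{a}^{\underline{k}}\,\underline{\partial}^{\underline{k}}\qquad\text{in }\gr^r D_V^{(m)}.\]

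Next I invoke Berthelot's defining identity $\underline{k}!\,\underline{\partial}^{\langle\underline{k}\rangle_{(m)}}=q_{\underline{k}}!\,\underline{\partial}^{\underline{k}}$ with $q_{k_i}=\lfloor k_i/p^m\rfloor$. For $|\underline{k}|=r\leq p^m$ every $k_i\leq p^m$, so each $q_{k_i}\in\{0,1\}$ and $q_{\underline{k}}!=1$, giving $\underline{\partial}^{\underline{k}}=\underline{k}!\,\underline{\partial}^{\langle\underline{k}\rangle_{(m)}}$. Substituting produces $L^r=r!\sum_{|\underline{k}|=r}\underline{a}^{\underline{k}}\,\underline{\partial}^{\langle\underline{k}\rangle_{(m)}}$, so the explicit candidate
\[L^{\langle r\rangle_{(m)}}:=\sum_{|\underline{k}|=r}\underline{a}^{\underline{k}}\,\underline{\partial}^{\langle\underline{k}\rangle_{(m)}}\]
satisfies $r!L^{\langle r\rangle_{(m)}}=L^r$. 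Uniqueness is immediate: $\gr^r D_V^{(m)}$ is $\c{O}_V$-locally free, hence $R$-flat, and $r!$ is invertible in the characteristic-zero fraction field $K$, so multiplication by $r!$ on $\gr^r D_V^{(m)}$ is injective. Coordinate-independence then follows from uniqueness, allowing the local formulas to glue into a global section of $\gr^r D_V^{(m)}$.

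For the vanishing statements I use the identification $(\gr D_{V,k}^{(m)})_{\m{red}}\cong\c{O}_{V_k}[\partial_i^{\langle p^m\rangle_{(m)}}]$ established in the paragraph preceding the proposition, under which every $\partial_i^{\langle k\rangle_{(m)}}$ with $0<k<p^m$ is sent to zero. When $0<r<p^m$, every monomial in the explicit formula for $L^{\langle r\rangle_{(m)}}$ contains some $\partial_i^{\langle k_i\rangle_{(m)}}$ with $0<k_i\leq r<p^m$, and so vanishes in the reduced ring. When $r=p^m$, the surviving terms are exactly those with a single $k_j=p^m$ and all other $k_i=0$, giving
\[L^{\langle p^m\rangle_{(m)}}\equiv\sum_j a_j^{p^m}\,\partial_j^{\langle p^m\rangle_{(m)}}\qquad\text{in }(\gr D_{V,k}^{(m)})_{\m{red}}.\]

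To conclude I match this with the composite \eqref{composite}. Under the identification $\partial_j^{\langle p^m\rangle_{(m)}}\leftrightarrow\partial_j^{(m)}$ fixed above the proposition, $L^{(m)}$ is the Frobenius base change of $L$, so in the chosen coordinates it reads $\sum_j a_j^{(m)}\partial_j^{(m)}$ with $a_j^{(m)}\in\c{O}_{V_k^{(m)}}$. Pulling back along the Frobenius morphism $V_k\to V_k^{(m)}$ replaces $a_j^{(m)}$ by $a_j^{p^m}$, producing exactly $\sum_j a_j^{p^m}\partial_j^{(m)}$. The main obstacle is not any single deep idea but careful bookkeeping: handling Berthelot's divided-power coefficient $q_{\underline{k}}!$ correctly in the range $|\underline{k}|\leq p^m$, verifying that the local formula is coordinate-independent via uniqueness, and tracking the Frobenius twist accurately when identifying symbols with functions on the twisted cotangent bundle.
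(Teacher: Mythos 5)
Your proof is correct and, at its core, takes the same route as the paper: both reduce to a local coordinate computation, both rest on Berthelot's identity relating $\underline{\partial}^{\underline{k}}$ to $\underline{\partial}^{\langle\underline{k}\rangle_{(m)}}$ in the range $|\underline{k}|\le p^m$, both establish uniqueness via flatness of $\gr^r D_V^{(m)}$, and both extract the vanishing and Frobenius-twist statements from the explicit description of $(\gr D_{V,k}^{(m)})_{\m{red}}$. The only presentational difference is that you write the closed-form multinomial expansion $L^{\langle r\rangle_{(m)}}=\sum_{|\underline{k}|=r}\underline{a}^{\underline{k}}\underline{\partial}^{\langle\underline{k}\rangle_{(m)}}$ directly, whereas the paper packages the same arithmetic incrementally by showing the set $\mathcal{S}$ of vector fields admitting such $L^{\langle r\rangle_{(m)}}$ is closed under the operations $L\mapsto fL$ and $(L,L')\mapsto L+L'$ via $(fL)^{\langle r\rangle_{(m)}}=f^r L^{\langle r\rangle_{(m)}}$ and $(L+L')^{\langle r\rangle_{(m)}}=\sum_s L^{\langle r-s\rangle_{(m)}}L'^{\langle s\rangle_{(m)}}$, which unwinds to the same multinomial.
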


\begin{proof}
  The uniqueness of $L^{\langle r\rangle_{(m)}}$ follows from the fact that
  $\gr D_{V}^{(m)}$ is a locally free $\c{O}_{V}$-module and $\c{O}_{V}$ is flat over $\b{Z}$. 
    The existence and the desired property of $L^{\langle r\rangle_{(m)}}$ are local problems. So we may assume $V$ is affine and we may find global coordinates $y_i$. 
    Consider the subset 
    \[\c{S}:=\left\{L\in \Gamma(V,\c{T}_V) 
    \left| \begin{array}{l}\forall 0\leq r \leq p^m, \exists L^{\langle r\rangle_{(m)}}\in \gr^r D_{V}^{(m)}\text{ such that}: \\
    r!L^{\langle r\rangle_{(m)}}=L^r, \\ 
    \text{$L^{\left<{r}\right>_{(m)}}$ vanishes in $\Spec (\gr D_{V,k}^{(m)})_{\m{red}}$ for $0<r<p^m$}, \\ 
    \text{$L^{\left<{p^m}\right>_{(m)}}$ can be identified with the composite \ref{composite}}.\end{array}\right.\right\}.\]
    Let's show this set contains $\partial_i:=\partial/\partial y_i$ and is an $R[V]$-submodule, and hence $\c{S}= \Gamma(V, \c{T}_V)$. 

    When $L=\partial_i$, we have a well-defined differential operator $\partial_i^{\langle r\rangle_{(m)}}$ of order $r$, and it defines an element in $\gr D_{V}^{(m)}$, still denoted by $\partial_i^{\langle r\rangle_{(m)}}$. For any $0\leq j<m$, we have $(\partial_i^{\langle p^j\rangle_{(m)}})^p=0$ since $p|\frac{p^ {j+1}!}{(p^j!)^p}$. Hence $\partial_i^{\left<{r}\right>_{(m)}}$ vanishes in $\Spec (\gr D_{V,k}^{(m)})_{\m{red}}$ for $0<r<p^m$.
    Recall the identification of $\Spec (\gr D_{V,k}^{(m)})_{\m{red}}$ with $T^*V_{k}^{(m)}\times_{V_{k}^{(m)}} V_{k}$ as $V_{k}$-schemes is given by $$\partial_i^{\langle p^m\rangle_{(m)}}\mapsto\partial_i^{(m)}.$$
    We conclude that $\partial_i^{\langle r\rangle_{(m)}}$ satisfies the desired properties. 

    If $L,L'\in \c{S}$ and $f\in R[V]$, we can define $$(fL)^{\langle r\rangle_{(m)}}=f^r L^{\langle r\rangle_{(m)}},
    \quad (L+L')^{\langle r\rangle_{(m)}}=\sum_{s=0}^r L^{\langle r-s\rangle_{(m)}}L'^{\langle s\rangle_{(m)}}.$$ 
    They clearly satisfy the desired properties. Hence $fL, L+L'\in \c{S}$.
\end{proof}

For any integer $r$, let $r=ap^m+b$, where $0\leq b<p^m$. By the notation and proof in \cite[1.1.3]{Berthelot1}, we have 
\[\frac{r!}{(p^m!)^aa!b!}=\left<\begin{array}{c}2p^m \\ p^m\end{array}\right>_{(m)}\cdots\left<\begin{array}{c}ap^m \\ (a-1)p^m\end{array}\right>_{(m)}\left<\begin{array}{c}ap^m+b \\ ap^m\end{array}\right>_{(m)}\in \b{Z}_{(p)}^\times\subseteq R^\times.\]
Then we can define $L^{\langle r\rangle_{(m)}}:=\frac{(p^m!)^aa!b!}{r!}(L^{\langle p^m\rangle_{(m)}})^aL^{\langle b\rangle_{(m)}}\in \gr^r D_{V}^{(m)}$.

\begin{defn}
    Choose a local coordinate system $(U, t_i\in \Gamma(U, \c O_H))$ of $H$ at the unit such that $L_{\zeta_i}|_{1_H}=\partial/\partial t_i|_{1_H}$, where $U\subset H$ is an open affine subset containing the unit. 
    Let $I_U\subset R[U]$ be the global sections of the ideal sheaf of $1_H$ in $H$. 
    By \cite[Remarques 1.4.3(iv)]{Berthelot1}, both $P^n_{(m)}(I)$ and $P^n_{(m)}(I_U)$ can be identified with 
    \[P^n_{(m)}(I/I^{n+1})\cong P^n_{(m)}(I_U/I_U^{n+1}).\]
    By \cite[1.5.1]{Berthelot1}, $P^n_{(m)}(I)\cong P^n_{(m)}(I_U)$ is a free $R$-module with basis $\underline{t}^{\langle \underline{r}\rangle_{(m)}}$
    ($ |\underline{r}|\leq n$). Let $\underline{\eta}^{\langle \underline{r}\rangle_{(m)}}$ be the dual basis. It depends not only on $\zeta_i$ but also on the choice of the coordinate system $(U, t_i)$.
    By \Cref{bij-of-inv}, the element $\eta_i^{\langle r\rangle_{(m)}}\in U^{(m)}(\f{L}(H))$ defines a right invariant differential operator $\nabla_{i}^{\langle r\rangle_{(m)}}\in \Gamma_{\m{inv}}(H, \s D^{(m)}_H)$.  %{\color{red}  $\partial/\partial t_i$ is only defined on $U$. How to make $\underline{t}^{\langle \underline{r}\rangle_{(m)}}$ a basis for the global $P^n_{(m)}(I)$}
\end{defn}

\begin{pro}\label{heighest-symbol-of-nabla}
    In $\gr^r D_{H}^{(m)}$, we have $\nabla_{i}^{\langle r\rangle_{(m)}}=L_{\zeta_i}^{\langle r\rangle_{(m)}}$.
\end{pro}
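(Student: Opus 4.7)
The plan is to establish the equality in $\gr^r \s{D}_H^{(m)}$ by showing both sides are right $H$-invariant global sections and that they take the same value at the unit $1_H$. Via the trivialization $\gr^r \s{D}_H^{(m)} \cong \c{O}_H \otimes_R \gr^r U^{(m)}(\f{L}(H))$ implicit in the proof of \Cref{bij-of-inv}, a right invariant section of $\gr^r \s{D}_H^{(m)}$ is determined by its value at $1_H$, so this will suffice.

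Right invariance of $\nabla_i^{\langle r\rangle_{(m)}}$ is built into its construction via \Cref{bij-of-inv}, and the image in the associated graded is again right invariant. For $L_{\zeta_i}^{\langle r\rangle_{(m)}}$, the vector field $L_{\zeta_i}$ is right invariant, so $L_{\zeta_i}^r$ is right invariant in $D_H^{(m)}$; applying the pullback $\phi_h^*$ by a right translation to the defining identity $r!\, L_{\zeta_i}^{\langle r\rangle_{(m)}} = L_{\zeta_i}^r$ in $\gr^r \s{D}_H^{(m)}$ gives $r!\, \phi_h^* L_{\zeta_i}^{\langle r\rangle_{(m)}} = L_{\zeta_i}^r$, and the uniqueness of divided powers from \Cref{def-L^<k>} (which rests on flatness of $\gr \s{D}_H^{(m)}$ over $\b{Z}$) forces $\phi_h^* L_{\zeta_i}^{\langle r\rangle_{(m)}} = L_{\zeta_i}^{\langle r\rangle_{(m)}}$.

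For the value at $1_H$, the coordinate choice gives $L_{\zeta_i}|_{1_H} = (\partial/\partial t_i)|_{1_H}$. Writing $L_{\zeta_i} = \partial/\partial t_i + M$ on $U$ with $M = \sum_j f_j\, \partial/\partial t_j$ and $f_j(1_H) = 0$, the addition and homogeneity formulas from the proof of \Cref{def-L^<k>} give
\[ M^{\langle s\rangle_{(m)}} = \sum_{|\underline{s}| = s} \prod_j f_j^{s_j}\, (\partial/\partial t_j)^{\langle s_j\rangle_{(m)}}, \]
whose every summand has a factor $f_j^{s_j}$ with $s_j \geq 1$ when $s \geq 1$, and hence vanishes at $1_H$; then the expansion $L_{\zeta_i}^{\langle r\rangle_{(m)}} = \sum_{s} (\partial/\partial t_i)^{\langle r-s\rangle_{(m)}} M^{\langle s\rangle_{(m)}}$ shows $L_{\zeta_i}^{\langle r\rangle_{(m)}}|_{1_H} = (\partial/\partial t_i)^{\langle r\rangle_{(m)}}|_{1_H}$. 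It remains to identify this with $\nabla_i^{\langle r\rangle_{(m)}}|_{1_H}$: the trivialization of \Cref{bij-of-inv} identifies $\gr^r \s{D}_H^{(m)}|_{1_H}$ with $\gr^r U^{(m)}(\f{L}(H))$, canonically dual to $\gr^r P^n_{(m)}(I)$. Under this identification $\nabla_i^{\langle r\rangle_{(m)}}|_{1_H}$ is by construction $\eta_i^{\langle r\rangle_{(m)}}$, the functional dual to $t_i^{\langle r\rangle_{(m)}}$, while the chart-based operator $(\partial/\partial t_i)^{\langle r\rangle_{(m)}}$ satisfies $(\partial/\partial t_i)^{\langle r\rangle_{(m)}}(\underline{t}^{\langle \underline{r}'\rangle_{(m)}})|_{1_H} = \delta_{\underline{r}', r e_i}$ by the standard pairing of divided-power operators with the basis of $P^n_{(m)}(I_U)$ from \cite[1.5.1]{Berthelot1}, yielding the same functional.

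The main subtlety, and likely the most laborious step to write out in full, is checking that the invariant trivialization from \Cref{bij-of-inv} is genuinely compatible with the chart-based local description of $\gr^r \s{D}_H^{(m)}$ at $1_H$ via $P^n_{(m)}(I_U)$; this compatibility is formal but requires careful unwinding of the $m$-PD structure on the $P^n_{(m)}(I_U)$ side of \cite[1.4--1.5]{Berthelot1}.
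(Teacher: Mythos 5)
Your framing --- show right $H$-invariance of both sides in $\gr^r \s{D}_H^{(m)}$ and compare values at the unit --- is a mild repackaging of the paper's argument, which likewise identifies the right-invariant operator $L_{\zeta_i}^r$ with an element of $U^{(m)}(\f{L}(H))$ and evaluates it on the $m$-PD basis $\underline{t}^{\langle\underline{r}\rangle_{(m)}}$ at $1_H$; the ``subtlety'' you flag about matching the invariant trivialization of \Cref{bij-of-inv} with the chart-based description near $1_H$ is precisely what the paper handles via the computation $P(\underline{t}^{\langle\underline{r}\rangle_{(m)}}) = L_{\zeta_i}^r\underline{t}^{\langle\underline{r}\rangle_{(m)}}|_{t=0}$. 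However, there is a genuine gap: you treat $r$ as arbitrary, while the identities you rely on are only established in the paper for $0 \leq r \leq p^m$. Concretely, the ``defining identity'' $r!\, L^{\langle r\rangle_{(m)}} = L^r$ fails once $r > p^m$: writing $r = ap^m + b$ with $0 \leq b < p^m$, the definition $L^{\langle r\rangle_{(m)}} = \frac{(p^m!)^a a! b!}{r!}(L^{\langle p^m\rangle_{(m)}})^a L^{\langle b\rangle_{(m)}}$ gives $r!\, L^{\langle r\rangle_{(m)}} = a!\, L^r$. Worse, the addition formula $(L+L')^{\langle r\rangle_{(m)}} = \sum_s L^{\langle r-s\rangle_{(m)}} L'^{\langle s\rangle_{(m)}}$ that underlies your evaluation of $M^{\langle s\rangle_{(m)}}$ genuinely fails for $r > p^m$ (rationally $L^{\langle r\rangle_{(m)}} = \frac{a!}{r!}L^r$, and the $a$-factorials on the two sides do not match under the binomial expansion), so your computation of $L_{\zeta_i}^{\langle r\rangle_{(m)}}|_{1_H}$ is only justified for $r \leq p^m$. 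The right-invariance step does survive the wrong constant, since cancelling a nonzero integer in $\gr \s{D}_H^{(m)}$ is licit by $\b{Z}$-flatness.

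The missing step is exactly the opening line of the paper's proof: by \cite[2.2.5]{Berthelot1} one has $\nabla_i^{\langle r\rangle_{(m)}} = \frac{(p^m!)^a a! b!}{r!}(\nabla_i^{\langle p^m\rangle_{(m)}})^a \nabla_i^{\langle b\rangle_{(m)}}$, and the same multiplicative relation holds for $L_{\zeta_i}^{\langle r\rangle_{(m)}}$ by definition, so the statement reduces to the cases $r = p^m$ and $0 \leq r < p^m$. With that reduction inserted, your argument for $r \leq p^m$ is correct and essentially equivalent to the paper's Leibniz-rule calculation.
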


\begin{proof}
    Note that $\nabla^{\langle r\rangle_{(m)}}=\frac{(p^m!)^aa!b!}{r!}(\nabla^{\langle p^m\rangle_{(m)}})^a\nabla^{\langle b\rangle_{(m)}}$ by \cite[2.2.5]{Berthelot1}. We may assume $0\leq r\leq p^m$. Then it suffices to show that $L_{\zeta_i}^r$ and $r! \nabla_{i}^{\langle r\rangle_{(m)}}$ have the same image in $\gr^r D_{H}^{(m)}$. Note that $L_{\zeta_i}^r$ is right invariant. By \Cref{bij-of-inv}, it corresponds to an element $P$ in $U^{(m)}(\f{L}(H))$. Let us calculate the value of $P$ on the basis $\underline{t}^{\langle \underline{r}\rangle_{(m)}}:=\prod_{i=1}^{2d}t_i^{\langle r_i\rangle_{(m)}}$. 

    By definition, $P(\underline{t}^{\langle \underline{r}\rangle_{(m)}})$ equals the value of $(1\otimes L_{\zeta_i}^r)(\underline{t}^{\langle \underline{r}\rangle_{(m)}}\circ A)\in \Gamma(H\times H , \c{O}_{H\times H })$ at $1_H\times 1_H$, which clearly equals $L_{\zeta_i}^r\underline{t}^{\langle \underline{r}\rangle_{(m)}}|_{t=0}$. 
    Let $\delta_{ij}$ be the Kronecker symbol. Recall that $L_{\zeta_i}t_j|_{t=0}=\delta_{ij}$. By the Leibniz rule, we have
    \[L_{\zeta_i}^r\underline{t}^{\langle \underline{r}\rangle_{(m)}}|_{t=0}=r!\prod_{s=1}^{r}\delta_{is}^{r_s}=\left\{ \begin{aligned}
        r! &\quad \text{ if } r_i=r \text{ and } \forall s\ne i, r_s=0 \\
        0  &\quad \text{ otherwise,}
    \end{aligned}\right.\] where $|\underline{r}|=r$.
    Therefore $P-r!\eta_i^{\langle r\rangle_{(m)}}\in F^{r-1}U^{(m)}(\f{L}(H))$. Hence $L_{\zeta_i}^r$ and $r! \nabla_{i}^{\langle r\rangle_{(m)}}$ differ by a right invariant differential operators of order $\leq r-1$.
\end{proof}

\begin{cor}\label{heighest-symbol-of-alpha}
    In $\gr^r D_{V}^{(m)}$ we have $\alpha(\nabla_{i}^{\langle r\rangle_{(m)}})=L_{\zeta_i}^{\langle r\rangle_{(m)}}$.
\end{cor}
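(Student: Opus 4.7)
The plan is to deduce the corollary from Proposition \ref{heighest-symbol-of-nabla} by transferring that identity along $\alpha$; the two properties of $\alpha$ needed are order-preservation and multiplicativity on right-invariant operators.

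By construction, the canonical map $\s{D}_{H\times V}^{(m)}\to\m{act}^*\s{D}_V^{(m)}$ and the pullback along $(1_H,\m{id})$ both respect the order filtration, so $\alpha:D_H^{(m)}\to D_V^{(m)}$ preserves order. Combining Lemma \ref{discription-alpha}, the $R$-linearity of $\alpha$, and Lemma \ref{inv-generated-by-L} (which writes every element of $\Gamma_{\m{inv}}(H,\s{D}_{H,\b{Q}}^{(m)})$ as a $K$-linear combination of products $L_{\zeta_{i_1}}\cdots L_{\zeta_{i_s}}$), one checks that $\alpha(RQ)=\alpha(R)\alpha(Q)$ for any right-invariant $R,Q$ (after $\otimes\b{Q}$); indeed, the image of $R\boxtimes 1$ under $\s{D}_{H\times V}^{(m)}\to\m{act}^*\s{D}_V^{(m)}$ can be identified with $\m{act}^*\alpha(R)$, and composing such sections is then identified with composition in $\s{D}_V^{(m)}$.

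Using the Berthelot-type decomposition $\nabla^{\langle r\rangle_{(m)}}=\frac{(p^m!)^a a! b!}{r!}(\nabla^{\langle p^m\rangle_{(m)}})^a\nabla^{\langle b\rangle_{(m)}}$ with $r=ap^m+b$, $0\leq b<p^m$, together with the parallel defining formula for $L^{\langle r\rangle_{(m)}}$ from Proposition \ref{def-L^<k>}, multiplicativity reduces the statement to the cases $0\leq r\leq p^m$. For such $r$, the proof of Proposition \ref{heighest-symbol-of-nabla} actually establishes the stronger fact $L_{\zeta_i}^r - r!\nabla_i^{\langle r\rangle_{(m)}}\in F^{r-1}D_H^{(m)}$. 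Applying the order-preserving map $\alpha$ and using Lemma \ref{discription-alpha} to identify $\alpha(L_{\zeta_i}^r)=(L_{\zeta_i}^V)^r$, we obtain $(L_{\zeta_i}^V)^r - r!\alpha(\nabla_i^{\langle r\rangle_{(m)}})\in F^{r-1}D_V^{(m)}$. Passing to $\gr^r D_V^{(m)}$ and invoking $r!L_{\zeta_i}^{V,\langle r\rangle_{(m)}}=[(L_{\zeta_i}^V)^r]$ from Proposition \ref{def-L^<k>}, we find $r!\alpha(\nabla_i^{\langle r\rangle_{(m)}})=r!L_{\zeta_i}^{V,\langle r\rangle_{(m)}}$ in $\gr^r D_V^{(m)}$. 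Cancellation of the factor $r!$ is legitimate because $\gr^r D_V^{(m)}$ is a locally free $\c{O}_V$-module, hence $\b{Z}$-flat, so the integral identity may be checked after inverting $r!$ in $\gr^r D_{V,\b{Q}}^{(m)}$.

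The main technical point is the multiplicativity of $\alpha$ on right-invariant operators; once this is verified the corollary follows by a direct transfer of Proposition \ref{heighest-symbol-of-nabla} along $\alpha$, with no essentially new ideas beyond those already present in Proposition \ref{heighest-symbol-of-nabla} and Lemma \ref{discription-alpha}.
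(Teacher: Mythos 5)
Your proof is correct, and it uses the same ingredients as the paper (Proposition \ref{heighest-symbol-of-nabla}, Lemma \ref{discription-alpha}, the definition of $L^{\langle r\rangle_{(m)}}$ and order-preservation of $\alpha$), but it takes a small detour. You first prove that $\alpha$ is multiplicative on right-invariant operators after $\otimes\,\mathbb{Q}$ (a true fact, deducible from Lemma \ref{discription-alpha} together with Lemma \ref{inv-generated-by-L} and linearity), and you use this together with the decomposition $\nabla^{\langle r\rangle_{(m)}}=\frac{(p^m!)^a a! b!}{r!}(\nabla^{\langle p^m\rangle_{(m)}})^a\nabla^{\langle b\rangle_{(m)}}$ to reduce to $r\leq p^m$. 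The paper skips that reduction entirely: since Proposition \ref{heighest-symbol-of-nabla} is stated for all $r$, and since the identity $L^r_{\zeta_i}=(r!/a!)L^{\langle r\rangle_{(m)}}_{\zeta_i}$ holds in $\gr^r D_H^{(m)}$ for general $r=ap^m+b$, one gets $(r!/a!)\nabla_i^{\langle r\rangle_{(m)}}-L^r_{\zeta_i}\in F^{r-1}D_H^{(m)}$ directly; applying $\alpha$ (order-preserving and, by Lemma \ref{discription-alpha}, sending the power $L^r_{\zeta_i}$ to $(L^V_{\zeta_i})^r$) finishes the argument for all $r$ simultaneously. So the multiplicativity of $\alpha$ on $\Gamma_{\m{inv}}(H,\s{D}^{(m)}_H)$ is extra machinery your route requires but the paper's does not — the reduction to small $r$ is already packaged inside Proposition \ref{heighest-symbol-of-nabla}, and redoing it at the level of the corollary duplicates work and forces you to push products through $\alpha$. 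On the plus side, your explicit justification for cancelling the integer factor by the $\mathbb{Z}$-flatness of $\gr^r D_V^{(m)}$ is a point the paper leaves implicit, and it is worth having on the record.
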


\begin{proof}
    Let $r=ap^m+b$, where $0\leq b<p^m$. By definition, we have $L^{r}_{\zeta_i}=(r!/a!)L_{\zeta_i}^{\langle r\rangle_{(m)}}$. By \Cref{heighest-symbol-of-nabla}, we have $(r!/a!)\nabla_{i}^{\langle r\rangle_{(m)}}-L^{r}_{\zeta_i}\in F^{r-1}D_{H}^{(m)}$, where $D_{H}^{(m)}=\Gamma(H,\s{D}_{H}^{(m)})$. Hence $(r!/a!)\alpha(\nabla_{i}^{\langle r\rangle_{(m)}})-\alpha(L^{r}_{\zeta_i})\in F^{r-1}D_{V}^{(m)}$. 
    By \Cref{discription-alpha}, $\alpha(L^{r}_{\zeta_i})=L^{r}_{\zeta_i}$. 
\end{proof}

\section{Calculation on characteristic cycles}\label{pcase}

%We want to imitate the methods in characteristic $0$ case.

\subsection{}\label{affintmodel}
Choose a basis $x=\{x_j\}_{j=1}^N$ on $\b{V}\cong \b{V}^*$. Let $\partial=\{\partial_j\}_{j=1}^N$ be the corresponding derivation of $x$.
    %We fix a Frobenius lifting $\BBmm\to \BBmm, x\mapsto x^p$ of $\hat{\b{V}}$. Assume $k=\b{F}_{p^s}$.
Recall $\pi^{p-1}=-p$. We define 
     \[A^{(m)}=R[\pi(\partial_j/\pi)^{p^i}, 0\leq i\leq m].\]
It can be regarded as an $R$-submodule of $D_{\b{V}}^{(m)}:=\Gamma(\b{V},\s{D}_{\b V}^{(m)})$ since $\partial_j^{p^i}=p^i!\partial_j^{\langle p^i\rangle_{(m)}}$ and the $\pi$-adic valuation of $p^i!$ is $p^i-1$. So the left $(H\times \b{G}_m)$-action on $\b{V}$ can induce a left $(H\times \b{G}_m)$-action on $A^{(m)}$. %We can also define a $\b{G}_m$-action on $A^{(m)}$ via $t\cdot\pi(\partial_j/\pi)^{p^i}\mapsto\pi(t\partial_j/\pi)^{p^i}$??. In summary, $A^{(m)}$ is equipped with an $H\times \b{G}_m$-action. 

Clearly $A^{(m)}_{\Q}= K[\partial]$. So we can identify $\Spec A^{(m)}_{\Q}$ with $\b{V}_K$ via $x_j\mapsto -\partial_j/\pi$. 
Let $Z$ be as in \Cref{setting}. Recall in \Cref{ass} we have a smooth $R$-scheme $\tilde{Z}$ and a birational morphism $p: \tilde{Z}\to Z$. We define $\omega_Z=p_*\omega_{\tilde{Z}}$. Regard $\omega_Z$ as an $\c{O}_{\b{P}\times\b{P}^1}$-module via the functor $\morZ'_*$. We can identify the coherent sheaf $\omega_{Z^{\m{aff}}_K}=\omega_{Z}|_{\b{V}_K\times \b{A}^1_K}$ with a finite $(A^{(m)}_{\Q}\otimes_K K[t])$-module $S_{\Q}^{(m)}$. 
By \cite[1.5 Proposition 2]{Serre},
there exists a finite $(A^{(m)}\otimes_R R[t])$-submodule $S^{(m)}\subset S^{(m)}_{\Q}$ such that $S^{(m)}\otimes_{\b{Z}}{\Q}=S_{\Q}^{(m)}$ and $S^{(m)}$ is a sub-$(R[H]\otimes_R R[t,t^{-1}])$-comodule of $S_{\Q}^{(m)}$. %As a corollary, for each $\xi\in \f{L}(H)_{R}$ and $k\geq 0$, $\frac{1}{k!}L_{\xi}^kS\subseteq S$.
We define $M^{(m)}:=S^{(m)}\otimes_{R[t],t\mapsto 1} R$, which is a finite $A^{(m)}$-module with $H$-action. %where the map $\alpha: R\langle \pi t^{p^m}\rangle\to R$ maps $\pi t^{p^m}$ to $1$. This is a finite $\hat{A}^{(m)}$-module with $H$-action. %and a compatible good filtration modulo $\unif$.
Then we have $M^{(m)}_\Q\cong \omega_{Y^{\m{aff}}_K}$ by \Cref{YfiberofZ}.

Recall $\unif$ is the uniformizer of $R$. $S^{(m)}_k:=S^{(m)}/\unif S^{(m)}$ is a finite $({A}^{(m)}_k\otimes_k k[t])$-module with $(H\times \Spec k[t,t^{-1}])$-action. It may not be flat over $k[t]$. Nevertheless, it is flat over an open dense subset of $\m{Spec}(k[t])$. The fact that $S_k^{(m)}$ has a
    $ \Spec k[t,t^{-1}]$-action implies that it is flat over $ \Spec k[t,t^{-1}]\subset  \Spec k[t]$. Let $\bar S^{(m)}_k$ be the maximal torsion free quotient of $S^{(m)}_k$, the natural map $S^{(m)}_k\twoheadrightarrow \bar S^{(m)}_k$ induces an isomorphism
    \[\bar S^{(m)}_k\otimes_{k[t],t\mapsto 1} k\cong S^{(m)}_k\otimes_{k[t],t\mapsto 1} k\cong M^{(m)}_k.\]
    The $(H_k\times \Spec k[t,t^{-1}])$-action on $ S^{(m)}_k$ induces an $(H_k\times \Spec k[t,t^{-1}])$-action on $\bar S^{(m)}_k$. 
    By \Cref{filmod}, $\bar S^{(m)}_k$ defines an $H_k$-invariant good filtration on $M^{(m)}_k$. Moreover, we have $$\gr M^{(m)}_k\cong \bar S^{(m)}_{k,0}:=\bar S^{(m)}_{k} \otimes_{k[t],t\mapsto 0}k.$$ Hence $\gr M^{(m)}_k$ is a quotient of $S^{(m)}_{k,0}$.

\subsection{}\label{S0=omega}
    Moreover, we choose a specific $S^{(0)}$ to do calculation. 
    Namely, the coherent sheaf $\omega_{Z^{\m{aff}}}$ defines a finite $(R[x]\otimes_R R[t])$-module $\Gamma(\b{V}\times \b{A}^1, \omega_{Z^{\m{aff}}})$ with $(H\times\Spec R[t])$-action. We have an isomorphism
    \[R[x]\otimes_R R[t]\to R[\pi x]\otimes_R R[\pi t],\quad x_j\mapsto \pi x_j, \quad t\mapsto  \pi t.\]
    Using this isomorphism, we regard $\Gamma(\b{V}\times \b{A}^1, \omega_{Z^{\m{aff}}})$ as a finite $(R[\pi x]\otimes R[\pi t])$-module $S^{(0)}$ with $(R[H]\otimes_R R[t])$-action. 
    The multiplication by $\pi\in K^{\times}=\b{G}_{m}(K)$ induces an isomorphism $S^{(0)}\otimes_{\b{Z}}{\Q}\cong \omega_{Z^{\m{aff}}_K}$ between $K[x,t]$-modules. 
    We identify $A^{(0)}_k=k[\partial]$ with $k[\pi x]$. Then we can identify $S^{(0)}_0$ with $\omega_{Z^{\m{aff}},0}$. As a corollary, we get the induced $A^{(0)}$-module $M^{(0)}$ and the induced filtration on $M^{(0)}_k$. 
    %We will show that % this choice satisfies $S^{(0)}_{k,0}\cong \omega_{Z}$ $\gr (M^{(0)}/\unif M^{(0)})$ 
    The corresponding graded module $\gr M^{(0)}_k$ is a quotient of $\omega_{Z^{\m{aff}},0,k}$.

\subsection{}
Consider the $\unif$-adic completion of $A^{(m)}$, denoted by
    \[\hat{A}^{(m)}=R\langle\pi{(\partial_j/\pi)^{p^i}}, 0\leq i\leq m\rangle.\]%=R[T^{(m)*}_0\hat{\b{V}}]
    We have an $R$-module isomorphism
    $\hat{D}_{\hat{\b{V}}}^{(m)}:=\Gamma(\hat{\b{V}},\hat{\s D}_{\hat{\b{V}}}^{(m)})\cong R[\hat{\b{V}}]\hat{\otimes}_{R} \hat{A}^{(m)}$.
    Via the naive Fourier transform map $x_i\mapsto -\partial_i/\pi$, 
    we may identify $\Spm \hat{A}^{(m)}_{\Q}$ with %$X(\pi x^{p^m})\subset(\b{V}_K)^\an$, 
    the %affinoid open subset
    closed disc of radius $|\pi|^{-\frac{1}{p^m}}$. %denoted by $B_m$. 

    For each integer $m\geq 0$, we define $\hat{S}^{(m)}$ as the $\unif$-adic completion of $S^{(m)}$. It is a $\hat{A}^{(m)}\hat\otimes R\langle t\rangle$-module with $\f{H}\times_\c{V} \hat{\b{G}}_m$-action, where $\f{H}\times_\c{V} \hat{\b{G}}_m$ is the $\unif$-adic completion of $H\times\b{G}_m$, $\c{V}=\m{Spf}(R)$. Similarly, let $\hat{M}^{(m)}$ be the completion of $M^{(m)}$. It is a $\hat{A}^{(m)}$-module with $\f{H}$-action.
    We may identify $\hat{M}^{(m)}_k$ with ${M}^{(m)}_k$, in particular it is equipped with a good filtration. 

    \begin{pro}\label{rel-M-omega}
        $\hat{M}^{(m)}$ defines a coherent sheaf $\hat{M}^{(m)}_{\Q}$ on $\Spm \hat{A}^{(m)}_{\Q}$.  It is isomorphic to the restriction of the coherent $\c{O}_{\b{P}_K^{\an}}$-module $\omega_{Y,\Q}^{\an}\cong \omega_{\f{Y},\Q}$ to $\Spm \hat{A}^{(m)}_{\Q}$. They have compatible $\f{H}_K$-actions. %Here we use the Fourier transform map to identify $\Spm \hat{A}^{(m)}_{\Q}$ with $B_m$. 
    \end{pro}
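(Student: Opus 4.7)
The plan is to leverage the finiteness of $M^{(m)}$ over $A^{(m)}$ established in \Cref{affintmodel} and to identify the $\unif$-adic completion with the rigid-analytic restriction via the standard coherent-sheaves-on-affinoids formalism. More concretely: since $S^{(m)}$ is a finite $(A^{(m)} \otimes_R R[t])$-module by construction, the specialization $M^{(m)} = S^{(m)} \otimes_{R[t], t\mapsto 1} R$ is a finite $A^{(m)}$-module. Completion along $\unif$ therefore commutes with base change on this finite module, yielding $\hat{M}^{(m)} \cong M^{(m)} \otimes_{A^{(m)}} \hat{A}^{(m)}$ as a finite $\hat{A}^{(m)}$-module. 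After inverting $\unif$, $\hat{M}^{(m)}_\Q$ becomes a finite $\hat{A}^{(m)}_\Q$-module, which defines a coherent sheaf on the affinoid $\Spm \hat{A}^{(m)}_\Q$, proving the coherence assertion.

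For the isomorphism, I would invoke \Cref{YfiberofZ}: the fiber at $t=1$ identifies $M^{(m)}_\Q$ with $\omega_{Y^{\m{aff}}_K}$ as $A^{(m)}_\Q$-modules together with compatible $H_K$-actions (viewing $\omega_{Y^{\m{aff}}_K}$ as a coherent $\c{O}_{\b{V}_K}$-module via $\iota'_*$). Under the identification $\Spec A^{(m)}_\Q \cong \b{V}_K$ via $x_j \mapsto -\partial_j/\pi$, the analytification of this coherent $\c{O}_{\b{V}_K}$-module, restricted to the affinoid polydisc $\Spm \hat{A}^{(m)}_\Q \subset \b{V}_K^{\an}$, is by the standard rigid-analytic correspondence the coherent sheaf associated to the completed tensor product $\omega_{Y^{\m{aff}}_K} \otimes_{A^{(m)}_\Q} \hat{A}^{(m)}_\Q = \hat{M}^{(m)}_\Q$. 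Combined with the assumed identification $\omega_{Y,\Q}^{\an} \cong \omega_{\f{Y},\Q}$, this yields the desired isomorphism of sheaves; the $\f{H}_K$-equivariance is automatic, since every step in the chain (the $H$-action on $S^{(m)}$, its specialization to $M^{(m)}$, and passage to $\unif$-adic completion) preserves the action.

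The main technical point is verifying that the analytic restriction of $\omega_{Y,\Q}^{\an}$ to the specific polydisc $\Spm \hat{A}^{(m)}_\Q$, whose polyradius $|\pi|^{-1/p^m}$ depends on $m$, is indeed computed by the completed tensor product $\omega_{Y^{\m{aff}}_K} \otimes_{A^{(m)}_\Q} \hat{A}^{(m)}_\Q$. Although this follows from general rigid analytic geometry, the integral structure of $A^{(m)}$ is nonstandard, involving the generators $\pi(\partial_j/\pi)^{p^i}$, and one must track it carefully to ensure that the resulting affinoid matches the expected closed polydisc in $\b{V}_K^{\an}$ rather than, say, a smaller open tube. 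This bookkeeping is the subtlest part of the argument; no conceptually new ideas beyond the standard analytification formalism are required.
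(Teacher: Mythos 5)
Your proposal is correct and follows essentially the same route as the paper's own (very terse) proof: identify $M^{(m)}_\Q$ with $\omega_{Y,\Q}|_{\b V_K}$ via \Cref{YfiberofZ}, analytify, and observe that the restriction of the analytification to the affinoid $\Spm\hat A^{(m)}_\Q$ is exactly $\hat M^{(m)}_\Q$. The paper states this chain of identifications in one line without unpacking it; you have filled in the standard facts (completion of a finite module over the noetherian ring $A^{(m)}$ commutes with base change, hence $\hat M^{(m)}\cong M^{(m)}\otimes_{A^{(m)}}\hat A^{(m)}$; restriction of a coherent analytification to an affinoid subdomain is computed by the completed tensor product; the $\f H_K$-action is carried along by each step), and you have correctly flagged the only point that requires care, namely that $\Spm\hat A^{(m)}_\Q$ is the closed disc of polyradius $|\pi|^{-1/p^m}$ in $\b V_K^{\an}$ under the Fourier identification $x_j\mapsto -\partial_j/\pi$, which the paper records in the paragraph just preceding the proposition. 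No gap; your write-up is simply a more explicit version of the same argument.
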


    \begin{proof}
    The analytification of ${M}^{(m)}_{\Q}\cong \omega_{Y,\Q}|_{\b{V}_K}$ is a coherent sheaf %$\hat{M}^{(m)}_{\Q}$ 
    on $\b{V}_K^{\an}$. We thus have
        \[\hat{M}^{(m)}_{\Q}\cong ({M}^{(m)}_{\Q})^{\an}|_{\Spm \hat{A}^{(m)}_{\Q}}\cong \omega_{Y,\Q}^{\an}|_{\Spm \hat{A}^{(m)}_{\Q}}. \qedhere\]
    \end{proof}

\subsection{Description of $\f{F}_\pi(\c{N})$}
%Recall that our $\f{F}_\pi$ has a $2-n$ shift compared with the usual case, where $n=\dim\b{V}_k$. 
We can regard $\f{F}_\pi(\c{N})$ as a coherent $\s{D}^\d_{\b{\hat P}, \Q}(\infty)$-module.

%The right $\s{D}^\d_{\b{\hat P}, \Q}(\infty)$-module $\f{F}_\pi(\c{N})$ is associated to the right ${D}^\d_{\b{P}, \Q}(\infty)$-module $F_\pi(\c{N})$, whose underlining $K$-module is
Since the closed disc $B_m$ is a strict neighborhood of the generic fiber of $\hat{\b V}$ in $\b{P}_K^\an$, 
we may identify $\omega_{\f{Y},\Q}\otimes_{\c{O}_{\hat{\b{P}}, \Q}}\s{D}^\d_{\b{\hat P}, \Q}(\infty)%\cong \omega_{Z,\Q}^{\an}\otimes_{\c{O}_{\hat{\b{P}}, \Q}}\s{D}^\d_{\b{\hat P}, \Q}(\infty)
$ with $\omega_{\f{Y},\Q}|_{B_m}\otimes_{\c{O}_{B_m}}\s{D}^\d_{\b{\hat P}, \Q}(\infty)$, hence identify
$$\c{N}=(\omega_{\f{Y},\Q}\otimes_{\c{O}_{\hat{\b{P}}, \Q}}\s{D}^\d_{\b{\hat P}, \Q}(\infty))/\sum_{\xi\in \f{L}(H)}L_{\xi}(\omega_{\f{Y},\Q}\otimes_{\c{O}_{\hat{\b{P}}, \Q}}\s{D}^\d_{\b{\hat P}, \Q}(\infty))$$ with
\[(\omega_{\f{Y},\Q}|_{B_m}\otimes_{\c{O}_{B_m}}\s{D}^\d_{\b{\hat P}, \Q}(\infty))/\sum_{\xi\in \f{L}(H)}L_{\xi}(\omega_{\f{Y},\Q}|_{B_m}\otimes_{\c{O}_{B_m}}\s{D}^\d_{\b{\hat P}, \Q}(\infty))).\]
Taking the Fourier transform and applying \Cref{rel-M-omega}, $\f{F}_\pi(\c{N})$ can be identified with the right $\s{D}^\d_{\b{\hat P}, \Q}(\infty)$-module
\[(\FsM^{(m)}_\Q\otimes_{\hat{A}^{(m)}_\b{Q}}\s{D}^\d_{\b{\hat P}, \Q}(\infty))/\sum_{\xi\in \f{L}(H)}L_{\xi}(\FsM^{(m)}_\Q\otimes_{\hat{A}^{(m)}_\b{Q}}\s{D}^\d_{\b{\hat P}, \Q}(\infty)),\]
where for any $b\in \FsM^{(m)}_\Q, Q\in \s{D}^\d_{\hat{\b{P}},\Q}(\infty), \xi\in \f{L}(H)$, we define
    \[L_{\xi}(b\otimes Q)=-L_{\xi}b\otimes Q-b\otimes F_\pi(L_{\xi})Q.\]
Here, as in \Cref{toroidalDmodiso'}, $L_{\xi}b$ is the Lie derivation of $b$ along $L_{\xi}$. We can thus identify $L_{\xi}b$ with $\beta(L_{\xi})b$ defined in \Cref{lie-alg-act}.
    
%={^tL_{\xi}}b\otimes t-b\otimes F_\pi(L_{\xi})t
    
%    Here $L_{\xi}b$ is the Lie derivation of $b$ along $L_{\xi}$ given by the $H$-action on $\omega_{\f Y,\Q}$, $D_{\hat{\b{V}},\Q}^\d= \Gamma(\hat{\b{V}},\s{D}_{\hat{\b{V}},\Q}^\d)$.
In particular, we can define a right $\hat{D}_{\hat{\b{V}},\Q}^{(m)}$-module 
\[F_\pi(\Ninf)^{(m)}_\Q:=(\FsM^{(m)}_\Q\otimes_{\hat{A}^{(m)}_\Q}\hat{D}_{\hat{\b{V}},\Q}^{(m)})/\sum_{\xi\in \f{L}(H)}L_{\xi}(\FsM^{(m)}_\Q\otimes_{\hat{A}^{(m)}_\Q}\hat{D}_{\hat{\b{V}},\Q}^{(m)})\] associated with the right $\s{D}^\d_{\b{\hat V}, \Q}$-module $\f{F}_\pi(\c{N})|_{\b{\hat V}}$,
where $\hat{D}_{\hat{\b{V}},\Q}^{(m)}= \Gamma(\hat{\b{V}},\hat{\s{D}}_{\hat{\b{V}},\Q}^{(m)})$, the right $\hat{D}_{\hat{\b{V}},\Q}^{(m)}$-module structure on $F_\pi(\Ninf)^{(m)}_\Q$ is given by the multiplication on the second factor. 
%composite of ${D}^\d_{\b{P}, \Q}(\infty)\xrightarrow{F_\pi}{D}^\d_{\b{P}, \Q}(\infty)$ and the right %${D}^\d_{\b{P}, \Q}(\infty)$-module structure on $\c{N}$.  
    
    Recall that ${A}^{(m)}_\Q\cong {A}^{(m')}_\Q$, ${M}^{(m)}_\Q\cong {M}^{(m')}_\Q$.
    We have $$\hat{M}^{(m)}_\Q\otimes_{\hat{A}^{(m)}_\Q}\hat{A}^{(m')}_\Q\cong {M}^{(m)}_\Q\otimes_{{A}^{(m)}_\Q}\hat{A}^{(m)}_\Q\otimes_{\hat{A}^{(m)}_\Q}\hat{A}^{(m')}_\Q\cong {M}^{(m')}_\Q \otimes_{{A}^{(m')}_\Q}\hat{A}^{(m')}_\Q\cong \hat{M}^{(m')}_\Q.$$
    %Since M^{(m)} are coherent for any $m$, its completion are equal to $-\otimes_{A^{(m)}}\hat{A}^{(m)}$
    By the flatness of ${D}_{\hat{\b{V}},\Q}^{\d}$ over $\hat{D}_{\hat{\b{V}}}^{(m)}$ (see \cite[3.5.4]{Berthelot1}), it is easy to see that $F_\pi(\Ninf)^{(m)}_\Q\otimes _{\hat{D}_{\hat{\b{V}},\Q}^{(m)}}{D}_{\hat{\b{V}},\Q}^{\d}$ is isomorphic to $ F_\pi(\Ninf)|_{\hat{\b{V}}}$, and  $F_\pi(\Ninf)^{(m)}_\Q\otimes _{\hat{D}_{\hat{\b{V}},\Q}^{(m)}}\hat{D}_{\hat{\b{V}},\Q}^{(m')}$ is isomorphic to $F_\pi(\Ninf)^{(m')}_\Q$. %We also have $\varinjlim_mF_\pi(\Ninf)^{(m)}\cong F_\pi(\Ninf)|_{\hat{\b{V}}}$, but for any $m'\geq m$, there exists only a surjective morphism $F_\pi(\Ninf)^{(m)}\otimes_{\hat{D}_{\hat{\b{V}}}^{(m)}}\hat{D}_{\hat{\b{V}}}^{(m')}\to F_\pi(\Ninf)^{(m')}$.
    
    \subsection{} Choose a nonzero left-invariant top form 
     $\omega_0$ of the invertible sheaf $\omega_{H}$.
     By \cite[1.3.4]{Berthelot2}, we can use this section to define the ``isomorphismes de transposition'': 
      $${^t\cdot}: \hat{\s D}_{{H}}^{(m)}\to \hat{\s D}_{H}^{(m),\m{op}}.$$
    By \cite[1.2.2]{Berthelot2}, as in the characteristic $0$ case we still have
     $$(f\omega_0) P= {^t P}(f) \omega_0$$ for any open set $U\subseteq H$, function $f\in \Gamma(U, \c{O}_H)$ and differential operator $P\in \Gamma(U, \hat{\s D}_{{H}}^{(m)})$.
     Since $L_{\xi}\omega_0=0$, we have ${^tL_{\xi}}=-L_{\xi}$.

    %Let $\chi$ be the character $H\to \prod \m{GL}(V_j)\xrightarrow{\prod \det_j}\b{G}_m$, %$\m{d}\chi$ the tangent map of $\chi$. 
    Consider the character \[\chi:H=G\times G\to\b{G}_m,\quad h=(g_1,g_2)\mapsto \prod_{j=1}^N\det(\rho_j(g_1g_2^{-1})).\]
    Let $ \theta$ be the automorphism $$\theta: H\to H,\quad (g_1,g_2)\mapsto(g_2,g_1).$$ %defined by switching the two factors of $H=G\times G$. 
    For any $h=(g_1,g_2)\in H$ and $a=(a_j),b=(b_j)\in\b{V}$, the pairing $\langle,\rangle: \b{V}\times \b{V}\to \b{A}^1$ satisfies
    \[\langle h\cdot a,b \rangle=\sum_{j=1}^N\m{Tr}(\rho_j(g_1)a_j\rho_j(g_2^{-1})b_j)=\sum_{j=1}^N\m{Tr}(a_j\rho_j(g_2^{-1})b_j\rho_j(g_1))=\langle a, \theta(h^{-1})\cdot b \rangle.\]
    %Therefore, for any $\xi\in\f L(H)$, the coefficient matrix of $-L_{\theta(\xi)}$ is the transpose of the coefficient matrix of $L_{\xi}$.
    Hence we have $F_\pi(L_{\xi})=L_{\theta(\xi)}-\chi(\xi)$ for any $\xi\in\f L(H)$ by direct calculations.
    
    Then we define a new action of $H$ on $M^{(m)}$ by
    \[\m{act}': H\times M^{(m)}\to M^{(m)}, \quad (h,v)\mapsto\chi(h)^{-1} hv.\]
    Repeating the constructions in \Cref{lie-alg-act} using $\m{act}'$ instead of $\m{act}$, 
    for any global section $P$ of $\s D_{H}^{(m)}$, 
    we get a morphism $\beta'(P): M^{(m)}\to M^{(m)}$. Note that
    $\beta'(L_{\xi}): M^{(m)}\to M^{(m)}$ can be identified with $L_{\xi}-\chi(\xi)$. We can thus rewrite \[L_{\xi}(b\otimes Q)
    =-L_{\xi}b\otimes Q-b\otimes F_\pi(L_\xi)Q
    =\beta'(^tL_{\xi})b\otimes Q-b\otimes   L_{\theta(\xi)}Q=\beta'(^tL_{\xi})b\otimes Q-b\otimes \alpha
    (L_{\theta(\xi)})Q\] in $\f{F}_\pi(\c{N})$ for any $b\in \FsM^{(m)}_\Q$ and $Q\in \s{D}^\d_{\hat{\b{P}},\Q}(\infty)$.

\begin{pro}\label{vanishing-higher-oper}
    For any $P\in \Gamma_{\m{inv}}(H, {\s D}_{H}^{(m)})$, $b\in \FsM^{(m)}$, $Q\in \hat{D}_{\hat{\b{V}}}^{(m)}$, we have 
    $\beta'(^tP)b\otimes Q-b\otimes \alpha(\theta P)Q$  vanishes in $F_\pi(\Ninf)^{(m)}_\Q$.
    %\[%\partial^{\left<\underline{k}\right>_{(m)}}(b\otimes Q)=    (^t\partial^{\left<\underline{k}\right>_{(m)}})'b\otimes Q-b\otimes \theta_*(\partial^{\left<\underline{k}\right>_{(m)}})Q\]
    
\end{pro}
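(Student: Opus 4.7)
The plan is to reduce the statement to the case of monomials in the $L_{\zeta_i}$'s by linearity, and then proceed by induction on length. By \Cref{inv-generated-by-L}, every $P\in\Gamma_{\m{inv}}(H,\s D_H^{(m)}_{\Q})$ is a $K$-linear combination of products $L_{\zeta_{i_1}}\cdots L_{\zeta_{i_s}}$. Since both sides of the desired identity are $K$-linear in $P$, it suffices to treat such a monomial. (The integrality constraints $b\in\hat M^{(m)}$, $Q\in\hat D_{\hat{\b V}}^{(m)}$ play no role since we work in the rationalized quotient.)

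Fix $P=L_{\zeta_{i_1}}\cdots L_{\zeta_{i_s}}$. Since ${}^t$ is an anti-homomorphism with ${}^tL_\zeta=-L_\zeta$, we have ${}^tP=(-1)^s L_{\zeta_{i_s}}\cdots L_{\zeta_{i_1}}$. Iterating \Cref{beta} yields $\beta'({}^tP)=(-1)^s\beta'(L_{\zeta_{i_s}})\circ\cdots\circ\beta'(L_{\zeta_{i_1}})$. Since $\theta$ is a group automorphism, $\theta P=L_{\theta(\zeta_{i_1})}\cdots L_{\theta(\zeta_{i_s})}$, and by \Cref{discription-alpha}, $\alpha(\theta P)=L^V_{\theta(\zeta_{i_1})}\circ\cdots\circ L^V_{\theta(\zeta_{i_s})}$. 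The statement thus reduces to showing
\[(-1)^s\,\beta'(L_{\zeta_{i_s}})\cdots\beta'(L_{\zeta_{i_1}})(b)\otimes Q \;\equiv\; b\otimes L^V_{\theta(\zeta_{i_1})}\cdots L^V_{\theta(\zeta_{i_s})}(Q)\]
in $F_\pi(\Ninf)^{(m)}_\Q$.

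I will prove this by induction on $s$. The base case $s=1$ is precisely the defining relation $L_\xi(b\otimes Q)=\beta'({}^tL_\xi)b\otimes Q-b\otimes\alpha(L_{\theta(\xi)})Q\equiv 0$ derived immediately before the proposition. For the inductive step, set $b_1:=\beta'(L_{\zeta_{i_1}})(b)$ and $P':=L_{\zeta_{i_2}}\cdots L_{\zeta_{i_s}}$. The induction hypothesis applied to $P'$ with $b$ replaced by $b_1$ rewrites the left-hand side, up to the sign flip from $(-1)^s$ versus $(-1)^{s-1}$, as $-\beta'(L_{\zeta_{i_1}})(b)\otimes\alpha(\theta P')(Q)$. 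The base case $s=1$ applied with $\zeta=\zeta_{i_1}$ and $Q$ replaced by $\alpha(\theta P')(Q)$ then converts this into $b\otimes L^V_{\theta(\zeta_{i_1})}\alpha(\theta P')(Q)=b\otimes\alpha(\theta P)(Q)$, as required.

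The main difficulty lies not in any conceptual obstacle but in the careful bookkeeping of signs and orderings: \Cref{beta} makes $\beta'$ into a ring homomorphism (on right-invariant operators) that preserves the order of composition, whereas ${}^t$ is an anti-homomorphism that reverses it, and these two reversals have to cancel against the factor $(-1)^s$ coming from ${}^tL_\zeta=-L_\zeta$. Once this algebraic bookkeeping is set up correctly, the induction is a routine unwinding of the relations defining $F_\pi(\Ninf)^{(m)}_\Q$.
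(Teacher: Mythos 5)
Your proof is correct and follows essentially the same approach as the paper's: the paper reduces to monomials in the $L_{\zeta_i}$ via \Cref{inv-generated-by-L} and then exhibits an explicit telescoping sum, whereas you perform the same cancellation by peeling off one factor at a time in an induction on $s$; these are the same argument in two presentations. The only trivial omission is the degenerate case $s=0$ (constant $P$), which the paper dismisses as trivial and which your induction base $s=1$ does not formally cover.
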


\begin{proof}
     By \Cref{inv-generated-by-L}, we may assume $P=L_{\xi_1}\cdots L_{\xi_s}$. If $s=0$, this is trivial. Assume $s\geq 1$. Note that
     \begin{align*}%\label{vanishing-higher-oper-formula}\tag{*}
         & \beta'(^t(L_{\xi_1}\cdots L_{\xi_s}))b\otimes Q-b\otimes \alpha(\theta(L_{\xi_1}\cdots L_{\xi_s}))Q \\
         = & \beta'(^tL_{\xi_s})\cdots \beta'(^tL_{\xi_1})b\otimes Q-b\otimes \alpha(L_{\theta(\xi_1)})\cdots \alpha(L_{\theta(\xi_s)})Q \\
         = & \sum_{i=1}^{s} \Big(\beta'(^tL_{\xi_{i}})\cdots\beta'(^tL_{\xi_1})b\otimes \alpha(L_{\theta(\xi_{i+1})})\cdots \alpha(L_{\theta(\xi_s)})Q \\
         & \;\;\qquad-\beta'(^tL_{\xi_{i-1}})\cdots\beta'(^tL_{\xi_1})b\otimes \alpha(L_{\theta(\xi_{i})})\cdots \alpha(L_{\theta(\xi_s)})Q\Big)\\
         = & \sum_{i=1}^{s} L_{\xi_i}\Big(\beta'(^tL_{\xi_{i-1}})\cdots\beta'(^tL_{\xi_1})b\otimes \alpha(L_{\theta(\xi_{i+1})})\cdots \alpha(L_{\theta(\xi_s)})Q\Big).
     \end{align*}
     We conclude that $\beta'(^t(L_{\xi_1}\cdots L_{\xi_s}))b\otimes Q-b\otimes \alpha(\theta(L_{\xi_1}\cdots L_{\xi_s}))Q$ vanishes in \[F_\pi(\Ninf)^{(m)}_\Q=(\FsM^{(m)}_\Q\otimes_{\hat{A}^{(m)}_\Q}\hat{D}_{\hat{\b{V}},\Q}^{(m)})/\sum_{\xi\in \f{L}(H)}L_{\xi}(\FsM^{(m)}_\Q\otimes_{\hat{A}^{(m)}_\Q}\hat{D}_{\hat{\b{V}},\Q}^{(m)}).\qedhere\]

\begin{comment}
     \item Note $\FsM^{(m)}\hat{\otimes}_{\hat{A}^{(m)}}\hat{D}_{\hat{\b{V}}}^{(m)}\cong \FsM^{(m)}\otimes_{R}\BBmm$, by \Cref{M-flat-over-Z} it flat over $R$, so it suffices to prove after tensor $\Q$. Again By \Cref{generated-by-L}, %it suffices to check elements of the form $L_{\xi_1}\cdots L_{\xi_s}$. 
     we may assume $p=L_{\xi_1}\cdots L_{\xi_s}$. Let us prove the claim by induction on $s$. If $s=0$, this is trivial. Assume $s\geq 1$. 

    Denote $q=L_{\xi_2}\cdots L_{\xi_s}$. For any $t\in \hat{A}^{(m)}$, 
    \begin{align*}
        %\beta'(^tq^tL_{\xi_1})b\otimes f-b\otimes L_{\theta(\xi_1)}\alpha(\theta q)f
        &\beta'(^tq^tL_{\xi_1})bt\otimes f-bt\otimes L_{\theta(\xi_1)}\alpha(\theta q)f\\
        =& \beta'(^tq) \Big((-L_{\xi_1}b)t\Big)\otimes f-\beta'(^tq)\Big(bL_{\xi_1}t\Big)\otimes f-bt\otimes L_{\theta(\xi_1)}\alpha(\theta q)f\\
        =&\beta'(^tq) \Big((-L_{\xi_1}b)t\Big)\otimes f
        -\beta'(^tq)\Big(bL_{\xi_1}t\Big)\otimes f-bt\otimes L_{\theta(\xi_1)}\alpha(\theta q)f
    \end{align*}

    By inductive hypothesis
    \[\beta'(^tq)bt\otimes f-bt\otimes \alpha(\theta q)f=\beta'(^tq)b\otimes tf-b\otimes \alpha(\theta q)(tf)\]
     
     %We've already know that each $L_{\xi_i}: \FsM^{(m)}\hat{\otimes}_{\hat{A}^{(m)}}\hat{D}_{\hat{\b{V}}}^{(m)}\to \FsM^{(m)}\hat{\otimes}_{\hat{A}^{(m)}}\hat{D}_{\hat{\b{V}}}^{(m)}$ is a well-defined map. 
     %By inductive hypothesis, each map 
     %By (\Cref{vanishing-higher-oper-formula}), $L_{\xi_i}: \FsM^{(m)}\hat{\otimes}_{\hat{A}^{(m)}}\hat{D}_{\hat{\b{V}}}^{(m)}\to \FsM^{(m)}\hat{\otimes}_{\hat{A}^{(m)}}\hat{D}_{\hat{\b{V}}}^{(m)}$, %$b\otimes t\mapsto\beta'(^tp)b\otimes t-b\otimes \alpha(\theta p)t$, $$b\otimes t\mapsto \beta'(^tL_{\xi_{i-1}})\cdots\beta'(^tL_{\xi_1})b\otimes \alpha(L_{\theta(\xi_{i+1})})\cdots \alpha(L_{\theta(\xi_s)})f=\beta'(^tL_{\xi_{i-1}}\cdots ^tL_{\xi_1})b\otimes \alpha(L_{\theta(\xi_{i+1})})\cdots \alpha(L_{\theta(\xi_s)})f$$ 
     %is well-defined. 
 \end{comment}    
\end{proof}
    
\begin{defn}
    Let $m\geq0$, $P\in \Gamma(H, {\s D}_{H}^{(m)})$. We define
    \[P: \FsM^{(m)}\hat{\otimes}_{R}\hat{D}_{\hat{\b{V}}}^{(m)}\to \FsM^{(m)}\hat{\otimes}_{\hat{A}^{(m)}}\hat{D}_{\hat{\b{V}}}^{(m)}, 
    \quad b\otimes Q\mapsto\beta'(^tP)b\otimes Q-b\otimes \alpha(\theta P)Q,\]
    which is a homomorphism of right $\hat{D}_{\hat{\b{V}}}^{(m)}$-modules. 
    Note that in the source we only take tensor product over $R$.
    We then define    
    %\[F_\pi(\Ninf)^{(m)}\cong (\FsM^{(m)}\otimes_{R}\BBmm)/\sum_{\xi\in \f{L}(H), 1\leq k\leq p^m}L_{\xi}^{\left<k\right>_{(m)}}(\FsM^{(m)}\otimes_{R}\BBmm).\]
   
    %\[F_\pi(\Ninf)^{(m)}= (\FsM^{(m)}\hat{\otimes}_{\hat{A}^{(m)}}\hat{D}_{\hat{\b{V}}}^{(m)})/\sum_{\xi\in \f{L}(H), {k}\leq p^m}\nabla_\xi^{\left<{k}\right>_{(m)}}(\FsM^{(m)}\hat{\otimes}_{R}\hat{D}_{\hat{\b{V}}}^{(m)}).\]
    \[\important{F_\pi(\Ninf)^{(m)}= (\FsM^{(m)}\hat{\otimes}_{\hat{A}^{(m)}}\hat{D}_{\hat{\b{V}}}^{(m)})/\sum_{1\leq i\leq 2d, {r}\leq p^m}\nabla_{i}^{\left<{r}\right>_{(m)}}(\FsM^{(m)}\hat{\otimes}_{R}\hat{D}_{\hat{\b{V}}}^{(m)}).}\]
    This is a coherent right $\hat{D}_{\hat{\b{V}}}^{(m)}$-module. 
    \begin{comment}
    Here for $b\in \FsM^{(m)}$, $f\in \hat{D}_{\hat{\b{V}}}^{(m)}$ we define
    \[\partial^{\left<\underline{k}\right>_{(m)}}(b\otimes f):=(^t\partial^{\left<\underline{k}\right>_{(m)}})'b\otimes f-b\otimes \theta_*(\partial^{\left<\underline{k}\right>_{(m)}})f,\]
    where $L_{\xi}^{\left<i\right>_{(m)}}b$ is defined in \Cref{lie-alg-act}. 
    \end{comment}
\end{defn}

    By \Cref{vanishing-higher-oper}, we may identify 
$F_{\pi}(\Ninf)^{(m)}\otimes_{\b Z} {\Q}$ with %the quotient
\[F_\pi(\Ninf)^{(m)}_\Q=%(\FsM^{(m)}_\Q\otimes_{R}\BBmm)/\sum_{\xi\in \f{L}(H)}L_{\xi}(\FsM^{(m)}_\Q\otimes_{R}\BBmm)
(\FsM^{(m)}_\Q\otimes_{\hat{A}^{(m)}_\Q}\hat{D}_{\hat{\b{V}},\Q}^{(m)})/\sum_{\xi\in \f{L}(H)}L_{\xi}(\FsM^{(m)}_\Q\otimes_{\hat{A}^{(m)}_\Q}\hat{D}_{\hat{\b{V}},\Q}^{(m)}).\]
Hence $F_\pi(\Ninf)^{(m)}$ is indeed an integral model for $F_\pi(\Ninf)^{(m)}_\Q$.

\begin{pro}\label{quo-gr}
    Let $F_\pi(\Ninf)^{(m)}_k:=F_\pi(\Ninf)^{(m)}/\unif F_\pi(\Ninf)^{(m)}$. The good filtration on $\FsM^{(m)}_k=M^{(m)}_k$ defined in \Cref{affintmodel} gives a good filtration on $F_{\pi}(\Ninf)^{(m)}_k$, and $\gr F_{\pi}(\Ninf)^{(m)}_k$ is a quotient of
    \[\gr M^{(m)}_k\otimes_{\gr A_k^{(m)}}\Big(\gr D_{\b{V}_k}^{(m)}/\sum_{\xi\in \f{L}(H),1\leq r\leq p^m}L_{\xi}^{\left<{r}\right>_{(m)}}\gr D_{\b{V}_k}^{(m)}\Big),\]
    hence a quotient of
    \[S^{(m)}_{k,0}\otimes_{\gr A_k^{(m)}}\Big(\gr D_{\b{V}_k}^{(m)}/\sum_{\xi\in \f{L}(H),1\leq r\leq p^m}L_{\xi}^{\left<{r}\right>_{(m)}}\gr D_{\b{V}_k}^{(m)}\Big).\]
    Here $L_{\xi}^{\left<{r}\right>_{(m)}}\in \gr^r D_{\b{V}_R}^{(m)}$ is defined in \Cref{def-L^<k>}.
\end{pro}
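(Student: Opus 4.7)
The plan is to lift the $H$-invariant good filtration on $M^{(m)}_k$ from \Cref{affintmodel} to a filtration on $F_\pi(\Ninf)^{(m)}$, reduce modulo $\unif$, and identify the associated graded by analyzing leading symbols of the defining relations $\nabla_i^{\langle r\rangle_{(m)}}$.

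First, I endow $\FsM^{(m)} \hat\otimes_R \hat D_{\hat{\b V}}^{(m)}$ with the tensor-product filtration
\[
F^i = \sum_{i_1+i_2 \leq i} F^{i_1}\FsM^{(m)} \hat\otimes_R F^{i_2}\hat D_{\hat{\b V}}^{(m)},
\]
where $F^{i_1}\FsM^{(m)}$ is the completion of the integral filtration obtained from $S^{(m)}$ (which specializes modulo $\unif$ to the good filtration of \Cref{affintmodel}), and $F^{i_2}\hat D_{\hat{\b V}}^{(m)}$ is the order filtration. This descends to $F_\pi(\Ninf)^{(m)}$ and, after reducing modulo $\unif$, to a filtration on $F_\pi(\Ninf)^{(m)}_k$. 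Before imposing the $\nabla$-relations, the associated graded of the source modulo $\unif$ is naturally identified with $\gr M^{(m)}_k \otimes_{\gr A^{(m)}_k} \gr D_{\b V_k}^{(m)}$, using that $\gr M^{(m)}_k$ is finitely generated over $\gr A^{(m)}_k$ and that $\gr \hat D_{\hat{\b V}}^{(m)}$ extends $\gr \hat A^{(m)}$ by adjoining the coordinate functions.

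The key step is to see how the relations $\nabla_i^{\langle r\rangle_{(m)}}(b \otimes Q) = 0$ descend to the graded. Rewriting the relation as
\[
b \otimes \alpha(\theta \nabla_i^{\langle r\rangle_{(m)}}) Q = \beta'({}^t \nabla_i^{\langle r\rangle_{(m)}}) b \otimes Q,
\]
the right-hand side lies in $F^{i_1+i_2}$ whenever $b \in F^{i_1}\FsM^{(m)}_k$ and $Q \in F^{i_2}D_{\b V_k}^{(m)}$, since the $H$-invariance of the filtration on $M^{(m)}_k$ together with \Cref{bij-of-inv} ensures that $\beta'({}^t P)$ preserves the filtration for any right-invariant $P$. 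The left-hand side, however, a priori lies only in $F^{i_1+i_2+r}$ because $\alpha(\theta\nabla_i^{\langle r\rangle_{(m)}})$ has order $r$. Thus in $\gr^{i_1+i_2+r}$ the class of $b \otimes \sigma(\alpha(\theta\nabla_i^{\langle r\rangle_{(m)}}))Q$ vanishes whenever $r \geq 1$. By \Cref{heighest-symbol-of-alpha}, this symbol equals $L_{\theta(\zeta_i)}^{\langle r\rangle_{(m)}}$. Since $\theta$ carries the basis $\{\zeta_i\}$ to another basis of $\f L(H)$, and since the identities $(fL)^{\langle r\rangle_{(m)}} = f^r L^{\langle r\rangle_{(m)}}$ and $(L+L')^{\langle r\rangle_{(m)}} = \sum_s L^{\langle r-s\rangle_{(m)}}L'^{\langle s\rangle_{(m)}}$ allow the recovery of $L_\xi^{\langle r\rangle_{(m)}}$ for arbitrary $\xi \in \f L(H)$ from the basis elements, we obtain that the full ideal $\sum_{\xi, 1\le r\le p^m} L_\xi^{\langle r\rangle_{(m)}} \gr D_{\b V_k}^{(m)}$ is killed in $\gr F_\pi(\Ninf)^{(m)}_k$. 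This yields the first quotient statement; the second follows immediately since $\gr M^{(m)}_k$ is itself a quotient of $S^{(m)}_{k,0}$ by \Cref{affintmodel}.

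The main obstacle I anticipate is not the symbol calculation itself but the bookkeeping: one must verify that the tensor-product filtration interacts correctly with $\unif$-adic completion and with the tensor relations $\FsM^{(m)} \hat\otimes_{\hat A^{(m)}} \hat D_{\hat{\b V}}^{(m)}$, so that the graded of the completed tensor product really agrees with the uncompleted graded tensor product, and that the induced filtration on the quotient $F_\pi(\Ninf)^{(m)}_k$ is genuinely a good filtration whose associated graded admits the claimed description. This amounts to keeping careful track of the compatibilities between the integral structures $S^{(m)}$, $\hat A^{(m)}$, and $\hat D_{\hat{\b V}}^{(m)}$ introduced in Sections \ref{Dmodule}--\ref{sec-right-inv}.
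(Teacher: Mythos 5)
Your proposal is correct and follows essentially the same route as the paper: lift the $H$-invariant good filtration from $M^{(m)}_k$, reduce modulo $\unif$, and use \Cref{heighest-symbol-of-alpha} together with $H$-equivariance of $\beta'$ to see that the symbol $L_{\theta(\zeta_i)}^{\langle r\rangle_{(m)}}$ annihilates the graded. The ``bookkeeping'' obstacle you flag is dispatched in the paper by identifying $\FsM^{(m)}\hat\otimes_{\hat A^{(m)}}\hat D^{(m)}_{\hat{\b V}}$ with $\FsM^{(m)}\hat\otimes_R R[\hat{\b V}]$, which modulo $\unif$ becomes the ordinary tensor product $M^{(m)}_k\otimes_k k[\b V]$; the filtration is then taken to be $F^iM^{(m)}_k\otimes_k k[\b V]$, which agrees with the descent of your convolution filtration and avoids any issue with $\unif$-adic completion.
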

\begin{proof}
    We identify $\FsM^{(m)}\hat{\otimes}_{\hat{A}^{(m)}}\hat{D}_{\hat{\b{V}}}^{(m)}$ with $\FsM^{(m)}\hat{\otimes}_{R}R[{\hat{\b{V}}}]$ as $R$-modules. This identification gives a right $\hat{D}_{\hat{\b{V}}}^{(m)}$-module structure on $\FsM^{(m)}\hat{\otimes}_{R}R[{\hat{\b{V}}}]$. 
    Repeating the arguments in \Cref{filonFourier}, one can show that the filtration $F^i(M^{(m)}_k\otimes_{k}k[\b{V}]):=F^iM^{(m)}_k\otimes_{k}k[\b{V}]$ 
    defines a good filtration on $M^{(m)}_k\otimes_{k}k[\b{V}]$
    with respect to the usual order filtration on $D_{\b{V}_k}^{(m)}$, and we have an isomorphism
    \[\gr(M^{(m)}_k)\otimes_{k}k[\b{V}]\cong \gr^F(M^{(m)}_k\otimes_{k}k[\b{V}]).\]
    Since $F_\pi(\Ninf)^{(m)}_k$ is a quotient of $M^{(m)}_k\otimes_{k}k[\b{V}]$, 
    we can endow it with the induced filtration. Then there exists a surjection
    \[\gr M^{(m)}_k\otimes_{\gr A_k^{(m)}}\gr D_{\b{V}_k}^{(m)}\cong \gr M^{(m)}_k\otimes_{k}k[\b{V}]\twoheadrightarrow \m{gr}\, F_\pi(\Ninf)^{(m)}_k.\]

    To verify the first claim, we need to check that, 
    for any 
    $\xi\in \f{L}(H), b\in F^iM^{(m)}_k, f\in F^j D_{\b{V}_k}^{(m)}$ and $1\leq r\leq p^m$, we have
    \[b\otimes L_{\xi}^{\left<{r}\right>_{(m)}}f
    \in  F^{i+j+r-1} F_\pi(\Ninf)^{(m)}_k.\]

    By the proof of \Cref{def-L^<k>}, each $L_{\xi}^{\left<{r}\right>_{(m)}}\in \gr^r D_{\b{V}}^{(m)}$ is contained in the left $k[\b V]$-module generated by $L_{\theta(\zeta_s)}^{\left<{r'}\right>_{(m)}}$, where $1\leq s\leq 2d$ and $1\leq r'\leq p^m$ are arbitrary. 
    We may assume $\xi=\theta(\zeta_s)$. 
    Note that the highest symbol of $\alpha(\nabla_{\theta(\zeta_s)}^{\left<{r}\right>_{(m)}})$ is $L_{\theta(\zeta_s)}^{\left<{r}\right>_{(m)}}$ by \Cref{heighest-symbol-of-alpha}. 
    It suffices to show that, for any integers $1\leq s\leq 2d$ and $1\leq r\leq p^m$, and any $b\in F^i M^{(m)}_k$ and $f\in F^j D{\b{V}_k}^{(m)}$, we have
    \[b\otimes \alpha(\theta(\nabla_{\zeta_s}^{\left<{r})\right>_{(m)}})f
    \in  F^{i+j+r-1} F_\pi(\Ninf)^{(m)}_k.\]
    
    Recall that in $F_\pi(\Ninf)^{(m)}$, we have the relation
    \[0=\nabla_{\zeta_s}^{\left<r\right>_{(m)}}(b\otimes f)=\beta'(^t\nabla_{\zeta_s}^{\left<r\right>_{(m)}})b\otimes f-b\otimes \alpha(\theta\nabla_{\zeta_s}^{\left<{r}\right>_{(m)}})f.\]
    Since the filtration on $M^{(m)}$ is compatible with the $H$-action by \Cref{affintmodel}, $\beta'(^t\nabla_{\zeta_s}^{\left<r\right>_{(m)}})b$ lies in $F^iM^{(m)}$. So $\beta'(^t\nabla_{\zeta_s}^{\left<r\right>_{(m)}})b\otimes f\in F^{i+j} F_\pi(\Ninf)^{(m)}_k$. 
    We conclude that
    $$b\otimes \alpha(\theta\nabla_{\zeta_s}^{\left<{r}\right>_{(m)}})f\in F^{i+j} F_\pi(\Ninf)^{(m)}_k\subseteq F^{i+j+r-1} F_\pi(\Ninf)^{(m)}_k.$$

    The second claim follows from \Cref{affintmodel}.
\end{proof}

\begin{pro}\label{M-support}
    Notation as in \Cref{setting}. Let $\m{Frob}: \b{V}_k\to \b{V}_k$ be the relative Frobenius.
    Then the (set-theoretic) support of $S^{(m)}_{k}$ in $\Spec (A_k^{(m)}\otimes k[t])_{\m{red}}\cong \b{V}_k\times \b{A}_k^1$ is contained in $\m{Frob}^m(Z_{0,k}'^{\m{aff}})\times \b{A}_k^1$, where $Z_{0,k}'^{\m{aff}}$ is the fiber of $Z'^{\m{aff}}_k$ at $0\in \b{A}_k^1$.
\end{pro}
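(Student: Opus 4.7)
The plan is to identify $\m{Supp}(S^{(m)}_k)$ with a subset of the special fiber of the scheme-theoretic closure of $Z'^{\m{aff}}_K$ inside $\Spec(A^{(m)}\otimes_R R[t])$, and then analyze this special fiber by combining a valuation-theoretic lifting with the natural $\b{G}_m$-action on $Z'^{\m{aff}}$. Since $S^{(m)}$ is a finitely generated $(A^{(m)}\otimes_R R[t])$-submodule of the $K$-vector space $S^{(m)}_\Q\cong\omega_{Z^{\m{aff}}_K}$, it is $R$-flat and its support is the closure $\c{Z}$ in $\Spec(A^{(m)}\otimes_R R[t])$ of $\m{Supp}(S^{(m)}_\Q)=Z'^{\m{aff}}_K$, viewed through the identification $\Spec A^{(m)}_\Q\cong\b{V}_K$, $\partial_j\leftrightarrow -\pi x_j$. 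Hence $\m{Supp}(S^{(m)}_k)\subseteq\c{Z}_k$, and it suffices to show that the image of $\c{Z}_k$ in $\Spec(A^{(m)}_k\otimes k[t])_{\m{red}}\cong\b{V}_k\times\b{A}^1_k$ is contained in $\m{Frob}^m(Z'^{\m{aff}}_{0,k})\times\b{A}^1_k$.

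Take any closed $\bar k$-point $(a,s)$ of this image. Standard valuation theory applied to the $R$-flat integral scheme $\c{Z}$ produces a DVR $\c{O}$ with residue field $\bar k$ together with an arc $\Spec\c{O}\to\c{Z}$ whose generic point maps to a $K'$-point $(b,\sigma)\in Z'^{\m{aff}}(K')$ (writing $K'=\m{Frac}\,\c{O}$) and whose special point realises $(a,s)$. Under $y_{i,j}=(-1)^{p^i}\pi x_j^{p^i}$, the integrality of $y_{m,j}$ (which is the strictest of the integrality constraints $\pi b_j^{p^i}\in\c{O}$) forces $v(b_j)\geq -v_\c{O}(\pi)/p^m$, so after replacing $\c{O}$ by a sufficiently ramified extension we may assume $v_\c{O}(\pi)=p^m$; then $\pi=u\pi_\c{O}^{p^m}$ for some $u\in\c{O}^\times$, $v(b_j)\geq -1$, and the vanishing modulo $\pi_\c{O}$ of $y_{i,j}$ for $i<m$ is automatic.

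Now exploit the $\b{G}_m$-action on $\b{V}\times\b{A}^1$ given by $\lambda\cdot(A,t)=(\lambda A,\lambda t)$, which stabilises $Z'^{\m{aff}}$ by construction. Acting by $\pi_\c{O}$ on $(b,\sigma)$ produces a new $K'$-point $(c,\pi_\c{O}\sigma)\in Z'^{\m{aff}}(K')$ with $c_j:=\pi_\c{O} b_j\in\c{O}$ and $v(\pi_\c{O}\sigma)\geq 1$; since $Z'^{\m{aff}}\subseteq\b{V}\times\b{A}^1$ is closed this extends to an $\c{O}$-point of $Z'^{\m{aff}}$ whose specialisation $(\bar c,0)$ lies in $Z'^{\m{aff}}_{0,k}(\bar k)$. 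Computing $y_{m,j}(b,\sigma)=(-1)^{p^m}\pi b_j^{p^m}=(-1)^{p^m}u c_j^{p^m}$ and reducing mod $\pi_\c{O}$ yields $a_j=(-1)^{p^m}\bar u\bar c_j^{p^m}$; choosing $\mu\in\bar k^\times$ with $\mu^{p^m}=(-1)^{p^m}\bar u$ (available because $\bar k$ is algebraically closed) we obtain $a=\m{Frob}^m(\mu\bar c)$, and $\mu\bar c\in Z'^{\m{aff}}_{0,k}$ by $\b{G}_m$-invariance. The main technical hurdle is arranging the ramification of $\c{O}$ in the second paragraph so that the formal integrality of the $y_{i,j}$ forces a precise valuation bound on the essentially non-integral $K'$-point $b$, after which scaling by $\pi_\c{O}$ produces a genuine integral point of $Z'^{\m{aff}}$ that can be specialised.
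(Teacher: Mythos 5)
Your argument is correct, and it repackages the paper's computation in valuative rather than rigid-analytic language. The paper works on the rigid generic fiber: after adjoining a $p^m$-th root $\sigma$ of $\pi$, it describes the points of $(Z'^{\mathrm{aff}}_K)^{\mathrm{an}}$ lying in $\mathrm{Spm}\,\hat{A}^{(m)}_{\mathbb{Q}}\hat\otimes K\langle t\rangle$ in coordinates $y_j=\pi x_j^{p^m}$, uses the homogeneity $f_j(\sigma^{-1}y,t)=f_j(y,\sigma t)$ of the defining equations of $Z'^{\mathrm{aff}}$ to transfer the scaling onto the $t$-coordinate, and reads off the specialization directly. You instead fix a closed $\bar k$-point of the support, lift it to an arc $\mathrm{Spec}\,\mathcal{O}\to\mathcal{Z}$ through the schematic closure, arrange $v_{\mathcal{O}}(\pi)=p^m$ so that a uniformizer $\pi_{\mathcal{O}}$ plays the role of $\sigma$, and then apply the $\mathbb{G}_m$-scaling (which is exactly the homogeneity the paper invokes) to produce an integral point of $Z'^{\mathrm{aff}}$ that reduces into $Z'^{\mathrm{aff}}_{0}$. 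The underlying mechanism is the same in both proofs: among the generators $\pi x_j^{p^i}$ of $A^{(m)}$ only the top one $i=m$ survives modulo $\varpi$, and scaling by a $p^m$-th root of $\pi$ simultaneously integralises the $\mathbb{V}$-coordinates and sends $t$ to $0$. Your version is marginally more elementary in that it avoids the rigid specialization map entirely, at the cost of routine care in choosing $\mathcal{O}$; in fact one need not insist that its residue field equal $\bar k$, since $a\in\bar k^n$ together with $Z'^{\mathrm{aff}}_{0}$ being defined over $k$ already places $a^{1/p^m}$ in $Z'^{\mathrm{aff}}_0(\bar k)$ as soon as it lies in $Z'^{\mathrm{aff}}_0(l)$ for any field extension $l$ of $\bar k$.
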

\begin{proof}
    We regard $\hat{S}^{(m)}$ as a coherent sheaf on the formal scheme $\m{Spf}\, \hat{A}^{(m)}\hat\otimes R\langle t \rangle$, hence regard ${S}^{(m)}_k$ as a coherent sheaf on special fiber $\Spec A_k^{(m)}\otimes k[t]$. 
    It suffices to show $S^{(m)}_{k}$ is supported in $\m{Frob}^m(Z_k'^{\m{aff}})$.
    Recall that 
    $$S^{(m)}_{\Q}\cong \Gamma(\m{Spm}\, \hat{A}^{(m)}_{\Q}\hat{\otimes} K\langle t \rangle, \omega_{\f{Z}^{\m{aff}},\Q}),$$
    where $\f{Z}^{\m{aff}}$ is the $\unif$-adic completion of $Z^{\m{aff}}$, $\omega_{\f{Z}^{\m{aff}},\Q}$ is a coherent sheaf on the rigid space $\b{P}^{\an} \times \b{P}^{1, \an}$ supported on $(Z'^{\m{aff}}_K)^{\an}$, and $\m{Spm}\, \hat{A}^{(m)}_{\Q}\hat{\otimes} K\langle t \rangle$ is an affinoid open subset of $\b{P}_K^{\an} \times \b{P}_K^{1, \an}$. 
    Let $$\m{sp}:\Spm \hat{A}^{(m)}_{\Q}\hat{\otimes} K\langle t \rangle\to \Spec (A_k^{(m)}\otimes k[t])$$ be the canonical specialization map, which is only a morphism between topological spaces. We claim that $\m{sp}^{-1}(\m{Frob}^m(Z_{k}'^{\m{aff}}))$ contains $(Z'^{\m{aff}}_K)^{\an}\cap \Spm \hat{A}^{(m)}_{\Q}\hat{\otimes} K\langle t\rangle$. 
    Note that the generic fiber of the formal scheme $\m{Spf}\, \hat{A}^{(m)}\hat\otimes R\langle t \rangle-\m{Frob}^m(Z_k'^{\m{aff}})$ is $\m{Spm}\, \hat{A}^{(m)}_{\Q}\hat{\otimes} K\langle t \rangle-\m{sp}^{-1}(\m{Frob}^m(Z_{k}'^{\m{aff}}))$.  
    If the claim is true, then the generic fiber of $S^{(m)}|_{\m{Spf}\, \hat{A}^{(m)}\hat\otimes R\langle t \rangle-\m{Frob}^m(Z_k'^{\m{aff}})}$, which can be identified with $\omega_{\f{Z},\Q}|_{\m{Spm}\, \hat{A}^{(m)}_{\Q}\hat{\otimes} K\langle t \rangle-\m{sp}^{-1}(\m{Frob}^m(Z_{k}'^{\m{aff}}))}$, must be trivial. 
    Since $S^{(m)}\subset S^{(m)}_{\Q}$ is flat over $R$, this implies that $S^{(m)}$, hence $S^{(m)}_{k}$, is supported in $\m{Frob}^m(Z_k'^{\m{aff}})$.
    
    To prove the claim, we may replace $R$ by a finite extension. Hence we may assume it contains a $p^m$-th root $\sigma$ of $\pi$. Recall that 
    \[\hat{A}^{(m)}=R\langle\pi{x^{p^i}}, 0\leq i\leq m\rangle.\]
    Consider the homomorphism 
    \[g: R\langle y \rangle\to R\langle\pi{x^{p^i}},0\leq i\leq m\rangle, \quad y_j\mapsto \pi x_j^{p^m}.\]
    They induce the following commutative diagram of topological spaces:
    \[\begin{tikzcd}
    {\Spm K\langle \pi x, \pi x^{p^m},t\rangle} \arrow[r, "\m{sp}"] \arrow[d, "g_{\Q}"] & \Spec ( A_k^{(m)}\otimes k[t]) \arrow[d, "g_{k}"] \\
    \Spm K\langle y,t\rangle \arrow[r, "\m{sp}"]  & \Spec k[y,t].
    \end{tikzcd}\]
    We use $g_{k}$  to identify $\Spec ( A_k^{(m)}\otimes k[t])_{\m{red}}$ with $\b{V}_k\times \b{A}^1_k$.
    Assume $Z'^{\m{aff}}$ is defined by the homogeneous equations $f_j(x,t)=0$ in $\b{A}\times\b{A}^1$. %Let $y=(y_i,t_0)\in \Spm K\langle \langle \pi x, \pi x^{p^m},t\rangle\cap (Z'^{\m{aff}}_K)^{\an}\subset \b{V}_K^{\an}$,where $y_i,t_0$ are the coordinates of $y$ in $\b{V}_K^{\an}\times \b{A}^1_K$ under the chosen basis. 
    Then we may identify $g_{\Q}((Z'^{\m{aff}}_K)^{\an}\cap \Spm \hat{A}^{(m)}_{\Q}\hat{\otimes} K\langle t\rangle)$ with
    \[{\{(y^{p^m},t)\in \Spm K\langle y,t\rangle: f_j(\sigma^{-1}y,t)=0\}=\{(y^{p^m},t)\in \Spm K\langle y,t\rangle: f_j(y,\sigma t)=0\}.}\]
    The image of $g_{\Q}((Z'^{\m{aff}}_K)^{\an}\cap \Spm \hat{A}^{(m)}_{\Q}\hat{\otimes} K\langle t\rangle)$ under $\m{sp}$ is contianed in
    \[\{(y^{p^m},t)\in \Spec k[y,t] : f_j(y,0)=0\}=\m{Frob}^m(Z_{0,k}'^{\m{aff}})\times \b{A}_k^1.\]
    The claim follows. 
\end{proof}

\begin{cor}\label{nil-gr}
    Fix an algebraic closure $\algclo$ of $k$. The coherent $\c{O}_{T^{(m)*}\b{V}_{\algclo}}$-module
    \[S^{(m)}_{\algclo,0}\otimes_{A_{\algclo}^{(m)}}\Big(\gr D_{\b{V},\algclo}^{(m)}/\sum_{\xi\in \f{L}(H),1\leq r\leq p^m}L_{\xi}^{\left<{r}\right>_{(m)}}\gr D_{\b{V},\algclo}^{(m)}\Big)\Big |_{\b{V}^{\gen}_{\algclo}}\]
    is supported in the zero section and hence the ideal $\gr^{\geq 1}A_k^{(m)}$ acts nilpotently on it.
\end{cor}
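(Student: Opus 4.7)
The plan is to mimic the characteristic-zero argument of Corollary~\ref{maincor}, with the Frobenius twist absorbed into the top-order term of the Lie derivative. Since the support claim is set-theoretic, I would pass to the reduced structure and consider closed $\algclo$-points of $T^{(m)*}\b{V}_{\algclo} = T^*\b{V}^{(m)}_{\algclo}\times_{\b{V}^{(m)}_{\algclo}} \b{V}_{\algclo}$, i.e.\ pairs $(A, v)$ with $A \in \b{V}_{\algclo}$ and $v \in T^*_{A^{(m)}}\b{V}^{(m)}_{\algclo}$. The goal is to show that if $A \in \b{V}^{\gen}_{\algclo}$ and $(A, v)$ lies in the support of the tensor product, then $v = 0$.

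Next I would translate the support into two conditions using Propositions~\ref{def-L^<k>} and~\ref{M-support}. Writing $N$ for the quotient on the right, a point $(A,v)$ lies in $\m{supp}(N)_{\m{red}}$ iff all the $L_\xi^{\langle r\rangle_{(m)}}$ for $1 \leq r \leq p^m$ vanish on it; by Proposition~\ref{def-L^<k>}, those with $0 < r < p^m$ vanish identically on the reduced structure, while the $r = p^m$ ones are exactly the functions $(A,v) \mapsto \langle L_\xi^{(m)}(A^{(m)}), v\rangle$ for $\xi \in \f{L}(H)$. The tensor-product support is further cut out by requiring that the image of $(A, v)$ under $\Spec (\gr D^{(m)}_{\b{V}, \algclo})_{\m{red}} \to \Spec (A^{(m)}_{\algclo})_{\m{red}}$ lie in $\m{supp}(S^{(m)}_{\algclo, 0})$. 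Using the identification $(A^{(m)}_{\algclo})_{\m{red}} \cong \algclo[\b{V}]$ from the proof of Proposition~\ref{M-support} (via $z_{j,m} \leftrightarrow \pi x_j^{p^m}$) together with the fact that the inclusion $A^{(m)} \hookrightarrow \gr D^{(m)}_{\b{V}}$ sends $z_{j,m}$ to a unit multiple of $\partial_j^{\langle p^m\rangle_{(m)}}$ (because the $\pi$-valuation of $\pi^{1-p^m}(p^m)!$ is zero), this map is simply the projection onto the cotangent coordinates $(A,v) \mapsto v$. Combined with Proposition~\ref{M-support}, this forces $v \in \m{Frob}^m(Z'^{\m{aff}}_{0, \algclo})$.

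Writing $v = \m{Frob}^m(v')$ for some $v' \in Z'^{\m{aff}}_{0, \algclo}$, and using that the pairing, the vector field $L_\xi$ and the $H$-action are all defined integrally, Frobenius commutes with them, yielding $\langle L_\xi^{(m)}(A^{(m)}), v\rangle = \langle L_\xi(A), v'\rangle^{p^m}$. The support relations therefore become $\langle L_\xi(A), v'\rangle = 0$ for every $\xi \in \f{L}(H)$, putting us in exactly the setting of the proof of Corollary~\ref{maincor}. If $v' \neq 0$, Proposition~\ref{orbitstr} (applicable in any characteristic by the remark after it) writes $v' = g\, e(\tau)\, h$ up to a nonzero scalar for some $g, h \in G_{\algclo}$ and some face $\tau \prec \Delta_\infty$ with $0 \notin \tau$; then $\langle L_\xi(A), v'\rangle = 0$ for all $\xi$ says that $(g,h)$ is a critical point of $f_{\tau, A}$, contradicting $A \in \b{V}^{\gen}_{\algclo}$. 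Hence $v = 0$, the support lies in the zero section over $\b{V}^{\gen}_{\algclo}$, and the nilpotent action of $\gr^{\geq 1} A_k^{(m)}$ follows automatically from coherence.

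The main obstacle is the Frobenius bookkeeping: verifying that the $A^{(m)}_{\algclo}$-algebra structure on $\gr D^{(m)}_{\b{V}, \algclo}$, once passed to the reduced rings, identifies $\Spec (A^{(m)}_{\algclo})_{\m{red}}$ with the cotangent direction of $T^{(m)*}\b{V}_{\algclo}$ in a way compatible with the identification used in Proposition~\ref{M-support}, and establishing the $p^m$-th power identity for the pairing under Frobenius. Once these identifications are in place, the rest is a direct translation of the characteristic-zero geometric argument.
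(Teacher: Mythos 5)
Your proof is correct and takes essentially the same approach as the paper's. The paper's proof is organized as a chain of equalities of supports in $\b{V}^{*(m)}_{\algclo}\times\b{V}_{\algclo}$ (pulling back along $(\m{Frob}^m,\m{id})$, using that relative Frobenius is a homeomorphism, and citing ``the same argument of Corollary~\ref{maincor}'' for $(Z'^{\m{aff}}_{0,\algclo}\times\b{V}^{\gen}_{\algclo})\cap Z(L_\xi)=\{0\}\times\b{V}^{\gen}_{\algclo}$), whereas you carry out the same computation pointwise via $\langle L_\xi^{(m)}(A^{(m)}),v\rangle=\langle L_\xi(A),v'\rangle^{p^m}$ and spell out the orbit-structure argument with Proposition~\ref{orbitstr} explicitly; the two presentations are interchangeable.
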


\begin{proof} 
    By the same argument of \Cref{maincor}, we have $(Z'^{\m{aff}}_{0,\algclo}\times \b{V}^{\gen}_{\algclo})\cap Z(L_{\xi})=\{0\}\times \b{V}^{\gen}_{\algclo}$, where $Z(L_\xi)$ means the closed subset of $\b{V}^*_{\algclo}\times_{\algclo}\b{V}_{\algclo}$ defined by $\{L_\xi\}_{\xi\in \f{L}(H)}$.

    We can identify $\Spec (\gr D_{\b{V},\algclo}^{(m)})_{\m{red}}$ with $T^*\b{V}_{\algclo}^{(m)}\times_{\b{V}_{\algclo}^{(m)}, \m{Frob}^m} \b{V}_{\algclo}\cong \b{V}_{\algclo}^{*(m)}\times \b{V}_{\algclo}$. %where $V_k^{(m)}$ as in \Cref{def-L^<k>}.
    By \Cref{def-L^<k>}, the function $L_{\xi}^{\left<{r}\right>_{(m)}}$ vanishes in $\Spec (\gr D_{\b{V},\algclo}^{(m)})_{\m{red}}$ for $0<r<p^m$, and $L_{\xi}^{\left<{p^m}\right>_{(m)}}$ can be identified with the composite $\b{V}_{\algclo}^{*(m)}\times \b{V}_{\algclo}
    \xrightarrow{(\m{id}, \m{Frob}^m)}
    \b{V}_{\algclo}^{*(m)}\times \b{V}_{\algclo}^{(m)}\cong T^*\b{V}_{\algclo}^{(m)}\xrightarrow{L_{\xi}^{(m)}}\b{A}_{\algclo}^1$. 
    Here $L_{\xi}^{(m)}$ is the base change of 
    $$L_{\xi}: T^*\b{V}_{\algclo}\to\b{A}_{\algclo}^1$$
    along the Frobenius morphism $\m{Frob}^m: \Spec k\to \Spec k$. We can identify $L_{\xi}^{\left<{p^m}\right>_{(m)}}\circ (\m{Frob}^m, \m{id})$ with the composition of  $L_{\xi}^{(m)}$ with the absolute Frobenius morphism
    \[\m{Frob}^m_{\m{abs}}: \b{V}_{\algclo}^{*}\times \b{V}_{\algclo}\to \b{V}_{\algclo}^{*}\times \b{V}_{\algclo}.\]
    By \Cref{M-support}, the support of $S^{(m)}_{\algclo,0}$ is contained in the image of $Z'^{\m{aff}}_{0,\algclo}$ under $\b{V}_{\algclo}^{*}\xrightarrow{\m{Frob}^m} \b{V}_{\algclo}^{*(m)}$. 
    Recall that the relative Frobenius are homeomorphisms. 
    This implies
    \begin{align*}
        &\m{Supp}\, S^{(m)}_{\algclo,0}\otimes_{A_{\algclo}^{(m)}}(\gr D_{\b{V},\algclo}^{(m)}/\sum_{\xi\in \f{L}(H),1\leq r\leq p^m}L_{\xi}^{\left<{r}\right>_{(m)}}\gr D_{\b{V},\algclo}^{(m)})\Big |_{\b{V}^{\gen}_{\algclo}}\\
        =& (\m{Supp}\, S^{(m)}_{\algclo,0} \times \b{V}^{\gen}_{\algclo} )\cap Z(L_{\xi}^{\left<{p^m}\right>_{(m)}}, \xi\in \f{L}(H))\\
        =&(\m{Frob}^m, \m{id})\Big((Z'^{\m{aff}}_{0,\algclo}\times \b{V}^{\gen}_{\algclo}))\cap Z(L_{\xi}^{\left<{p^m}\right>_{(m)}}\circ (\m{Frob}^m, \m{id})) \Big)\\
        =& (\m{Frob}^m, \m{id})\Big((Z'^{\m{aff}}_{0,\algclo}\times \b{V}^{\gen}_{\algclo}))\cap Z(L_{\xi}) \Big)\\
        =&(\m{Frob}^m, \m{id})(\{0\}\times \b{V}^{\gen}_{\algclo})=\{0\}\times \b{V}^{\gen}_{\algclo}.\qedhere
    \end{align*}
\end{proof}

\begin{cor}\label{coherent}
    Let $\f F_\pi(\Ninf)^{(m)}$ be the corresponding $\hat{\s D}_{\hat{\b{V}}}^{(m)}$-module of $F_\pi(\Ninf)^{(m)}$. Then $\f{F}_{\pi}(\Ninf)^{(m)} |_{\b{V}^{\gen}_k}$ is coherent as an $\c{O}_{\hat{\b{V}}}$-module.
\end{cor}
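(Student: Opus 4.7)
The plan is to combine Proposition~\ref{quo-gr} with Corollary~\ref{nil-gr} to obtain $\c{O}$-coherence modulo $\unif$ on $\b{V}^{\gen}_k$, and then to use the $\unif$-adic completeness of $F_\pi(\Ninf)^{(m)}$ to lift this coherence to the formal setting.

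For the first step, Proposition~\ref{quo-gr} provides a good filtration on $F_\pi(\Ninf)^{(m)}_k$ whose associated graded is a quotient of the coherent $\gr D_{\b{V}_k}^{(m)}$-module appearing in Corollary~\ref{nil-gr}. By that corollary, after pulling back to $\b{V}^{\gen}_{\bar k}$ this graded module is set-theoretically supported on the zero section of $T^{(m)*}\b{V}^{(m)}_{\bar k}\times_{\b{V}^{(m)}_{\bar k}}\b{V}_{\bar k}$, so the augmentation ideal $\gr^{\geq 1}A_k^{(m)}$ acts nilpotently on $\gr F_\pi(\Ninf)^{(m)}_k|_{\b{V}^{\gen}_k}$. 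Since this module is coherent over $\gr D_{\b{V}_k}^{(m)}$, on any quasi-compact open of $\b{V}^{\gen}_k$ there exists $N$ with $\gr^{\geq N}=0$, which forces the filtration of $F_\pi(\Ninf)^{(m)}_k|_{\b{V}^{\gen}_k}$ to stabilize at level $N-1$. Each graded piece $\gr^j$ is coherent over $\gr D_{\b{V}_k}^{(m)}$ and supported on the zero section, hence is $\c{O}_{\b{V}^{\gen}_k}$-coherent; inductively each $F^j$ is $\c{O}$-coherent, so the stabilized value $F^{N-1}=F_\pi(\Ninf)^{(m)}_k|_{\b{V}^{\gen}_k}$ is $\c{O}$-coherent.

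For the second step, $F_\pi(\Ninf)^{(m)}$ is $\unif$-adically complete and separated by construction, being a quotient of the completed tensor product $\FsM^{(m)}\hat\otimes_R \hat{D}_{\hat{\b{V}}}^{(m)}$. I lift a finite $\c{O}_{\b{V}^{\gen}_k}$-generating set of the mod-$\unif$ reduction to $\unif$-adic global sections of $\f{F}_\pi(\Ninf)^{(m)}$ over the formal open subscheme of $\hat{\b{V}}$ with special fiber $\b{V}^{\gen}_k$. The resulting morphism from a finite free $\c{O}$-module to $\f{F}_\pi(\Ninf)^{(m)}$ on this formal open is surjective modulo $\unif$, so by topological Nakayama (applicable on noetherian quasi-compact opens thanks to $\unif$-adic completeness) it is surjective, yielding the desired $\c{O}$-coherence. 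The main technical obstacle is precisely this lifting: one must verify that the mod-$\unif$ generators extend to honest $\unif$-adic sections on a quasi-compact formal open of the non-degenerate locus, and that successive approximations modulo $\unif^n$ converge to produce the surjection claimed by Nakayama; this is a standard but delicate bookkeeping argument that should be carried out locally and then patched.
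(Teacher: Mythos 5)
Your argument follows the paper's proof essentially step for step: Proposition~\ref{quo-gr} gives a good filtration on $F_\pi(\Ninf)^{(m)}_k$ whose associated graded is a quotient of the module in Corollary~\ref{nil-gr}, nilpotency of $\gr^{\geq 1}A^{(m)}_k$ on $\b V^{\gen}_k$ then forces the filtration to stabilize and yields $\c O_{\b V_k^{\gen}}$-coherence of the reduction mod $\unif$, and $\unif$-adic completeness of $F_\pi(\Ninf)^{(m)}$ (as a coherent $\hat D^{(m)}_{\hat{\b V}}$-module) lifts this to $\c O_{\hat{\b V}}$-coherence. The ``delicate bookkeeping'' lifting argument you flag as the main obstacle is precisely what the paper delegates to \cite[3.2.2]{Berthelot1}, so you need not reprove it by hand.
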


\begin{proof}
    By \Cref{nil-gr} and \Cref{quo-gr}, $\gr^{\geq 1}A_k^{(m)}$ acts nilpotently on $\gr F_{\pi}(\Ninf)^{(m)} |_{\b{V}^{\gen}_k}$. Note that the filtration on $F_{\pi}(\Ninf)_k^{(m)}$ defined in \Cref{quo-gr} is a good filtration, implying that $\gr F_{\pi}(\Ninf)_k^{(m)}$ is a finitely generated $\gr A_k^{(m)}$-module. We have $$F^i(F_{\pi}(\Ninf)_k^{(m)}) |_{\b{V}^{\gen}_k}=F_{\pi}(\Ninf)^{(m)}_k |_{\b{V}^{\gen}_k}$$ for $i\gg 0$. Therefore, $F_\pi(\Ninf)^{(m)}_k |_{\b{V}^{\gen}_k}$ is a coherent $\c{O}_{\b{V}_k}$-module. $F_\pi(\Ninf)^{(m)}$ is complete with respect to the $\unif$-adic topology since it is a coherent $\hat{D}_{\hat{\b{V}}}^{(m)}$-module and $\hat{D}_{\hat{\b{V}}}^{(m)}$ is $\unif$-adically complete. By \cite[3.2.2]{Berthelot1}, this implies that $\f{F}_{\pi}(\Ninf)^{(m)} |_{\b{V}^{\gen}_k}$ is coherent as a $\c{O}_{\hat{\b{V}}}$-module.
\end{proof}

\begin{lem}\label{fiber-at-0}
    Let $Z'^{\m{aff}}:=Z'\cap(\b V\times\b A^1)$, and let $Z'^{\m{aff}}_{0},Z'_0$ be the fiber of $Z'^{\m{aff}},Z'$ at $0\in\b{A}^1$, respectively. We may identify $Z'_{0}\subset \b{P}$ with $\overline{Z_0'^{\m{aff}}}:=Z_0'^{\m{aff}}\cup \b{P}(Z_0'^{\m{aff}})$. 
\end{lem}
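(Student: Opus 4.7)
The plan is to establish the set-theoretic equality $|Z'_0| = |\overline{Z_0'^{\m{aff}}}|$ as subsets of $\b{P}$, working after base change to an algebraic closure of the residue field; the identification asserted in the statement follows from this upon taking reduced structures. One direction is immediate: $Z_0'^{\m{aff}}\subseteq Z'_0$ and $Z'_0$ is closed in $\b{P}$, so $\overline{Z_0'^{\m{aff}}}\subseteq Z'_0$. The key input for the reverse inclusion is the $\b{G}_m$-action on $Z'$ induced by right multiplication on the second factor of $G\times\b{G}_m$: the parametrization $(g,t)\mapsto(t\rho(g),t)$ is equivariant for the diagonal scaling $\lambda\cdot(v,t)=(\lambda v,\lambda t)$ on $\b{V}\times\b{A}^1$, so this extends to $\b{P}\times\b{P}^1$ by $\lambda\cdot([v:z],[t:u])=([\lambda v:z],[\lambda t:u])$, preserves $Z'$, and fixes $0=[0:1]\in\b{P}^1$. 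Hence $Z'_0$ is $\b{G}_m$-stable; on $\b{V}\times\{0\}$ the action reduces to scalar multiplication on the $\b{V}$-coordinate, so $Z_0'^{\m{aff}}$ is a cone in $\b{V}$, and its closure in $\b{P}$ is precisely $Z_0'^{\m{aff}}\cup\b{P}(Z_0'^{\m{aff}})$, matching the author's notation.

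For the reverse inclusion I take a geometric point $P\in Z'_0\setminus Z_0'^{\m{aff}}$; necessarily $P=([v:0],0)$ with $v\ne 0$, and it suffices to show that some nonzero scalar multiple of $v$ lies in $Z_0'^{\m{aff}}$. Since $Z'^{\m{aff}}$ is open and dense in the irreducible scheme $Z'$, I choose a curve through $P$ whose generic point lies in $Z'^{\m{aff}}$; normalizing and localizing at a point above $P$ produces a DVR $\c{O}$ with uniformizer $\unif$, fraction field $K'$, and a morphism $\Spec\c{O}\to Z'$ whose generic fiber lands in $Z'^{\m{aff}}$ and whose closed fiber is $P$ (this uses the valuative criterion of properness for $Z'$). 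Write the generic fiber as $(w,t)\in(\b{V}\times\b{A}^1)(K')$. If $t=0$, the curve lies entirely in $Z_0'^{\m{aff}}$ and we are done. Otherwise, because $[w:1]$ specializes to $[v:0]$, I can write $w=\unif^{-n}w_0$ with $n\ge 1$ and $w_0\in\b{V}_{\c{O}}$ whose reduction modulo $\unif$ is a nonzero scalar multiple of $v$, while $t\in\unif\c{O}\setminus\{0\}$.

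Applying the $\b{G}_m$-action by $\lambda=\unif^n\in(K')^\times$ converts $(w,t)$ into $(w_0,\unif^n t)$, whose coordinates both lie in $\c{O}$. Because the $\b{G}_m$-action preserves the open subscheme $Z'^{\m{aff}}=Z'\cap(\b{V}\times\b{A}^1)$ (the open set $\b{V}\times\b{A}^1\subseteq\b{P}\times\b{P}^1$ being itself $\b{G}_m$-stable), this rescaled $K'$-point of $Z'^{\m{aff}}$ extends, by integrality of its coordinates, to an $\c{O}$-point of $Z'^{\m{aff}}$; its reduction is a point of $Z_0'^{\m{aff}}$ equal to a nonzero scalar multiple of $v$. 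Since $Z_0'^{\m{aff}}$ is a cone, $v$ itself lies in $Z_0'^{\m{aff}}$, giving $P\in\b{P}(Z_0'^{\m{aff}})\subseteq\overline{Z_0'^{\m{aff}}}$. The main technical issue is verifying that the rescaled $\c{O}$-point genuinely lands in $Z'^{\m{aff}}$ rather than in $Z'\setminus Z'^{\m{aff}}$; this is precisely where the $\b{G}_m$-stability of $Z'^{\m{aff}}$ combined with the integrality of the new coordinates is essential.
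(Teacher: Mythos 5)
Your argument is correct, but it takes a genuinely different route from the paper's. The paper argues directly with the defining ideal of $Z'^{\m{aff}}$ in $\b{V}\times\b{A}^1$: since $Z'^{\m{aff}}$ is stable under the diagonal $\b{G}_m$-scaling of $(v,t)$, its ideal is graded, so one may take generators $f_i(v,t)=\sum_s t^s g_{is}(v)$ with each $g_{is}$ homogeneous of degree $n_i-s$; homogenizing in the $\b{P}$-direction gives $\sum_s t^s x^s g_{is}(v)$, which cuts out a closed subscheme of $\b{P}\times\b{A}^1$ containing $Z'\cap(\b{P}\times\b{A}^1)$, and setting $t=0$ annihilates every term with $s\geq 1$, leaving exactly the equations $g_{i0}(v)=0$ that define $\overline{Z_0'^{\m{aff}}}$. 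Your proof replaces this computation with a valuative argument: specialize an arc from $Z'^{\m{aff}}$ into a boundary point of $Z'_0$, then use the very same $\b{G}_m$-action to rescale the arc so that its coordinates become integral, and read off that the new special fiber lies in $Z_0'^{\m{aff}}$ along the correct line. Both proofs hinge on the same structure — the $\b{G}_m$-equivariance of $Z'^{\m{aff}}$ (equivalently, homogeneity of its ideal) and the fact that $Z_0'^{\m{aff}}$ is a cone — and both establish only the set-theoretic identification, which is all the paper uses downstream in \Cref{rankoverVE}. The paper's version is shorter and gives the explicit equations; yours is coordinate-free and makes it visibly clear that the $\b{G}_m$-action is what forces all limit points at $t=0$ to come from $Z_0'^{\m{aff}}$. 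Two small remarks: the invocation of the valuative criterion of properness is superfluous once you have normalized a curve in $Z'$ through $P$ meeting $Z'^{\m{aff}}$, since localizing the normalization at a preimage of $P$ already produces the arc; and the reduction to geometric points of $\Spec R$ should be phrased as checking the set-theoretic equality on each geometric fiber (closed and generic), as the lemma is later applied over $\bar R$ and over $\bar K$ as well as over $\algclo$.
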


\begin{proof}
    Assume $Z'^{\m{aff}} \subset \b{V}\times \b{A}^1$ is defined by equations $f_i(v,t)=0$. 
    Assume $f_i=\sum_s t^s g_{is}(v)$. Then $g_{is}$ is a homogeneous polynomial of degree $n_i-s$. %since $Z'$ has $G_m$-action.
    So $Z_0'^{\m{aff}}$ is defined by equations $g_{i0}(v)=0$. 
    Then $Z'\cap (\mathbb P\times \mathbb A^1)$ 
    is contained in $$\{([v:x],t):\forall i, \sum_s t^s x^s g_{is}(v)=0\}.$$     It follows that $Z'_0$ is contained in \[\{([v:x],0):\forall i, g_{i0}(v)=0\}\subseteq Z_0'^{\m{aff}}\cup \b{P}(Z_0'^{\m{aff}}).\qedhere\]
\end{proof}

\begin{lem}\label{rankoverVE}
    Let $E$ be a subspace of $\f{L}(H)_{\algclo}$ of dimension equal to $d=\dim G_k$. Consider the closed subspace
    %\[S^E=\{(A, B)\in \b{V}_{k}\times (Z_0-\infty)_{k}: \frac{d}{dt}\Big|_{t=0}	f_A(e^{t\xi_1} Be^{-t\xi_2})=0, \forall (\xi_1,\xi_2)\in E\subset \f{L}(H)\}.\]
    \[S^E=\{(A, B)\in \b{V}_{\algclo}\times \b{P}(Z_{0,\algclo}'^{\m{aff}})%Z_{0,k} L_{\xi,A}\in \Gamma(\b{P, \c{O}}(1))\text{ vanishes at }B, 
    :\forall \xi\in E\subset \f{L}(H)_{\algclo}, L_{\xi,A}(B)=0\}\]
    of $\b{V}_{\algclo}\times \b{P}(Z'^{\m{aff}}_{0,\algclo})$, where $L_{\xi,A}(B)=0$ means that $\left<L_\xi(A),v\right>=0$ for any $v\in Z_{0,\algclo}'^{\m{aff}}\subseteq\b{V}_{\algclo}^*$ such that $B=[v:0]$. 
    Let $\m{pr}_1:S^E\to \b{V}_{\algclo}$ be the canonical projection morphism, $\b{V}^E_{\algclo}$ the %subspace of $A\in \b{V}_{k}$ such that the fiber $S_A^E$ has $\dim 0$, 
    complement of $\m{pr}_1(S^E)$. 
    %which is equivalent to require $S_A=\{0\}$.  %by \cite[II Exer. 3.22]{Hartshorne}, 
    %We have $\b{V}^E_{\algclo}\subset \b{V}_{\algclo}$ is open since $\m{pr}_1$ is proper.
    Then $$(\omega_{Z,0,\algclo}\otimes_{\algclo}\algclo[\b{V}])\Big/\sum_{\xi\in E}L_{\xi}(\omega_{Z,0,\algclo}\otimes_{\algclo}\algclo[\b{V}])\Big |_{\b{V}^E_{\algclo}}$$
    is supported in the zero section and is locally free of rank \[d!\int_{\Delta_{\infty}\cap\mathfrak C}
	\prod_{\alpha \in R^+} \frac{(\lambda, \alpha)^2}{(\rho, \alpha)^2}\mathrm d\lambda\]as an $\c{O}_{\b{V}_{\algclo}}$-module. 
\end{lem}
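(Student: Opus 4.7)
The plan is to adapt the strategy in the proof of Theorem~\ref{cha0mainthm}, strengthening the upper bound on the rank there to an exact equality for this concrete quotient. The key inputs are the Cohen-Macaulay property of $\omega_{Z^{\m{aff}},\algclo}$ established in Proposition~\ref{filomega} together with the degree computation of \cite[3.3, 3.4]{l-adic}.

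For the support claim, since $\omega_{Z,0,\algclo}$ is supported on $Z'^{\m{aff}}_{0,\algclo}$, the tensor product $\omega_{Z,0,\algclo}\otimes_{\algclo}\algclo[\b{V}]$, viewed on $\b{V}_{\algclo}\times\b{V}_{\algclo}$, is supported on $Z'^{\m{aff}}_{0,\algclo}\times\b{V}_{\algclo}$. Taking the quotient by the bilinear forms $(v,A)\mapsto\langle L_\xi(A),v\rangle$ for $\xi\in E$ cuts the support down to $\{(v,A):v\in Z'^{\m{aff}}_{0,\algclo},\;\langle L_\xi(A),v\rangle=0\text{ for all }\xi\in E\}$. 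For $A\in\b{V}^E_{\algclo}$, any nonzero such $v$ would yield $[v:0]\in\b{P}(Z'^{\m{aff}}_{0,\algclo})$ with $(A,[v:0])\in S^E$, contradicting $A\in\b{V}^E_{\algclo}$. So the support lies in the zero section $\{0\}\times\b{V}^E_{\algclo}$.

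For the rank computation, fix a basis $\xi_1,\ldots,\xi_d$ of $E$. For $A\in\b{V}^E_{\algclo}$, the support argument gives $Z'^{\m{aff}}_{\algclo}\cap V(t,L_{\xi_1}(A),\ldots,L_{\xi_d}(A))=\{0\}$. Since $Z^{\m{aff}}_{\algclo}$ has dimension $d+1$ and $\omega_{Z^{\m{aff}},\algclo}$ is Cohen-Macaulay, the elements $(t,L_{\xi_1}(A),\ldots,L_{\xi_d}(A))$ form a regular sequence on $\omega_{Z^{\m{aff}},\algclo}$ at the origin, so the fiber of the quotient at $(0,A)$ has $\algclo$-dimension
\[
\dim_{\algclo}\bigl(\omega_{Z^{\m{aff}},\algclo}\otimes_{\c{O}_{\b{V}_{\algclo}\times\b{A}^1_{\algclo}}}\c{O}_L\bigr),
\]
where $L=V(t,L_{\xi_i}(A))$. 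Following the proof of \cite[3.3]{l-adic}, this equals the degree of the projective closure of $Z'^{\m{aff}}_{\algclo}$, which by \cite[3.4]{l-adic} applied to the Newton polytope $\Delta'_\infty$ (as in the proofs of Theorems~\ref{cha0mainthm} and \ref{cha02ndmainthm}) equals $d!\int_{\Delta_\infty\cap\f{C}}\prod_{\alpha\in R^+}(\lambda,\alpha)^2/(\rho,\alpha)^2\,\m{d}\lambda$.

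Local freeness then follows from constant fiber dimension: the pushforward of our quotient to $\b{V}^E_{\algclo}$ is coherent (its support is finite over $\b{V}^E_{\algclo}$), and on the smooth (in particular reduced) scheme $\b{V}^E_{\algclo}$, a coherent sheaf with constant fiber rank is locally free of that rank. The main obstacle is upgrading the intersection-theoretic inequality to an exact equality: this is precisely where the Cohen-Macaulay hypothesis is decisive, converting the regular-sequence property into an identification of the intersection multiplicity at the origin with the projective degree.
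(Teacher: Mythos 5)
Your support argument and the final reduction to a constant-fiber-dimension-implies-locally-free step (Hartshorne II Exer.~5.8) match the paper, and you correctly identify that the Cohen-Macaulay property of $\omega_{Z^{\m{aff}}}$ forces the Koszul-type complex cutting by $t, L_{\xi_1,A},\ldots,L_{\xi_d,A}$ to be concentrated in degree $0$ once the support drops to $\{0\}$. But there is a genuine gap in the rank computation: you invoke \cite[3.3, 3.4]{l-adic} directly over $\algclo$, i.e.\ in characteristic $p$. The Kazarnovskii degree formula \cite[3.4]{l-adic} is a characteristic-$0$ statement, and both in \cite{l-adic} and in this paper (see the proof of Theorem~\ref{cha0mainthm}) it is only ever applied over $\bar K$. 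As written, your proof asserts $\deg(\overline{Z'^{\m{aff}}_{\algclo}})=\RANK$ without justification.

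The paper's proof handles exactly this point by lifting to characteristic $0$ and transferring the degree by a flatness argument. It lifts $A$ to $A_{\bar R}\in\b V(\bar R)$ over the maximal unramified extension $\bar R$, lifts a basis $\bar\xi_1,\ldots,\bar\xi_d$ of $E$ to $\f L(H)_R$, forms $\bar L\cong\b P^{n-d}_{\bar R}$ (flat over $\bar R$), shows $\bar L\cap Z'_{\bar R}=\{0\}$ via properness and Lemma~\ref{fiber-at-0}, computes $\chi\bigl(R\Gamma(\b P_{\bar K}\times\b P^1_{\bar K},\,\omega_{Z,\bar K}\otimes^L\c O_{\bar L_{\bar K}})\bigr)=\RANK$ over $\bar K$ via \cite[3.3, 3.4]{l-adic}, and then picks a finite locally free resolution $F^\bullet$ of $\c O_{\bar L_{\bar R}}$ and uses flatness of $\omega_{Z,\bar R}$, $\bar L_{\bar R}$, and $\b P_{\bar R}\times\b P^1_{\bar R}$ over $\bar R$ to equate the Euler characteristics on the special and generic fibers. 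Without this lifting step your argument does not close. (A smaller issue: the Cohen-Macaulayness you need is over $\algclo$, so the relevant reference is Corollary~\ref{cor-G-embed} (or \ref{ass-2_0}), not Proposition~\ref{filomega}, which is a statement over $K$.)
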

\begin{proof}
    Note that $(\omega_{Z,0,\algclo}\otimes_{\algclo}\algclo[\b{V}])/\sum_{\xi\in E}L_{\xi}(\omega_{Z,0,\algclo}\otimes_{\algclo}\algclo[\b{V}])$ is supported in $S^E$, the first assertion follows. For the second,
    by \cite[II Exer. 5.8]{Hartshorne} it suffices to check that for any closed point $A\in \b{V}^E_{\algclo}$, the fiber of $(\omega_{Z,0,\algclo}\otimes_{\algclo}\algclo[\b{V}])/\sum_{\xi\in E}L_{\xi}(\omega_{Z,0,\algclo}\otimes_{\algclo}\algclo[\b{V}])$ at $A$ has rank $\RANK$.

    Let $\bar L_{\algclo}$ be the linear subspace of $\b{P}_{\algclo}$ defined by equations $L_{\xi,A}\in \Gamma(\b{P}_{\algclo}, \c O(1))$ for $\xi\in E$. Let ${L}_{\algclo}=\bar L_{\algclo}\cap \b{V}_{\algclo}$. 
    %Embed $\bar{L}_{\xi,A}$ in to $\b{V}_{k}\times \b{A}^1_{k}$.
    By assumptions ${L}_{\algclo}\cap \b{P}(Z_{0,\algclo}'^{\m{aff}})=\emptyset$. %Both $\bar{L}_{\algclo}$ and $(Z_0-\infty)_{\algclo}$ are homogeneous, this implies
    By \Cref{fiber-at-0}, this implies $\bar{L}_{\algclo}\cap Z'_{0,\algclo}=\{0\}$. 
    In particular, $1+d=\dim Z'_{\algclo}\geq \m{codim}\,\bar{L}_{\algclo}$. As $\bar{L}_{\algclo}$ is a linear subspace of $\b{P}_{\algclo}\times \b{P}^1_{\algclo}$ defined by $1+d$ equations, this implies $d= \m{codim}\,\bar{L}_{\algclo}$.
    We identify $\b{P}_{\algclo}$ with the fiber of $\b{P}_{\algclo}\times \b{P}_{\algclo}^1\to\b{P}_{\algclo}^1$ at zero.
    The fiber of $(\omega_{Z,0,\algclo}\otimes_{\algclo}\algclo[\b{V}])/\sum_{\xi\in E}L_{\xi}(\omega_{Z,0,\algclo}\otimes_{\algclo}\algclo[\b{V}])$ at $A$ can be rewritten as $\omega_{Z,0,{\algclo}}\otimes_{\c{O}_{\b{P}_{\algclo}}}\c{O}_{\bar{L}_{\algclo}}$, which is supported in the origin $\{0\}\subset \b{V}_{\algclo}$. By \Cref{cor-G-embed}, %\Cref{ass}, 
    the sheaf $\omega_{Z,{\algclo}}$ is Cohen-Macaulay. We conclude that the derived product 
    $\omega_{Z,0,{\algclo}}\otimes^L_{\c{O}_{\b{P}_{\algclo}}} \c{O}_{\bar{L}_{\algclo}}$ is concentrated in degree $0$. Hence we have
    \[\omega_{Z,0,{\algclo}}\otimes_{\c{O}_{\b{P}_{\algclo}}}\c{O}_{\bar{L}_{\algclo}}\cong
    R\Gamma(\b{P}_{\algclo}\times \b{P}_{\algclo}^1, \omega_{Z,{\algclo}}\otimes^L_{\c{O}_{\b{P}_{\algclo}\times \b{P}^1_{\algclo}}}\c{O}_{\bar{L}_{\algclo}}).\tag{1}\label{rankoverVE-1}\]

    Let $\bar{R}$ be the maximal unramified extension of $R$.
    Lift $A$ to an element $A_{\bar{R}}\in \b{V}(\bar{R})$.
    Let $\bar{\xi}_1,\cdots, \bar{\xi}_{d}$ be a basis of $E\subset \f{L}(H)_{\algclo}$. Choose liftings $\xi_1,\cdots, \xi_{d}\in \f{L}(H)_{R}$.
    They generate a sub-$R$-module $E_{R}\subset \f{L}(H)_{R}$, which is free of rank $d$ and we have $E_R/\unif E_R\cong E$. %Let $L_{R}$ be the linear subspace of $\b{V}_{R}$ defined by equations $L_{\xi,A}$ for $\xi\in E_{R}$.
    Let $\bar{L}$ be the closed subset of $\b{P}_{\bar{R}}$ defined by $L_{\xi,A_{\bar{R}}}\in \Gamma(\b{P}_{R}, \c{O}(1))$ for $\xi\in E_{R}$.
    Since $L_{\bar{\xi}_i,A}\in \Gamma(\b{P}_{\algclo}, \c{O}(1)), 1\leq i\leq d$ are linearly independent, we may extend $L_{\xi_i,A_{\bar{R}}}\in \Gamma(\b{P}_{\bar R}, \c{O}(1))$ to a basis of $\Gamma(\b{P}_{\bar R}, \c{O}(1))$. Thus $\bar{L}\cong \b{P}_{\bar{R}}^{n-d}$. In particular, $\bar{L}$ is flat over $\Spec \bar{R}$. 
    The proper morphism $\b{P}(\bar{L}\cap Z'_{\bar{R}}\cap \b{V}_{\bar{R}})\to \Spec {\bar{R}}$ has %finite 
    empty special fiber. So $\b{P}(\bar{L}\cap Z'_{\bar{R}}\cap \b{V}_{\bar{R}})=\emptyset$. % is finite. In particular, $\bar{L}_{\bar{K}}\cap Z_{\bar{K}}$ is finite. $\bar{L}_{\bar{K}}\cap Z_{\bar{K}}=\overline{L_{\bar{K}}\cap (Z_{\bar{K}}-\infty)}$, $L_{\bar{K}}\cap (Z_{\bar{K}}-\infty)$ is homogeneous 
    In other words, $\bar{L}_{\bar{K}}\cap Z'_{\bar{K}}\cap \b{V}_{\bar K}$ equals $\{0\}$. 
    By \Cref{fiber-at-0}, this implies $\bar{L}\cap Z'_{\bar{R}}=\{0\}$. 
    %In particular, $1+\dim G=\dim Z\geq \m{codim}\,\bar{L}_{\bar{K}}$. As $L\subset \b{V}\times\b{A}^1$ is a linear subspace defined by $1+\dim G$ equations, this implies $1+\dim G= \m{codim}\,\bar{L}_{\bar{K}}$.
    %\WARN{By the arguments in \cite{l-adic} this implies that}
    Hence, the complex $\omega_{Z,\bar{K}}\otimes^L_{\c{O}_{\b{P}_{\bar K}\times \b{P}^1_{\bar K}}}\c{O}_{\bar{L}_{\bar{K}}}$ is supported in $\{0\}$. 
    We thus have identifications 
    \begin{align*}R\Gamma(\b{P}_{\bar{K}}\times \b{P}_{\bar{K}}^1, \omega_{Z,\bar{K}}\otimes^L_{\c{O}_{\b{P}_{\bar{K}}\times \b{P}_{\bar{K}}^1}}\c{O}_{\bar{L}_{\bar{K}}})
    & \cong \m{Tor}^{\c{O}_{\b{P}_{\bar{K}}\times \b{P}_{\bar{K}}^1}}_{\bullet}(\omega_{Z,\bar{K}}, \c{O}_{\bar{L}_{\bar{K}}})
    \\ & \cong \m{Tor}^{\c{O}_{\b{V}_{\bar{K}}\times \b{A}_{\bar{K}}^1}}_{\bullet}(\omega_{Z,\bar{K}}|_{\b{V}_{\bar{K}}\times \b{A}_{\bar{K}}^1}, \c{O}_{\bar{L}_{\bar{K}}}|_{\b{V}_{\bar{K}}\times \b{A}_{\bar{K}}^1}).\end{align*}
    By the arguments in \cite[3.3]{l-adic}, the latter is concentrated in degree $0$ and 
    %$$R\Gamma(\b{P}\times \b{P}^1, \omega_{Z,\bar{K}}\otimes^L_{\c{O}_{\b{P}\times \b{P}^1}}\c{O}_{\bar{L}_{\bar{K}}})   \cong R^0\Gamma(\b{P}\times \b{P}^1, \omega_{Z,\bar{K}}\otimes^L_{\c{O}_{\b{P}\times \b{P}^1}}\c{O}_{\bar{L}_{\bar{K}}})$$ 
    has dimension $\RANK$ as a $\bar{K}$-vector space.
    \begin{comment}
    %Choose lifting $L\subset \b{V}_{R}$. Let $\bar L\subset $
    %Let $S=$
    Note
    %Since $\omega_{Z, \algclo}$ is Cohen-Macaulay, %the dimension of the fiber equals to    $\chi(\omega_{Z_{\algclo}},\c{O}_{L_{k}})$. Note
    \begin{align*}
        \Gamma(\b{P}\times \b{P}^1, \omega_{Z,{\bar{k}(\f{p})}}\otimes_{\c{O}_{\b{P}\times \b{P}^1}}\c{O}_{\bar L_{\bar{k}(\f{p})}})
        \cong&\Gamma(\b{P}\times \b{P}^1, \omega_{Z,{\bar{k}(\f{p})}}\otimes^L_{\c{O}_{\b{P}\times \b{P}^1}}\c{O}_{\bar L_{\bar{k}(\f{p})}})\\
        \cong&%都flat over k(p)
        \Gamma(\b{P}\times \b{P}^1, \omega_{Z,R}\otimes^L_{\c{O}_{\b{P}\times \b{P}^1}}\c{O}_{\bar L})\otimes^L_{R} \bar{k}(\f{p})\\
        \cong&%\omega_{Z}的定义
        \Gamma(\b{P}\times \b{P}^1, (\iota p)_*\omega_{\tilde{Z}, R}\otimes^L_{\c{O}_{\b{P}\times \b{P}^1}}\c{O}_{\bar L})\otimes^L_{R} \bar{k}(\f{p})\\
        \cong& \Gamma(\tilde Z_{R}, \omega_{\tilde{Z}, R}\otimes^L L(\iota p)^*\c{O}_{\bar L})\otimes^L_{R} \bar{k}(\f{p})\\
        {\cong}&%\omega_{\tilde{Z}} locally free
        \Gamma((\iota p)^{-1}L, \omega_{\tilde{Z}, R}|_{(\iota p)^{-1}L})\otimes^L_{R} \bar{k}(\f{p}).
    \end{align*}
    Same arguments shows $\Gamma(\b{P}\times \b{P}^1, \omega_{Z,{k}}\otimes_{\c{O}_{\b{P}\times \b{P}^1}}\c{O}_{\bar L_{k}})
        \cong \Gamma((\iota p)^{-1}L, \omega_{\tilde{Z}, R}|_{(\iota p)^{-1}L})\otimes^L_{R} \bar{K}$.
    \end{comment}

    Choose a finite locally free resolution $F^\bullet$ of $\c{O}_{\bar{L}_{\bar{R}}}$ as a coherent $\c{O}_{\b{P}_{\bar{R}}\times \b{P}^1_{\bar{R}}}$-module.
    Note that both $\b{P}_{\bar{R}}\times \b{P}^1_{\bar{R}}$, $\omega_{Z,\bar{R}}$ and ${\bar{L}_{\bar{R}}}$ are flat over $\bar{R}$.
    The complex $F^\bullet\otimes_{\bar{R}}\algclo$ calculates $\c{O}_{\bar{L}_{\bar{R}}}\otimes^L_{\bar{R}}\algclo\cong \c{O}_{\bar{L}_{\algclo}}$. So it is a finite locally free resolution of $\c{O}_{\bar{L}_{\algclo}}$ as $\c{O}_{\b{P}_{\algclo}\times \b{P}^1_{\algclo}}$-module. 
    We may thus identify $\omega_{Z,{\algclo}}\otimes_{\c{O}_{\b{P}_{\algclo}\times \b{P}_{\algclo}^1}}\c{O}_{\bar{L}_{\algclo}}$ with the complex $(\omega_{Z,\bar{R}}\otimes_{\b{P}_{\bar{R}}\times \b{P}_{\bar{R}}^1} F^\bullet)\otimes_{\bar{R}}\algclo$.
    %Since $\tilde Z_{R}$ is proper over $R$, %$\omega_{\tilde{Z}, R}$ is locally free% so flat over $R$
    Then we conclude
    \begin{align*}\tag{2}\label{rankoverVE-2}
    &\chi(R\Gamma(\b{P}_{\algclo}\times \b{P}_{\algclo}^1, \omega_{Z,{\algclo}}\otimes^L_{\c{O}_{\b{P}_{\algclo}\times \b{P}_{\algclo}^1}}\c{O}_{\bar{L}_{\algclo}}))\\
    =&\sum_i (-1)^i \chi(R\Gamma(\b{P}_{\algclo}\times \b{P}_{\algclo}^1,(\omega_{Z,\bar{R}}\otimes_{\b{P}_{\bar{R}}\times \b{P}_{\bar{R}}^1} F^i)\otimes_{\bar{R}}\algclo))\\
    =&\sum_i (-1)^i \chi(R\Gamma(\b{P}_{\bar{K}}\times \b{P}_{\bar{K}}^1,(\omega_{Z,\bar{R}}\otimes_{\b{P}_{\bar{R}}\times \b{P}_{\bar{R}}^1} F^i)\otimes_{\bar{R}}\bar{K}))\\
    =&\chi(R\Gamma(\b{P}_{\bar{K}}\times \b{P}_{\bar{K}}^1, \omega_{Z,\bar{K}}\otimes^L_{\c{O}_{\b{P}_{\bar{K}}\times \b{P}_{\bar{K}}^1}}\c{O}_{\bar{L}_{\bar{K}}}))\\
    =&\RANK.
        %\chi(R\Gamma((\iota p)^{-1}L, \omega_{\tilde{Z}, R}|_{(\iota p)^{-1}L})\otimes^L_{R} \algclo)
        %=&\chi(R\Gamma((\iota p)^{-1}L, \omega_{\tilde{Z}, R}|_{(\iota p)^{-1}L})\otimes^L_{R} K)\\
        %=&\chi(\omega_{Z,K}, \c{O}_{L_{K}})=\RANK.
    \end{align*}
    Here the second equality follows from the fact that $\omega_{Z,\bar{R}}\otimes_{\b{P}_{\bar{R}}\times \b{P}_{\bar{R}}^1} F^i$ is a coherent sheaf flat over $\bar R$. The assertion then follows from (\ref{rankoverVE-1}) and (\ref{rankoverVE-2}). 
    \end{proof}

\begin{lem}\label{union-V^E}
    The union of $\b{V}^E_{\algclo}$ for all linear subspaces $E\subset \f{L}(H)_{\algclo}$ of dimension $d$ is $\b{V}^\gen_{\algclo}$.
\end{lem}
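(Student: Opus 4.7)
The plan is to reformulate both sides as conditions on kernels of linear functionals on $\f{L}(H)_{\algclo}$, and then extract the desired $d$-plane $E$ by a Grassmannian dimension count. First I would unwind the definitions. Combining \Cref{orbitstr} with the calculation at the end of the proof of \Cref{maincor}, one has the following equivalence: $A \in \b{V}^\gen_{\algclo}$ if and only if, for every nonzero $v \in Z'^{\m{aff}}_{0,\algclo}$, the linear functional
\[\ell_{v,A}:\f{L}(H)_{\algclo}\to \algclo,\qquad \xi\mapsto \langle L_\xi(A),v\rangle\]
is nonzero; equivalently, $K_{[v],A}:=\ker \ell_{v,A}$ is a proper subspace of the $2d$-dimensional space $\f{L}(H)_{\algclo}$. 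One direction is exactly what is shown in the proof of \Cref{maincor}; the converse is obtained by running that calculation in reverse, using \Cref{orbitstr} to write any such $v$ as $\lambda(g,h)\cdot e(\tau\times\{1\})$ and then transferring the vanishing of $\ell_{v,A}$ to the vanishing of $\mathrm df_{\tau,A}$ at $(g,h)$. Unwinding the definition of $S^E$ likewise gives that $A\in \b{V}^E_{\algclo}$ holds precisely when $E\not\subseteq K_{[v],A}$ for every $[v]\in\b{P}(Z'^{\m{aff}}_{0,\algclo})$. The inclusion $\bigcup_E \b{V}^E_{\algclo}\subseteq \b{V}^\gen_{\algclo}$ is then immediate, since if some $E$ avoids every $K_{[v],A}$ then each $\ell_{v,A}$ is nonzero.

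For the reverse inclusion, I would fix $A\in\b{V}^\gen_{\algclo}$ and form the incidence variety
\[I=\{(E,[v])\in \mathrm{Gr}(d,\f{L}(H)_{\algclo})\times \b{P}(Z'^{\m{aff}}_{0,\algclo}):E\subseteq K_{[v],A}\}.\]
Since each $K_{[v],A}$ has dimension at most $2d-1$, the fiber of the second projection over $[v]$ is $\mathrm{Gr}(d,K_{[v],A})$, of dimension at most $d(d-1)$; together with $\dim \b{P}(Z'^{\m{aff}}_{0,\algclo})\leq d-1$ this yields $\dim I\leq d^2-1$. Because $\b{P}(Z'^{\m{aff}}_{0,\algclo})$ is projective, the first projection is a closed map, so $\mathrm{pr}_1(I)$ is a closed subvariety of $\mathrm{Gr}(d,\f{L}(H)_{\algclo})$ of dimension strictly less than $d^2=\dim \mathrm{Gr}(d,\f{L}(H)_{\algclo})$. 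Its complement is therefore a nonempty open subvariety, and since $\algclo$ is algebraically closed it contains a closed point $E$; such an $E$ is contained in no $K_{[v],A}$ and thereby witnesses $A\in \b{V}^E_{\algclo}$.

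The delicate step will be the first one: the translation from the critical-point definition of $\b{V}^\gen_{\algclo}$ into the pairing form, and from the definition of $S^E$ into the kernel condition. The substance of the first translation already appears inside the proof of \Cref{maincor}, but here I need the equivalence in both directions and uniformly in $[v]$, which is precisely where \Cref{orbitstr} must be invoked. The remaining incidence-variety dimension count is routine, with the decisive inequality being $d(d-1)+(d-1)<d^2$.
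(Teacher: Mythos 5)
Your proof is correct, but it reaches the desired $d$-plane $E$ by a genuinely different mechanism than the paper. The opening reformulation --- $\b{V}^\gen_{\algclo}$ as the locus where every $\ell_{v,A}$ is nonzero, and $\b{V}^E_{\algclo}$ as the locus where $E\not\subseteq\ker\ell_{v,A}$ for all $[v]\in\b{P}(Z'^{\m{aff}}_{0,\algclo})$ --- coincides with the identification the paper also draws from \Cref{orbitstr}, and the easy inclusion $\bigcup_E\b{V}^E_{\algclo}\subseteq\b{V}^\gen_{\algclo}$ is the same. For the converse, though, the paper runs a greedy induction on $r$: given an $r$-plane $F\subset\f{L}(H)_{\algclo}$ with $\dim\bigl(Z'^{\m{aff}}_{0,\algclo}\cap Z(L_{\xi,A}:\xi\in F)\bigr)\leq d-r$, each top-dimensional generic point $\eta_i$ of that intersection is not $\{0\}$, so $U_i=\{\xi:\eta_i\notin Z(L_{\xi,A})\}$ is a nonempty open; irreducibility of $\f{L}(H)_{\algclo}$ gives $\xi_0\in\bigcap_i U_i$, and appending $\xi_0$ to $F$ advances the induction, terminating at $r=d$ with the required $E$. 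You instead bound the entire bad locus inside $\mathrm{Gr}(d,\f{L}(H)_{\algclo})$ in one stroke via the incidence variety $I$ and the estimate $d(d-1)+(d-1)<d^2$, using $\dim K_{[v],A}\leq 2d-1$ and $\dim\b{P}(Z'^{\m{aff}}_{0,\algclo})\leq d-1$. Both arguments exploit the same genericity --- at each stage the obstructing $\xi$'s lie in a proper closed subset --- but you package it as a single Grassmannian dimension count rather than an iterated choice inside $\f{L}(H)_{\algclo}$. Your version is slightly more uniform and avoids tracking the stepwise drop in intersection dimension; the paper's is more elementary in that it works directly in the Lie algebra without invoking the Grassmannian and its dimension formula.
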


\begin{proof}
    By \Cref{orbitstr}, the open subset $\b{V}^\gen_{\algclo}\subset \b{V}_{\algclo}$ %consists of elements $A$ in $\b{V}_{\algclo}$ such that the fiber of 
    can be identified with the complement of the projection of 
    \[S=\{(A, B)\in \b{V}_{\algclo}\times \b{P}(Z_{0,\algclo}'^{\m{aff}})%Z_{0,\algclo} L_{\xi,A}\in \Gamma(\b{P, \c{O}}(1))\text{ vanishes at }B, 
    :\forall \xi\in \f{L}(H)_{\algclo}, L_{\xi,A}(B)=0\}\]
    %\forall[v:0]\in \b{P}(Z_{0,\algclo}'^{\m{aff}})
    %at $A$ has dimension $0$. 
    to the first factor $\b{V}_{\algclo}$. 
    Let $A\in \b{V}^\gen_{\algclo}$. % It suffice to show there exists a linear subspace $F\subset \f{L}(H)_{\algclo}$ of dimension $d$ such that $Z(L_{\xi,A}, \xi\in E)\subseteq Z'^{\m{aff}}_{0,\algclo}$
    For any integer $0\leq r\leq d$, we claim that there exists a linear subspace $F\subset \f{L}(H)_{\algclo}$ of dimension $r$ such that $Z'^{\m{aff}}_{0,\algclo}\cap Z(L_{\xi,A}, \xi\in F)$ has dimension $\leq d-r$. Induction on $r$. For $r=0$, this is trivial. When $r\geq 1$, assume $F$ is a linear subspace of dimension $(r-1)$ such that $Z'^{\m{aff}}_{0,\algclo}\cap Z(L_{\xi,A}, \xi\in F)$ has dimension $\leq d-r+1$. Let $\eta_1,\cdots, \eta_s$ be the generic points of $Z'^{\m{aff}}_{0,\algclo}\cap Z(L_{\xi,A}, \xi\in F)$ of dimension $(d-r+1)$. 
    In particular, each $\eta_i$ is not a closed point, so $\eta_i\notin \{0\} = \ Z'^{\m{aff}}_{0,\algclo}\cap Z(L_{\xi,A}, \xi\in \f{L}(H))$. Therefore, 
    \[U_i:=\{ \xi\in \f{L}(H): \eta_i\notin Z(L_{\xi,A})\}\]
    is a nonempty open subset of $\f{L}(H)$. Note that $\f{L}(H)$ is irreducible. We have $\cap_i U_i$ is also nonempty. Choose $\xi_0\in \cap_i U_i$ and define $F'=\m{span}\{F, \xi_0\}$. Clearly $F'$ is a linear subspace of dimension $r$ such that $Z'^{\m{aff}}_{0,\algclo}\cap Z(L_{\xi,A}, \xi\in F')$ has dimension $\leq d-r$. 
\end{proof}

\begin{cor}\label{rank}
    $\f{F}_{\pi}(\Ninf)^{(0)}|_{\b{V}^{\gen}_k}$ is locally a quotient of a free $\c{O}_{\hat{\b{V}}}$-module of rank \[\RANK.\]
\end{cor}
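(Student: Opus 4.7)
The plan is to bound the fiber dimensions of the reduction modulo $\unif$ using the graded module description, and then apply Nakayama's lemma twice: once on the special fiber and once $\unif$-adically. First I would specialize \Cref{quo-gr} to $m=0$. The only value of $r$ in the range $1\le r\le p^0=1$ is $r=1$, and then $L_\xi^{\langle 1\rangle_{(0)}}=L_\xi$. Combined with the identification $S^{(0)}_{k,0}\cong\omega_{Z^{\m{aff}},0,k}$ from \Cref{S0=omega}, this shows that $\gr F_\pi(\Ninf)^{(0)}_k$ is a quotient of
\[\omega_{Z^{\m{aff}},0,k}\otimes_{\gr A_k^{(0)}}\Big(\gr D_{\b V_k}^{(0)}\Big/\sum_{\xi\in\f L(H)}L_\xi\,\gr D_{\b V_k}^{(0)}\Big).\]

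Next I would bound this fiberwise over $\b V^\gen$. After base change to $\algclo$, \Cref{union-V^E} covers $\b V^\gen_{\algclo}$ by the open sets $\b V^E_{\algclo}$ indexed by $d$-dimensional subspaces $E\subset\f L(H)_{\algclo}$. Since the module above is a further quotient of $(\omega_{Z,0,\algclo}\otimes_{\algclo}\algclo[\b V])/\sum_{\xi\in E}L_\xi$ (we mod out by more relations), \Cref{rankoverVE} implies that the fiber of $\gr F_\pi(\Ninf)^{(0)}_k$ at every geometric point of $\b V^\gen_k$ has dimension at most $\RANK$, and its support is contained in the zero section of the cotangent bundle.

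To conclude, I would combine this with \Cref{coherent}: the latter gives that $\f F_\pi(\Ninf)^{(0)}|_{\b V^\gen_k}$ is $\c O_{\hat{\b V}}$-coherent, hence $F_\pi(\Ninf)^{(0)}_k|_{\b V^\gen_k}$ is $\c O_{\b V_k}$-coherent. Because the associated gr is supported on the zero section, the good filtration on $F_\pi(\Ninf)^{(0)}_k|_{\b V^\gen_k}$ stabilizes locally, so fiber dimensions of the module agree with those of its gr and are therefore at most $\RANK$. Nakayama produces local surjections $\c O_{\b V_k}^{\RANK}\twoheadrightarrow F_\pi(\Ninf)^{(0)}_k|_{\b V^\gen_k}$; lifting along $\unif$-adic completeness of coherent $\c O_{\hat{\b V}}$-modules and applying Nakayama a second time yields the desired local surjections $\c O_{\hat{\b V}}^{\RANK}\twoheadrightarrow\f F_\pi(\Ninf)^{(0)}|_{\b V^\gen_k}$.

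The most delicate step will be the passage from a rank bound on $\gr$ to a bound on the number of local generators of the module itself, which relies on coupling the good filtration with the zero-section support to ensure local stabilization of the filtration; once this is set up, both Nakayama applications are routine and the geometric content is entirely absorbed into the previously established lemmas.
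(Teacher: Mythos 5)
Your proposal is correct and follows essentially the same argument as the paper: cover $\b V^{\gen}_{\algclo}$ by the opens $\b V^E_{\algclo}$ from \Cref{union-V^E}, bound the fibers of $\gr F_\pi(\Ninf)^{(0)}_k$ using \Cref{rankoverVE} together with \Cref{quo-gr} and \Cref{S0=omega}, and then conclude by Nakayama via the coherence from \Cref{coherent}. One wording to tighten: the fiber dimension of the filtered module does not in general \emph{agree} with that of its $\gr$; what holds is the inequality $\dim(M\otimes k(A))\le\sum_i\dim(\gr^i M\otimes k(A))$ coming from the filtration induced on the fiber, and this inequality is exactly what your last step needs.
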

\begin{proof}
    By \Cref{coherent}, $\f{F}_{\pi}(\Ninf)^{(0)}|_{\b{V}^{\gen}_k}$ is a coherent $\c{O}_{\hat{\b{V}}}$-module. By Nakayama's lemma, 
    it suffices to show that for any closed point $A\in \b{V}^{\gen}_k$, the fiber of $\f{F}_{\pi}(\Ninf)^{(0)}_{\algclo}$ at $A$ has dimension $\leq \RANK$. 

    By \Cref{union-V^E}, we may find a linear subspace $E\subset \f{L}(H)_{\algclo}$ of dimension $d$ such that $A\in \b{V}^E_{{\algclo}}$. 
    %this on each $\b{V}^E_{{\algclo}}$.
    By \Cref{rankoverVE}, each $$\gr^i(\omega_{Z,0,\algclo}\otimes_{{\algclo}}{\algclo}[\b{V}])\Big/\sum_{\xi\in E}L_{\xi}(\omega_{Z,0,\algclo}\otimes_{{\algclo}}{\algclo}[\b{V}])\Big |_{\b{V}^E_{{\algclo}}}$$
    is a locally free $\c{O}_{{\b{V}}_{\algclo}}$-module. The total rank for $i\in \b{Z}$ is $\RANK$. By \Cref{S0=omega} and \Cref{quo-gr}, we have surjections 
    \[\gr^i(\omega_{Z,0,\algclo}\otimes_{{\algclo}}{\algclo}[\b{V}])/\sum_{\xi\in E}L_{\xi}(\omega_{Z,0,\algclo}\otimes_{{\algclo}}{\algclo}[\b{V}])\Big |_{\b{V}^E_{{\algclo}}}\twoheadrightarrow \gr^i\f F_{\pi}(\Ninf)^{(0)}_{\algclo}\Big |_{\b{V}^E_{{\algclo}}}.\]
    Therefore, the fiber of $\f{F}_{\pi}(\Ninf)^{(0)}_{\algclo}$ at $A$ has dimension $\leq \RANK$.
    %\[\leq \sum_i \m{rank}\, \gr^i(\omega_{Z,0}\otimes_{{\algclo}}{\algclo}[\b{V}])/\sum_{\xi\in E}L_{\xi}(\omega_{Z,0}\otimes_{{\algclo}}{\algclo}[\b{V}])\Big |_{\b{V}^E_{{\algclo}}}=\RANK.\] 
    %Thus locally we have a surjection
    %\[\c{O}_{\b{V},{\algclo}}^{\oplus \RANK}\twoheadrightarrow \f{F}_{\pi}(\Ninf)^{(0)}_{\algclo}.\]
    %Apply $$, we gets locally a surjection
%    \[\c{O}_{\b{V},{\algclo}}^{\oplus \RANK}\twoheadrightarrow F_{\pi}(\Ninf)^{(s)}_{\algclo}.\]
    %, locally we may find a surjection
    %\[\c{O}_{\hat{\b{V}}}^{\oplus \RANK}\twoheadrightarrow \f{F}_{\pi}(\Ninf)^{(0)}.\]
\end{proof}

\begin{thm}\label{2ndmainthm}
    $\f{F}_{\pi}(\c{N})$ restricted to $\mathbb V_{k}^{\mathrm{gen}}$ is $\c{O}_{\hat{\mathbb V}, \Q}$-coherent. Moreover, for any divisor $T$ of $\b{V}_k$ containing the complement of $\mathbb V_{k}^{\mathrm{gen}}$, 
    the associated isocrystal $\m{sp}^*(\f{F}_{\pi}(\c{N})|_{\b{V}_k-T})$ on $\mathbb V_{k}-T$ overconvergent along $T$ 
    has rank less or equal to 
    \[d!\int_{\Delta_{\infty}\cap\mathfrak C}
	\prod_{\alpha \in R^+} \frac{(\lambda, \alpha)^2}{(\rho, \alpha)^2}\mathrm d\lambda.\]
\end{thm}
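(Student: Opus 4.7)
The plan is to deduce the theorem from the finite-level integral results established in this section and then pass to the full $\s D^\dagger(\infty)$-module via extension of scalars.

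First, I would invoke the finite-level coherence established in Corollary \ref{coherent}: the $\hat{\s D}_{\hat{\b V}}^{(m)}$-module $\f F_\pi(\Ninf)^{(m)}|_{\b V^\gen_k}$ is coherent as an $\c O_{\hat{\b V}}$-module for every $m\geq 0$, so tensoring with $\Q$ yields $\c O_{\hat{\b V}, \Q}$-coherence of $\f F_\pi(\Ninf)^{(m)}_\Q|_{\b V^\gen_k}$. At level $m=0$, Corollary \ref{rank} strengthens this: the module is locally a quotient of a free $\c O_{\hat{\b V}, \Q}$-module of rank
$$r := \RANK.$$

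Second, I would use the identification
$$\f F_\pi(\c N)|_{\hat{\b V}} \cong \f F_\pi(\Ninf)^{(0)}_\Q \otimes_{\hat{D}_{\hat{\b V}, \Q}^{(0)}} D^\dagger_{\hat{\b V}, \Q}(\infty)$$
recorded in Section 4 after the definition of $F_\pi(\Ninf)^{(m)}_\Q$. On the open $\b V^\gen_k$, the source is $\c O$-coherent and equipped with an integrable connection, encoded by its $\hat{\s D}^{(0)}$-action. Since the action of the higher-order arithmetic operators in $\s D^\dagger(\infty)$ is uniquely determined by this integrable connection on an $\c O$-coherent module (each operator being an iterate of the connection with appropriate normalization), extending scalars does not enlarge the underlying $\c O$-module. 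Hence $\f F_\pi(\c N)|_{\hat{\b V}^\gen_k}$ is $\c O_{\hat{\b V}, \Q}$-coherent with $\c O$-rank bounded by $r$.

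Finally, for any divisor $T\supseteq \b V_k - \b V^\gen_k$, the restriction $\f F_\pi(\c N)|_{\b V_k - T}$ is a coherent $\c O_{\hat{\b V}, \Q}$-module carrying an integrable connection with overconvergent structure along $T$ (the latter being encoded by the $(\infty)$ in the $\s D^\dagger$-ring, which factors through any $\s D^\dagger(T)$ with $T\supset\infty$). By Berthelot's equivalence between $\c O_{\hat{\b V}, \Q}$-coherent $\s D^\dagger_{\hat{\b V}, \Q}(T)$-modules and overconvergent isocrystals on $\b V_k-T$ overconvergent along $T$, the specialization $\m{sp}^*(\f F_\pi(\c N)|_{\b V_k-T})$ is such an isocrystal, and the rank bound from the second step transfers.

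The principal obstacle lies in the middle step. A priori, extending scalars from $\hat{\s D}^{(0)}_\Q$ to $\s D^\dagger_\Q(\infty)$ could enlarge the underlying $\c O$-module, since $\s D^\dagger(\infty)$ contains infinitely many operators not present in $\hat{\s D}^{(0)}$. The resolution appeals to the general principle in Berthelot's theory that an $\c O$-coherent module with a given integrable connection carries a \emph{unique} compatible $\s D^\dagger(\infty)$-structure with the same underlying $\c O$-module, provided the connection is overconvergent — a condition automatic here because $\f F_\pi(\c N)$ is itself constructed as a $\s D^\dagger(\infty)$-module via Huyghe's geometric Fourier transform. A secondary point is to match the $\s D^\dagger$-coherent module with the corresponding overconvergent isocrystal through $\m{sp}^*$, which follows from the basic definitions of the arithmetic $\s D$-module framework.
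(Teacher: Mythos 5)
Your outline tracks the paper's proof at the structural level: start from Corollary~\ref{coherent} and Corollary~\ref{rank} at finite level, then transfer coherence and the rank bound to the $\s D^\dagger(\infty)$-module $\f F_\pi(\c N)$ via the identification with $\f F_\pi(\Ninf)^{(0)}_\Q \otimes_{\hat{D}^{(0)}_{\hat{\b V},\Q}} D^\dagger_{\hat{\b V},\Q}(\infty)$. You also correctly locate the principal difficulty, namely that this extension of scalars could a priori enlarge the underlying $\c O$-module.

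However, the resolution you offer for that difficulty is circular. You invoke the principle that an $\c O$-coherent module with a given integrable connection carries a \emph{unique} compatible $\s D^\dagger(\infty)$-structure. That uniqueness statement says: if $\f F_\pi(\c N)|_{\b V^\gen_k}$ \emph{is} $\c O$-coherent, then its $\s D^\dagger(\infty)$-action is determined by the level-$0$ action. It does not say that the tensor product $\f F_\pi(\Ninf)^{(0)}_\Q \otimes_{\hat D^{(0)}} D^\dagger(\infty)$ is $\c O$-coherent or that the canonical map $\f F_\pi(\Ninf)^{(0)}_\Q \to \f F_\pi(\Ninf)_\Q$ is surjective; those are precisely what need proof. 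The divided-power operators $\partial^{[n]}$ in $\s D^\dagger$ are not iterates of the level-$0$ connection — they are $1/n!$ times such iterates — so the image of a level-$0$ generating set under the level-$0$ operators need not a priori exhaust the level-$\infty$ tensor product. The paper closes this gap by a genuinely analytic argument drawn from Caro: on an affine open $U\subset \b V^\gen_k$, the image of $\Gamma(U,\f F_\pi(\Ninf)^{(0)}_\Q)\to \Gamma(U,\f F_\pi(\Ninf)^{(m)}_\Q)$ is dense (Caro 2.2.9), then closed because the $\hat{\s D}^{(m)}$-module topology coincides with the $\c O$-module Banach topology on a coherent module (Berthelot 4.1.2 and a closed-image criterion from functional analysis), so the map is surjective; passing to the colimit over $m$ gives surjectivity onto $\Gamma(U,\f F_\pi(\Ninf)_\Q)$, and Caro 2.2.13 then yields $\c O_{\hat{\b V},\Q}$-coherence, with Caro 2.2.12 supplying the identification with an overconvergent isocrystal. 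Your proposal omits this density-and-closedness argument entirely, and without it the step from finite-level coherence to $\s D^\dagger(\infty)$-level coherence is unproven.
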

\begin{proof}
    %By \cite[2.2.13]{Caro-L}, it suffices to show

    By \Cref{coherent}, for each $m\geq 0$,
    $\f{F}_{\pi}(\Ninf)^{(m)}|_{\mathbb V_{k}^{\mathrm{gen}}}$ is coherent as $\c{O}_{\b{V}}$-module. We claim that $\f{F}_{\pi}(\Ninf)_\b{Q}$ is $\c{O}_{\hat{\mathbb V}, \Q}$-coherent and the morphism
    \[\f{F}_{\pi}(\Ninf)^{(0)}_{\Q}\to \f{F}_{\pi}(\Ninf)_\Q\]
    is surjective. If the claim is true, it implies that $\f{F}_{\pi}(\Ninf)_{\Q}$ has rank $\leq \RANK$ by \Cref{rank}. %The natural map $\hat{\s D}_{\hat{\b{V}},\Q}^{\dagger}$

    To prove the claim above, we refer to the proof of \cite[2.2.14]{Caro-L}. 
    %We need to show that for any affine open subset $U\subset \mathbb V_{k}^{\mathrm{gen}}$, the map. 
    Let $U\subset \mathbb V_{k}^{\mathrm{gen}}$ be an affine open subset. 
    Note $U\hookrightarrow \hat{\b{V}}$ defines a local coordinate on $U$. 
    For any $m\geq 0$, by \cite[2.2.9]{Caro-L}, the canonical map $\Gamma(U, \f{F}_\pi(\Ninf)^{(0)}_{\Q})\to \Gamma(U, \f{F}_\pi(\Ninf)^{(m)}_{\Q})$ %\otimes_{\hat{D}_{\hat{\b{V}}}^{(0)}}\hat{D}_{\hat{\b{V}}}^{(m)}$ 
    has dense image. Here the topology on $\Gamma(U, \f{F}_\pi(\Ninf)^{(m)}_{\Q})$ is induced by its $\hat{\s D}_{\hat{\b{V}},\Q}^{(m)}$-module structure (see \cite[4.1.1]{Berthelot1}). By \cite[4.1.2]{Berthelot1}, it is equivalence to the topology induced by its $\c{O}_{\hat{\b{V}},\b{Q}}$-module structure since $\Gamma(U, \f{F}_\pi(\Ninf)^{(m)}_{\Q})$ is a coherent $\c{O}_{\hat{\b{V}},\b{Q}}$-module by \Cref{coherent}. %Then the surjective morphism $F_\pi(\Ninf)^{(0)}\otimes_{\hat{D}_{\hat{\b{V}}}^{(0)}}\hat{D}_{\hat{\b{V}}}^{(m)}\to F_\pi(\Ninf)^{(m)}$ implies that $F_\pi(\Ninf)^{(0)}\to F_\pi(\Ninf)^{(m)}$ has dense image. 
    Then \cite[3.7.3.1]{NonArchimedean} implies that the image of $\Gamma(U, \f{F}_\pi(\Ninf)^{(0)}_{\Q})\to \Gamma(U, \f{F}_\pi(\Ninf)^{(m)}_{\Q})$ is closed. Hence the map $\Gamma(U, \f{F}_\pi(\Ninf)^{(0)}_{\Q})\to \Gamma(U, \f{F}_\pi(\Ninf)^{(m)}_{\Q})$ is a surjection. Applying $\varinjlim_m$, we see that the map
    $\Gamma(U, \f{F}_\pi(\Ninf)^{(0)}_{\Q})\to \Gamma(U, \f{F}_\pi(\Ninf)_{\Q})$
    is surjective. By \cite[2.2.13]{Caro-L}, $\f{F}_\pi(\Ninf)_{\Q}|_{U}$ is $\c{O}_{\hat{\b{V}},\b{Q}}|_U$-coherent. 
    We conclude that $\f{F}_{\pi}(\c{N})$ restricted to $\mathbb V_{k}^{\mathrm{gen}}$ is $\c{O}_{\hat{\mathbb V}, \Q}$-coherent. 
    Then $\f{F}_{\pi}(\Ninf)_{\Q}$ is an isocrystal on $\mathbb V_{k}^{\mathrm{gen}}$ overconvergent along $T$ by \cite[2.2.12]{Caro-L}. 
\end{proof}

\begin{proof}[Proof of \Cref{mainthm}]
    Follows from \Cref{2ndmainthm} and \Cref{directsummand}.
\end{proof}

\begin{cor}\label{locallyfree}
    For any closed point $x\in \mathbb V_{k}^{\mathrm{gen}}$, $i_x^{!} \f{F}_{\pi}((\iota p)_{+}\c{O}_{G_k})[n],i_x^+ \f{F}_{\pi}((\iota p)_{+}\c{O}_{G_k})[-n]\in D^b(K)$ are concentrated in degree $0$ and have rank \[\RANK,\] where $n$ is the dimension of $\b{V}$. 
\end{cor}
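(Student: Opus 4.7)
The plan is to reduce the corollary to the two assertions that (a) $\c{E}:=\f{F}_{\pi}((\iota p)_{+}\c{O}_{G_k})|_{\b{V}^\gen_k}$ is locally free as a convergent isocrystal, and (b) its rank equals $\RANK$. Given (a) and (b), the statement follows at once from the standard identifications $i_x^+\c{E}\simeq\c{E}(x)[n]$ and $i_x^!\c{E}\simeq\c{E}(x)[-n]$ for a locally free $\c{O}$-coherent $\s{D}^\dagger$-module $\c{E}$ of rank $r$ on a smooth $n$-dimensional formal scheme and a $k$-rational closed point $x$: both $i_x^+\c{E}[-n]$ and $i_x^!\c{E}[n]$ are then concentrated in degree $0$ with $K$-dimension $r$. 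Assertion (a) is immediate from \Cref{mainthm}, which gives $\c{O}_{\hat{\b{V}},\b{Q}}$-coherence of $\c{E}$, since a convergent isocrystal on a smooth formal scheme is automatically locally free.

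The upper bound in (b), $r\le \RANK$, is \Cref{mainthm}. For the matching lower bound, I would identify the fiber $\c{E}(A)$ at a nondegenerate closed point $A\in\b{V}^\gen_k$ with the rigid cohomology $H^*_{\mathrm{rig}}(G_{k(\f p)},\c O_{G_{k(\f p)}}\cdot\exp(\pi f_A))$ of the twisted trivial isocrystal on $G_{k(\f p)}$. This identification is the $p$-adic Fourier--Laplace formalism: the geometric Fourier transform $\f{F}_{\pi}$ is built (in \cite{Huyghe2004Trans} and recalled in \Cref{Hyper-D}) from the kernel $\c{K}_{\pi}$, whose specialization to $A$ twists $\c{O}_G$ by the exponential of the pairing with $A$. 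For nondegenerate $A$, the resulting rigid cohomology vanishes outside a single degree; I would prove this by showing that the Koszul-type complex computing it is quasi-isomorphic, via the Cohen--Macaulay sheaf $\omega_Z$ (\Cref{cor-G-embed}), to the derived tensor product $\omega_{Z,0}\otimes^L_{\c{O}_{\b{P}\times\b{P}^1}}\c{O}_{\bar L}$ considered in the proof of \Cref{rankoverVE}. That proof already shows this derived tensor product is concentrated in degree $0$ (by Cohen--Macaulayness) and has total dimension exactly $\RANK$ (by the base-change identity comparing characteristic $0$ and characteristic $p$ Euler characteristics via the flat $\bar R$-family). Hence $r\ge \RANK$, yielding equality.

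The main obstacle is the Fourier--Laplace identification of the closed-point fibers of $\f{F}_\pi((\iota p)_{+}\c{O}_{G_k})$ in the arithmetic $\s{D}^\dagger$-module framework, together with the attendant single-degree vanishing for nondegenerate parameters. Once these are in place, the numerical identity $\RANK$ for the Euler characteristic is exactly the one already proved in \Cref{rankoverVE} via comparison with characteristic $0$ and \cite[3.4]{l-adic}. The obstacle is structural rather than computational: one must track the identification through the finite-level integral models $\f{F}_\pi(\Ninf)^{(m)}$ constructed in \Cref{pcase} and through the direct-summand decomposition of \Cref{directsummand}, in order to transfer the characteristic-zero intersection-theoretic count from the bound on $\c{N}$ to the actual fiber of the isocrystal $\f{F}_\pi((\iota p)_+\c{O}_{G_k})$; in particular one must verify that the direct-summand inclusion becomes an isomorphism on $\b{V}^\gen_k$, which is what upgrades the inequality of \Cref{mainthm} to the equality claimed here.
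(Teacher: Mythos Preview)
The paper's proof is a single sentence citing \Cref{mainthm} together with \cite[6.3.10]{Caro19padic} and \cite[5.6]{Abe2011ExplicitCO}. Once \Cref{mainthm} furnishes the overconvergent isocrystal structure on $\b{V}^\gen_k$, those references identify the shifted fibers $i_x^![\,n\,]$ and $i_x^+[-n]$ with the stalk of the isocrystal, giving concentration in degree $0$ with dimension equal to the generic rank. That is precisely your step~(a) together with the ``standard identifications'' you invoke; on this part you and the paper agree.

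The divergence is over the rank \emph{equality} in (b). Read literally, the paper's argument supplies only the upper bound, since \Cref{mainthm} asserts rank $\leq\RANK$ and nothing else in the paper gives the reverse inequality; the wording ``have rank $\RANK$'' in the corollary is stronger than what the one-line proof establishes. Consistently, the equality is never used downstream: \Cref{Fourier} invokes only local freeness, and the final estimate in \S\ref{exponential} cites \Cref{mainthm} directly for the bound rather than this corollary. So you are attempting to prove strictly more than the paper does or needs.

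Your outline for the missing lower bound is reasonable in spirit but, as you yourself flag, far from complete. Both obstacles you name are real and neither is handled anywhere in the paper: a fiberwise identification $i_A^+\f{F}_\pi((\iota p)_+\c{O}_{G_k})\simeq R\Gamma_{\mathrm{rig},c}(G_k, f_A^*\c{L}_\pi)$ in the overholonomic framework is not set up (\S\ref{calculation} extracts only Frobenius traces, not the full complex), and there is no claim, let alone proof, that the direct-summand inclusion of \Cref{directsummand} becomes an isomorphism over $\b{V}^\gen_k$ after Fourier transform. Without one of these you cannot convert the Euler-characteristic count of \Cref{rankoverVE} into a lower bound for the \emph{summand}. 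Note also that even in characteristic $0$ the paper's \Cref{cha0mainthm} claims only the inequality, so there is no ready-made equality to transport.
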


\begin{proof}
    By \Cref{mainthm}, it follows from \cite[6.3.10]{Caro19padic} and \cite[5.6]{Abe2011ExplicitCO}.
\end{proof}

\section{Application: Exponential sums}\label{exponential}

Notation as in \Cref{Hyper-D}. In this section, we need to specify the divisors associated with our morphisms. Denote by $\infty$
the divisors $\b{P}_k-\b{V}_k$ in $\b{P}_k$, or $\b{P}_k\times \b{P}_k-\b{V}_k\times\b{V}_k$ in $\b{P}_k\times \b{P}_k$.
A morphism of pairs $f:(X_1,X_1')\to(X_2,X_2')$ means a morphism $f:X_1\to X_2$ such that $f(X_1-X_1')\subseteq(X_2-X_2')$, where $X_1'\subseteq X,X_2'\subseteq X_2$ are divisors.

Consider $\iota p:(\tilde{\f{Y}},\tilde D_k)\to(\hat{\b{P}},\infty)$. We have $(\iota p)_+:D_{\m{ovhol}}^b(\s{D}_{\tilde{\f{Y}},\b{Q}}^\dagger(\tilde{D}_k))\to D_{\m{ovhol}}^b(\s{D}_{\hat{\b{P}},\b{Q}}^\dagger(\infty))$.

\begin{lem}\label{Fourier}
    We have $\f{F}_\pi((\iota p)_+\c{O}_{G_k})\cong \m{Hyp}_+[2-n-d]$ as $\s{D}_{\hat{\b{P}},\b{Q}}^\dagger(\infty)$-modules, where
    $d=\dim G_k$. In particular, $\m{Hyp}_+[2-n-d]$ restricted to $\mathbb V_{k}^{\mathrm{gen}}$ is locally free as an $\c{O}_{\hat{\mathbb V}, \Q}$-module by \Cref{locallyfree}.
\end{lem}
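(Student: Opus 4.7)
The plan is to unfold the Fourier transform using base change and the projection formula, and then match the result to the definition of $\m{Hyp}_+$ while carefully tracking the shifts. Starting from the definition
\[\f{F}_\pi((\iota p)_+\c{O}_{G_k}) = p_{2,+}\bigl(\c{K}_\pi\otimes^L p_1^!((\iota p)_+\c{O}_{G_k})\bigr)[2-3n],\]
I would first apply base change to the Cartesian square
\[\begin{tikzcd}
\tilde{\f Y}\times\hat{\b P}\arrow[r,"p_1'"]\arrow[d,"\iota p\times\m{id}"'] & \tilde{\f Y}\arrow[d,"\iota p"] \\
\hat{\b P}\times\hat{\b P}\arrow[r,"p_1"] & \hat{\b P}
\end{tikzcd}\]
to rewrite $p_1^!((\iota p)_+\c{O}_{G_k})\cong (\iota p\times\m{id})_+\, p_1'^!\c{O}_{G_k}$, and use that $p_1'$ is smooth of relative dimension $n$ to identify $p_1'^!\c{O}_{G_k}\cong p_1'^*\c{O}_{G_k}[n]$. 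Since localization along a divisor commutes with smooth pullback, $p_1'^*\c{O}_{G_k}\cong (^\d\tilde{D}_k\times\hat{\b P})\c{O}_{\tilde{\f Y}\times\hat{\b P},\Q}$.

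Next I would apply the projection formula for the proper morphism $\iota p\times\m{id}$:
\[\c{K}_\pi\otimes^L(\iota p\times\m{id})_+\, p_1'^*\c{O}_{G_k}\cong (\iota p\times\m{id})_+\bigl((\iota p\times\m{id})^*\c{K}_\pi\otimes^L p_1'^*\c{O}_{G_k}\bigr).\]
Using the identity $\c{E}\otimes^L (^\d E)\c{O}\cong (^\d E)\c{E}$ for localization along a divisor $E$, the inner tensor product simplifies to $(^\d\tilde{D}_k\times\hat{\b P})(\iota p\times\m{id})^*\c{K}_\pi$. For the morphism of pairs $\iota p\times\m{id}\colon(\tilde{\f Y}\times\hat{\b P},\,\tilde{D}_k\times\hat{\b P}\cup\tilde{\f Y}\times\infty)\to(\hat{\b P}\times\hat{\b P},\,\infty\times\hat{\b P}\cup\hat{\b P}\times\infty)$, the extraordinary pullback $(\iota p\times\m{id})^!$ is obtained from the naive pullback by extending overconvergence along $\tilde{D}_k\times\hat{\b P}$ and shifting by the dimension difference $d-n$ between $\tilde{\f Y}\times\hat{\b P}$ and $\hat{\b P}\times\hat{\b P}$; this gives the identification $(^\d\tilde{D}_k\times\hat{\b P})(\iota p\times\m{id})^*\c{K}_\pi\cong (\iota p\times\m{id})^!\c{K}_\pi[n-d]$.

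Using $p_2\circ(\iota p\times\m{id})=\pi_2$ and assembling the shifts yields
\[\f{F}_\pi((\iota p)_+\c{O}_{G_k})\cong \pi_{2,+}\bigl((\iota p\times\m{id})^!\c{K}_\pi\bigr)\bigl[(2-3n)+n+(n-d)\bigr] = \m{Hyp}_+[2-n-d],\]
which is the desired isomorphism. The main obstacle will be making these steps fully rigorous in Berthelot's formalism for overholonomic arithmetic $\s{D}^\d$-modules with mixed overconvergence divisors: one must verify that base change and projection formula are available for the proper (but non-smooth) morphism $\iota p\times\m{id}$ between smooth formal schemes in the categories $D^b_{\m{ovhol}}(\s{D}^\d(-))$, and that the convention for $(\iota p\times\m{id})^!$ on morphisms of pairs used in the definition of $\m{Hyp}_+$ agrees with the formula $(\iota p\times\m{id})^!\cong (^\d\tilde{D}_k\times\hat{\b P})(\iota p\times\m{id})^*[d-n]$ employed above.
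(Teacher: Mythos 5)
Your proposal is correct and follows essentially the same route as the paper: expand the definition of $\f{F}_\pi$, apply proper base change (cited in the paper as \cite[1.3.10]{Abe2013TheoryOW}) to move $(\iota p)_+$ past $p_1^!$, apply the projection formula (\cite[3.3.5]{Caro19padic}), and then identify the resulting expression with $(\iota p\times\m{id})^!\c{K}_\pi$ up to a shift. The only cosmetic difference is that you spell out the identification $(\iota p\times\m{id})^!\cong (^\d(\tilde{D}_k\times\hat{\b P}))(\iota p\times\m{id})^*[d-n]$ and the absorption of $\pi_1^!\c{O}_{G_k}$ into a localization, which the paper leaves implicit in the chain of isomorphisms.
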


\begin{proof}
 We use  the proper base-change theorem for overholonomic module in \cite[1.3.10]{Abe2013TheoryOW}, and the projection formula in \cite[3.3.5]{Caro19padic}.

    Let $p_1,p_2$ be the projections of $(\hat{\b{P}}\times \hat{\b{P}},\infty)$ to its factors,  and
    let $\pi_1,\pi_2$ be the canonical projections of $(\tilde{\f{Y}}\times \hat{\b{P}},(\tilde{Y}\times\infty)\cup(\tilde{D}\times\b{P}))$
    to its factors. Note that $\iota p$ is a proper morphism between smooth $R$-formal schemes. We have %and $p_1$ is smooth
    \begin{align*}
       \f{F}_\pi((\iota p)_+\c{O}_{G_k})[n-2] & \cong p_{2,+}(\c{K}_\pi\otimes^L (\iota p\times\m{id})_+\pi_1^!\c{O}_{G_k})[-2n] \\ & \cong p_{2,+}(\iota p\times\m{id})_+(L(\iota p\times\m{id})^*\c{K}_\pi\otimes^L \pi_1^!\c{O}_{G_k})[-2n] \\ & \cong \pi_{2,+}(\iota p\times\m{id})^![n-d]\c{K}_\pi[n][-2n] \\ & = \m{Hyp}_+[-d]\qedhere.
    \end{align*}
\end{proof}

To estimate the upper bound of the weights, we need to consider the hypergeometric $\s{D}_{\hat{\b{P}},\b{Q}}^\dagger(\infty)$-module with
proper support $\m{Hyp}_!=\b{D}_{\infty}(\m{Hyp}_+)$.

\begin{thm}\label{weight}
    $\mathcal H^j(\m{Hyp}_!)=0$ for $j\not=n+d-2$. Moreover, $\m{Hyp}_!$ is mixed of weight $\leq 2$ in the sense of \cite{Abe2013TheoryOW}.
\end{thm}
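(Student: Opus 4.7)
The plan is to decouple the two assertions and reduce each, via the Fourier identification \Cref{Fourier}, to standard compatibility properties of the functors $(\iota p)_+$, $\f{F}_\pi$, and $\b{D}_\infty$, together with the vanishing result \Cref{liesdeg0}.

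For the cohomological concentration, rewrite the goal using \Cref{Fourier} as
\[ \m{Hyp}_!\;\cong\;\b{D}_\infty\,\f{F}_\pi\bigl((\iota p)_+\c{O}_{G_k}\bigr)[n+d-2]. \]
Huyghe's theorem realizes $\f{F}_\pi$ as an equivalence of categories of coherent $\s{D}_{\hat{\b{P}},\b{Q}}^\dagger(\infty)$-modules; in particular it is $t$-exact, and $\b{D}_\infty$ is exact on overholonomic complexes. Hence everything reduces to showing that $(\iota p)_+\c{O}_{G_k}$ is concentrated in cohomological degree $0$. By the side-change isomorphism (tensoring with $\omega_{\tilde{\f Y},\b{Q}}$ upstairs and with $\omega_{\hat{\b{P}},\b{Q}}^{-1}$ downstairs), this is equivalent to the analogous statement for $(\iota p)_+\omega_{G_k}$. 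Now \Cref{liesdeg0} gives $\c{H}^j(\D(\iota p)_+\omega_{G_k})=0$ for $j\neq 0$; applying $\D$ once more and invoking Verdier biduality $\D^2\cong\m{id}$ on $D^b_{\m{ovhol}}$ yields the desired concentration of $(\iota p)_+\omega_{G_k}$, and hence of $(\iota p)_+\c{O}_{G_k}$, in degree $0$. Tracking the shift $[n+d-2]$ then places $\m{Hyp}_!$ in cohomological degree $n+d-2$.

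For the weight bound, the strategy is to propagate purity through the construction in Abe's framework \cite{Abe2013TheoryOW}. Since $\tilde{\f Y}$ is smooth and proper, the constant isocrystal $\c{O}_{\tilde{\f Y},\b{Q}}$ is pure of weight $0$. The dagger localization $(^\d\tilde{D}_k)$, which is the overconvergent analogue of the $j_*$-extension along $\f{G}\hookrightarrow\tilde{\f Y}$, respects the relevant weight bound on constant isocrystals. The proper direct image $(\iota p)_+$ preserves the bound in Abe's theory. Huyghe's Fourier functor $\f{F}_\pi=p_{2,+}(\c{K}_\pi\otimes^L p_1^!(-)[2-3n])$ is assembled from proper pushforward, smooth pullback with a shift, and tensoring with the pure Fourier kernel $\c{K}_\pi$, so it interacts with weights in a controlled manner. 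Composing these with the external shift $[n+d-2]$ from \Cref{Fourier} and finally applying $\b{D}_\infty$, which exchanges the inequalities $\leq w \leftrightarrow \geq -w$, yields the asserted bound of weight $\leq 2$ for $\m{Hyp}_!$.

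The main obstacle will be the careful bookkeeping of weight shifts through these several functors: one must reconcile the internal shift $[2-3n]$ in the definition of $\f{F}_\pi$, the intrinsic weight of $\c{K}_\pi$, the external shift $[n+d-2]$ coming from \Cref{Fourier}, and the sign conventions of $\b{D}_\infty$ in Abe's formalism, so that they combine to exactly the target weight bound $\leq 2$. A secondary technical point is to verify that all intermediate objects remain overholonomic with a Frobenius structure, so that the weight theory of \cite{Abe2013TheoryOW} applies throughout the composition.
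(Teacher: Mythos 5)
Your argument for the cohomological concentration in degree $n+d-2$ takes the same route as the paper: combine \Cref{Fourier} with \Cref{liesdeg0}, use side-change to pass between $\c O_{G_k}$ and $\omega_{G_k}$, and invoke exactness of $\f F_\pi$ and Verdier duality on overholonomic complexes. That part is sound.

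For the weight bound, however, your route has a genuine gap. To conclude $\m{Hyp}_!=\D_\infty(\m{Hyp}_+)$ is mixed of weight $\leq 2$, you need $\m{Hyp}_+$ mixed of weight $\geq -2$. You propose to obtain this by pushing a lower weight bound on $\c O_{G_k}$ forward through $(\iota p)_+$ and then through Huyghe's Fourier functor $\f F_\pi=p_{2,+}(\c K_\pi\otimes^L p_1^!(-)[2-3n])$. The obstacle is the tensor product: in Weil II--type weight theories (including Abe's), the functors $f^!$, $f_+$ for proper $f$, and $\otimes^!$ preserve the ``mixed of weight $\geq w$'' bound, while $\otimes^L$ only preserves ``$\leq w$'' bounds. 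Since $\f F_\pi$ is built with $\otimes^L$, running a lower weight bound through it is not mere bookkeeping; one would have to either invoke a Fourier--duality compatibility (so as to rewrite $\f F_\pi$ in terms of the dual transform and $\otimes^!$), or appeal to the nontrivial theorem that the geometric Fourier transform preserves purity. Your phrase ``interacts with weights in a controlled manner'' glosses over exactly this point. The paper sidesteps the tensor product entirely: it uses the defining formula $\m{Hyp}_+=\pi_{2,+}((\iota p)\times\m{id})^!\c K_\pi$ together with the identity $\c K_\pi=h_+\lambda^!(\c L_\pi[-1])$ from \cite[A.2]{Abe2011ExplicitCO}, so that $\m{Hyp}_+$ is obtained from the pure object $\c L_\pi$ by a composition of $\geq$-preserving functors ($\lambda^!$, $h_+$ proper, $((\iota p)\times\m{id})^!$, $\pi_{2,+}$ proper) alone, and the bound $\geq -2$ follows directly from Abe's main theorem. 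This is the key structural idea your proposal is missing.
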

\begin{proof}
    By \Cref{Fourier} and \Cref{liesdeg0}, we have $\mathcal H^j(\m{Hyp}_+)=0$ for $j\not=2-d-n$.
    The first assertion follows by duality.
    Recall that as in \cite[A.2]{Abe2011ExplicitCO}, we have $\c{K}_\pi=h_+\lambda^!(\c{L}_\pi[-1])$, where $h:\f{X}\to\hat{\b{P}}\times\hat{\b{P}}$
is a proper birational morphism, $\lambda:\f{X}\to\hat{\mathbb{P}}^1$ is a morphism, and $\c{L}_\pi$ is the $\s{D}^\dagger(\infty)$-module
associated  with the Dwork isocrystal. Since the eigenvalues of the Frobenius on stalks of $\c{L}_\pi$ have norm $1$, $\c{L}_\pi$
is pure of weight $-1$ in the sense of \cite{Abe2013TheoryOW}. By the main theorem of \cite{Abe2013TheoryOW}, \[\m{Hyp}_+=\pi_{2,+}((\iota p)\times\m{id})^!h_+\lambda^!(\c{L}_\pi[-1])\] is mixed of weight $\geq -2$ and $\m{Hyp}_!=\b{D}_{\infty}(\m{Hyp}_+)$ is mixed of weight $\leq 2$.
\end{proof}

\subsection{}

Let \[\theta(z)=\exp \left(\pi z-\pi z^p\right), \quad \theta_m(z)=\exp \left(\pi z-\pi z^{p^m}\right)=\prod_{i=0}^{m-1} \theta\left(z^{{p^i}}\right).\]
Then $\theta_m(z)$ converges in a disc of radius $>1$, and the value $\theta(1)=\left.\theta(z)\right|_{z=1}$ of the power series $\theta(z)$ at $z=1$ is a primitive $p$-th root of unity in $K$ (\cite[4.1, 4.3]{Monsky1970PadicAA}). Let $u \in \mathbb{F}_{p^m}$ and let $u' \in \overline{\mathbb{Q}}_p$ be its Techmüller lifting so that $u'^{p^m}=u'$. Then we have (\cite[4.4]{Monsky1970PadicAA})
\[\left.\theta_m(z)\right|_{z=u'}= \theta(1)^{\operatorname{Tr}_{\mathbb{F}_{p^m} / \mathbb{F}_p}(u)}.\]

Let $q$ be a power of $p$ such that there exists an isomorphism $k\cong\b{F}_q$, where $k$ is the residue field of $R$. Let $\psi_m: \mathbb{F}_{q^m} \rightarrow K^*$ be the additive character defined by
\[\psi_m(u)=(\left.\exp\left(-\pi z-(-\pi) z^{q^m}\right)\right|_{z=1})^{\operatorname{Tr}_{\mathbb{F}_{q^m} / \mathbb{F}_p}(u)}.\] Then we have $\psi_m(u)=\left.\exp \left(-\pi z-(-\pi) z^{q^m}\right)\right|_{z=u'}$. Denote $\psi_1$ by $\psi$. We have $\psi_m=\psi \circ \operatorname{Tr}_{\mathbb{F}_{q^m} / \mathbb{F}_q}$.

\subsection{}\label{calculation}
For any closed point $A\in\b{V}_k$, let $k(A)$ be the residue field at $A$. By the base change theorem \cite[5.2.5]{Caro19padic},
we have \[i_A^+\m{Hyp}_!\cong l_!((\iota p)\times i_A)^+\b{D}_\infty(\c{K}_\pi),\]
where $i_A:\m{Spec}(k(A))\to \b{V}_k$ is the canonical closed immersion, and $l$ is the structure morphism of $\tilde{Y}_k$. We denote $\c{Q}:=((\iota p)\times i_A)^+\b{D}_\infty(\c{K}_\pi)$ for short.

By results in \cite{Carosurhol}, $\c{Q}$ is an overholonomic $F$-$\s{D}_{\tilde{\f{Y}},\b{Q}}^\dagger(\tilde{D}_k)$-module since it is obtained by applying six functors to the Dwork isocrystal. In our case $\tilde{Y}_k$ is the special fiber of $\tilde{\f{Y}}$. It implies that every $F$-$\s{D}_{\tilde{\f{Y}},\b{Q}}^\dagger(\tilde{D}_k)$-module is arithmetic, i.e., it is contained in the category defined in \cite[4.1]{Carosurhol}.
By \cite[6.5]{Carosurhol} we have \begin{equation}\label{Lfunction}\prod_{j\in\b{Z}}\m{det}_K(1-t\m{Fr}\,|\,\c{H}^j(l_!\c{Q}))^{(-1)^{j+1}}=\prod_{g\in |G_k|}\prod_{j\in\b{Z}}\m{det}_K(1-t\m{Fr}\,|\,\c{H}^j(l_{g,+}i_g^+\c{Q}))^{(-1)^{j+1}},\end{equation}
where $l_g:\m{Spec}(k(g))\to\m{Spec}(k)$ and $i_g:\m{Spec}(k(g))\to\tilde{Y}_k$ are the canonical morphisms,
and $\bar{k}$ is an algebraic closure of $k$.
Note that \[\m{det}_K(1-t\m{Fr}\,|\,l_{g,+}\c{E})=\m{det}_K(1-t^{\m{deg}(g)}\m{Fr}^{\m{deg}(g)}\,|\,\c{E}_g)^{\frac1{\m{deg}(g)}}\] by \cite[3.3.5]{surcoherent}, where $\c{E}$ is a coherent $\s{D}^\dagger$-module coming from an overconvergent isocrystal. %It follows that the formal Taylor expansion of $\m{det}_K(1-t\m{Fr}\,|\,l_{g,+}\c{E})$ does not have term $t$ if $\m{deg}(g)>1$.
In particular, we have \begin{equation*}\m{Tr}_K(\m{Fr}\,|\,l_!\c{Q}):=\sum_{j\in\b{Z}}(-1)^{j}\m{Tr}_K(\m{Fr}\,|\,\c{H}^jl_!\c{Q})= \sum_{g\in G_k(k)}\m{Tr}_K(\m{Fr}\,|\,i_g^+\c{Q})\end{equation*} since they are both the coefficient of $t$ in \Cref{Lfunction}.
Consider the morphism \[F:\f{G}\times\hat{\b{V}}\to\hat{\mathbb{A}}^1,\quad(g,(A_1,\cdots,A_N))\mapsto\sum_{j=1}^N\m{Tr}(\rho_j(g)A_j).\] By the definition of $\c{K}_{\pi}$, we have\[i_g^+\c{Q}=((i_g\circ\iota p)\times i_A)^+\b{D}_\infty(\c{K}_\pi)|_{\f{G}\times\hat{\b{V}}}\cong ((i_g\circ\iota p)\times i_A)^+F^+(\c{L}_{\pi^{-1}}|_{\hat{\mathbb{A}}^1})(-1)[1].\] Hence we can rewrite the trace formula above as
\begin{equation}\label{traceformula}\m{Tr}_K(\m{Fr}\,|\,l_!\c{Q})= \sum_{g\in G_k(k)}\m{Tr}_K(\m{Fr}\,|\,F_{g,A}^+(\c{L}_{-\pi}|_{\hat{\mathbb{A}}^1})(-1)[1]),\end{equation} where $F_{g,A}:=F\circ((i_g\circ \iota p)\times i_A)$.

Recall that the Frobenius structure on $\c{L}_\pi$ is just $\exp(\pi x-\pi x^q)$ if we regard $\c{L}_\pi$ as a $K\langle x\rangle^\d$-module. By the purity theorem \cite[5.6]{Abe2011ExplicitCO} and the flatness of $\c{L}_{-\pi}|_{\hat{\mathbb{A}}^1}$ over $\c{O}_{\hat{\mathbb{A}}^1}$, we have \[F_{g,A}^+(\c{L}_{-\pi}|_{\hat{\mathbb{A}}^1})(-1)[1]\cong F_{g,A}^*(\c{L}_{-\pi}|_{\hat{\mathbb{A}}^1})[2].\] Therefore, it is concentrated in degree $-2$ and its eigenvalue of the Frobenius action is given by $\exp(-\pi x-(-\pi) x^q)$. %\cite[4.3]{p-adic-Weil-II}
It follows that \begin{equation}\label{traceofdwork}
\begin{aligned}
    \m{Tr}_K(\m{Fr}\,|\,F_{g,A}^+(\c{L}_{-\pi}|_{\hat{\mathbb{A}}^1})(-1)[1])=(-1)^2\exp(-\pi z-(-\pi) z^q)|_{z=y'}=\psi(\sum_{j=1}^N\m{Tr}(A_j\rho_j(g)),
\end{aligned}\end{equation} where $y'$ is the Techm\"uller lifting of $y:=\sum_{j=1}^NA_j\rho_j(g))$.

In summary, we have the following corollary:

\begin{cor}
    Notation as above, for any closed point $A\in \b{V}^{\m{gen}}_k$, we have\[\left|\sum_{g\in G(k)}\psi\Big(\sum_{j=1}^N\m{Tr}(\bar{A}_j\bar{\rho}_j(g))\Big)\right|\leq q^{\frac{d}{2}}d!\int_{\Delta_{\infty}\cap\mathfrak C}
	\prod_{\alpha \in R^+} \frac{(\lambda, \alpha)^2}{(\rho, \alpha)^2}\mathrm d\lambda.\]
\end{cor}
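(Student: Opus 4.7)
The plan is to deduce the bound by combining three ingredients already established in the paper: the trace formula from \Cref{calculation}, the rank/concentration bound of \Cref{locallyfree}, and the weight estimate of \Cref{weight}.

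First, I would observe that by the calculations in \Cref{calculation} (specifically (\ref{traceformula}) and (\ref{traceofdwork})) together with the base-change isomorphism $i_A^+\m{Hyp}_!\cong l_!\c{Q}$, the exponential sum is expressed as a Frobenius trace:
\[\sum_{g\in G(k)}\psi\Big(\sum_{j=1}^N\m{Tr}(\bar{A}_j\bar{\rho}_j(g))\Big)=\m{Tr}_K(\m{Fr}\,|\,i_A^+\m{Hyp}_!).\]
So the task reduces to bounding this trace by $q^{d/2}$ times the claimed integral.

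Next I would analyze the structure of $i_A^+\m{Hyp}_!$ under the hypothesis $A\in\b{V}_k^{\m{gen}}$. By \Cref{Fourier} we have $\m{Hyp}_+\cong\f{F}_\pi((\iota p)_+\c{O}_{G_k})[n+d-2]$, and by definition $\m{Hyp}_!=\b{D}_\infty\m{Hyp}_+$. Using the compatibility of Verdier duality with closed-point pullback (so that $i_A^+\b{D}_\infty$ matches $\b{D}\circ i_A^!$ up to a fixed shift determined by $n$), and invoking \Cref{locallyfree}, which states that both $i_A^!\f{F}_\pi((\iota p)_+\c{O}_{G_k})[n]$ and $i_A^+\f{F}_\pi((\iota p)_+\c{O}_{G_k})[-n]$ are concentrated in degree $0$ with $K$-dimension bounded by $d!\int_{\Delta_{\infty}\cap\f{C}}\prod_{\alpha\in R^+}\frac{(\lambda,\alpha)^2}{(\rho,\alpha)^2}\m{d}\lambda$, we conclude that $i_A^+\m{Hyp}_!$ is concentrated in a single cohomological degree $j_0$ (determined by the bookkeeping of the shifts above), with $\dim_K\c{H}^{j_0}(i_A^+\m{Hyp}_!)$ bounded by the same integral.

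Next I would bound the Frobenius eigenvalues. By \Cref{weight}, $\m{Hyp}_!$ is mixed of weight $\leq 2$ in Abe's sense. Since $i_A$ is a closed immersion from a point, pullback preserves upper weight bounds; combined with the cohomological concentration established above and the purity theorem \cite[5.6]{Abe2011ExplicitCO} already used in the proof of \Cref{Fourier}, each Frobenius eigenvalue on $\c{H}^{j_0}(i_A^+\m{Hyp}_!)$ has absolute value $\leq q^{d/2}$. Combining the rank and weight bounds yields
\[\Big|\m{Tr}_K(\m{Fr}\,|\,i_A^+\m{Hyp}_!)\Big|\leq\dim_K\c{H}^{j_0}(i_A^+\m{Hyp}_!)\cdot q^{d/2}\leq q^{d/2}\cdot d!\int_{\Delta_{\infty}\cap\f{C}}\prod_{\alpha\in R^+}\frac{(\lambda,\alpha)^2}{(\rho,\alpha)^2}\m{d}\lambda.\]

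The main obstacle will be the bookkeeping of cohomological degrees and weight conventions. Pinning down $j_0$ requires combining the shifts from \Cref{Fourier} (the $[n+d-2]$ between $\m{Hyp}_+$ and $\f{F}_\pi((\iota p)_+\c{O}_{G_k})$) with the shifts of \Cref{locallyfree} and the degree-flipping of Verdier duality, and checking that the weight estimate of \Cref{weight}, written in the $\s{D}^\dagger$-module normalization where $\c{L}_\pi$ has weight $-1$, translates under pullback to the bound $q^{d/2}$ on the relevant cohomology stalk---in other words, that the ``$+2$'' in the weight bound of $\m{Hyp}_!$ and the dimension-dependent shifts above conspire to yield exactly the exponent $d/2$ rather than something larger.
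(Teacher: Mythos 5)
Your proposal is correct and follows essentially the same approach as the paper: it combines the trace formula from \Cref{calculation}, the weight estimate of \Cref{weight}, and the rank bound coming from \Cref{mainthm} (which you access via \Cref{locallyfree}). The paper obtains the cohomological concentration directly from \Cref{weight} together with the purity theorem and pins down the concentration degree as $d-2$, rather than routing through \Cref{locallyfree} and leaving the shift bookkeeping implicit, but the chain of ingredients and the logic are the same.
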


\begin{proof}
    By \cite[5.6]{Abe2011ExplicitCO} again, \Cref{weight} implies that $\c{H}^j(i_A^+\m{Hyp}_!)=0$ for $j\neq d-2$. Moreover, $i_A^+\m{Hyp}_!$ is mixed of weight $\leq 2$ by the main theorem of \cite{Abe2013TheoryOW}. Then by Equations (\ref{traceformula}), (\ref{traceofdwork}) above, we have \[\m{LHS}=|\m{Tr}_K(\m{Fr}\,|\,i_A^+\m{Hyp}_!)|\leq q^{\frac {d}{2}}\cdot\m{rank}(\m{Hyp}_!|_{\b{V}_k^{\m{gen}}}).\] %by \Cref{weight} and the main theorem of \cite{Abe2013TheoryOW}.
    It follows that \[\m{LHS}\leq q^{\frac {d}2}\cdot\m{rank}(\f{F}_\pi((\iota p)_+\c{O}_{G_k})|_{\b{V}_k^{\m{gen}}})\] by \Cref{Fourier} and the
    fact that $\m{Hyp}_!=\b{D}_{\infty}\m{Hyp}_+$. We then apply \Cref{mainthm} for the upper bound of the rank.
\end{proof}

\begin{appendix}
\section{Spherical varieties over Dedekind domains}%Proof of \Cref{ass-true-on-dense}}
\label{Appendix}
    In this appendix, when we use the notation in the introduction, we omit the subscript $R$ of $R$-schemes. Notation as in \ref{setting} and \ref{ass}. We study compactifiactions of $G$ or more precisely good resolutions of $Y$ and $Z$ defined over the Dedekind domain $R$. 

    For a survey of spherical variety, see \cite{Timashev}. First, %show \Cref{ass} is satisfied in 
    consider the characteristic $0$ case. 
    We may choose a complete normal toroidal $H_K\times \b{G}_{m,K}$-spherical variety $\tilde{Z}'_K$ containing $G_K\times \b{G}_{m,K}$ as an open $H_K\times \b{G}_{m,K}$-orbit. Let $Z''_K$ be the Zariski closure of $G_K$ in $Z_K\times \tilde{Z}'_K$, and let $\tilde{Z}''_K$ be the normalization of $Z''_K$. Then $\tilde{Z}''_K$ is a toroidal $H_K\times \b{G}_{m,K}$-spherical variety, which is proper over $Z''_K$. Refining the fan of $\tilde{Z}''_K$, we get a smooth toroidal $H_K\times \b{G}_{m,K}$-spherical variety $\tilde{Z}_K$ together with an $H_K\times \b{G}_{m,K}$-equivariant birational proper morphism $\tilde{Z}_K\to Z_K$. 

    Let $D_1, \cdots, D_s$ be all the $B_K$-stable but not $H_K$-stable prime divisors of $\tilde{Y}_K$. By the Bruhat decomposition, each $D_i$ is of the form $\overline{B^{-}w_iB}$ where $w_i$ is the element in the Weyl group corresponding to a positive root. Let $\delta=\sum D_i$, $\tilde{Y}_{0,K}=\tilde{Y}_K-\delta$. We have $\tilde{Y}_{0,K}\cap G_K=B^-_KB_K$. 

\begin{lem}
	$\{g\in H_K:g\delta=\delta\}$ is the parabolic subgroup $P_K=B_K\times B^{-}_K$.
\end{lem}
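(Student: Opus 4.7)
The plan is to prove the two inclusions separately. For the containment $P_K\subseteq\{g\in H_K:g\delta=\delta\}$, I would use that each $D_i$ is $P_K$-stable by construction (the ``$B_K$-stability" in the preceding sentence being with respect to the Borel $P_K$ of $H_K$), and that a connected group cannot nontrivially permute a finite set of irreducible divisors; hence each $D_i$ is setwise fixed by $P_K$ and $P_K\delta=\delta$.

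For the reverse inclusion, I would first reduce to a statement inside $G_K$. Taking open complements gives $\mathrm{Stab}_{H_K}(\delta)=\mathrm{Stab}_{H_K}(\tilde Y_{0,K})$, and since $G_K$ is an $H_K$-orbit in $\tilde Y_K$, any $h$ in this stabilizer preserves the intersection $\tilde Y_{0,K}\cap G_K=B_K^- B_K$. The problem thus becomes: show $\mathrm{Stab}_{H_K}(B_K^- B_K)=P_K$ for the induced $H_K$-action on $G_K=H_K/\Delta G_K$. Since $B_K^- B_K=P_K\cdot 1$ is the open $P_K$-orbit and $\mathrm{Stab}_{H_K}(1)=\Delta G_K$, any stabilizing $h$ can be factored as $h=p\cdot(g,g)$ with $p\in P_K$ and $g\in G_K$. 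The condition $h\cdot(B_K^- B_K)=B_K^- B_K$ then translates, using that $\Delta G_K$ acts by conjugation on $G_K$, to $gB_K^- B_K g^{-1}=B_K^- B_K$, i.e., $g\in N_{G_K}(B_K^- B_K)$.

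The remaining step is the classical computation $N_{G_K}(B_K^- B_K)=T_K$. Writing $gB_K^- B_K g^{-1}=(gB_K^- g^{-1})(gB_K g^{-1})$, I would invoke the uniqueness (up to common $T_K$-conjugation) of the factorization of the big Bruhat cell into two opposite Borels, combined with the self-normalization $N_{G_K}(B_K)=B_K$ and $N_{G_K}(B_K^-)=B_K^-$, to conclude $gB_K g^{-1}=B_K$ and $gB_K^- g^{-1}=B_K^-$, hence $g\in B_K\cap B_K^-=T_K$. Then $(g,g)\in T_K\times T_K\subset P_K$ and $h\in P_K$, closing the argument. The one point requiring real care is the statement $N_{G_K}(B_K^- B_K)=T_K$, which is a standard consequence of Bruhat decomposition but deserves explicit verification; matching the action convention so that the open $P_K$-orbit through $1\in G_K$ is exactly $B_K^- B_K$ (rather than $B_K B_K^-$) is a routine bookkeeping matter that I do not expect to cause difficulties.
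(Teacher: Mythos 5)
Your reduction to a statement about $N_{G_K}(B_K^-B_K)$ is a legitimate and somewhat different packaging, but the proof is not complete: the final step, $N_{G_K}(B_K^-B_K)=T_K$ under conjugation, is not a citable ``classical computation,'' and the ``uniqueness of factorization of the big cell into opposite Borels'' that you invoke to establish it is itself a non-trivial claim in need of proof. Concretely: if $g\in G_K$ satisfies $g(B_K^-B_K)g^{-1}=B_K^-B_K$, then for every $b\in B_K^-$ the element $gbg^{-1}$ \emph{left-multiplication}-stabilizes $B_K^-B_K$, and one needs to know that the left-multiplication stabilizer $\{x\in G_K: xB_K^-B_K=B_K^-B_K\}$ equals $B_K^-$ in order to conclude $gB_K^-g^{-1}\subseteq B_K^-$, hence $g\in N_{G_K}(B_K^-)=B_K^-$, and symmetrically $g\in B_K$, so $g\in T_K$. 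That left-stabilizer computation is precisely the paper's key lemma, proved by observing that $P_0=\{x:xB_K^-B_K=B_K^-B_K\}$ contains $B_K^-$ and satisfies $P_0=B_K^-(B_K\cap P_0)$, then applying the Borel fixed-point theorem to the projective variety $P_0/B_K^-$ and the connectedness of parabolics to force $P_0=B_K^-$. So your route doesn't actually bypass the hard step; it reformulates the problem and then leaves the essential content (which is proved in the paper via a specific geometric argument) labelled as standard. Your alternative phrasing via ``uniqueness of factorization $B_1B_2=B_K^-B_K$'' has the same difficulty, since that uniqueness is most cleanly proved by the same left/right-stabilizer computation, or by a separate nontrivial argument (e.g.\ via the $T\times T$-weights of the generalized minors cutting out the boundary divisors of the big cell).

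A secondary point: the identification $P_K\cdot 1=B_K^-B_K$ that your factorization $h=p\cdot(g,g)$ rests on is not automatic. With the usual action $(a,b)\cdot x=axb^{-1}$ and $P_K=B_K\times B_K^-$, one gets $P_K\cdot 1=B_KB_K^-$, not $B_K^-B_K$. The paper is implicitly using the convention in which $(g,h)$ acts as $x\mapsto h^{-1}xg$ (this is visible in its proof, where the stabilizer condition is written $h^{-1}B_K^-B_Kg=B_K^-B_K$), and under that convention indeed $P_K\cdot 1=B_K^-B_K$. You flagged this as ``routine bookkeeping,'' which is fair as far as it goes, but since $B_KB_K^-\neq B_K^-B_K$ it does genuinely need to be checked, not merely noted; getting it wrong would swap $P_K=B_K\times B_K^-$ with $B_K^-\times B_K$ and change the statement.

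In summary: the first inclusion and the reduction to $\mathrm{Stab}_{H_K}(B_K^-B_K)$ are fine, and the factorization $h=p\cdot(g,g)$ is a clean idea, but the argument is not finished, because $N_{G_K}(B_K^-B_K)=T_K$ is exactly where the real work lies. The paper's proof supplies that work via a Borel fixed-point argument showing the left-multiplication stabilizer is $B_K^-$; your sketch takes this for granted.
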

\begin{proof}
	%Let $(g,h)\in P$, i.e., $h^{-1}B^{-}Bg=B^{-}B$. We need to show $(g,h)\in B\times B^{-}$. Again by the Bruhat decomposition, we may assume $g, h\in W$. Since $h^{-1}g\in B^{-}B$, $g=h$.
	First, let us show that if $g\in G_K$ satisfies $gB^{-}_KB_K=B^{-}_KB_K$, then $g\in B^{-}_K$.
	Let $P_0$ be the subgroup of $G_K$ consisting of all such elements. Then $P_0$ contains $B^{-}_K$ and $P_0B_K=B^{-}_KB_K$. Taking intersection with $P_0$, we get $P_0=B_K^{-}(B_K\cap P_0)$. So $B_K\cap P_0$ acts transitively on the flag variety $P_0/B^{-}$. By the Borel fixed point theorem \cite[18.4]{milne}, the connected component of $B\cap P_0$ has a fixed point on $P_0/B^{-}$. So $P_0/B^{-}$ is finite. By \cite[18.54]{milne}, $P_0$ is connected. So $P_0=B^{-}_K$.
	
	Let $(g,h)\in H_K$ such that $h^{-1}B^{-}_KB_Kg=B^{-}_KB_K$. Then for any $b\in B^{-}_K$, $$h^{-1}bhB^{-}_KB_K=h^{-1}bhh^{-1}B^{-}_KB_Kg=h^{-1}B^{-}_KB_Kg=B^{-}_KB_K.$$
	By the above claim, this implies $h^{-1}bh\in B_K$. So $h$ lies in the normalizer of $B_K$, which is just $B_K$ by \cite[18.53]{milne}. Similarly, one can show $g\in B_K$.
\end{proof}

By \cite[Proposition 1 in 2.4]{Brion} or \cite[Theorem 29.1]{Timashev},  there exists a closed $T_K\times T_K$-stable subvariety $S$ 
of $\tilde{Y}_{0,K}$ such that we have a $P_K$-equivariant isomorphism
\[R_{u}(P_K)\times S\rightarrow \tilde{Y}_{0,K},\quad(g,y)\mapsto gy,\]
where $R_u(P_K)$ is the unipotent radical of $P_K$. 
Moreover, $S$ is a toric variety for a quotient torus of $T_K\times T_K$ with 
the same fan as $\tilde{Y}_K$. $\tilde{Y}_{0,K}$ is smooth implies $S$ is smooth. 
We may assume that $S$ contains $1_G$. Then the open orbit of $S$ is $(T_K\times T_K)\cdot1_G=T_K$. Thus $S$ is a toric variety for the torus $T_K$.

    \begin{proof}[Proof of \Cref{ass-true-on-dense}.]
    By the standard passing to limit argument we may assume that the resolution $\tilde{Z}\to Z$ is defined over $R$ and satisfies \Cref{ass-3}(2)
    after replacing $\Spec R$ by an open dense subset. 
    By \cite[\href{https://stacks.math.columbia.edu/tag/0559}{Tag 0559}]{stacksproject}, the geometric irreducibility of the generic fiber implies the geometric irreducibility of the fibers over an open dense subset of $\m{Spec}(R)$.
    Thus the desired conditions are satisfied after deleting finitely many primes. 
    \end{proof}

\begin{thm}[{\cite[6.2.8]{Frob-split}}]\label{app-thm}
    Fix an algebraic closure $\bar{k}$ of the residue field $k$ of $R$. Let $X$ be a $G_{\bar k}$-embedding over $\bar k$ in sense of \cite[6.2.1]{Frob-split}. 
    That is, $X$ is a normal variety equipped with an action of $G_{\bar k} \times G_{\bar k}$ containing $G_{\bar k}$ as an open orbit. 
    Let $f: \tilde{X}\to X$ be a smooth toroidal resolution in the sense of \cite[6.2.5]{Frob-split}. Then $Rf_*\mathcal{\c{O}}_{\tilde{X}}\cong \mathcal{\c{O}}_{X}$, $R^if_*\omega_{\tilde{X}}=0$ for $i>0$. 
\end{thm}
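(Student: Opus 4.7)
\textbf{Proof plan for Theorem \ref{app-thm}.} The plan is to give a uniform treatment in arbitrary characteristic via the Frobenius splitting theory of reductive group embeddings. In characteristic zero, the second vanishing $R^i f_* \omega_{\tilde X} = 0$ is simply Grauert--Riemenschneider, and the first statement then follows from Kempf's criterion for rational singularities together with the normality of $X$. Since $\bar k$ here is an arbitrary algebraically closed field, it is cleaner to treat both vanishings simultaneously via Frobenius splitting.

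The first step is to produce a Frobenius splitting of $\tilde X$ that is compatible with the boundary divisor $\partial\tilde X := \tilde X \setminus G_{\bar k}$. For a smooth toroidal $G_{\bar k}$-embedding this is a theorem of Rittatore and Brion--Kumar. The construction exploits the same local model as in \Cref{ass}(2): over the $P$-stable chart $\tilde Y_0 \cong R_u(P)\times S$, one combines the obvious (trivial) splitting of the affine factor $R_u(P)$ with the canonical torus-invariant Frobenius splitting of the smooth toric variety $S$, which compatibly splits the toric boundary $S\setminus T$. These local $P$-canonical splittings glue together through the $(G\times G)$-action, since $\tilde X = (G\times G)\cdot \tilde Y_0$ and two $B$-canonical sections agreeing on an open dense set must agree globally. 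One checks by construction that the resulting section of $\omega_{\tilde X}^{1-p}$ vanishes along each irreducible component of $\partial \tilde X$, so the splitting is compatible with $\partial \tilde X$.

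Second, I would deduce both conclusions from this splitting. Compatibility of the splitting with $\partial \tilde X$ means, in particular, that $\tilde X$ is Frobenius split; since $f$ is proper birational and $X$ is normal, the projection $f_* \mathcal{O}_{\tilde X} = \mathcal{O}_X$ is a split surjection after applying Frobenius, which formally forces $R^i f_* \mathcal{O}_{\tilde X}=0$ for $i>0$, giving the first assertion. For the second assertion, one applies the Frobenius-split analog of Grauert--Riemenschneider due to Mehta--van der Kallen: on a smooth variety $\tilde X$ equipped with a splitting compatibly vanishing along an effective divisor $D$ with $-D$ $f$-ample (or in the $f$-relative big and nef case), one obtains $R^i f_*\omega_{\tilde X}=0$ for $i>0$. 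In the toroidal setting, $\partial \tilde X$ meets each $f$-fiber and its components can be used to verify the required positivity after possibly twisting by an effective exceptional divisor.

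The main technical obstacle is the construction of the compatible $B$-canonical splitting in the positive characteristic case; once that is in hand, both vanishings are formal. The argument above depends essentially on the toroidal nature of the resolution (encoded in \Cref{ass}(2)), since it is exactly this hypothesis that reduces the existence of a compatible splitting to the well-understood toric case.
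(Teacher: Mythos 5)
The paper does not give a proof of \Cref{app-thm}; it is stated as a citation to \cite[6.2.8]{Frob-split} (Brion--Kumar, \emph{Frobenius Splitting Methods in Geometry and Representation Theory}), and the appendix only proves the downstream Corollary~\ref{cor-G-embed} and Proposition~\ref{ass-1}. So there is no proof in the paper to compare against; what you have produced is an outline of the cited reference's argument.

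Your outline does capture the right framework (a $B$-canonical Frobenius splitting of $\tilde X$ compatible with $\partial\tilde X$, obtained by gluing from the local toroidal chart, followed by splitting-based vanishing), but one step as written is a genuine gap. You claim that once $\tilde X$ is Frobenius split, the fact that $f_*\mathcal{O}_{\tilde X}=\mathcal{O}_X$ is a split surjection after applying Frobenius ``formally forces'' $R^i f_*\mathcal{O}_{\tilde X}=0$ for $i>0$. It does not. Frobenius splitness alone gives injections $R^i f_*\mathcal{O}_{\tilde X}\hookrightarrow R^i f_*F_*\mathcal{O}_{\tilde X}$, but since $F$ is finite this is just $R^i f_*\mathcal{O}_{\tilde X}$ viewed over the Frobenius twist, and no vanishing follows. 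To get vanishing one must use that the splitting is compatible with a divisor $D\supseteq\operatorname{Exc}(f)$ that has the right relative positivity: compatibility gives injections $R^i f_*\mathcal{O}_{\tilde X}\hookrightarrow R^i f_*\mathcal{O}_{\tilde X}((p^n-1)D)$, and one then invokes relative Serre vanishing for $n\gg 0$, which requires $D$ (or a suitable effective substitute) to be $f$-ample or at least $f$-big and nef. This is exactly the positivity check you flag for the $\omega$-vanishing, but the same input is needed for the $\mathcal{O}$-vanishing and cannot be waved away as formal. Alternatively, one can first prove $R^i f_*\omega_{\tilde X}=0$ via Mehta--van der Kallen and then deduce $R^i f_*\mathcal{O}_{\tilde X}=0$ by Kempf's criterion once one knows $f_*\omega_{\tilde X}$ is the dualizing sheaf and $X$ is Cohen--Macaulay, but that route requires establishing Cohen--Macaulayness of $X$ independently (it is not available by circular reference to Corollary~\ref{cor-G-embed}). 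A smaller issue: your construction of the splitting is phrased in terms of the paper's $\tilde Y_0$ and \Cref{ass}(2), but \Cref{app-thm} concerns a general $G_{\bar k}$-embedding $X$ over $\bar k$, not the specific $\tilde Y$ of the paper; you should instead invoke the Local Structure Theorem for toroidal spherical varieties in arbitrary characteristic.
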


%In the following we assume $Z/R$ is a variety satisfying \Cref{ass}. In particular, 

\begin{cor}\label{cor-G-embed}
    %Same assumptions as in \Cref{app-thm}. 
    Keep the assumption and notation of \Cref{app-thm}. Then $Rf_*\omega_{\tilde{X}}$ is a Cohen-Macaulay $\mathcal{\c{O}}_{X}$-module. 
    Suppose $X$ is smooth, then we have $Rf_*\omega_{\tilde{X}}\cong \omega_{X}$. %In general  % pure of %with $\dim \m{Supp}=\m{depth}=\dim \tilde{X}$. . 
\end{cor}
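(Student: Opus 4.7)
My plan is to deduce everything from Grothendieck duality combined with the vanishing statements of \Cref{app-thm}. Let $n=\dim X=\dim\tilde X$. Since $f:\tilde X\to X$ is proper and \Cref{app-thm} gives $Rf_*\mathcal O_{\tilde X}\cong\mathcal O_X$, Grothendieck duality
\[
R\mathcal Hom_{\mathcal O_X}(Rf_*\mathcal O_{\tilde X},\mathcal G)\;\cong\;Rf_*R\mathcal Hom_{\mathcal O_{\tilde X}}(\mathcal O_{\tilde X},f^!\mathcal G)
\]
collapses to the adjunction isomorphism $\mathcal G\cong Rf_*f^!\mathcal G$ for every $\mathcal G\in D^+_{\mathrm{qc}}(X)$.

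Next I would specialize this to the normalized dualizing complex $\omega_X^\bullet$ on $X$. Since $\tilde X$ is smooth of dimension $n$, we have $f^!\omega_X^\bullet\cong \omega_{\tilde X}^\bullet\cong \omega_{\tilde X}[n]$. The other part of \Cref{app-thm} says $R^if_*\omega_{\tilde X}=0$ for $i>0$, so $Rf_*\omega_{\tilde X}=f_*\omega_{\tilde X}$ is concentrated in degree $0$. Combining these gives
\[
\omega_X^\bullet\;\cong\;Rf_*f^!\omega_X^\bullet\;\cong\;Rf_*\omega_{\tilde X}[n]\;\cong\;(f_*\omega_{\tilde X})[n].
\]
Because $\omega_X^\bullet$ is thus concentrated in a single cohomological degree, the scheme $X$ is Cohen--Macaulay, and its canonical module is identified with $f_*\omega_{\tilde X}=Rf_*\omega_{\tilde X}$.

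For the Cohen--Macaulay assertion I would then invoke the standard fact that on a Cohen--Macaulay scheme the canonical module is itself a maximal Cohen--Macaulay sheaf (locally, over a CM Noetherian local ring, the canonical module has depth equal to the dimension of the ring). This gives the first claim of the corollary. For the second claim, if $X$ is already smooth, then $\omega_X^\bullet=\omega_X[n]$ where $\omega_X$ is the usual canonical bundle, and comparing with the isomorphism above yields $Rf_*\omega_{\tilde X}\cong\omega_X$ as desired.

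The only delicate point is making sure the versions of Grothendieck duality and the identification $f^!\omega_X^\bullet\cong\omega_{\tilde X}[n]$ are available in the generality needed (proper morphisms of finite type separated $\bar k$-schemes with $\tilde X$ smooth), but these are classical; the genuine content is \Cref{app-thm}, which has already been imported. So I do not expect a serious obstacle beyond correctly bookkeeping shifts and the normalization of dualizing complexes.
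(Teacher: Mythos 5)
Your argument is correct, and it rests on the same engine as the paper's proof (Grothendieck duality combined with the two vanishing statements of Theorem \ref{app-thm}), but the packaging is genuinely different. You apply duality directly to the proper morphism $f:\tilde X\to X$, in the form $\mathcal G\cong Rf_*f^!\mathcal G$ (valid once $Rf_*\mathcal O_{\tilde X}\cong\mathcal O_X$), specialize to $\mathcal G=\omega_X^\bullet$ to get $\omega_X^\bullet\cong f_*\omega_{\tilde X}[n]$, conclude that $X$ is Cohen--Macaulay with canonical module $f_*\omega_{\tilde X}$, and then quote the local-algebra fact that a canonical module over a Cohen--Macaulay local ring is maximal Cohen--Macaulay. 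The paper instead chooses an embedding $g:U\hookrightarrow S$ of an affine open into a smooth scheme, applies duality to the composite $f^{-1}(U)\to S$, and reads off Cohen--Macaulayness of $f_*\omega_{\tilde X}$ from the concentration of $R\mathcal Hom_{\mathcal O_S}(g_*f_*\omega_{\tilde X},\omega_S)$ in a single degree (via the depth--$\operatorname{Ext}$ criterion, Stacks Tag 0B5A). Your route avoids the auxiliary embedding and keeps everything intrinsic to $X$, at the price of invoking the characterization of Cohen--Macaulayness by the dualizing complex being concentrated and the canonical-module fact; the paper's route uses only the more elementary $\operatorname{Ext}$-depth criterion over a regular ambient. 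Both are sound, and your version has the small bonus of explicitly recording that $X$ itself is Cohen--Macaulay.
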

\begin{proof}
    First assume $X$ is smooth. 
    By the relative duality theorem \cite[\href{https://stacks.math.columbia.edu/tag/0A9Q}{Tag 0A9Q}]{stacksproject}, we have
    \[Rf_*\omega_{\tilde{X}}\cong R\s{H}\m{om}_{\mathcal{\c{O}}_{X}}(Rf_*\mathcal{\c{O}}_{\tilde{X}}, \omega_{X})\cong \omega_{X}.\]

    The second claim follows from \cite[I. \S 3. p.49. Theorem]{toroidal-embed}. More precisely, %this is a local problem so we may assume $$
    for any affine open subset $U\subseteq X$, choose an embedding $g:U\to S$ into a smooth $\bar{k}$-scheme. By the relative duality theorem \cite[\href{https://stacks.math.columbia.edu/tag/0A9Q}{Tag 0A9Q}]{stacksproject} again, we have an isomorphism
    \[g_*f_{U,*}\mathcal{\c{O}}_{f^{-1}(U)}%\cong Rg_*Rf_*\mathcal{\c{O}}_{\tilde{X}}\cong R\m{Hom}_{\mathcal{\c{O}}_{S}}(Rg_*Rf_*\omega_{\tilde{X}}, \omega_{S})[\dim S-\dim \tilde{X}].
    \cong R\s{H}\m{om}_{\mathcal{\c{O}}_{S}}(g_*f_{U,*}\mathcal{\c{O}}_{f^{-1}(U)}, \omega_{S})[\dim S-\dim \tilde{X}],\] where $f_U:f^{-1}(U)\to U$ is the restriction of $f$ on $U$.
    In particular, $R^i\s{H}\m{om}_{\mathcal{\c{O}}_{S}}(g_*f_{U,*}\mathcal{\c{O}}_{f^{-1}(U)}, \omega_{S})=0$ for $i\ne \dim S-\dim \tilde{X}$. By \cite[\href{https://stacks.math.columbia.edu/tag/0B5A}{Tag 0B5A}]{stacksproject}, this implies $g_*f_{U,*}\mathcal{\c{O}}_{f^{-1}(U)}$ is Cohen-Macaulay and hence $f_{U,*}\mathcal{\c{O}}_{f^{-1}(U)}$ is a Cohen-Macaulay module. %with $\dim \m{Supp}=\m{depth}=\dim \tilde{X}$. 
\end{proof}

\begin{cor}\label{ass-1_0}\label{ass-2_0} 
    Under \Cref{ass}, we have a smooth resolution $p_0: \tilde{Z}_k\to Z_k$ of $G_k\times \b{G}_{m,k}$-embeddings. Then $Rp_{0,*}\omega_{\tilde{Z}_k}\cong p_{0,*}\omega_{\tilde{Z}_k}$ is a Cohen-Macaulay $\mathcal{\c{O}}_{Z_k}$-module. 
\end{cor}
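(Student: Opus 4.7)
The plan is to reduce the statement to \Cref{cor-G-embed} after taking special fibers and base-changing to an algebraic closure, then descend the Cohen-Macaulay and vanishing properties.

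First, I would construct the resolution $p_0$ explicitly as the base change of $p\colon \tilde Z\to Z$ along $\Spec k\to \Spec R$. By \Cref{ass}(1), $\tilde Z\to \Spec R$ is smooth with geometrically irreducible fibers, so $\tilde Z_k$ is smooth over $k$. The birationality of $p_0$ follows because $p$ is an isomorphism over $G\times \b G_m$ by \Cref{ass}(1), and $G_k\times \b G_{m,k}$ is dense in both $\tilde Z_k$ and $Z_k$ (again by geometric irreducibility of fibers, since both contain $G_k\times \b G_{m,k}$ as an open subscheme). Thus $p_0\colon \tilde Z_k\to Z_k$ is an equivariant birational morphism from a smooth variety to a $(G_k\times \b G_{m,k})$-embedding.

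Next, I would verify that $p_0$ is a toroidal resolution in the sense of \cite[6.2.5]{Frob-split}. This is essentially the content of \Cref{ass}(2): the $P$-equivariant local model $\tilde Y_{0}\cong R_u(P)\times S$ for $\tilde Y$, combined with the $\b G_m$-equivariant isomorphism $\tilde Y\times \b G_m\cong \tilde \pi^{-1}(\b G_m)$, produces the required toric local structure on $\tilde Z$ along the complement of $G\times \b G_m$, and this structure is preserved by passing to the special fiber. Hence $p_{0,\bar k}\colon \tilde Z_{\bar k}\to Z_{\bar k}$ is a smooth toroidal resolution of a $(G_{\bar k}\times \b G_{m,\bar k})$-embedding.

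Having set this up, I would apply \Cref{cor-G-embed} to $p_{0,\bar k}$: it yields that $R(p_{0,\bar k})_*\omega_{\tilde Z_{\bar k}}$ is concentrated in degree zero and Cohen-Macaulay as an $\c O_{Z_{\bar k}}$-module. To conclude, I would descend along the faithfully flat morphism $\Spec \bar k\to \Spec k$: since $\omega_{\tilde Z_k}$ and the fibers of $p_0$ are compatible with base change, flat base change gives $(R^ip_{0,*}\omega_{\tilde Z_k})\otimes_k\bar k\cong R^i(p_{0,\bar k})_*\omega_{\tilde Z_{\bar k}}=0$ for $i>0$, so $R p_{0,*}\omega_{\tilde Z_k}\cong p_{0,*}\omega_{\tilde Z_k}$. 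The Cohen-Macaulay property is local on stalks and descends under faithfully flat extensions by \cite[\href{https://stacks.math.columbia.edu/tag/033G}{Tag 033G}]{stacksproject} (applied to the local rings of $Z_k$).

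The main obstacle I anticipate is the verification that $p_{0,\bar k}$ falls under the technical definition of a smooth toroidal resolution used in \cite[6.2.5]{Frob-split}; this is not entirely formal because the definition demands a compatible toric chart structure around every point of the exceptional locus, and one must transport the local model of \Cref{ass}(2) from $\tilde Y_0$ to all of $\tilde Z$ via the $(H\times \b G_m)$-action and check that this structure persists over $k$. Once this is in place, the remaining steps are standard.
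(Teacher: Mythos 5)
There is a genuine gap. Your plan is to apply \Cref{cor-G-embed} directly to $p_{0,\bar k}\colon \tilde Z_{\bar k}\to Z_{\bar k}$, but \Cref{cor-G-embed} (and the underlying \Cref{app-thm}) is stated for a smooth toroidal resolution of a $G$-embedding $X$ in the sense of \cite[6.2.1]{Frob-split}, and a $G$-embedding is by definition \emph{normal}. Although $Z_R$ is normal, its special fiber $Z_{\bar k}$ need not be, so the hypotheses of \Cref{cor-G-embed} are simply not met for the target $Z_{\bar k}$. The paper's proof addresses this by first passing to the normalization $X_{\bar k}$ of $Z_{\bar k}$, noting that $p_0$ factors through $p_0'\colon \tilde Z_{\bar k}\to X_{\bar k}$, proving the Cohen--Macaulay statement over $X_{\bar k}$, and then pushing forward along the finite normalization map (using that the direct image of a Cohen--Macaulay module under a finite morphism is Cohen--Macaulay, \cite[IV Proposition 11]{Serre2}). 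Your argument has no substitute for this step.

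The second problem, which you flag yourself but do not resolve, is that $\tilde Z_{\bar k}$ may not be a smooth \emph{toroidal} resolution. \Cref{ass}(2) gives a $P$-equivariant local model on an open subset $\tilde Y_0$ of $\tilde Y$; it does not assert that the reduction $\tilde Z_{\bar k}$ is a toroidal embedding, and transporting the local model to all of $\tilde Z$ over $k$ is not formal. The paper avoids having to verify this by choosing a further smooth toroidal resolution $f\colon \tilde X_{\bar k}\to \tilde Z_{\bar k}$ (possible since $\tilde Z_{\bar k}$ is normal), and then $p_0'\circ f$ is a smooth toroidal resolution of $X_{\bar k}$; applying \Cref{cor-G-embed} to $f$ (note $\tilde Z_{\bar k}$ is smooth) gives $Rf_*\omega_{\tilde X_{\bar k}}\cong\omega_{\tilde Z_{\bar k}}$, so $Rp_{0,*}'\omega_{\tilde Z_{\bar k}}\cong R(p_0'\circ f)_*\omega_{\tilde X_{\bar k}}$, which is Cohen--Macaulay by \Cref{cor-G-embed} applied to $p_0'\circ f$. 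Your argument lacks this maneuver. (As a minor point, the citation of \cite[\href{https://stacks.math.columbia.edu/tag/033G}{Tag 033G}]{stacksproject} for descent of the Cohen--Macaulay property under faithfully flat base change is misdirected; that tag concerns ascent/descent of normality, not Cohen--Macaulayness.)
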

\begin{proof}
    By \cite[\href{https://stacks.math.columbia.edu/tag/045P}{Tag 045P}]{stacksproject} we may base change to $\bar k$. 
    Let $X_{\bar k}$ be the normarlization of $Z_{\bar k}$. 
    $p_0$ factors through a morphism $p_0': \tilde{Z}_{\bar k}\to X_{\bar k}$.
    Note that by \Cref{ass}(\ref{ass-0}), $X_{\bar k}$ is a $G_{\bar k}$-embedding over $\bar k$ in the sense of \cite[6.2.1]{Frob-split}. 
    By \cite[6.2.5]{Frob-split}, we can choose a smooth toroidal resolution $f: \tilde{X}_{\bar k}\to \tilde{Z}_{\bar k}$. 
    Then $p_{0}'\circ f: \tilde{X}_{\bar k}\to X_{\bar k}$ is also a smooth toroidal resolution. 
    %Replacing ${Z}_{\bar k}$ by its normalization, we may assume ${Z}_{\bar k}$ is normal. 
    By \Cref{cor-G-embed}, we have
    $Rp_{0,*}'\omega_{\tilde{Z}_{\bar k}}\cong R(p_{0}'\circ f)_*\omega_{\tilde{X}_{\bar k}}$ is a Cohen-Macaulay $\mathcal{\c{O}}_{X_{\bar k}}$-module. 
    Note that by \cite[IV Proposition 11]{Serre2}, the direct image of a Cohen-Macaulay module under a finite morphism is Cohen-Macaulay. 
    The claim follows. 
\end{proof}

Notation as in \Cref{ass}. Note that $\omega_{\tilde{Z}}$ is a locally free sheaf on $\tilde{Z}$ and hence flat over $R$. So  $R^0p_*\omega_{\tilde{Z}}$ is flat over $R$. Let $l:Z\to \Spec R$ be the structure morphism. By \cite[6.3.1]{PMIHES_1965__24__5_0} for $x\in Z$ we have
    \[\m{depth}_{x}\,R^0p_*\omega_{\tilde{Z}}=\m{depth}_{l(x)}R\,+\m{depth}_{x}\,(R^0p_*\omega_{\tilde{Z}})_{l(x)}.\]
Since $R$ is Cohen-Macaulay, $R^0p_*\omega_{\tilde{Z}}$ is Cohen-Macaulay at $x$ if and only if $(R^0p_*\omega_{\tilde{Z}})_{l(x)}$ is Cohen-Macaulay at $x$. 

\begin{prop}\label{ass-1}\label{ass-2} 
     $R^ip_*\omega_{\tilde{Z}}=0$ for $i>0$ and $R^0p_*\omega_{\tilde{Z}}$ is Cohen-Macaulay. The latter is equivalent to that $R^0p_*\omega_{\tilde{Z}}$  has Cohen-Macaulay fibers over $R$.
\end{prop}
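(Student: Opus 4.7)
The plan is to derive both statements of the proposition from the fiber-wise versions already recorded in \Cref{ass-1_0} by the standard short-exact-sequence argument over $\Spec R$. Since both assertions are local on $Z$, I reduce to the case where $R$ is a discrete valuation ring with uniformizer $\pi$, closed point $s$ and generic point $\eta$. Because $\tilde{Z}$ is smooth over $R$, the sheaf $\omega_{\tilde{Z}}\cong\omega_{\tilde{Z}/R}$ is a line bundle, hence $R$-flat, and its restriction to the special fiber is $\omega_{\tilde{Z}_s}$; the workhorse is the short exact sequence
\[0\longrightarrow\omega_{\tilde{Z}}\xrightarrow{\ \pi\ }\omega_{\tilde{Z}}\longrightarrow\omega_{\tilde{Z}_s}\longrightarrow 0.\]

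For the vanishing, applying $Rp_*$ yields a long exact sequence. \Cref{ass-1_0} (and its obvious generic-fiber analogue, obtained by the same normalization trick applied to the spherical embedding $Z_{\bar{K}}$, which has rational singularities in characteristic zero) gives $R^ip_{s,*}\omega_{\tilde{Z}_s}=R^ip_{\eta,*}\omega_{\tilde{Z}_\eta}=0$ for $i>0$. Hence multiplication by $\pi$ is surjective on $R^ip_*\omega_{\tilde{Z}}$ for every $i\geq 1$, while flat base change to $\eta$ shows that these coherent sheaves are supported inside $Z_s$. Since $\pi$ lies in the maximal ideal of $\c{O}_{Z,z}$ for every $z\in Z_s$, Nakayama's lemma forces each such stalk, and therefore the entire sheaf $R^ip_*\omega_{\tilde{Z}}$, to vanish.

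Once the vanishing is established the long exact sequence collapses to
\[0\longrightarrow p_*\omega_{\tilde{Z}}\xrightarrow{\ \pi\ }p_*\omega_{\tilde{Z}}\longrightarrow p_{s,*}\omega_{\tilde{Z}_s}\longrightarrow 0,\]
so $\pi$ is a non-zero-divisor on $p_*\omega_{\tilde{Z}}$; over the DVR $R$ this is precisely flatness. The fibers of $p_*\omega_{\tilde{Z}}$ over $R$ are then $p_{s,*}\omega_{\tilde{Z}_s}$ and $p_{\eta,*}\omega_{\tilde{Z}_\eta}$, both Cohen--Macaulay by \Cref{ass-1_0} and its generic-fiber analogue. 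The depth identity
\[\m{depth}_x\, p_*\omega_{\tilde{Z}}=\m{depth}_{l(x)}\, R+\m{depth}_x\,(p_*\omega_{\tilde{Z}})_{l(x)}\]
quoted immediately before the proposition (valid because $p_*\omega_{\tilde{Z}}$ is now $R$-flat and $R$ is Cohen--Macaulay) then converts fiber-wise Cohen--Macaulayness into the global statement, and is also the source of the equivalence asserted in the proposition.

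I expect the only mildly delicate point to be the generic-fiber input, since \Cref{app-thm} as cited is phrased via Frobenius splitting. The facts needed there (namely $Rp_*\c{O}=\c{O}$ on the normalization of $Z_{\bar{K}}$ together with vanishing of higher direct images of $\omega$) are however standard in characteristic zero via Grauert--Riemenschneider and the rational-singularities property of spherical embeddings, so the remainder of the proof of \Cref{cor-G-embed} applies verbatim.
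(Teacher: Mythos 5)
Your proof is correct, and it reaches the same endpoint (flatness of $p_*\omega_{\tilde{Z}}$ over $R$, base change to the special fiber, vanishing of higher direct images, and the depth identity) but via a genuinely different mechanism than the paper. The paper works with the $\unif$-adic truncations $\tilde{Z}_n = \tilde{Z}\times_R R/\unif^{n+1}$, proves the vanishing and base-change statements by induction on $n$, and then invokes the theorem on formal functions (together, implicitly, with properness of $Z$ over $\Spec R$ to pass from vanishing of the $\unif$-adic completion to vanishing of the sheaf itself). You instead apply $Rp_*$ to the single short exact sequence $0\to\omega_{\tilde{Z}}\xrightarrow{\pi}\omega_{\tilde{Z}}\to\omega_{\tilde{Z}_s}\to 0$ over the DVR, obtain surjectivity of $\pi$ on $R^ip_*\omega_{\tilde{Z}}$ for $i\geq 1$, and finish with Nakayama after observing that these sheaves are supported inside $Z_s$. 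The trade-off is that your route requires the generic-fiber inputs ($R^ip_{\eta,*}\omega_{\tilde{Z}_\eta}=0$ and Cohen--Macaulayness of $p_{\eta,*}\omega_{\tilde{Z}_\eta}$) as explicit hypotheses, which you correctly supply via Grauert--Riemenschneider and the rational-singularities property of spherical embeddings in characteristic zero. The paper's induction-plus-formal-functions route is self-contained in the sense that it only consumes the special-fiber input from \Cref{ass-1_0}, and for the Cohen--Macaulay assertion on the generic fiber it offers the alternative, more economical argument that the CM locus of $R^0p_*\omega_{\tilde{Z}}$ is open (by excellence of $R$) and its complement, being closed and proper over $\Spec R$, would have to meet $Z_s$ if nonempty; the paper does also mention Timashev and Koll\'ar--Mori as the characteristic-zero source you rely on. Your argument is shorter and more elementary; the paper's avoids the characteristic-zero input at the cost of the formal-functions machinery and an extra induction.
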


\begin{proof}
   % First consider the generic fiber case. Note $Z_K$ is a normal spherical variety.  We have $R^ip_*\omega_{\tilde{Y}}=0$ for all $i\geq 1$ by Grauert-Riemenschneider theorem (\cite[2.68]{birational}).    By \cite[Theorem 15.20]{Timashev} $Z_K$ is Cohen-Macaulay and hence has rational singularity.    The sheaf $Rp_*\omega_{\tilde{Z}}\cong R^0p_*\omega_{\tilde{Z}}$ coincides with the dualizing sheaf for $Z$ by \cite[5.70]{birational}, hence Cohen-Macaulay. 
        %follows from the proof of \cite[3.3]{l-adic} or the special fiber case and the fact that Cohen-Macaulay is an open property. 

    %Now consider the special fibers. 
    We may assume $R$ is a complete discrete valuation ring with uniformizer $\unif$. 
    For each integer $n\geq 0$, let $R_n=R/\unif^{n+1} R$, and let $p_n: \tilde{Z}_n\to  Z_n$ be the base change of $p$. Let's prove $R^ip_{n,*}\omega_{\tilde{Z}_n}=0$ for $i>0$ and $R^0p_{n,*}\omega_{\tilde{Z}_{n-1}}\cong R^0p_{n,*}\omega_{\tilde{Z}_{n}}\otimes_{R_n} R_{n-1}$ by induction on $n$.

    When $n=0$, this follows from \Cref{ass-1_0}. Assume $n>0$, we have a short exact sequence 
    \[0\to \omega_{\tilde{Z}_{0}}\xrightarrow{\unif^{n}}\omega_{\tilde{Z}_{n}}\xrightarrow{} \omega_{\tilde{Z}_{n-1}}\to 0\]
    of coherent $\c{O}_{\tilde{Z}_{n}}$-modules. Applying $Rp_*$ to this sequence, we get a long exact sequence 
    \[0\to R^0p_{n,*}\omega_{\tilde{Z}_{0}}\xrightarrow{\unif^{n}} R^0p_{n,*}\omega_{\tilde{Z}_{n}}\to R^0p_{n,*}\omega_{\tilde{Z}_{n-1}}\to R^1p_{n,*}\omega_{\tilde{Z}_{0}}\to \cdots\]
    \[\cdots\to  R^ip_{n,*}\omega_{\tilde{Z}_{0}}\to R^ip_{n,*}\omega_{\tilde{Z}_{n}}\to R^ip_{n,*}\omega_{\tilde{Z}_{n-1}}\to \cdots\]
By the inductive hypothesis $R^ip_{n,*}\omega_{\tilde{Z}_{0}}=R^ip_{n,*}\omega_{\tilde{Z}_{n-1}}=0$ for $i\geq 1$, we conclude that $R^ip_{n,*}\omega_{\tilde{Z}_{n}}=0$ and we have a short exact sequence 
    \[0\to R^0p_{n,*}\omega_{\tilde{Z}_{0}}\xrightarrow{\unif^{n}} R^0p_{n,*}\omega_{\tilde{Z}_{n}}\to R^0p_{n,*}\omega_{\tilde{Z}_{n-1}}\to  0\]
In particular, the natural morphism $R^0p_{n,*}\omega_{\tilde{Z}_{n}}\to R^0p_{n,*}\omega_{\tilde{Z}_{n-1}}\to R^0p_{n,*}\omega_{\tilde{Z}_{0}}$ is surjective. So we may identify the image of $R^0p_{n,*}\omega_{\tilde{Z}_{0}}\xrightarrow{\unif^{n}} R^0p_{n,*}\omega_{\tilde{Z}_{n}}$ with the image of $R^0p_{n,*}\omega_{\tilde{Z}_{n}}\xrightarrow{\unif^{n}} R^0p_{n,*}\omega_{\tilde{Z}_{n}}$. 
We thus have $R^0p_{n,*}\omega_{\tilde{Z}_{n-1}}\cong R^0p_{n,*}\omega_{\tilde{Z}_{n}}\otimes_{R_n} R_{n-1}$. 

By the theorem on formal functions \cite[III 11.1]{Hartshorne},  $\varprojlim_n R^ip_*\omega_{\tilde{Z}_{n}}$ is isomorphic to $(R^ip_*\omega_{\tilde{Z}})^\wedge%\cong Rp_*\omega_{\tilde{Z}}
$. We conclude that $R^ip_*\omega_{\tilde{Z}}=0$ for $i>0$ and $R^0p_*\omega_{\tilde{Z}}\otimes_R k\cong R^0p_{0,*}\omega_{\tilde{Z}_{0}}$. By \Cref{ass-2_0}, the special fiber of $R^0p_*\omega_{\tilde{Z}}$ is Cohen-Macaulay. 

The generic fiber case can be deduced from \cite[Theorem 15.20]{Timashev} and \cite[5.70]{birational} or from the special fiber case. 
Note that $R$ is an excellent ring since it is a Dedekind domain with fraction field of characteristic zero, we get that the Cohen-Macaulay locus of $R^0p_*\omega_{\tilde{Z}}$ is open by \cite[7.8.3(iv)]{PMIHES_1965__24__5_0}. This locus contains the special fiber. Since $Z\to \Spec R$ is proper, this implies that the Cohen-Macaulay locus is the whole space. 
\end{proof}
\end{appendix}

%\addcontentsline{toc}{section}{Reference}
\bibliography{ref}

\end{document}